\documentclass[11pt]{article}

\usepackage{fullpage}
\usepackage{amssymb,amsmath,amsthm,mathtools}
\usepackage{algorithm}
\usepackage{algorithmic}
\usepackage{graphicx}
\usepackage{xcolor}
\usepackage{colortbl}
\usepackage{enumitem}
\usepackage{array}
\usepackage{longtable}
\usepackage{threeparttable}
\usepackage{pifont}
\usepackage{xspace}
\usepackage[round]{natbib}

\usepackage{hyperref}
\usepackage{bookmark}
\definecolor{niceblue}{rgb}{0.0,0.19,0.56}
\hypersetup{
  colorlinks = true,
  linkcolor  = blue,
  citecolor  = niceblue,
  urlcolor   = blue,
  pdftitle   = {Weak-L-alpha Noise, Strong Guarantees: Gradient Clipping under Affine Heavy Tails}
}

\usepackage{mdframed}
\usepackage{thmtools}
\makeatletter
\@ifundefined{newcounteralias}{}{%
  \renewcommand\thmt@autorefsetup{%
    \@xa\def\csname\thmt@envname autorefname\@xa\endcsname\@xa{\thmt@thmname}%
  }%
}
\makeatother
\definecolor{shadecolor}{gray}{0.9}
\declaretheoremstyle[
headfont=\normalfont\bfseries,
notefont=\mdseries, notebraces={(}{)},
bodyfont=\normalfont,
postheadspace=0.5em,
spaceabove=1pt,
mdframed={
 skipabove=8pt,
 skipbelow=8pt,
 hidealllines=true,
 backgroundcolor={shadecolor},
 innerleftmargin=4pt,
 innerrightmargin=4pt}
]{shaded}
\declaretheorem[style=shaded,within=section]{theorem}
\declaretheorem[style=shaded,sibling=theorem]{lemma}
\declaretheorem[style=shaded,sibling=theorem]{proposition}
\declaretheorem[style=shaded,sibling=theorem]{corollary}
\declaretheorem[style=shaded,sibling=theorem]{definition}
\declaretheorem[style=shaded,sibling=theorem]{remark}
\declaretheorem[style=shaded,within=section]{assumption}
\usepackage{needspace,etoolbox}
\makeatletter
\newcommand{\boxneedspace}{\if@nobreak\else\Needspace{8\baselineskip}\fi}
\@for\thm@env:=theorem,lemma,proposition,corollary,definition,remark,assumption\do{\expandafter\BeforeBeginEnvironment\expandafter{\thm@env}{\boxneedspace}}
\makeatother

\newcommand{\R}{\mathbb{R}}
\newcommand{\E}{\mathbb{E}}
\newcommand{\PP}{\mathbb{P}}
\newcommand{\D}{\mathcal{D}}
\newcommand{\F}{\mathcal{F}}
\newcommand{\G}{\mathcal{G}}

\newcommand*{\dif}{\mathop{}\!\mathrm{d}}

\newcommand{\abr}[1]{\left\langle#1\right\rangle}
\newcommand{\ind}{\mathbf{1}}
\newcommand{\tg}{\widetilde{g}}
\newcommand{\tX}{\widetilde{X}}
\def\clip{\texttt{clip}}
\DeclareMathOperator{\TV}{TV}
\DeclareMathOperator{\Var}{Var}
\newcommand{\wnorm}[1]{\left\lVert #1\right\rVert_{L^{\alpha,\infty}}}
\newcommand{\Mnorm}[1]{\left\lVert #1\right\rVert_{M_\alpha}}

\definecolor{PineGreen}{RGB}{0,110,51}
\newcommand{\algname}[1]{{\color{PineGreen}\sf #1}\xspace}
\newcommand{\clipsgd}{\algname{Clip-SGD}}
\newcommand{\clipsstm}{\algname{Clip-SSTM}}
\newcommand{\circledOne}{\text{\ding{172}}}
\newcommand{\circledTwo}{\text{\ding{173}}}
\newcommand{\circledThree}{\text{\ding{174}}}
\newcommand{\circledFour}{\text{\ding{175}}}
\newcommand{\circledFive}{\text{\ding{176}}}

\newcommand{\stk}[1]{{\renewcommand{\arraystretch}{1.05}\begin{tabular}[c]{@{}l@{}}#1\end{tabular}}}

\title{\bf Weak-$L^\alpha$ Noise, Strong Guarantees:\\ Gradient Clipping under Affine Heavy Tails}
\author{\begin{tabular}{c}
     {\bf Alyona Nekrasova$^{1}$ \quad Makar Sidorov$^{2}$ \quad Savelii Chezhegov$^{3}$}\\
     {\bf Alexander Gasnikov$^{4}$ \quad Margarita Preobrazhenskaia$^{2}$}\\
     {\bf Igor Preobrazhensky$^{2}$ \quad Eduard Gorbunov$^{5}$}\\[0.7em]
     {\small $^{1}$Independent researcher \qquad $^{2}$Yaroslavl State University}\\
     {\small $^{3}$Moscow Independent Research Institute of Artificial Intelligence}\\
     {\small $^{4}$Innopolis University \qquad $^{5}$Mohamed bin Zayed University of Artificial Intelligence}
\end{tabular}}
\date{\today}

\hypersetup{
  pdftitle={Weak-L-alpha Noise, Strong Guarantees: Gradient Clipping under Affine Heavy Tails},
  pdfauthor={Alyona Nekrasova, Makar Sidorov, Savelii Chezhegov, Alexander Gasnikov, Margarita Preobrazhenskaia, Igor Preobrazhensky, Eduard Gorbunov}
}

\begin{document}
\widowpenalty=10000 \clubpenalty=10000 \displaywidowpenalty=10000
\setlength{\emergencystretch}{1.5em}
\hyphenation{Mar-cin-kie-wicz}

\maketitle
\addtocontents{toc}{\protect\setcounter{tocdepth}{-1}}

\begin{abstract}

Gradient clipping is a key tool for obtaining high-probability convergence guarantees for stochastic first-order methods under heavy-tailed noise. Existing analyses typically assume a bounded central moment of order $\alpha\in(1,2]$, which excludes distributions with tails of order $u^{-\alpha}$ at the critical exponent. In this paper, we consider smooth convex optimization under affine heavy-tailed noise, whose scale may grow with the gradient norm. We derive high-probability convergence guarantees for clipped SGD (\clipsgd) and its accelerated variant \clipsstm under a weak-$L^\alpha$ tail condition, which is strictly weaker than the corresponding moment bound. For $1<\alpha<2$, the clipping bounds have the same powers of the clipping level as in the moment case, and both methods attain the additive stochastic rate $K^{-(\alpha-1)/\alpha}$. The gradient-dependent noise contributes a faster-decaying term of order $K^{-2(\alpha-1)/\alpha}$, up to logarithmic factors. At $\alpha=2$, we characterize the additional logarithmic factor on the Lorentz scale and establish its necessity for the additive error at fixed confidence. We also analyze \clipsgd with prescribed clipping levels and derive finite-horizon guarantees for reaching a neighborhood determined by the clipping bias. For additive noise, matching lower bounds show that this neighborhood is unavoidable, up to constants and the natural error cap. Numerical experiments with Pareto noise illustrate the theoretical results.
\end{abstract}

\section{Introduction}\label{sec:intro}

Gradient clipping rescales a stochastic gradient whenever its norm exceeds a prescribed level $\lambda$. It is a standard tool for stabilizing the training of machine learning models. It was introduced to handle exploding gradients in recurrent neural networks \citep{pascanu2013difficulty} and is now used in large-scale training pipelines, including GPT-3 \citep{brown2020language} and LLaMA \citep{touvron2023llama}. It is also a basic component of differentially private SGD \citep{abadi2016deep}. One explanation for its usefulness is that stochastic gradient noise can be heavy-tailed, as observed in deep learning and language-model training \citep{simsekli2019tail,zhang2020adaptive}. Moreover, SGD iterates can have heavy-tailed stationary distributions even for light-tailed data \citep{gurbuzbalaban2021heavy}. Under a bounded central moment of order $\alpha\in(1,2]$, clipped SGD (\clipsgd) converges with high probability and logarithmic dependence on the inverse failure probability, whereas SGD without clipping may fail when the variance is infinite \citep{zhang2020adaptive,gorbunov2020stochastic,sadiev2023high}. Section~\ref{sec:related} discusses extensions of these results.

\begin{table}[t]
\caption{Summary of known and new high-probability complexity results for \clipsgd and \clipsstm with the clipping level tuned to the horizon. Complexity is the total number of oracle calls needed for a method to guarantee $f-f^*\le\varepsilon$ with probability at least $1-\beta$; the constants depending only on $\alpha$ and the logarithmic factors are omitted for simplicity. Notation: $\zeta:=g(x,\xi)-\nabla f(x)$ is the noise, $\alpha\in(1,2]$ is its tail index and $\sigma$ its level; $R\ge\|x^0-x^*\|$ is any upper bound on the initial distance; $\beta\in(0,1)$ is the failure probability. Under the affine conditions, $\E\|\zeta\|^\alpha\le\sigma_0^\alpha+\sigma_1^\alpha\|\nabla f(x)\|^\alpha$ (Assumption~\ref{as:moment}) or $\PP\{\|\zeta\|>u\}\le(\sigma_0^\alpha+\sigma_1^\alpha\|\nabla f(x)\|^\alpha)u^{-\alpha}$ for all $u>0$ (Assumption~\ref{as:weak}). The error $f-f^*$ is measured at the averaged iterate $\bar x^K$ for \clipsgd and at the last iterate $y^K$ for \clipsstm. The results of this paper are highlighted in blue.
}\label{tab:complexity}
\begin{center}
\footnotesize
\renewcommand{\arraystretch}{1.7}
\setlength{\tabcolsep}{3.5pt}
\resizebox{\textwidth}{!}{%
\begin{tabular}{llll}
\textbf{Method} & \textbf{Result} & \textbf{Noise condition} & \textbf{Oracle calls}\\
\hline
\clipsgd & \citet{sadiev2023high} & $\E\|\zeta\|^\alpha\le\sigma^\alpha$, $\alpha\in(1,2]$ & $\max\left\{\frac{LR^2}{\varepsilon},\left(\frac{\sigma R}{\varepsilon}\right)^{\frac{\alpha}{\alpha-1}}\right\}$\\
\rowcolor{blue!10} \clipsgd & Theorem~\ref{thm:sgd} & Assumption~\ref{as:moment} or~\ref{as:weak} & $\max\left\{\frac{LR^2}{\varepsilon},\left(\frac{\sigma_0R}{\varepsilon}\right)^{\frac{\alpha}{\alpha-1}},\left(\frac{\sigma_1^2LR^2}{\varepsilon}\right)^{\frac{\alpha}{2(\alpha-1)}}\right\}$\\
\clipsstm & \citet{gorbunov2020stochastic} & $\E\|\zeta\|^2\le\sigma^2$ & $\max\left\{\sqrt{\frac{LR^2}{\varepsilon}},\frac{\sigma^2R^2}{\varepsilon^2}\right\}$\\
\clipsstm & \citet{sadiev2023high} & $\E\|\zeta\|^\alpha\le\sigma^\alpha$, $\alpha\in(1,2]$ & $\max\left\{\sqrt{\frac{LR^2}{\varepsilon}},\left(\frac{\sigma R}{\varepsilon}\right)^{\frac{\alpha}{\alpha-1}}\right\}$\\
\rowcolor{blue!10} \clipsstm & uniform scale (Cor.~\ref{cor:uniform}) & Assumption~\ref{as:moment} or~\ref{as:weak} & $\max\left\{\sqrt{\frac{LR^2}{\varepsilon}},\left(\frac{\sigma_0R}{\varepsilon}\right)^{\frac{\alpha}{\alpha-1}},\left(\frac{\sigma_1LR^2}{\varepsilon}\right)^{\frac{\alpha}{\alpha-1}}\right\}$\\
\rowcolor{blue!10} \clipsstm & Theorem~\ref{thm:sstm-main} & Assumption~\ref{as:moment} or~\ref{as:weak} & $\max\left\{\sqrt{\frac{LR^2}{\varepsilon}},\left(\frac{\sigma_0R}{\varepsilon}\right)^{\frac{\alpha}{\alpha-1}},\left(\frac{\sigma_1^2LR^2}{\varepsilon}\right)^{\frac{\alpha}{2(\alpha-1)}}\right\}$\\
\end{tabular}%
}
\end{center}
\end{table}

Two limitations of much of the existing high-probability theory motivate this work. First, the moment assumption $\E\|g(x,\xi)-\nabla f(x)\|^\alpha\le\sigma^\alpha$ is stronger than the tail control used in our analysis. Pareto distributions and non-Gaussian $\alpha$-stable distributions with $\alpha\in(1,2)$ have tails of order $u^{-\alpha}$ but an infinite $\alpha$-th moment; the latter have been proposed as models of stochastic gradient noise in deep learning \citep{simsekli2019tail}. A moment theorem therefore applies only at a smaller exponent, with a slower polynomial rate when that exponent is fixed. The weak-$L^\alpha$ (Marcinkiewicz) condition $\PP\{\|g(x,\xi)-\nabla f(x)\|>u\}\le\sigma^\alpha u^{-\alpha}$ controls the tail directly.

Second, many high-probability analyses tune the clipping level to the noise parameters and the horizon and assume a uniform noise scale on the relevant region. Practical implementations often use a prescribed constant level, and the noise scale may vary with the gradient. Although prior work addresses some of these features separately (Section~\ref{sec:related}), their combination under a local weak-tail condition motivates the following question:
\begin{center}
    \emph{What do clipped methods guarantee under a pure tail condition on the noise, with a noise scale that grows with the gradient norm, and with a clipping level chosen by the user?}
\end{center}
In this paper, we address this question for \clipsgd and its accelerated variant \clipsstm.

\paragraph{Our contributions.} We analyze \clipsgd and \clipsstm under affine moment and weak-$L^\alpha$ noise conditions, which allow the noise scale to depend on the gradient norm. Table~\ref{tab:complexity} compares the guarantees for horizon-dependent tuning with prior results. Table~\ref{tab:prescribed} summarizes the guarantees and lower bounds for prescribed clipping levels. Our contributions are as follows.
\begin{itemize}[leftmargin=*]
    \item \textbf{Clipping under tail conditions.} We bound the clipping bias and the second moment of the clipped error under a weak-$L^\alpha$ tail condition. The bounds have the same powers of the clipping level as in the moment case, with an unavoidable logarithmic factor at $\alpha=2$ (Lemma~\ref{lem:clip}).
    \item \textbf{New high-probability results for \clipsgd and \clipsstm.} With horizon-dependent tuning, both methods attain high-probability rates with explicit constants (Theorems~\ref{thm:sgd} and~\ref{thm:sstm-main}). In the nontrivial stochastic regime, the additive term is optimal in $K$, $\sigma_0$, and $R$; the corresponding confidence dependence is optimal under the moment condition and under the weak-tail condition for $\alpha<2$ (Theorem~\ref{thm:maximal} and Propositions~\ref{prop:lower-rate} and~\ref{prop:confidence-lower}). At the weak-$L^2$ endpoint, the additive term contains the necessary factor $\sqrt{\ln(eK)}$ at fixed confidence (Corollary~\ref{cor:lorentz-upper}). The gradient-dependent noise contributes a faster-decaying term. For \clipsstm, this follows from a weighted bound on the true gradients; we do not establish optimality of the multiplicative term.
    \item \textbf{Prescribed clipping levels.} For sufficiently large prescribed clipping levels, we bound the error of the averaged iterate by a clipping-bias neighborhood plus terms that vanish with the horizon (Theorem~\ref{thm:fixed-main}). For arbitrary positive clipping levels, we obtain a best-iterate guarantee under additive or sufficiently small multiplicative noise; the weak-tail version requires $\alpha<2$ (Theorem~\ref{thm:any-main}). Additive-noise lower bounds show that the neighborhood is unavoidable up to $\alpha$-dependent constants and truncation at $LR^2$ (Theorem~\ref{thm:lower-main}). For additive noise, a clipping level proportional to $(K+1)^{1/\alpha}$, held constant within each run, recovers the rate $K^{-(\alpha-1)/\alpha}$ up to logarithmic factors.
    \item \textbf{Numerical experiments.} In Section~\ref{sec:experiments} and Appendix~\ref{app:experiments}, we illustrate the derived results in numerical experiments with Pareto noise.
\end{itemize}

\section{Related Work}\label{sec:related}

We discuss the closely related works below and defer a more detailed overview to Appendix~\ref{app:related}.

\paragraph{High-probability bounds under heavy-tailed noise.} High-probability guarantees with logarithmic dependence on the inverse failure probability were first obtained for clipped and truncated stochastic methods under bounded variance by \citet{nazin2019algorithms} and \citet{gorbunov2020stochastic}. \citet{sadiev2023high} extended the analysis to bounded central moments of order $\alpha\in(1,2]$. For smooth convex problems, they derive the rate $\widetilde O(LR^2/K+\sigma R K^{-(\alpha-1)/\alpha})$ for \clipsgd and its accelerated counterpart for \clipsstm. These are the closest constant-scale results: Theorems~\ref{thm:sgd} and~\ref{thm:sstm-main} recover them when $\sigma_1=0$ and the moment condition holds (Table~\ref{tab:complexity}).

The weak-$L^p$ condition itself is used by \citet{elbakkali2026stochastic} for zeroth-order non-convex optimization under global sample-wise smoothness and a constant noise scale. Their methods clip scalar directional derivatives and use mini-batches or a momentum scheme with a warm-start batch. We consider first-order methods for convex problems with one oracle call per iteration, under a local tail condition whose scale may grow with the gradient norm. Our assumption requires neither symmetry nor a density. Appendix~\ref{app:related} discusses the full range of bounded-moment results, structural noise assumptions, and methods without clipping, including the accelerated convex method of \citet{he2025accelerated}.

\paragraph{Affine noise, fixed clipping levels, and lower bounds.} Affine variance is classical in the analysis of SGD \citep{bottou2018optimization,gower2019sgd,khaled2023better}. Most high-probability analyses under heavy-tailed noise use a constant scale on the relevant region; state-dependent finite-variance bounds appear in \citet{tsai2022heavy,das2024near}, and affine moment bounds are considered in the concurrent work of \citet{lobanov2026accelerated}.

For a fixed clipping level, \citet{koloskova2023revisiting} characterize the clipping bias and derive tight in-expectation guarantees under bounded variance. \citet{vatankhah2026differentially} obtain high-probability bounds for the best iterate with an arbitrary clipping level under a bounded moment assumption. Table~\ref{tab:prescribed} compares their regime $\lambda>4LR$. We extend prescribed-level guarantees to affine weak-$L^\alpha$ noise and derive matching additive-noise lower bounds, distinguishing averaged and best iterates.

Stochastic convex lower bounds go back to \citet{nemirovski1983problem}. \citet{vural2022mirror} establish the optimal heavy-tailed rate $K^{-(\alpha-1)/\alpha}$, and \citet{liu2026clipped} derive high-probability lower bounds for non-smooth convex problems. Our lower bounds concern smooth unconstrained problems and capture the confidence dependence and the additional logarithmic factors at the weak-$L^2$ endpoint.

\paragraph{Concurrent work.} \citet{lobanov2026accelerated} analyze a restarted accelerated method with clipping and projection under $(H_0,H_1)$-smoothness \citep{alimisis2025why} and a moment bound combining additive, gradient-dependent, and function-gap-dependent noise. Their method uses one sample per iteration. For $L$-smooth objectives and no function-gap-dependent term, their convex complexity has the same polynomial dependence as Theorem~\ref{thm:sstm-main}, including the dependence on $\sigma_1$, whose optimality remains open in both works.

Their analysis covers generalized smoothness, strong convexity, and noise decaying with the function gap. We analyze \clipsgd and \clipsstm without restarts or projections under a tail condition on a ball, including the weak-$L^2$ endpoint with infinite variance, and study prescribed clipping levels. Appendix~\ref{app:related} gives the precise assumptions, complexity comparison, and relation between the lower bounds.

\section{Problem Setup}\label{sec:setup}

We consider the following stochastic optimization problem and assumptions.

\paragraph{Notation.} We use the standard notation: $\abr{\cdot,\cdot}$ denotes the standard inner product in $\R^d$ and $\|\cdot\|$ is the induced Euclidean norm, $B_r(x):=\{y\in\R^d:\|y-x\|\le r\}$ is the ball with center at $x$ and radius $r$, $(a)_+:=\max\{a,0\}$, and $\ind_A$ is the indicator of an event $A$. For a $\sigma$-algebra $\G$, $\E[\cdot\mid\G]$ denotes the conditional expectation w.r.t.\ $\G$.

\paragraph{Problem.} We consider the stochastic optimization problem
\begin{equation}\label{eq:problem}
    \min_{x\in\R^d} f(x),
\end{equation}
where $f:\R^d\to\R$ is Borel measurable and accessible only through the stochastic oracle specified below. We assume that a minimizer $x^*$ exists and write $f^*:=f(x^*)$. Following \citet{sadiev2023high}, we impose the smoothness, convexity, and noise assumptions only on a ball around a minimizer. Given $R\ge\|x^0-x^*\|>0$, we use $Q=B_{2R}(x^*)$ for \clipsgd and $Q=B_{3R}(x^*)$ for \clipsstm. The analysis shows that the relevant iterates remain in these balls with high probability. No smoothness or convexity is required outside $Q$.

\paragraph{Assumptions on $f$.} We start with the standard assumptions on the objective function.

\begin{assumption}[Local $L$-smoothness]\label{as:smooth}
We assume that $f$ is $L$-smooth on $Q$, i.e., $\|\nabla f(x)-\nabla f(y)\|\le L\|x-y\|$ for all $x,y\in Q$.
\end{assumption}

\begin{assumption}[Gradient bound]\label{as:grad-bound}
We assume that $\|\nabla f(x)\|^2\le 2L\left(f(x)-f^*\right)$ for all $x\in Q$.
\end{assumption}

\begin{assumption}[Local convexity]\label{as:convex}
We assume that $f$ is convex on $Q$, i.e., $f(y)\ge f(x)+\abr{\nabla f(x),y-x}$ for all $x,y\in Q$.
\end{assumption}

If $f$ is convex and $L$-smooth on $\R^d$, then Assumptions~\ref{as:smooth}--\ref{as:convex} hold for any $R$. Here these properties are required only on $Q$, which contains the relevant iterates with high probability. Assumption~\ref{as:grad-bound} follows from Assumptions~\ref{as:smooth} and~\ref{as:convex}, since $Q$ is a ball centered at $x^*$ (Lemma~\ref{lem:grad-bound}). We state it separately to simplify the comparison with \citet{sadiev2023high}.

\paragraph{Stochastic oracle.} Let $\G_0:=\sigma(x^0)$ and $\G_k:=\sigma(x^0,\xi^0,\dots,\xi^{k-1})$ for $k\ge1$. The oracle is a jointly measurable, finite vector field $g(x,\xi)\in\R^d$ satisfying $\E_\xi[g(x,\xi)]=\nabla f(x)$ for all $x\in Q$. It need not be the gradient of a random function; expected-loss minimization with $g=\nabla f_\xi$ is a special case. At iteration $k$, the method samples $\xi^k\sim\D$ independently of $\G_k$ and queries the oracle at a $\G_k$-measurable point. The noise assumptions hold only on $Q$; outside $Q$, the oracle is an arbitrary finite jointly measurable extension. Appendix~\ref{app:aux} explains why these pointwise assumptions apply to adaptive queries and why the extension does not enter the bounds. We consider two affine noise models, whose scale may grow with the gradient norm.

\begin{assumption}[Affine moment condition]\label{as:moment}
We assume that there exist $\sigma_0,\sigma_1\ge0$ and $\alpha\in(1,2]$ such that for all $x\in Q$ we have $\E_\xi[g(x,\xi)]=\nabla f(x)$ and
\begin{equation}\label{eq:moment-affine}
    \E_\xi\|g(x,\xi)-\nabla f(x)\|^\alpha\le\sigma_0^\alpha+\sigma_1^\alpha\|\nabla f(x)\|^\alpha .
\end{equation}
\end{assumption}

\begin{assumption}[Affine weak-$L^\alpha$ (Marcinkiewicz) condition]\label{as:weak}
We assume that there exist $\sigma_0,\sigma_1\ge0$ and $\alpha\in(1,2]$ such that for all $x\in Q$ we have $\E_\xi[g(x,\xi)]=\nabla f(x)$ and, for all $u>0$,
\begin{equation}\label{eq:weak-tail}
    \PP\left\{\|g(x,\xi)-\nabla f(x)\|>u\right\}\le\frac{\sigma_0^\alpha+\sigma_1^\alpha\|\nabla f(x)\|^\alpha}{u^\alpha}.
\end{equation}
\end{assumption}

Condition~\eqref{eq:weak-tail} restricts only the tail of the noise. Equivalently, $\wnorm{\|g(x,\xi)-\nabla f(x)\|}^{\alpha}\le\sigma_0^\alpha+\sigma_1^\alpha\|\nabla f(x)\|^\alpha$, where $\wnorm{\cdot}$ denotes the weak-$L^\alpha$ quasi-norm. For $\alpha>1$, this quasi-norm is equivalent to the norm of the Marcinkiewicz space $M_\alpha$, which is a classical rearrangement-invariant Banach space \citep{krein2002interpolation,bennett1988interpolation}; see Appendix~\ref{app:spaces} for the details. By Markov's inequality, Assumption~\ref{as:moment} implies Assumption~\ref{as:weak} with the same $\sigma_0,\sigma_1,\alpha$, but the converse implication does not hold at the same exponent: weak-$L^\alpha$ noise may have infinite $\alpha$-th moment and, for $\alpha=2$, infinite variance (see the example in Section~\ref{sec:pareto}). When $\sigma_1=0$, Assumption~\ref{as:moment} recovers the standard heavy-tailed noise assumption \citep{zhang2020adaptive,sadiev2023high}.

\paragraph{Example: Pareto noise at the critical index.}\label{sec:pareto}
Weak-$L^\alpha$ noise can have an infinite $\alpha$-th moment. For example, non-Gaussian symmetric $\alpha$-stable distributions with $\alpha\in(1,2)$, proposed as models of stochastic gradient noise in deep learning \citep{simsekli2019tail}, have tails $\PP\{|Y|>u\}\sim c_\alpha u^{-\alpha}$ as $u\to\infty$ \citep{samoradnitsky2017stable}. For a concrete example, let $\sigma>0$, $\alpha\in(1,2]$, and $\xi=\sigma\varepsilon\mathsf R$, where $\varepsilon$ is an independent Rademacher sign and $\PP\{\mathsf R>u\}=\min\{1,u^{-\alpha}\}$. Then $\E\xi=0$, $\wnorm{\xi}=\sigma$, and $\E|\xi|^\alpha=\infty$, while $\E|\xi|^q=\sigma^q\alpha/(\alpha-q)$ for $0<q<\alpha$ (Proposition~\ref{prop:pareto}). Thus, the moment condition applies only at a smaller exponent, whereas the weak-tail condition holds at the critical exponent with the exact scale. The Pareto tail bound is attained for $u\ge\sigma$, and a $d$-dimensional version is $\xi=\sigma\mathsf R U$, where $U$ is independent and uniform on the unit sphere. Our experiments use symmetric or centered asymmetric Pareto noise in scalar or one-directional form, with an affine scale $S(x)$.

\paragraph{Considered methods.} For $\lambda>0$, the clipping operator is defined as follows: $\clip(v,\lambda):=\min\{1,\lambda/\|v\|\}\,v$ for $v\neq0$ and $\clip(0,\lambda):=0$. We study the convergence of clipped SGD (\clipsgd, Algorithm~\ref{alg:clip-sgd}) in terms of its averaged iterate $\bar x^K:=\frac{1}{K+1}\sum_{k=0}^K x^k$, and of its accelerated variant \clipsstm, which we introduce in Section~\ref{sec:sstm}.

\begin{algorithm}[t]
\caption{\clipsgd}\label{alg:clip-sgd}
\begin{algorithmic}[1]
\REQUIRE starting point $x^0$, stepsize $\gamma>0$, clipping levels $\{\lambda_k\}_{k\ge0}$, number of iterations $K+1$
\FOR{$k=0,1,\dots,K$}
\STATE Sample $\xi^k\sim\D$ independently of $\G_k$; set $g^k=g(x^k,\xi^k)$
\STATE $\tg^k=\clip(g^k,\lambda_k)$
\STATE $x^{k+1}=x^k-\gamma\tg^k$
\ENDFOR
\ENSURE $\bar x^K=\frac{1}{K+1}\sum_{k=0}^K x^k$
\end{algorithmic}
\end{algorithm}

\section{Clipping under Affine Heavy-Tailed Noise}\label{sec:clipping}

The key technical ingredient of the high-probability analysis of clipped methods is a lemma bounding the bias and the variance of the clipped stochastic gradient. In this section, we generalize Lemma~5.1 from \citet{sadiev2023high} to the setting of this paper, i.e., to the noise satisfying either the affine weak-$L^\alpha$ condition or the affine $\alpha$-th moment condition. For brevity, we denote $A(x):=\sigma_0^\alpha+\sigma_1^\alpha\|x\|^\alpha$.

\begin{lemma}[Clipping under affine heavy-tailed noise]\label{lem:clip}
Let $\alpha\in(1,2]$, $\lambda>0$, and let $X$ be a random vector in $\R^d$ such that $\E X=x$, $\|x\|\le\lambda/2$, and either (a) $\PP\{\|X-x\|>u\}\le A(x)u^{-\alpha}$ for all $u>0$, or (b) $\E\|X-x\|^\alpha\le A(x)$. Let $\tX:=\clip(X,\lambda)$. Then $\|\tX-\E\tX\|\le2\lambda$ almost surely, $\E\|\tX-\E\tX\|^2\le\E\|\tX-x\|^2$, and
\begin{equation}\label{eq:clip-unified}
    \|\E\tX-x\|\le\frac{b\,A(x)}{\lambda^{\alpha-1}},\qquad \E\|\tX-x\|^2\le v\,\lambda^{2-\alpha}A(x),
\end{equation}
where $(b,v)=(b_\alpha,v_\alpha((3\lambda/2)^\alpha/A(x)))$ in case~(a) (the second bound reads $0$ if $A(x)=0$) and $(b,v)=(b_\alpha^{\rm mom},v_\alpha^{\rm mom})$ in case~(b), with
\begin{gather}
    v_\alpha(s):=\left(\tfrac32\right)^{2-\alpha}\min\left\{\tfrac{2}{2-\alpha},\,1+\tfrac2\alpha\ln(1+s)\right\},\label{eq:BC-alpha}\\
    b_\alpha:=\frac{2^{\alpha-1}}{\alpha-1},\qquad b_\alpha^{\rm mom}:=\frac{2^{\alpha-1}(\alpha-1)^{\alpha-1}}{\alpha^\alpha},\label{eq:moment-clipping-constants}
\end{gather}
and $v_\alpha^{\rm mom}:=(\frac32)^{2-\alpha}$, for $s\in[0,\infty]$, where $\frac{2}{2-\alpha}:=+\infty$ for $\alpha=2$; here $\frac12\le b_\alpha^{\rm mom}\le1$ and $1\le v_\alpha^{\rm mom}\le\frac32$.
\end{lemma}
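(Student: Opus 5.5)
The trivial claims come first. Since $\|\tX\|\le\lambda$ always, we have $\|\E\tX\|\le\lambda$ and hence $\|\tX-\E\tX\|\le2\lambda$ almost surely. The inequality $\E\|\tX-\E\tX\|^2\le\E\|\tX-x\|^2$ is the usual fact that the mean minimizes the mean-squared deviation: $\E\|\tX-x\|^2=\E\|\tX-\E\tX\|^2+\|\E\tX-x\|^2$. Throughout, write $\eta:=\|X-x\|$ and let $E:=\{\|X\|>\lambda\}$ be the event on which clipping is active. Because $\|x\|\le\lambda/2$, on $E$ we have $\eta\ge\|X\|-\|x\|>\lambda/2$. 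We also have the pointwise bound $\|\tX-x\|\le\min\{\eta,\tfrac32\lambda\}$. Off $E$ this holds with equality $\tX-x=X-x$. On $E$ it follows from $\|\tX-x\|\le\lambda+\lambda/2$, and also from $\|\tX-x\|\le\|X-x\|$, since clipping is the projection onto the ball $B_\lambda(0)$, which contains $x$.

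For the bias, use $\E X=x$ to write $\E\tX-x=\E[(\tX-X)\ind_E]$. On $E$, $\|\tX-X\|=\|X\|-\lambda\le\eta+\|x\|-\lambda\le(\eta-\lambda/2)_+$. Hence
\[
\|\E\tX-x\|\le\E(\eta-\lambda/2)_+=\int_{\lambda/2}^\infty\PP\{\eta>u\}\dif u .
\]
In case~(a), bound the integrand by $A(x)u^{-\alpha}$. This gives $A(x)(\lambda/2)^{1-\alpha}/(\alpha-1)=b_\alpha A(x)\lambda^{1-\alpha}$. In case~(b), use the pointwise inequality $(\eta-\lambda/2)_+\le c\,\eta^\alpha$ with the optimal constant $c=\sup_{t>\lambda/2}(t-\lambda/2)t^{-\alpha}$. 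A one-variable maximization puts the maximizer at $t=\frac{\alpha\lambda}{2(\alpha-1)}$, which gives $c=\frac{(\alpha-1)^{\alpha-1}}{\alpha^\alpha}(\lambda/2)^{1-\alpha}=b_\alpha^{\rm mom}\lambda^{1-\alpha}$. Taking expectations gives the bound. The bounds $\frac12\le b_\alpha^{\rm mom}\le1$ are an elementary monotonicity check on $\alpha\in(1,2]$.

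For the second moment, use $\E\|\tX-x\|^2\le\E\min\{\eta,M\}^2$ with $M=\tfrac32\lambda$. In case~(b), $\min\{\eta,M\}^2\le M^{2-\alpha}\eta^\alpha$, which gives $v_\alpha^{\rm mom}=(3/2)^{2-\alpha}$ directly; its range $[1,\tfrac32]$ is immediate. In case~(a), write
\[
\E\min\{\eta,M\}^2=\int_0^M2u\,\PP\{\eta>u\}\dif u\le\int_0^M2u\min\{1,A u^{-\alpha}\}\dif u .
\]
The first branch of $v_\alpha$ comes from dropping the $\min$ with $1$: this gives $2AM^{2-\alpha}/(2-\alpha)=(3/2)^{2-\alpha}\lambda^{2-\alpha}A\cdot\frac{2}{2-\alpha}$. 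The logarithmic branch is the main obstacle, because it must survive at $\alpha=2$. Let $u_0=A^{1/\alpha}$ be the crossover point. If $u_0\ge M$, i.e.\ $s=M^\alpha/A\le1$, bound by $M^2=M^{2-\alpha}\cdot M^\alpha\le M^{2-\alpha}A$, which is within the target since $1+\tfrac2\alpha\ln(1+s)\ge1$. Otherwise, the integral splits into $u_0^2=A^{2/\alpha}$ plus $2A\int_{u_0}^Mu^{1-\alpha}\dif u$. For $\alpha<2$ this second piece is $2A\frac{M^{2-\alpha}-u_0^{2-\alpha}}{2-\alpha}$. I would use $\frac{1-e^{-(2-\alpha)t}}{2-\alpha}\le t$ with $t=\ln(M/u_0)=\frac1\alpha\ln s$, which bounds it by $2AM^{2-\alpha}\cdot\frac1\alpha\ln s$. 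Also $u_0^2=A\,u_0^{2-\alpha}\le AM^{2-\alpha}$. Summing gives $AM^{2-\alpha}(1+\tfrac2\alpha\ln s)\le AM^{2-\alpha}(1+\tfrac2\alpha\ln(1+s))$, and for $\alpha=2$ the same computation gives the limit form exactly. Finally, when $A(x)=0$ we have $\eta=0$ almost surely, so the bound reads $0$.
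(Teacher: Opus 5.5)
Your proof is correct and follows the paper's argument essentially step for step. The bias bound reduces to $\E(\|X-x\|-\lambda/2)_+$, which you then handle by the layer-cake integral in case~(a) and by the optimal pointwise bound $(y-c)_+\le\frac{(\alpha-1)^{\alpha-1}}{\alpha^\alpha}c^{1-\alpha}y^\alpha$ in case~(b). You use the same contraction $\|\tX-x\|\le\min\{\|X-x\|,3\lambda/2\}$ and the same split of $2\int_0^{3\lambda/2}u\,\PP\{\|X-x\|>u\}\dif u$ at $u_0=A^{1/\alpha}$. The differences are cosmetic: you get the contraction from nonexpansiveness of the projection onto $B_\lambda(0)\ni x$ instead of the identity $(s-\lambda)(s+\lambda-2\abr{e,x})\ge0$, and you integrate $u^{1-\alpha}$ exactly and apply $1-e^{-y}\le y$ where the paper bounds $u^{2-\alpha}\le(3\lambda/2)^{2-\alpha}$ inside the integral, which gives the same bound.
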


Both cases also hold conditionally on a $\sigma$-algebra $\G$ with respect to which $x$ is measurable (Corollary~\ref{cor:conditional}). Appendix~\ref{app:clipping} gives the separate statements, proofs, and endpoint remarks. The bias is bounded by $bA(x)\lambda^{1-\alpha}$, and the second moment by $vA(x)\lambda^{2-\alpha}$. For $\alpha<2$, these bounds have the same powers of $\lambda$ as under the moment assumption considered by \citet{sadiev2023high}. Case~(a) applies to the strictly larger class $L^{\alpha,\infty}$ (Propositions~\ref{prop:embeddings} and~\ref{prop:pareto}). Its constants reflect the weaker assumption: $v_\alpha(s)\le v_\alpha(\infty)=\frac{2}{2-\alpha}(\frac32)^{2-\alpha}$ for $\alpha<2$, while at $\alpha=2$ the logarithmic factor in~\eqref{eq:BC-alpha} is unavoidable (Remark~\ref{rem:alpha-two}).

\section{Main Results}\label{sec:results}

In this section, we present high-probability convergence guarantees for \clipsgd with a constant clipping level tuned as in \citet{sadiev2023high}. To handle both noise models simultaneously, let $(b,v)$ be the clipping constants from Lemma~\ref{lem:clip} and $c:=180\max\{v,b/3\}$. Under Assumption~\ref{as:moment}, we have $(b,v)=(b_\alpha^{\rm mom},v_\alpha^{\rm mom})$ and $c=c_\alpha^{\rm mom}:=180(\frac32)^{2-\alpha}\in[180,270)$. Under Assumption~\ref{as:weak}, we have $(b,v)=(b_\alpha,v_\alpha(\Lambda))$ and $c=c_\alpha(\Lambda):=180\max\{v_\alpha(\Lambda),b_\alpha/3\}$, where $\Lambda$ is the ratio of the clipping level to the average noise scale: for the tuned clipping level, $\Lambda=\Lambda_K:=\max\{80(K+1),\Lambda_{2\sqrt2LR}\}$, where $\Lambda_\lambda$ is the explicit ratio~\eqref{eq:Lambda} from Appendix~\ref{app:main} ($\Lambda_\lambda=(3\lambda/(2\sigma_0))^\alpha$ if $\sigma_1=0$ and $\sigma_0>0$). For $\alpha<2$, $c_\alpha(\Lambda)$ is bounded by $c_\alpha(\infty)=\max\{180v_\alpha(\infty),60b_\alpha\}$, which depends only on $\alpha$, whereas $c_2(\Lambda)=180(1+\ln(1+\Lambda))$.

\begin{theorem}[High-probability convergence of \clipsgd]\label{thm:sgd}
Let Assumptions~\ref{as:smooth}, \ref{as:grad-bound}, and~\ref{as:convex} hold for $Q=B_{2R}(x^*)$, $R\ge\|x^0-x^*\|>0$, and let either Assumption~\ref{as:moment} or Assumption~\ref{as:weak} hold with $\alpha\in(1,2]$. Let $c$ be defined as above with $\Lambda=\Lambda_K$, $\beta\in(0,1)$, $K\ge0$, and $\ell:=\ln\frac{4(K+1)}{\beta}$. Then there exists an explicit stepsize $\gamma=\gamma_*(K,\beta)$ (see Theorem~\ref{thm:main} in Appendix~\ref{app:main}) such that with probability at least $1-\beta$ the iterates produced by Algorithm~\ref{alg:clip-sgd} with $\lambda_k\equiv\lambda:=\frac{R}{40\gamma\ell}$ satisfy $x^k\in B_{\sqrt2R}(x^*)$ for all $k=0,1,\dots,K+1$ and
\begin{align}
    f(\bar x^K)-f^*\le\max\Bigg\{\frac{160\sqrt2\,LR^2\ell}{K+1},\ \frac{2\left(c\,40^{\alpha-1}\right)^{\frac1\alpha}\sigma_0R\,\ell^{\frac{\alpha-1}{\alpha}}}{(K+1)^{\frac{\alpha-1}{\alpha}}},\ \frac{2\left(c\,2^\alpha40^{\alpha-1}\right)^{\frac2\alpha}\sigma_1^2LR^2\ell^{\frac{2(\alpha-1)}{\alpha}}}{(K+1)^{\frac{2(\alpha-1)}{\alpha}}}\Bigg\}.\label{eq:sgd-rate}
\end{align}
\end{theorem}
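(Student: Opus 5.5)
We follow the induction-plus-Bernstein template of \citet{sadiev2023high}, with Lemma~\ref{lem:clip} supplying the bias and second-moment bounds. Write $\theta_k:=\tg^k-\nabla f(x^k)$ and split it as $\theta_k=\theta_k^u+\theta_k^b$, where $\theta_k^u:=\tg^k-\E[\tg^k\mid\G_k]$ is the unbiased part and $\theta_k^b:=\E[\tg^k\mid\G_k]-\nabla f(x^k)$ is the bias. The step $x^{k+1}=x^k-\gamma\tg^k$, combined with convexity (Assumption~\ref{as:convex}), smoothness (Assumption~\ref{as:smooth}) and Assumption~\ref{as:grad-bound} on $Q$, gives the standard one-step inequality. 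Provided $\gamma\le 1/(cL)$ for a suitable numerical $c$, summing it yields
\begin{equation*}
\gamma\sum_{k=0}^{T-1}(f(x^k)-f^*)+\|x^T-x^*\|^2\le R^2-2\gamma\sum_{k<T}\abr{x^k-x^*-\gamma\nabla f(x^k),\theta_k}+2\gamma^2\sum_{k<T}\|\theta_k\|^2 .
\end{equation*}
We then define the event $E_T$ on which, for all $t\le T$, the right-hand side is at most $2R^2$. On $E_T$ every $x^t$ lies in $B_{\sqrt2R}(x^*)$, and induction on $T$ propagates $E_T$ to $E_{T+1}$ with probability at least $1-\beta/(K+1)$ at each step.

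Next we control the noise terms. On $E_T$ we have $\|\nabla f(x^k)\|\le\sqrt2LR$. The constraint $\gamma\le R/(80\ell\cdot\sqrt2 LR)$, which holds for the chosen stepsize, together with $\lambda=R/(40\gamma\ell)$ gives $\|\nabla f(x^k)\|\le\lambda/2$. Lemma~\ref{lem:clip} then applies conditionally (Corollary~\ref{cor:conditional}) with $A(\nabla f(x^k))\le\sigma_0^\alpha+\sigma_1^\alpha(\sqrt2LR)^\alpha=:\bar A$. To keep the adapted sums bounded, we replace $x^k-x^*$ by its truncation $\eta_k:=(x^k-x^*-\gamma\nabla f(x^k))\ind_{E_k}$, whose norm is at most $\sqrt2 R+\gamma\sqrt2LR\le2R$. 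We split the right-hand side into five parts: the martingale term $\sum\abr{\eta_k,\theta_k^u}$; the bias term $\sum\abr{\eta_k,\theta_k^b}$; the martingale term $\sum(\|\theta_k^u\|^2-\E_k\|\theta_k^u\|^2)$; the predictable term $\sum\E_k\|\theta_k^u\|^2$; and the term $\sum\|\theta_k^b\|^2$. The summands of the two martingales are bounded by $4R\lambda$ and $4\lambda^2$ respectively, thanks to $\|\tX-\E\tX\|\le2\lambda$. Freedman's (Bernstein's) inequality then bounds both by $O(R^2)$ once the total conditional variance, at most $(K+1)\,v\lambda^{2-\alpha}\bar A\cdot\max\{R^2,\lambda^2\}$, is $O(R^4/\ell)$ after multiplication by $\gamma^2$. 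The bias term is at most $2\gamma R(K+1)b\bar A\lambda^{1-\alpha}$. Requiring each of the five terms to be at most $R^2/5$ produces conditions of the form $\gamma^{\alpha}\lesssim R^\alpha/\big(c\,40^{\alpha-1}(K+1)\ell^{\alpha-1}\bar A\big)$. The weak case also needs one check: the argument $(3\lambda/2)^\alpha/A$ of $v_\alpha$ is dominated by $\Lambda_K$, since $v_\alpha$ is monotone. This is why the condition $\Lambda=\max\{80(K+1),\Lambda_{2\sqrt2LR}\}$ appears, and it explains the lower bound $\lambda\ge2\sqrt2LR$.

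Finally we take $\gamma_*$ to be the minimum of three quantities: $1/(80\sqrt2L\ell)$; the $\sigma_0$-constraint $R(K+1)^{-1/\alpha}\ell^{-(\alpha-1)/\alpha}/(c40^{\alpha-1})^{1/\alpha}\sigma_0$; and the analogous $\sigma_1$-constraint in which $\bar A$ contains $\sigma_1^\alpha(\sqrt2LR)^\alpha$. The $\sigma_1$-constraint is implicit in $\gamma$, because $\lambda$ depends on $\gamma$ and $L R$ enters $\bar A$. We resolve it by treating the $\sigma_1$ term separately and solving for $\gamma$, which turns it into $\gamma\lesssim \big(R^{\alpha}/(c2^\alpha40^{\alpha-1}(K+1)\ell^{\alpha-1}\sigma_1^\alpha L^\alpha R^\alpha)\big)^{\cdots}$ after also balancing against $\lambda^{\alpha-1}$. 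On $E_{K+1}$, Jensen gives $f(\bar x^K)-f^*\le2R^2/(\gamma(K+1))$. Substituting each of the three branches of $\gamma_*$ yields the three terms of \eqref{eq:sgd-rate}. The $\sigma_1$ branch gives the squared exponent $2(\alpha-1)/\alpha$ because that constraint behaves like $\gamma^{\alpha}\cdot\lambda^{\alpha-2}$-type scaling in $\sigma_1LR$.

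We expect this $\sigma_1$ bookkeeping to be the main obstacle: making the implicit constraint explicit, verifying that the chosen $\gamma$ keeps $\lambda\ge2\|\nabla f\|$, and checking the constant $c$ at $\alpha=2$, where $v$ depends on $\Lambda$. We would first carry out the estimates with generic $\bar A$ and only afterwards split $\bar A\le2\max\{\sigma_0^\alpha,\sigma_1^\alpha(\sqrt2LR)^\alpha\}$ to read off the branches.
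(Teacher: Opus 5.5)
Your template (summed one-step inequality, induction events, the five-term split, Freedman for the two martingale parts, conditional use of Lemma~\ref{lem:clip}) is the paper's. The gap is the step where you bound the noise scale by the worst case $\bar A=\sigma_0^\alpha+\sigma_1^\alpha(\sqrt2LR)^\alpha$ over the ball: this cannot produce the third term of~\eqref{eq:sgd-rate}.

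With $\bar A$ in every summand, the $\sigma_1$-part of the bias condition reads $\gamma(K+1)\lambda^{1-\alpha}\sigma_1^\alpha(\sqrt2LR)^\alpha\lesssim R/c$. After substituting $\lambda=R/(40\gamma\ell)$ it is already explicit in $\gamma$: $\gamma^\alpha(K+1)(40\ell)^{\alpha-1}2^{\alpha/2}\sigma_1^\alpha L^\alpha\lesssim 1/c$. Up to constants it yields the error term $\sigma_1LR^2(\ell/(K+1))^{(\alpha-1)/\alpha}$, and the variance condition behaves the same way. This is the uniform-scale rate (compare Corollary~\ref{cor:uniform}): it is linear in $\sigma_1$, decays only like $K^{-r}$, and is not dominated by $\max\{LR^2\ell/K,\sigma_1^2LR^2(\ell/K)^{2r}\}$. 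For instance, with $\alpha=2$, $\sigma_0=0$, $\sigma_1=1$ you get $LR^2\sqrt{\ell/K}$ against the claimed $LR^2\ell/K$. There is nothing implicit to solve, and no rebalancing against $\lambda$ turns this into $\sigma_1^2$ with exponent $2r$.

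The missing idea is to keep the noise scale state-dependent and pay for it with the function gaps that your summed inequality already controls. On $E_{T-1}$ you have $\gamma\sum_{t\le T-2}\delta_t\le2R^2$ with $\delta_t=f(x^t)-f^*$, and also $\delta_{T-1}\le LR^2$. Assumption~\ref{as:grad-bound} gives $\|\nabla f(x^t)\|^\alpha\le(2L\delta_t)^{\alpha/2}$. Lemma~\ref{lem:jensen-power} with exponent $\alpha/2$ then gives
\[
\sum_{t<T}A(\nabla f(x^t))\le\Sigma_T:=\sigma_0^\alpha T+\sigma_1^\alpha2^\alpha L^{\alpha/2}R^\alpha\gamma^{-\alpha/2}T^{\frac{2-\alpha}{2}}+\sigma_1^\alpha2^{\alpha/2}(LR)^\alpha .
\]
The bias and variance conditions are then imposed on $\gamma\lambda^{1-\alpha}\Sigma_{K+1}$ and $\gamma^2\lambda^{2-\alpha}\Sigma_{K+1}$. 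The middle term carries the factor $\gamma^{-\alpha/2}$, so its condition becomes $\gamma^{\alpha/2}(40\ell)^{\alpha-1}2^\alpha\sigma_1^\alpha L^{\alpha/2}(K+1)^{\frac{2-\alpha}{2}}\le1/c$, i.e.\ $\gamma\le\gamma_1$. Then $2R^2/(\gamma_1(K+1))$ is exactly the third term of~\eqref{eq:sgd-rate}, and the non-accumulating last term only produces the redundant threshold $\gamma_2$.

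Under Assumption~\ref{as:weak} there is a second issue. Termwise domination of the arguments $(3\lambda/2)^\alpha/A_t$ by $\Lambda_K$ is false when $A_t$ is small, so monotonicity of $v_\alpha$ alone does not suffice. You need the concavity and monotonicity of $s\mapsto s\,v_\alpha(C/s)$ (Lemma~\ref{lem:log-sum}), so that only the averaged ratio $T(3\lambda/2)^\alpha/\Sigma_T$ enters. You then need a genuine fixed-point check $\Lambda_{\gamma_*,\lambda_*}\le\Lambda_K$, because $\lambda_*$ itself depends on $c=c_\alpha(\Lambda_K)$; this is done in Steps~2--3 of the proof of Theorem~\ref{thm:main}.
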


The full statement (Theorem~\ref{thm:main}) gives the bound $2R^2/(\gamma(K+1))$ for every admissible stepsize $\gamma\le\gamma_*$. Choosing $\gamma=\gamma_*$ yields~\eqref{eq:sgd-rate}; Corollary~\ref{cor:rate} gives the oracle complexity summarized in Table~\ref{tab:complexity}. Writing $r=(\alpha-1)/\alpha$, the three terms in the rate have polynomial orders $K^{-1}$, $K^{-r}$, and $K^{-2r}$: they correspond to the deterministic error, additive noise, and gradient-dependent noise, respectively.

The horizon logarithm in $\ell$ comes from a union bound. A maximal inequality replaces $\ell$ by $\ln(4/\beta)$ under the moment condition and under the weak-tail condition for $\alpha<2$ with $v=v_\alpha(\infty)$ (Theorem~\ref{thm:maximal}; see also \citealp{nguyen2023improved}). Proposition~\ref{prop:confidence-lower} establishes optimality of this confidence dependence for the additive stochastic term. The same replacement applies to \clipsstm. For prescribed clipping levels, it is valid under either noise condition for every $\alpha\in(1,2]$, because the clipping level does not depend on $\ell$.

\paragraph{Discussion.} Under Assumption~\ref{as:moment} with $\sigma_1=0$, the stochastic term $\sigma_0R\,K^{-(\alpha-1)/\alpha}$ cannot be improved, up to the logarithmic factor and the constant, by any algorithm making $K+1$ oracle calls, even if one asks for the guarantee with constant probability: Proposition~\ref{prop:lower-rate} (Appendix~\ref{app:lower}) provides a convex $L$-smooth instance on $\R$ with $\|x^0-x^*\|=R$ and $\E|\zeta|^\alpha\le\sigma_0^\alpha$ such that the error is at least $\frac14\min\{\sigma_0R(K+1)^{-(\alpha-1)/\alpha},2LR^2\}$ with probability at least $1/4$; the proof relies on the two-point argument of \citet{bubeck2013bandits}. Since Assumption~\ref{as:moment} implies Assumption~\ref{as:weak} with the same parameters, the same lower bound holds for the larger class as well. That is, for $\alpha<2$ the weaker assumption costs nothing in the rate and affects only the $\alpha$-dependent constants. Next, the terms depending on $\sigma_1$ decay faster in $K$ than the $\sigma_0$-term; for $\alpha=2$ under Assumption~\ref{as:moment}, the last term in~\eqref{eq:sgd-rate} is of the order $\sigma_1^2LR^2\ell/K$, which together with the first term recovers the well-known $(1+\sigma_1^2)LR^2/K$ scaling under affine variance.

\paragraph{The endpoint $\alpha=2$ and the Pareto example.} Under Assumption~\ref{as:weak} with $\alpha=2$, the noise may have infinite variance at every point (Section~\ref{sec:pareto}), meaning that the analyses relying on the bounded second moment are not applicable. Nevertheless, Theorem~\ref{thm:sgd} gives the rate of the finite-variance case, $\max\{\frac{LR^2\ell}{K},\sigma_0R\sqrt{\frac{c\ell}{K}},\frac{c\sigma_1^2LR^2\ell}{K}\}$ up to absolute constants, with the logarithmic factor $c=c_2(\Lambda_K)=O(\ln(K+1)+\ln(1+\Lambda_{2\sqrt2LR}))$; the same holds for \clipsstm (Theorem~\ref{thm:sstm-main}). A logarithmic factor is unavoidable both in the clipping lemma (Remark~\ref{rem:alpha-two}) and in the rate: at a fixed confidence level, no algorithm using $K$ stochastic gradients can avoid the factor $\sqrt{\ln(eK)}$ in the additive term over the weak-$L^2$ class, and, more generally, the factor $[\ln(eK)]^{1/2-1/s}$ on the Lorentz scale $L^{2,s}$ (Proposition~\ref{prop:lorentz-opt-lower} in Appendix~\ref{app:lorentz}; this order is attained, see Corollary~\ref{cor:lorentz-upper}). Therefore, for the Pareto example from Section~\ref{sec:pareto}, the weak-$L^\alpha$ analysis gives the exponent $(\alpha-1)/\alpha$ with a $K$-independent constant for $\alpha<2$ and the finite-variance exponent $1/2$ up to a logarithmic factor for $\alpha=2$, whereas the moment-based analysis can be applied only with some $q<\alpha$ and $\sigma_0=(\frac{\alpha}{\alpha-q})^{1/q}\sigma$: a fixed $q$ leads to the slower exponent $(q-1)/q$, and choosing $q\to\alpha$ with the growth of the horizon recovers the exponent $(\alpha-1)/\alpha$ only up to a factor of the order $(\ln K)^{1/\alpha}$ (see Corollary~\ref{cor:pareto} in Appendix~\ref{app:main}).

\subsection{Acceleration: Clip-SSTM}\label{sec:sstm}

Next, we consider the accelerated version of \clipsgd called Clipped Stochastic Similar Triangles Method (\clipsstm) \citep{gorbunov2020stochastic,sadiev2023high}, see Algorithm~\ref{alg:clip-sstm}. For a parameter $a\ge1$, the method uses the weights $\omega_{k+1}:=\frac{k+2}{2aL}$ and $A_{k+1}:=A_k+\omega_{k+1}$, so that $A_k=\frac{k(k+3)}{4aL}$. We analyze \clipsstm under either of the two noise models with $\alpha\in(1,2]$ and with the clipping constants $(b,v)$ from Section~\ref{sec:results}; under Assumption~\ref{as:weak}, $v=v_\alpha(\Lambda_K^{\rm S})$ for an explicit $\Lambda_K^{\rm S}\ge200\,b\,K$ defined in~\eqref{eq:Lambda-sstm} (Appendix~\ref{app:sstm}), which plays the same role as $\Lambda_K$ in Theorem~\ref{thm:sgd}.

\begin{algorithm}[t]
\caption{\clipsstm}\label{alg:clip-sstm}
\begin{algorithmic}[1]
\REQUIRE starting point $x^0$, number of iterations $K\ge1$, parameter $a\ge1$, clipping levels $\{\lambda_k\}_{k=0}^{K-1}$
\STATE Set $A_0=0$ and $y^0=z^0=x^0$ (the supplied starting point)
\FOR{$k=0,1,\dots,K-1$}
\STATE $\omega_{k+1}=\frac{k+2}{2aL}$, $A_{k+1}=A_k+\omega_{k+1}$
\STATE $x^{k+1}=(A_ky^k+\omega_{k+1}z^k)/A_{k+1}$
\STATE Sample $\xi^k\sim\D$ independently of $\G_k$; set $\tg_k=\clip(g(x^{k+1},\xi^k),\lambda_k)$
\STATE $z^{k+1}=z^k-\omega_{k+1}\tg_k$
\STATE $y^{k+1}=(A_ky^k+\omega_{k+1}z^{k+1})/A_{k+1}$
\ENDFOR
\ENSURE $y^K$
\end{algorithmic}
\end{algorithm}

\begin{theorem}[High-probability convergence of \clipsstm]\label{thm:sstm-main}
Let Assumptions~\ref{as:smooth}, \ref{as:grad-bound}, and~\ref{as:convex} hold for $Q=B_{3R}(x^*)$, $R\ge\|x^0-x^*\|>0$, and let either Assumption~\ref{as:moment} or Assumption~\ref{as:weak} hold with $\alpha\in(1,2]$. Let $K\ge1$ be an integer, $\beta\in(0,1]$, $H:=\ln\frac{4K}{\beta}$, and $r:=\frac{\alpha-1}{\alpha}$. Then there exist an explicit parameter $a_*=a_*(K,\beta)$ and constants $c_{0,\alpha},c_{1,\alpha}$, determined by $(b,v)$ and $\alpha$ (see Theorem~\ref{thm:sstm} in Appendix~\ref{app:sstm}), such that on one event of probability at least $1-\beta$ all points $x^k,y^k,z^k$, $k=0,\dots,K$, produced by Algorithm~\ref{alg:clip-sstm} with $a=a_*$ and $\lambda_k=\frac{R}{30\,\omega_{k+1}H}$ lie in $B_{2R}(x^*)$ and
\begin{align}
    f(y^K)-f^*\le\max\Big\{\frac{1058400\,LR^2H^2}{K^2},\ \frac{12c_{0,\alpha}\sigma_0RH^r}{K^r},\ \frac{6c_{1,\alpha}\sigma_1^2LR^2H^{2r}}{K^{2r}}\Big\}.\label{eq:sstm-rate-main}
\end{align}
Under Assumption~\ref{as:moment}, and under Assumption~\ref{as:weak} with $\alpha<2$ (if one computes $v=v_\alpha(\infty)$, which is allowed), the constants $c_{0,\alpha}$ and $c_{1,\alpha}$ depend only on $\alpha$ (at $\alpha=2$ under Assumption~\ref{as:moment}, $c_{0,2}=\sqrt{2700}$ and $c_{1,2}=49\cdot5400$); at $\alpha=2$ under Assumption~\ref{as:weak}, $c_{0,2}=O(\sqrt v)$ and $c_{1,2}=O(v)$ with $v=1+\ln(1+\Lambda_K^{\rm S})=O(\ln(eK)+\ln(1+\Lambda_K^0))$, where $\Lambda_K^0$ is defined in~\eqref{eq:Lambda-sstm}.
\end{theorem}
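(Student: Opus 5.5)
I will follow the induction-on-good-events scheme of \citet{gorbunov2020stochastic,sadiev2023high} for \clipsstm, feeding it Lemma~\ref{lem:clip} (in its conditional form, Corollary~\ref{cor:conditional}) in place of the bounded-moment clipping lemma.

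\textbf{Deterministic descent inequality.} The starting point is the standard one-step inequality for similar triangles on $Q=B_{3R}(x^*)$. Using Assumptions~\ref{as:smooth}--\ref{as:convex} and $a\ge1$, one gets, as long as $x^{k+1},y^k,z^k\in Q$,
\[
A_N(f(y^N)-f^*)+\tfrac12\|z^N-x^*\|^2\le\tfrac12\|z^0-x^*\|^2+\sum_{k<N}\omega_{k+1}\abr{\theta_k,x^*-z^k}+\sum_{k<N}\omega_{k+1}^2\|\theta_k\|^2 .
\]
Here $\theta_k:=\tg_k-\nabla f(x^{k+1})$. Each $\theta_k$ is split as $\theta_k^u+\theta_k^b$ with $\theta_k^u:=\tg_k-\E[\tg_k\mid\G_k]$. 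This yields the five sums \circledOne--\circledFive: the martingale inner products, the bias inner products, the martingale part of $\|\theta^u_k\|^2$, the conditional variances, and the squared biases.

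\textbf{Induction.} The induction hypothesis is that, with probability $\ge1-k\beta/K$, all points up to step $k$ lie in $B_{2R}(x^*)$ and the right-hand side is at most $2R^2$. On this event, $\|\nabla f(x^{k+1})\|\le 2LR$ (Assumption~\ref{as:smooth}) and $\|\nabla f(x^{k+1})\|\le\lambda_k/2$. The second bound is exactly where the choice $\lambda_k=R/(30\omega_{k+1}H)$ and the upper bound on $1/a$ enter. It makes Lemma~\ref{lem:clip} applicable with $A(\nabla f(x^{k+1}))=\sigma_0^\alpha+\sigma_1^\alpha\|\nabla f(x^{k+1})\|^\alpha$.

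The sums are then handled as follows.
\begin{itemize}
\item \circledOne{} and \circledThree{} are martingale sums with increments bounded by $O(R^2/H)$, since $\|\theta^u_k\|\le2\lambda_k$. Freedman/Bernstein bounds them by $R^2/5$ each, with probability $1-\beta/(2K)$, provided their conditional variances are controlled.
\item \circledTwo, \circledFour, \circledFive{} and those variances are bounded by $\sum_k\omega_{k+1}^{2}\lambda_k^{2-\alpha}A$ and $\sum_k\omega_{k+1}R\,\lambda_k^{1-\alpha}A$.
\end{itemize}

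\textbf{Main obstacle: the $\sigma_1$-part.} The $\sigma_0$-part of these sums reduces to conditions of the form $\sigma_0^\alpha R^{2-\alpha}H^{\alpha-1}\sum_k\omega_{k+1}^\alpha\lesssim R^2$, i.e. $\sigma_0^\alpha K^{\alpha+1}/(aL)^\alpha\lesssim R^\alpha H^{1-\alpha}$. This gives the $\sigma_0RH^r/K^r$ term after choosing $a_*$ as a maximum of three expressions.

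The difficulty is the $\sigma_1$-part. The crude bound $\|\nabla f(x^{k+1})\|\le2LR$ only gives the "uniform scale" rate of Corollary~\ref{cor:uniform}. Instead, I would use a weighted gradient bound. By Assumption~\ref{as:grad-bound} and convexity, $\|\nabla f(x^{k+1})\|^2\le2L(f(x^{k+1})-f^*)$. The descent inequality with the extra term $\sum_k\omega_{k+1}\tfrac{1}{2L}\|\nabla f(x^{k+1})\|^2$, which is retained from the $a\ge1$ slack in the smoothness step, shows that $\sum_k\omega_{k+1}\|\nabla f(x^{k+1})\|^2\lesssim LR^2$ on the good event.

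Then, via Hölder with exponents $2/\alpha$, the $\sigma_1$ sums become
\[
\sigma_1^\alpha\sum_k\omega_{k+1}^{\alpha}(R/H)^{2-\alpha}\|\nabla f\|^\alpha\le\sigma_1^\alpha (R/H)^{2-\alpha}\Bigl(\sum_k\omega_{k+1}\|\nabla f\|^2\Bigr)^{\alpha/2}\Bigl(\sum_k\omega_{k+1}^{(2\alpha-\alpha)/(2-\alpha)\cdot\ldots}\Bigr)^{1-\alpha/2}.
\]
Absorbing the gradient factor into the left-hand side, for example via Young, gives a condition on $a$ scaling like $a\gtrsim(\sigma_1^2 K^{2r}\ldots)$. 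This produces the $\sigma_1^2LR^2H^{2r}/K^{2r}$ term.

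I expect this self-referential absorption to be the delicate step. Inside the induction, the retained gradient sum must be bounded using only information up to step $k$. So I would carry $\sum_{t<k}\omega_{t+1}\|\nabla f(x^{t+1})\|^2\le 4LR^2$ as part of the induction hypothesis.

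\textbf{Conclusion and constants.} Taking $N=K$ gives $A_K(f(y^K)-f^*)\le2R^2$, hence $f(y^K)-f^*\le 8aLR^2/(K(K+3))$. Substituting $a_*$ as the maximum of the three requirements yields~\eqref{eq:sstm-rate-main}. The localization claim $z^k\in B_{2R}(x^*)$ follows from the same bound, and $x^k,y^k\in B_{2R}(x^*)$ follows because they are convex combinations of the $z$'s. Finally, the constants depend on $(b,v)$. In the weak case $v=v_\alpha(\Lambda^{\rm S}_K)$, and one checks via the lower bound on $\lambda_k$ that $(3\lambda_k/2)^\alpha/A\le\Lambda_K^{\rm S}$. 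At $\alpha=2$ this gives the logarithmic factor in $c_{0,2}=O(\sqrt v)$ and $c_{1,2}=O(v)$.
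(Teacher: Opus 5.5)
There is a genuine gap at the clipping condition, and it is where acceleration is decided. You derive $\|\nabla f(x^{k+1})\|\le\lambda_k/2$ from $\|\nabla f(x^{k+1})\|\le2LR$ together with a lower bound on $a$. Since $\lambda_k/2=R/(60\omega_{k+1}H)=aLR/(30(k+2)H)$, this route needs $a\ge60(K+1)H$.

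The prescribed $a_*$ does not guarantee this: for small noise, $a_*=176400H^2$ while $K\gg H$. Forcing it makes $6aLR^2/(K(K+3))$ of order $LR^2H/K$, so the accelerated term $LR^2H^2/K^2$ in~\eqref{eq:sstm-rate-main} is lost. Your retained sum $\sum_t\omega_{t+1}\|\nabla f(x^{t+1})\|^2\lesssim LR^2$ does not help, because pointwise it only gives $\omega_{t+1}\|\nabla f(x^{t+1})\|\lesssim R\sqrt{t/a}$.

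The missing idea is the per-step bound of Lemma~\ref{lem:budget}. On the induction event, the potential at time $t$ gives $A_t(f(y^t)-f^*)\le\frac32R^2$, hence $\|\nabla f(y^t)\|\le R\sqrt{3L/A_t}$ by Assumption~\ref{as:grad-bound}. Also $\|x^{t+1}-y^t\|=\omega_{t+1}\|z^t-y^t\|/A_{t+1}\le4R\omega_{t+1}/A_{t+1}$. Together with $L\omega_{t+1}^2/A_{t+1}\le1/a$ and $L\omega_{t+1}^2/A_t\le9/(4a)$, this yields $\omega_{t+1}\|\nabla f(x^{t+1})\|\le7R/\sqrt a$. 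Then $\sqrt a\ge420H$ gives exactly $\|\nabla f(x^{t+1})\|\le\lambda_t/2$.

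The same bound, raised to the power $\alpha$ and summed, gives the $\sigma_1$-budget $7^\alpha R^\alpha T/a^{\alpha/2}$ directly from $E_{T-1}$, including the last query point $x^T$. So no self-referential absorption is needed. Your H\"older route reproduces this order, but the potential at time $T-1$ only covers $x^1,\dots,x^{T-1}$. The Freedman variance bound at step $T$ must hold before the potential at time $T$ is available, so Young-absorption cannot be applied to the martingale terms. Carrying the gradient sum in the induction hypothesis leaves the same one-step gap at $x^T$.

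Two further points. First, at $\alpha=2$ under Assumption~\ref{as:weak}, your pointwise check $(3\lambda_k/2)^\alpha/A(\nabla f(x^{k+1}))\le\Lambda_K^{\rm S}$ is false in general. With $\sigma_0=0$ the ratio is unbounded as $\nabla f(x^{k+1})\to0$, and even for $\sigma_1=0$ the ratio at $k=0$, $(a_*LR/(20H\sigma_0))^\alpha$, exceeds $\Lambda_K^{\rm S}$ once $K$ is large. The logarithm must be controlled in aggregate instead. The paper uses the concavity of $s\mapsto s\,v_\alpha(C/s)$ (Lemma~\ref{lem:log-sum}) with the common constant $C=(R/(20H))^\alpha$, available because $\omega_{t+1}\lambda_t=R/(30H)$, and then checks the fixed-point inequality $\Lambda(a_*)\le\Lambda_K^{\rm S}$.

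Second, bounding the right-hand side by $2R^2$ gives $8aLR^2/(K(K+3))$, i.e.\ the constant $1411200$ instead of the stated $1058400$. You need $B_K<R^2$, so that $A_K(f(y^K)-f^*)\le\frac32R^2$. Your descent inequality with $\abr{\theta_k,x^*-z^k}$ is itself valid for $a\ge2$: keep the smoothness slack and apply Young to the cross term. So that part of the plan is fine.
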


The oracle complexity implied by~\eqref{eq:sstm-rate-main} is given in Table~\ref{tab:complexity} (see Corollary~\ref{cor:sstm-complexity}). When $\sigma_1=0$, bound~\eqref{eq:sstm-rate-main} has the same powers of $K$ as the moment-based results of \citet{gorbunov2020stochastic} ($\alpha=2$) and \citet{sadiev2023high}, but it holds for the larger class $L^{\alpha,\infty}$; at $\alpha=2$ under Assumption~\ref{as:weak}, it gives the finite-variance rates for the noise whose variance may be infinite, with the logarithmic factor $v$, which Proposition~\ref{prop:lorentz-sstm} (Appendix~\ref{app:lorentz}) refines on the Lorentz scale $L^{2,s}$.

\paragraph{Role of the direct Lorentz analysis.} For fixed $L,R,\sigma_0>0$, additive noise, and fixed confidence, the moment-embedding route in Corollary~\ref{cor:lorentz-upper} improves the direct Lorentz bound by a factor $\sqrt{\ln(eK)}$. Its additive term matches the lower bound in Proposition~\ref{prop:lorentz-opt-lower} in the stochastic regime. The direct analysis remains useful because it gives explicit finite-horizon parameters under the original affine Lorentz condition. Appendix~\ref{app:lorentz} gives the detailed comparison. Neither result establishes optimality of the multiplicative term or of the joint dependence on $K$ and a vanishing failure probability.

\paragraph{Why the affine structure is preserved.} The key idea of the proof is to bound the true gradients along the trajectory by induction via the function gap: on the induction event, the potential of SSTM gives $A_t(f(y^t)-f^*)\le\frac32R^2$, and Assumptions~\ref{as:smooth}--\ref{as:grad-bound} turn this into $\omega_{t+1}\|\nabla f(x^{t+1})\|\le7R/\sqrt a$, which implies $\sum_{t<T}\omega_{t+1}^\alpha\|\nabla f(x^{t+1})\|^\alpha\le7^\alpha R^\alpha T/a^{\alpha/2}$ (see Lemma~\ref{lem:budget} in Appendix~\ref{app:sstm-budget}), whereas the crude bound $\|\nabla f(x^{t+1})\|\le2LR$ gives only $R^\alpha K^{\alpha+1}/a^\alpha$.

This weighted estimate preserves the gradient-dependent noise scale throughout the clipping analysis. As a result, the multiplicative contribution is bounded by $\sigma_1^2LR^2(H/K)^{2r}$, compared with $\sigma_1LR^2(H/K)^r$ when the noise is bounded uniformly on $Q$ (Corollary~\ref{cor:uniform}). This improves the bound's dependence on $K$; it does not establish optimality of the multiplicative term. Faster rates are known under additional multiplicative-noise structure (Appendix~\ref{app:sstm-uniform}).

\subsection{Prescribed Clipping Levels}\label{sec:planned}

The clipping levels in Theorems~\ref{thm:sgd} and~\ref{thm:sstm-main} depend on the parameters of the problem and on the horizon, whereas in practice $\lambda$ is a constant chosen by the user. In this section, we allow the user to prescribe $\lambda$. Theorem~\ref{thm:fixed-main} bounds the error of the computable averaged iterate $\bar x^K$ for any $\lambda\ge2\sqrt2LR$; this is a geometric condition guaranteeing that the true gradient is never clipped in $B_{\sqrt2R}(x^*)$, and it involves neither $\beta$ nor $K$ (nor the noise, except for the case $\alpha=2$ with $\sigma_1>0$). Theorem~\ref{thm:any-main} covers \emph{any} $\lambda>0$ (for additive noise or for multiplicative noise below an absolute threshold; under Assumption~\ref{as:weak}, only for $\alpha<2$), but it bounds the best iterate $\min_{t\le K}f(x^t)$, which cannot be selected using the gradient information only. In both results, only the clipping level is free: the stepsizes still depend on the noise and confidence parameters. Below, $(b,v)$ are the clipping constants from Section~\ref{sec:results} with $\Lambda=\Lambda_\lambda$ under Assumption~\ref{as:weak}; only $b$ affects the size of the neighborhood.

\begin{theorem}[\clipsgd with a prescribed clipping level]\label{thm:fixed-main}
Let Assumptions~\ref{as:smooth}, \ref{as:grad-bound}, and~\ref{as:convex} hold for $Q=B_{2R}(x^*)$, $R\ge\|x^0-x^*\|>0$, and let either Assumption~\ref{as:moment} or Assumption~\ref{as:weak} hold with $\alpha\in(1,2]$. Let $\beta\in(0,1)$, $K\ge0$, $\ell:=\ln\frac{4(K+1)}{\beta}$, and let $\lambda\ge2\sqrt2LR$ be arbitrary (with $\lambda\ge240\,b\,\sigma_1^2LR$ if $\alpha=2$ and $\sigma_1>0$). Then there exists an explicit stepsize $\gamma=\gamma(\lambda,K,\beta)\le\frac{R}{40\lambda\ell}$ (see Corollary~\ref{cor:fixed} in Appendix~\ref{app:main}) such that with probability at least $1-\beta$ the iterates produced by Algorithm~\ref{alg:clip-sgd} with $\lambda_k\equiv\lambda$ satisfy $x^k\in B_{\sqrt2R}(x^*)$ for all $k=0,1,\dots,K+1$ and
\begin{equation}\label{eq:fixed-bound-main}
    f(\bar x^K)-f^*\le\max\left\{\frac{80\lambda R\ell}{K+1},\,\mathcal N_\lambda,\,\mathcal V_{\lambda,K}\right\},
\end{equation}
where the neighborhood
\begin{equation}\label{eq:neighborhood-main}
    \mathcal N_\lambda:=\max\left\{\frac{120\,bR\sigma_0^\alpha}{\lambda^{\alpha-1}},2\left(\frac{60\cdot2^\alpha bRL^{\frac\alpha2}\sigma_1^\alpha}{\lambda^{\alpha-1}}\right)^{\frac2{2-\alpha}}\right\}
\end{equation}
does not depend on $K$ (its second term is absent for $\alpha=2$), whereas $\mathcal V_{\lambda,K}$ vanishes as $K\to\infty$: for $\sigma_1=0$, $\mathcal V_{\lambda,K}=120\sqrt2\,R\sqrt{v\lambda^{2-\alpha}\sigma_0^\alpha\ell/(K+1)}$, and for $\sigma_1>0$ it has three more terms, of the orders $K^{-1}$, $K^{-1}$ and $K^{-2/(4-\alpha)}$ up to logarithmic factors. If $\sigma_1=0$ and $\lambda=\lambda_0(K+1)^{1/\alpha}$ with $\lambda_0>0$ and $K+1\ge(2\sqrt2LR/\lambda_0)^\alpha$, then, with probability at least $1-\beta$, $f(\bar x^K)-f^*\le\Xi R\,(K+1)^{-(\alpha-1)/\alpha}$ with $\Xi:=80\lambda_0\ell+120\,b\,\sigma_0^\alpha\lambda_0^{1-\alpha}+120\sqrt{2v\ell}\,\sigma_0^{\alpha/2}\lambda_0^{1-\alpha/2}$.
\end{theorem}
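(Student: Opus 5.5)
The plan is to rerun the induction-plus-martingale argument behind Theorem~\ref{thm:sgd} (as in \citet{sadiev2023high}) with $\lambda$ treated as a free parameter. Only the condition $\gamma\le R/(40\lambda\ell)$ is kept, which is exactly the relation $\lambda=R/(40\gamma\ell)$ that was used as an equality in the tuned case.

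\textbf{Descent inequality.} On the event that $x^t\in B_{\sqrt2R}(x^*)$ for $t\le k$, write $\tg^t=\nabla f(x^t)+\theta_t^u+\theta_t^b$. Here $\theta_t^u:=\tg^t-\E_{\xi^t}\tg^t$ is the unbiased part and $\theta_t^b:=\E_{\xi^t}\tg^t-\nabla f(x^t)$ is the bias. The usual expansion of $\|x^{t+1}-x^*\|^2$, together with convexity (Assumption~\ref{as:convex}) and Assumption~\ref{as:grad-bound}, gives
\[
2\gamma\sum_{t\le k}(f(x^t)-f^*)\le R^2-\|x^{k+1}-x^*\|^2+(\text{\circledOne})+\dots+(\text{\circledFive}).
\]
The five terms are, in order: the martingale $-2\gamma\sum\langle\theta_t^u,x^t-x^*\rangle$; the bias term $-2\gamma\sum\langle\theta_t^b,x^t-x^*\rangle$; the centered second-moment martingale $2\gamma^2\sum(\|\theta_t^u\|^2-\E\|\theta_t^u\|^2)$; the conditional second moments $2\gamma^2\sum\E\|\theta_t^u\|^2$; and the squared bias $2\gamma^2\sum\|\theta_t^b\|^2$.

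Since $\lambda\ge2\sqrt2LR$ and $\|\nabla f(x^t)\|\le L\|x^t-x^*\|\le\sqrt2LR\le\lambda/2$, Lemma~\ref{lem:clip} (in its conditional form, Corollary~\ref{cor:conditional}) applies with $A_t:=A(\nabla f(x^t))$. It gives $\|\theta_t^u\|\le2\lambda$, $\|\theta_t^b\|\le bA_t\lambda^{1-\alpha}$, and $\E\|\theta_t^u\|^2\le v\lambda^{2-\alpha}A_t$, where $v=v_\alpha(\Lambda_\lambda)$ is uniform in $t$ because $\Lambda_\lambda$ lower-bounds $(3\lambda/2)^\alpha/A_t$.

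\textbf{Bounding the five terms.} The martingale terms \circledOne{} and \circledThree{} have increments bounded by $2\gamma\cdot2\lambda\cdot2R$ and $8\gamma^2\lambda^2$. Both bounds are at most a small multiple of $R^2/\ell$, because $\gamma\lambda\le R/(40\ell)$. Freedman's (Bernstein's) inequality, combined with a union bound over $k$ (hence $\ell$), therefore bounds each by $R^2/5$ plus a variance part, and that variance part reduces to \circledFour{}. For \circledTwo{}, \circledFour{} and \circledFive{} the plan is to split $A_t=\sigma_0^\alpha+\sigma_1^\alpha\|\nabla f(x^t)\|^\alpha$.
\begin{itemize}
\item The $\sigma_0$ parts give $\gamma RKb\sigma_0^\alpha\lambda^{1-\alpha}$ and $\gamma^2Kv\lambda^{2-\alpha}\sigma_0^\alpha\ell$. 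These produce, after division by $\gamma(K+1)$, respectively $\mathcal N_\lambda$'s first term and $\mathcal V_{\lambda,K}$ once $\gamma$ is chosen as the minimum of the corresponding constraints.
\item For the $\sigma_1$ parts I would bound $\|\nabla f\|^\alpha\le(2L(f-f^*))^{\alpha/2}$. In the bias term, Young's inequality with exponents $2/\alpha$ and $2/(2-\alpha)$ absorbs $\gamma R\,b\lambda^{1-\alpha}\sigma_1^\alpha(2L)^{\alpha/2}\sum(f(x^t)-f^*)^{\alpha/2}$ into half of the left-hand side $\gamma\sum(f(x^t)-f^*)$. The remainder is $\gamma K(\cdot)^{2/(2-\alpha)}$, which is the second term of $\mathcal N_\lambda$.
\item At $\alpha=2$ Young's inequality is unavailable. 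The extra assumption $\lambda\ge240b\sigma_1^2LR$ makes the coefficient $\gamma Rb\sigma_1^2 2L/\lambda$ at most a small fraction of $\gamma$, so the term is absorbed directly.
\item The $\sigma_1$ parts of the variance terms are handled similarly, using $\gamma^2\lambda^{2-\alpha}$ small. This yields the extra $K^{-1}$ and $K^{-2/(4-\alpha)}$ terms of $\mathcal V_{\lambda,K}$.
\end{itemize}

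\textbf{Closing the induction and conclusion.} On the intersection event, the total is at most $2R^2$. This gives both $\|x^{k+1}-x^*\|^2\le2R^2$ (closing the induction) and, via Jensen's inequality, $f(\bar x^K)-f^*\le R^2/(\gamma(K+1))$. Choosing $\gamma$ as the minimum of $R/(40\lambda\ell)$ and the constraints from the previous step turns $2R^2/(\gamma(K+1))$ into the maximum in \eqref{eq:fixed-bound-main}. The final claim is a substitution. With $\lambda=\lambda_0(K+1)^{1/\alpha}$, the condition $K+1\ge(2\sqrt2LR/\lambda_0)^\alpha$ ensures $\lambda\ge2\sqrt2LR$. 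Each of the three terms then scales as $R(K+1)^{-(\alpha-1)/\alpha}$ with the stated coefficients, and bounding the maximum by the sum gives $\Xi$.

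I expect the main obstacle to be the bookkeeping of the $\sigma_1$ terms. The gradient-dependent scale enters the bias, the conditional variances, and the Freedman variance proxy. Each must be absorbed into the left-hand function-gap sum without breaking the induction, and this is especially delicate at $\alpha=2$, which is why the extra lower bound on $\lambda$ is needed there.
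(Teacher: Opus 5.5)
There is a genuine gap in the variance step. You claim $\E[\|\theta_t^u\|^2\mid\G_t]\le v\lambda^{2-\alpha}A_t$ for every $t$ with the single constant $v=v_\alpha(\Lambda_\lambda)$, ``because $\Lambda_\lambda$ lower-bounds $(3\lambda/2)^\alpha/A_t$''. Under Assumption~\ref{as:weak}, Lemma~\ref{lem:clip} gives the per-step factor $v_\alpha\big((3\lambda/2)^\alpha/A_t\big)$, and $v_\alpha$ is non-decreasing. A uniform constant would therefore need an \emph{upper} bound on $(3\lambda/2)^\alpha/A_t$, so a lower bound points the wrong way.

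Neither bound holds when $\sigma_1>0$. The inequality $(3\lambda/2)^\alpha/A_t\le\Lambda_\lambda$ is equivalent to $A_t\ge\sigma_0^\alpha+2^{\alpha/2}\sigma_1^\alpha(LR)^\alpha/(K+1)$, which fails near $x^*$. At $\alpha=2$ with $\sigma_0=0$, the per-step factor $1+\ln\big(1+(3\lambda/2)^2/A_t\big)$ is even unbounded as $\nabla f(x^t)\to0$. Your claim is fine under Assumption~\ref{as:moment}, for $\sigma_1=0$, or for $\alpha<2$ with $v_\alpha(\infty)$ in place of the stated $v$.

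The paper never uses a per-step uniform $v$. It keeps the bound $\lambda^{2-\alpha}\phi_\lambda(A_t)$ with $\phi_\lambda(s)=s\,v_\alpha((3\lambda/2)^\alpha/s)$, which is concave and non-decreasing (Lemma~\ref{lem:log-sum}). It sums this against a \emph{deterministic} budget $\Sigma_T\ge\sum_{t<T}A_t$, and $\Sigma_T/T\ge\Sigma_{K+1}/(K+1)\ge\sigma_0^\alpha+2^{\alpha/2}\sigma_1^\alpha(LR)^\alpha/(K+1)$ then yields $v_\alpha(\Lambda_{\gamma,\lambda})\le v_\alpha(\Lambda_\lambda)$. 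This is exactly why $\Lambda_\lambda$ carries $2^{\alpha/2}\sigma_1^\alpha(LR)^\alpha$ in its denominator.

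The gap interacts with your main change of route. The paper obtains $\sum_{t<T}A_t\le\Sigma_T$ \emph{a priori} on $E_{T-1}$: the induction hypothesis gives $\gamma\sum_{t\le T-2}\delta_t\le 2R^2$, and it then uses Lemma~\ref{lem:jensen-power} and $\delta_{T-1}\le LR^2$. All $\sigma_1$-contributions thus become deterministic stepsize conditions. You instead absorb them into $\gamma\sum_t\delta_t$ at the current step via Young's inequality, which needs a bound \emph{linear} in $\sum_t A_t$. A repair is the tangent-line inequality for concave $\phi_\lambda$ with $\phi_\lambda(0)=0$, taken at $s_0:=(3\lambda/2)^\alpha/\Lambda_\lambda$. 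It gives $\sum_{t<T}\phi_\lambda(A_t)\le v_\alpha(\Lambda_\lambda)\big[\sum_{t<T}A_t+(K+1)\sigma_0^\alpha+2^{\alpha/2}\sigma_1^\alpha(LR)^\alpha\big]$, which is Young-absorbable; the additive piece plays the role of the paper's $S_0+S_2$.

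Two further adjustments are needed. First, Lemma~\ref{lem:freedman} as stated requires a deterministic bound $G$ on the total conditional variance, which current-step absorption does not provide. Either carry the a priori bound from $E_{T-1}$ as the paper does, or use the fixed-$\mu$ form $S_T\le \ell/\mu+\frac{\psi(\mu)}{\mu}V_T$ (same supermartingale, $\mu\asymp\ell/R^2$); that is what actually gives your ``$R^2/5$ plus $O(\ell)$ times \circledFour''. Second, in \circledFive{} do not split $A_t^2$, since that produces $\delta_t^\alpha$; use $\|\theta_t^b\|\le\frac32\lambda$ first, so this term is dominated by \circledTwo.

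With these fixes your argument yields a bound of the same shape, but not literally the stated constants in $\mathcal N_\lambda$ and $\mathcal V_{\lambda,K}$, and its $K^{-1}$ terms arise differently. The stated constants come from the budget $\Sigma_{K+1}(\gamma)=S_0+S_1\gamma^{-\alpha/2}+S_2$ and the thresholds~\eqref{eq:fixed-thresholds}.
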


Theorem~\ref{thm:fixed-main} concerns a separate run for each horizon $K$, with the stepsize chosen for that horizon. As $K$ increases, the bound approaches $\mathcal N_\lambda$; the slowest vanishing term has order $K^{-1/2}$ up to logarithms. The additive part of $\mathcal N_\lambda$ is proportional to the clipping-bias constant $b$ from Lemma~\ref{lem:clip}; the factor $120$ comes from the optimization analysis. This neighborhood does not depend on the variance constant $v$, including its logarithm at $\alpha=2$. Its dependence on $R$, $\sigma_0$, and $\lambda$ is unimprovable up to $\alpha$-dependent constants in the nontrivial regime (Theorem~\ref{thm:lower-main}). The multiplicative neighborhood decreases faster in $\lambda$.

For $\sigma_1=0$ and $\lambda=\lambda_0(K+1)^{1/\alpha}$, any $\lambda_0>0$ recovers the polynomial rate from Theorem~\ref{thm:sgd}; Appendix~\ref{app:main-statements} quantifies the cost of misspecifying the noise level. Under the moment condition, the non-private specialization of \citet{vatankhah2026differentially} gives the same three-term order for the best iterate when $\lambda>4LR$, after combining it with the initial-point bound $f(x^0)-f^*\le LR^2/2$ (see the discussion of Table~\ref{tab:prescribed}).

\paragraph{Arbitrary clipping levels.} When $\lambda<2\sqrt2LR$, the true gradient may be clipped along the trajectory, and the proof of Theorem~\ref{thm:fixed-main} does not work. The next result covers any $\lambda>0$, for additive noise and for multiplicative noise below an absolute threshold. Its neighborhood depends on $\lambda$ only through the bias scale
\begin{equation}\label{eq:bias-scale}
    \bar b_\lambda:=\frac{\sigma_0^\alpha}{\max\{\lambda,\sigma_0\}^{\alpha-1}},
\end{equation}
which equals $\sigma_0^\alpha/\lambda^{\alpha-1}$ for $\lambda\ge\sigma_0$ and saturates at $\sigma_0$ for $\lambda<\sigma_0$. Let $(\kappa_1,\kappa_2):=(1,1)$ under Assumption~\ref{as:moment} and $(\kappa_1,\kappa_2):=\left(\frac{\alpha}{\alpha-1},\frac{2}{2-\alpha}\right)$ under Assumption~\ref{as:weak} with $\alpha<2$, and let $c_\perp:=2^{\alpha-1}(\kappa_2+3\kappa_1)$ and $\bar\sigma_1:=\frac{1}{2\cdot28^{1/\alpha}}$.

\begin{theorem}[Any clipping level]\label{thm:any-main}
Let Assumptions~\ref{as:smooth}, \ref{as:grad-bound}, and~\ref{as:convex} hold for $Q=B_{2R}(x^*)$, $R\ge\|x^0-x^*\|>0$, and let Assumption~\ref{as:moment} (with $\alpha\in(1,2]$) or Assumption~\ref{as:weak} (with $\alpha\in(1,2)$) hold with $\sigma_1\le\bar\sigma_1$. Let $\lambda>0$ be arbitrary, $\beta\in(0,1)$, $K\ge0$, and $\ell:=\ln\frac{4(K+1)}{\beta}$. Then there exists an explicit stepsize $\gamma=\gamma(\lambda,K,\beta)$ (see Theorem~\ref{thm:any-lambda} in Appendix~\ref{app:any-lambda}) such that with probability at least $1-\beta$ the iterates produced by Algorithm~\ref{alg:clip-sgd} with $\lambda_k\equiv\lambda$ satisfy
\begin{equation}\label{eq:any-bound-main}
    \min_{0\le t\le K}f(x^t)-f^*\le\max\left\{\varepsilon_N,\ \varepsilon_M,\ \mathcal V'_{\lambda,K}\right\},
\end{equation}
where $\varepsilon_N:=48\sqrt2\,c_\perp R\,\bar b_\lambda$, $\varepsilon_M:=384\,c_\perp\sigma_1^\alpha LR^2\min\{1,\,96\,c_\perp\sigma_1^\alpha\}$, and $\mathcal V'_{\lambda,K}$ collects the terms vanishing as $K\to\infty$, of the orders $K^{-1/2}$, $K^{-1}$ and $K^{-2}$ up to logarithmic factors, which are given explicitly in Theorem~\ref{thm:any-lambda}.
\end{theorem}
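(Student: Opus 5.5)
We would follow the standard clipped-SGD high-probability template of \citet{sadiev2023high}, but with a bias decomposition adapted to the fact that for $\lambda<2\sqrt2LR$ the true gradient itself may be clipped, so Lemma~\ref{lem:clip} (which needs $\|x\|\le\lambda/2$) is not directly available. The basic step is the one-step inequality: on the event that $x^k\in B_{2R}(x^*)$,
\[
\|x^{k+1}-x^*\|^2=\|x^k-x^*\|^2-2\gamma\abr{\tg^k,x^k-x^*}+\gamma^2\|\tg^k\|^2 .
\]
We then write $\tg^k=\E_k\tg^k+\theta_k^u$ with $\|\theta_k^u\|\le2\lambda$. The key structural step is a lower bound on $\abr{\E_k\tg^k,x^k-x^*}$ in terms of $\min\{\|\nabla f(x^k)\|,\lambda\}$ and $f(x^k)-f^*$, rather than an upper bound on $\|\E_k\tg^k-\nabla f(x^k)\|$.

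\textbf{Step 1: a clipping inequality for arbitrary $\lambda$.} For $X=g(x,\xi)$ with mean $x=\nabla f$, we would decompose the mean $\E\clip(X,\lambda)$ into its component along $x$ and an orthogonal part. Let $\bar\lambda:=\max\{\lambda,\sigma_0\}$. The intended conclusion is
\[
\abr{\E\clip(X,\lambda),x}\ge\tfrac12\min\{\lambda,\|x\|\}\|x\|-c_\perp\bigl(\bar b_\lambda+\sigma_1^\alpha\|x\|^\alpha\bar\lambda^{1-\alpha}\bigr)\|x\| .
\]
For the orthogonal or bias part we aim for a bound of the form $c_\perp(\bar b_\lambda+\dots)$. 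To get this, we would bound $\E[\|X-x\|\ind_{\|X-x\|>\lambda/2}]$ and the tail probability. In case (b) this uses Markov or Hölder, giving $\kappa=1$. In case (a) it uses the layer-cake formula $\int_{\lambda/2}^\infty A u^{-\alpha}\,du$, giving $\kappa_1=\alpha/(\alpha-1)$, and $\int_0^{\lambda}2uA u^{-\alpha}\,du$ for the second moment, giving $\kappa_2=2/(2-\alpha)$. This is why $\alpha<2$ is needed under Assumption~\ref{as:weak}. The saturation at $\sigma_0$ comes from simply using $\|\clip\|\le\lambda$ when $\lambda<\sigma_0$. Combining this with convexity, $\abr{\nabla f(x^k),x^k-x^*}\ge f(x^k)-f^*$, and with Assumption~\ref{as:grad-bound}, we get a per-step decrease proportional to $\gamma\phi_k$, where
\[
\phi_k\gtrsim\min\Bigl\{f(x^k)-f^*,\ \frac{\lambda}{R}\,\bigl(f(x^k)-f^*\bigr)\Bigr\}\;-\;\text{bias}\cdot R .
\]
The multiplicative term $\sigma_1^\alpha\|\nabla f\|^\alpha$ is absorbed into the main decrease when $\sigma_1\le\bar\sigma_1$. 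This absorption is exactly where the threshold $28^{-1/\alpha}/2$ comes from, and any leftover produces $\varepsilon_M$.

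\textbf{Step 2: summation and concentration.} We sum over $k\le T$ and run the induction event $E_T=\{\|x^t-x^*\|\le\sqrt2R\ \forall t\le T\}$ exactly as in Theorem~\ref{thm:sgd}. The martingale terms $\sum\gamma\abr{\theta_k^u,\eta_k}$, where $\eta_k$ is the truncated version of $x^k-x^*$, are handled by Bernstein/Freedman with increments bounded by $4\sqrt2\gamma\lambda R$ and variances bounded by $\kappa_2\lambda^{2-\alpha}A$. The quadratic terms $\sum\gamma^2(\|\theta^u_k\|^2-\E_k\|\theta^u_k\|^2)$ are handled the same way. Together with a union bound over $T$, this yields the factor $\ell$. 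The stepsize $\gamma$ is chosen so that all noise contributions are at most $R^2/4$; this gives the $K^{-1/2}$, $K^{-1}$ and $K^{-2}$ terms in $\mathcal V'$.

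\textbf{Step 3: conclusion.} On the good event, $\sum_{k\le K}\gamma\phi_k\le 2R^2+\gamma(K+1)\cdot O(R\,\bar b_\lambda+\dots)$, so $\min_k\phi_k$ is small. To conclude, we invert the $\min\{\cdot,\frac{\lambda}{R}\cdot\}$ structure: if the best iterate had gap above $\varepsilon_N$, the decrease rate would exceed the bias. Note that $\lambda$ then enters only through $\bar b_\lambda$, because the small-$\lambda$ slowdown is compensated by the stepsize choice. We expect the main obstacle to be Step~1, namely obtaining a clean lower bound on the inner product when $\|\nabla f\|\gg\lambda$ that uses only tail information. We would first try splitting on the event $\{\|X-x\|\le\|x\|/2\}$: on this event the clipped vector has a positive inner product with $x$ of order $\lambda\|x\|/2$, and the complementary event has probability at most $A(x)(\|x\|/2)^{-\alpha}$.
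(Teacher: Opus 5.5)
\textbf{There is a genuine gap.} It sits exactly where the $\lambda$-independence of $\varepsilon_N$ has to come from.

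When $G:=\|\nabla f(x)\|>\lambda/2$, the useful drift is only about $w\delta$ with $w=\min\{G,\lambda\}/(6G)$, and $w$ can be as small as $\lambda/(6\sqrt2LR)$. The part of $\E\tX$ orthogonal to $\hat a:=\nabla f(x)/G$ enters only through $-\|P_\perp\E\tX\|\,\|u\|$ with $\|u\|\le\sqrt2R$. It is not correlated with $\delta$, so a neighborhood of order $R\bar b_\lambda$ requires this bias to carry the \emph{same} factor $\lambda/\max\{G,\lambda\}$ as the drift.

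Your estimate ``$c_\perp(\bar b_\lambda+\dots)$'' comes from thresholding $\|X-x\|$ at $\lambda/2$, as in Lemma~\ref{lem:clip}, and it has no such factor. Inverting your $\phi_k$ then gives a neighborhood of order $\min\{LR^2\bar b_\lambda/\lambda,\,LR^2\bar b_\lambda^2/\lambda^2\}$ rather than $R\bar b_\lambda$. For $\lambda<\sigma_0$ this is $LR^2\sigma_0/\lambda$, which exceeds $\varepsilon_N\asymp R\sigma_0$ by the unbounded factor $LR/\lambda$. Your remark that the small-$\lambda$ slowdown is compensated by the stepsize is not correct: drift and bias are multiplied by the same $\gamma$ in every step, so $\gamma$ only affects the vanishing terms.

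\textbf{The missing ingredient} is Lemma~\ref{lem:arb-clip}(ii). Write $\tX=s(y)y$ with $y=a+z$ and $s(v)=\lambda/\max\{\|v\|,\lambda\}$. Since $P_\perp a=0$ and $\E z=0$, you get $P_\perp\E\tX=\E[(s(y)-s(a))P_\perp z]$. Moreover, $|s(y)-s(a)|\le\lambda\|z\|/(D\max\{\|y\|,\lambda\})$ with $D=\max\{G,\lambda\}$. Splitting at $\|z\|=D/2$ (not $\lambda/2$) gives $\|P_\perp\E\tX\|\le c_\perp\lambda AD^{-\alpha}=6w\,c_\perp AD^{1-\alpha}$. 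Finally, $\sigma_0^\alpha D^{1-\alpha}\le\bar b_\lambda$ as soon as $G\ge\sigma_0$.

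\textbf{The multiplicative part of your Step~1 has the same defect.} Subtracting $c_\perp\sigma_1^\alpha G^\alpha\bar\lambda^{1-\alpha}$ from a parallel component of size $\frac12\min\{G,\lambda\}$ cannot be absorbed by $\sigma_1\le\bar\sigma_1$ when $G\gg\lambda/\sigma_1$, and this is allowed for small $\lambda$. The split you mention at the end is the right mechanism and should replace that inequality. Once $G\ge2\cdot28^{1/\alpha}\sigma_0$ and $\sigma_1\le\bar\sigma_1$, you have $\PP\{\|z\|>G/2\}\le2^\alpha\sigma_0^\alpha G^{-\alpha}+2^\alpha\sigma_1^\alpha\le\frac1{14}$. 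This bound is scale-free in the multiplicative part and gives $\abr{\E\tX,\hat a}\ge\frac16\min\{G,\lambda\}$ with no correction at all; this is where $\bar\sigma_1$ comes from.

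The remaining pieces then fall into place as in Lemma~\ref{lem:drift}:
\begin{itemize}
\item The regime $\lambda/2<G<2\cdot28^{1/\alpha}\sigma_0$ is excluded by $\delta>\varepsilon_N$.
\item $\varepsilon_M$ arises from the orthogonal multiplicative term $6w\,c_\perp\sigma_1^\alpha G^\alpha D^{1-\alpha}\le12w\,c_\perp\sigma_1^\alpha G$. It is absorbed via $\delta\ge C\sigma_1^\alpha RG$, which holds once $\delta>\varepsilon_M$.
\end{itemize}

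\textbf{A secondary point on localization.} This drift bound holds only while $\delta_t>\max\{\varepsilon_N,\varepsilon_M\}$. The paper therefore localizes only up to the first entrance time $\tau$ into the neighborhood, not on $\{\|x^t-x^*\|\le\sqrt2R\ \forall t\}$ as you propose. Otherwise you must pay for the possibly negative drift inside the neighborhood with an extra stepsize restriction.
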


Theorem~\ref{thm:any-main} bounds the first time an iterate reaches the stated neighborhood, so its stepsize need not decrease as $1/K$. The tuning still depends on the horizon. The guarantee concerns the best iterate in objective value, as in \citet{vatankhah2026differentially}; selecting an output with the same guarantee requires function values or an additional validation procedure.

For $\sigma_1>0$, the bound has the additional term $\varepsilon_M=O(\sigma_1^{2\alpha}LR^2)$, which does not decrease in $\lambda$. The key new ingredient of the proof is a clipping lemma for arbitrary clipping levels (Lemma~\ref{lem:arb-clip}): when $\|\nabla f(x)\|>\lambda/2$ and $\|\nabla f(x)\|\gtrsim\sigma_0$, the mean clipped gradient has a component of size at least $\min\{\|\nabla f(x)\|,\lambda\}/6$ along $\nabla f(x)$, provided that $\sigma_1\le\bar\sigma_1$, and an orthogonal component bounded by a quantity of order $\lambda A(\nabla f(x))/\max\{\|\nabla f(x)\|,\lambda\}^\alpha$. The analysis of \citet{vatankhah2026differentially} compares the mean clipped gradient with $\clip(\nabla f(x),\lambda/2)$. It covers arbitrary $\lambda>0$, with additional terms involving $\zeta_\lambda:=\max\{0,2LR-\lambda/2\}$ when $\lambda<4LR$. Table~\ref{tab:prescribed} compares the regime $\lambda>4LR$, where $\zeta_\lambda=0$. We also notice that some restriction on $\sigma_1$ is necessary for small clipping levels: for the multiplicative oracle of \citet{faw2023beyond} ($\sigma_0=0$, $\sigma_1\le9$) on $f(x)=\frac L2x^2$, \clipsgd with any fixed $\lambda>0$ and $\gamma_k\le1/L$ started from $x^0=\lambda/L$ stays at distance $\ge\lambda/(2L)$ from $x^*$ forever with probability $\ge2/3$ (Proposition~\ref{prop:diverge}).

\subsection{Lower Bounds for a Fixed Clipping Level}\label{sec:lower}

The next result shows that the additive-noise neighborhood from Theorem~\ref{thm:any-main}, and hence the first part of $\mathcal N_\lambda$ in Theorem~\ref{thm:fixed-main}, cannot be improved for any $\lambda>0$, up to the factors depending on $\alpha$ and the trivial level $LR^2$. More precisely, for any $\lambda>0$ we construct a one-dimensional problem with additive noise such that \clipsgd with $\lambda_k\equiv\lambda$ and any deterministic stepsizes does not approach the solution.

\begin{theorem}[Lower bound for a fixed clipping level]\label{thm:lower-main}
Let $L,R,\sigma_0,\lambda>0$, $\alpha\in(1,2]$, and let $\varepsilon_\lambda:=\frac{1}{48\cdot4^\alpha}\min\{R\bar b_\lambda,LR^2\}$ with $\bar b_\lambda=\min\{\sigma_0,\sigma_0^\alpha/\lambda^{\alpha-1}\}$ from~\eqref{eq:bias-scale}. Then there exist a convex $L$-smooth function $f:\R\to\R$ with minimizer $x^*=0$, a point $x^0$ with $|x^0-x^*|=R$, and an oracle $g(x,\xi)=f'(x)+\zeta$ with $\E\zeta=0$ and $\E|\zeta|^\alpha\le\sigma_0^\alpha$ (i.e., Assumptions~\ref{as:smooth}--\ref{as:weak} hold on $\R$ with $\sigma_1=0$) such that \clipsgd with $\lambda_k\equiv\lambda$ and any deterministic stepsizes $\gamma_k>0$ satisfies $\E f(\hat x)-f^*\ge\varepsilon_\lambda$ for every iterate $\hat x=x^k$ and every convex combination $\hat x$ of $x^0,\dots,x^K$ with deterministic weights, in particular, for $\bar x^K$; moreover, $\E f(x^k)-f^*$ grows linearly in $\sum_{j<k}\gamma_j$. If $\gamma_k\le\frac{R}{16\lambda}$ for all $k$, then, with probability at least $3/4$, $f(x^k)-f^*\ge\varepsilon_\lambda/2$ and $f(\bar x^k)-f^*\ge\varepsilon_\lambda/2$ for all $k\ge0$. The explicit growth rate is given in Theorem~\ref{thm:lower} (Appendix~\ref{app:lower}).
\end{theorem}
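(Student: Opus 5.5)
We will construct a one-dimensional instance in which the clipped oracle has a nonzero mean drift at the minimizer. Clipped SGD then drifts away from $x^*$ at a rate proportional to the clipping bias, and this drift does not vanish with the stepsizes. We use asymmetric two-point (or three-point) additive noise. Take $\zeta=-p^{-1}M(1-p)\cdot$ with small probability and a positive value otherwise. Concretely, $\zeta=M$ with probability $p$ and $\zeta=-pM/(1-p)$ with probability $1-p$, where $M\ge\lambda+|f'(x)|+\dots$ is large enough that the rare value is always clipped to $\lambda$ in a region around $x^*$. We choose $p$ so that $\E|\zeta|^\alpha\le pM^\alpha+(pM)^\alpha\lesssim\sigma_0^\alpha$. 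The natural choice is $M\asymp\max\{\lambda,\sigma_0\}$ with $p\asymp\sigma_0^\alpha/M^\alpha$ (and $p$ capped at $1/2$ when $\lambda<\sigma_0$). The mean of the clipped oracle at a point where $|f'(x)|$ is small is then $\E\,\clip(f'(x)+\zeta,\lambda)-f'(x)\approx -p(M-\lambda)\approx -c\,\sigma_0^\alpha/\max\{\lambda,\sigma_0\}^{\alpha-1}=-c\,\bar b_\lambda$. So there is a constant negative bias of order $\bar b_\lambda$: the method sees an effective gradient shifted by $-c\bar b_\lambda$, i.e., a spurious push in the $+x$ direction.

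For the function we take a Huber-type convex $L$-smooth $f$ with $x^*=0$. It is quadratic $\frac L2x^2$ on $|x|\le\delta$ and linear with slope $\pm L\delta$ outside, where $L\delta\asymp\min\{\bar b_\lambda,LR\}/4^{\alpha}$ is chosen so that $|f'|$ stays below half the bias. Along the whole positive side, the clipped mean step is then $\E[\tilde g\mid x]\le f'(x)-c\bar b_\lambda\le -\tfrac{c}{2}\bar b_\lambda<0$ (we need to check that clipping of the frequent value does not occur, i.e., $|f'(x)|+pM/(1-p)\le\lambda$; when $\lambda$ is tiny compared with $\sigma_0$ we instead rescale so that the frequent value is also clipped, with the analogous sign computation). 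Starting from $x^0=R>0$, we get $\E x^{k+1}\ge \E x^k+\gamma_k\frac c2\bar b_\lambda$. Hence $\E x^k\ge R+\frac c2\bar b_\lambda\sum_{j<k}\gamma_j$. Since $f$ is linear with slope $L\delta$ for $x\ge\delta$ and $f(x)\ge L\delta x-\tfrac L2\delta^2$ by convexity, Jensen gives
\[
\E f(\hat x)-f^*\ge L\delta\,\E\hat x-\tfrac{L}{2}\delta^2\ge L\delta R/2\gtrsim \min\{R\bar b_\lambda,LR^2\}/4^\alpha
\]
for every iterate and every deterministic convex combination. The linear growth in $\sum\gamma_j$ follows from the same display. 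The negative side needs a symmetric argument only if iterates could cross $0$. To handle this we define the drift inequality globally: on $x<0$ the clipped mean is even more negative, which pushes the iterate upward. So the submartingale bound $\E[x^{k+1}\mid\G_k]\ge x^k+\gamma_k\frac c2\bar b_\lambda$ holds everywhere, and convexity handles all $x\in\R$.

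For the high-probability part, we note that with $\gamma_k\le R/(16\lambda)$ each step moves at most $R/16$. We consider $M_k:=x^k-\E$-drift compensator, a martingale with increments bounded by $2\gamma_k\lambda$ and conditional variance at most $\gamma_k^2\lambda^2$ times $O(p)$-like quantities. We apply a maximal (Freedman/Doob) inequality to show that $\inf_k x^k\ge R/2$ with probability $\ge3/4$. Here the drift is nonnegative, so only downward martingale fluctuations matter. On that event, $f(x^k)-f^*\ge\varepsilon_\lambda/2$ for all $k$, and since $\bar x^k$ averages points $\ge R/2$, the same holds for $\bar x^k$.

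The main obstacle is this uniform-in-time fluctuation control when $\sum\gamma_j$ is unbounded. The martingale variance accumulates as $\sum\gamma_j^2\lambda^2$, which is not bounded by $R^2$. I would handle it by comparing with the drift: a time-uniform Freedman bound of the form $M_k\ge -R/4-\theta\sum_{j<k}\gamma_j\bar b_\lambda$ with probability $3/4$. This works because the variance per unit of drift, $\gamma_j\lambda^2p/(\gamma_j\bar b_\lambda)\cdot\gamma_j$, is controlled by $\gamma_j\le R/(16\lambda)$. Alternatively, I would use the exponential supermartingale $\exp(-\theta x^k)$ with $\theta\asymp 1/R$, checking that $\E[e^{-\theta(x^{k+1}-x^k)}\mid\G_k]\le1$ via the positive drift and bounded increments. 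Then Ville's inequality gives $\PP\{\inf_k x^k\le R/2\}\le e^{-\theta R/2}$; adjusting constants makes this at most $1/4$.
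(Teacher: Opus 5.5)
Your construction for large $\lambda$ is the paper's instance A mirrored: a Huber objective, a two-point additive noise whose rare atom is always clipped so that the mean clipped field has a fixed sign, the bound $\E f(\hat x)\ge f(R)$ for deterministic averages, and an exponential supermartingale with Ville's inequality under the cap $\gamma_k\le R/(16\lambda)$. The paper uses rare atom $-2\lambda$, $p=\frac12(\min\{\sigma_0,\lambda\}/(4\lambda))^\alpha$, slope $s=p\lambda/2$, and starts at $x^0=-R$. There is, however, a genuine gap in the regime $\lambda\lesssim\sigma_0$. This is exactly where $\bar b_\lambda=\sigma_0$, so it is the whole reason the minimum with $\sigma_0$ appears in $\varepsilon_\lambda$.

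\textbf{The gap.} For $\lambda\lesssim\sigma_0$, your drift claim $\E[\tg\mid x]\le f'(x)-c\bar b_\lambda\le-\frac c2\bar b_\lambda$ cannot hold. Clipped values lie in $[-\lambda,\lambda]$, so the mean clipped field never exceeds $\lambda\ll\sigma_0$ in absolute value. Your design rule, which chooses the Huber slope so that $|f'|$ stays below half the drift, then caps the slope at $O(\lambda)$. That yields only $\min\{R\lambda,LR^2\}$, not $\min\{R\sigma_0,LR^2\}$. Your parenthetical remark about also clipping the frequent value points at the right instance. What it misses is the idea that makes that instance work: once both atoms are clipped for every $x$, the clipped oracle equals $\pm\lambda$ independently of $f'(x)$. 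The slope is therefore decoupled from the drift and may be much larger than it.

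\textbf{The paper's fix (instance B).} For $\lambda<\sigma_0/(2\cdot3^{1/\alpha})$, take $a=\sigma_0/3^{1/\alpha}$, let $\zeta=-a$ with probability $3/4$ and $\zeta=3a$ with probability $1/4$, and use slope $s=a-\lambda>a/2$. Then both atoms are always clipped, so $\tg=-\lambda$ or $+\lambda$ with probabilities $3/4$ and $1/4$. The upward drift is $\lambda/2\ge s_\lambda$, which still gives the linear growth. Meanwhile $f(R)\ge\min\{sR/2,LR^2/2\}\gtrsim\min\{R\sigma_0,LR^2\}$, because the slope is of order $\sigma_0$, far above the drift.

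\textbf{The intermediate range.} For $\lambda$ comparable to but below $\sigma_0$, your stated choice $M\asymp\sigma_0$ with $p$ capped at $1/2$ makes the frequent atom of size of order $\sigma_0$, so it gets clipped. The paper instead keeps instance A down to $\lambda=\sigma_0/(2\cdot3^{1/\alpha})$, with the noise level lowered to $\sigma_\lambda=\min\{\sigma_0,\lambda\}$.

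\textbf{Constants.} To reach the explicit constants $1/(48\cdot4^\alpha)$ and $3/4$, it is cleaner to follow the paper in the supermartingale step: center the increments and take $\vartheta=1/(4\bar\gamma\lambda)\ge4/R$. Since $2p\vartheta\lambda^2\bar\gamma=s$, the variance compensator $2p\vartheta^2\lambda^2\sum_{j<k}\gamma_j^2\le\vartheta s\sum_{j<k}\gamma_j$ is absorbed exactly by the drift, giving failure probability $e^{-\vartheta R/2}\le e^{-2}$.
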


\paragraph{Comparison with the upper bounds.} Let $\sigma_1=0$, and assume that the conditions of the upper-bound result under consideration are satisfied. Let $\bar b_\lambda=\min\{\sigma_0,\sigma_0^\alpha/\lambda^{\alpha-1}\}$. Theorem~\ref{thm:lower-main} gives the lower bound $\varepsilon_\lambda=(48\cdot4^\alpha)^{-1}\min\{R\bar b_\lambda,LR^2\}$ in expectation for every iterate and every deterministic average under arbitrary deterministic stepsizes; if $\gamma_k\le R/(16\lambda)$, it also gives the lower bound $\varepsilon_\lambda/2$ with probability at least $3/4$, simultaneously for all iterates and their averages. This restriction on the stepsizes is satisfied by the tuning $\gamma\le R/(40\lambda\ell)$ used in the upper bounds. In the regime $R\bar b_\lambda\le LR^2$, the best-iterate neighborhood $\varepsilon_N=48\sqrt2\,c_\perp R\bar b_\lambda$ from Theorem~\ref{thm:any-main}, at the confidence level $\beta<1/4$, matches this high-probability lower bound up to the factor $2\cdot48^2\sqrt2\cdot4^\alpha c_\perp$, which depends only on $\alpha$. For the averaged iterate, we impose $\lambda\ge2\sqrt2LR$ and the nontrivial-regime condition $R\sigma_0^\alpha/\lambda^{\alpha-1}\le LR^2$. These conditions imply $\sigma_0<\lambda$, and, therefore, the neighborhood $120\,bR\sigma_0^\alpha/\lambda^{\alpha-1}$ from Theorem~\ref{thm:fixed-main} matches the same lower bound up to the factor $11520\cdot4^\alpha b$; see Table~\ref{tab:prescribed} in Appendix~\ref{app:lower}. Note that these comparisons concern only the neighborhoods: the lower bound does not depend on $K$ and does not match the terms vanishing with the horizon.

The proof (Appendix~\ref{app:lower}) relies on a Huber function whose slope is smaller than the clipping bias of a two-point noise, so that the mean clipped field has the same sign everywhere and pushes the iterates away from $x^*$. Consequently, a uniform guarantee in expectation $\E f(\bar x^K)-f^*\le\varepsilon$ over the class of problems with additive noise and parameters $(L,R,\sigma_0,\alpha)$, where $\varepsilon<\min\{R\sigma_0,LR^2\}/(48\cdot4^\alpha)$, requires $\lambda\ge\sigma_0(\sigma_0R/(48\cdot4^\alpha\varepsilon))^{1/(\alpha-1)}$ regardless of the deterministic stepsizes; on the other hand, for the high-probability criterion with $\lambda=\sigma_0(120b\,\sigma_0R/\varepsilon)^{1/(\alpha-1)}$ (if this value is at least $2\sqrt2LR$), the neighborhood from Theorem~\ref{thm:fixed-main} equals $\varepsilon$, and the bound becomes at most $\varepsilon$ after $K=O_\alpha(v\ell(\sigma_0R/\varepsilon)^{\alpha/(\alpha-1)})$ iterations, which is the complexity from Table~\ref{tab:complexity}. Moreover, the expected error of the last iterate also grows linearly in the total stepsize, which makes the scale of the bias condition from Theorem~\ref{thm:fixed-main} necessary for the averaged iterate with a constant stepsize, and the bias persists even on strongly convex quadratics (see Appendix~\ref{app:lower-statement}).

\section{Numerical Experiments}\label{sec:experiments}

We illustrate the effect of clipping under weak-$L^\alpha$ noise with an infinite $\alpha$-th moment. On a quadratic problem with asymmetric noise, we compare the plateaus produced by prescribed clipping levels with the decreasing errors obtained using a horizon-dependent level. Appendix~\ref{app:experiments} reports additional experiments with symmetric noise, ridge regression, and the full theoretical parameter choices for \clipsgd and \clipsstm.

\paragraph{Protocol.} We use $\alpha=3/2$, $1024$ independent runs for each configuration, and horizons $n\le10^6$ oracle calls, and we report the mean of $f(\bar x^{n-1})-f^*$ together with pointwise approximate $95\%$ confidence bands; the horizon-dependent configurations are rerun for each horizon, with $\beta=0.05$. The noise is built from a Pareto random variable $P$ with $\PP\{P>t\}=t^{-\alpha}$ for $t\ge1$ (so that $\E P=3$ and $\E P^\alpha=\infty$): $g(x)=x+S(x)Z$ with the centered asymmetric $Z=(P-3)/2$ and $S(x)=(\sigma_0^\alpha+\sigma_1^\alpha|x|^\alpha)^{1/\alpha}$. This oracle satisfies the affine weak-tail inequality with $(\sigma_0,\sigma_1)$, and its $\alpha$-th moment is infinite whenever the noise scale is non-zero.

\paragraph{Influence of the clipping level.} We consider the quadratic $f(x)=x^2/2$ on $\R$ ($L=1$, $x^*=0$, $x^0=1$) with the asymmetric noise and $(\sigma_0,\sigma_1)=(0.05,0.001)$, and compare the constant clipping levels $\lambda\in\{0.1,0.5,1,4\}$ with the level $\lambda(n):=R/(40\gamma_*(n)\ell_n)$ from Theorem~\ref{thm:sgd}, computed for each horizon $n$ with $\beta=0.05$ and the stepsize $\gamma_*(n)$ from Theorem~\ref{thm:main}; this level grows as $n^{1/\alpha}$ up to logarithmic factors. All variants use $\gamma=0.05$ (see Figure~\ref{fig:exp-clipping-bias}).

Figure~\ref{fig:exp-clipping-bias} shows distinct plateaus for constant clipping levels, with smaller plateaus at larger levels. The dotted lines show objective values at zeros of the mean clipped field and provide a reference for these plateaus. With $\lambda(n)$, the observed error continues to decrease over the tested horizons, consistently with the displayed reference slope and the heuristic $n^{-2/3}$ balance between squared bias and the variance of averaged clipped noise. Increasing the level reduces clipping bias while increasing the second-moment bound. All configurations use the empirical stepsize $\gamma=0.05$, so this comparison isolates the clipping level rather than testing the theorem's full parameter prescription.

\begin{figure}[t]
    \centering
    \includegraphics[width=0.6\linewidth]{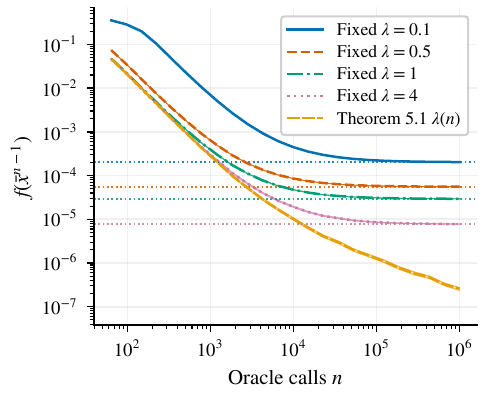}
    \caption{Influence of the clipping level under centered asymmetric Pareto noise on $f(x)=x^2/2$.}
    \label{fig:exp-clipping-bias}
\end{figure}

\section{Conclusion}\label{sec:conclusion}

We derived high-probability guarantees for \clipsgd and \clipsstm under affine weak-$L^\alpha$ noise. For $\alpha<2$, the tail condition preserves the polynomial rates available under a bounded $\alpha$-th moment. At $\alpha=2$, the additive term requires a logarithmic correction; our Lorentz-scale upper and lower bounds identify its optimal asymptotic order at fixed confidence and fixed positive problem parameters. The analysis also separates the additive and gradient-dependent noise contributions and gives guarantees for prescribed clipping levels. In the additive case, matching lower bounds characterize the resulting clipping-bias neighborhood up to constants and truncation at $LR^2$.

Several questions remain open. These include lower bounds for the multiplicative term, sharper guarantees for small clipping levels under multiplicative noise, and the joint dependence on the horizon and the failure probability at the endpoint.

\section*{AI use statement}

The main results presented in this document were obtained by the authors. We used a generative AI assistant to improve the writing, strengthen the initial core results, review mathematical arguments, and check references. The authors take responsibility for the final content of this work.

\bibliography{refs}

\clearpage
\appendix
\addtocontents{toc}{\protect\setcounter{tocdepth}{2}}
\thispagestyle{empty}

\begin{center}
{\LARGE\bf Weak-$L^\alpha$ Noise, Strong Guarantees:\\[0.3em] Gradient Clipping under Affine Heavy Tails\\[0.6em] Supplementary Material}
\end{center}
\vskip 1.5em

\setcounter{tocdepth}{2}
\tableofcontents
\clearpage

\section{Notation}\label{app:notation}

In Table~\ref{tab:notation}, we summarize the notation and the constants used in the statements of the paper, in the order of their appearance.

\begin{center}
\small
\renewcommand{\arraystretch}{1.25}
\begin{longtable}{p{0.22\linewidth}>{\raggedright\arraybackslash}p{0.53\linewidth}>{\raggedright\arraybackslash}p{0.17\linewidth}}
\caption{Notation.}\label{tab:notation}\\
\hline
\textbf{Symbol} & \textbf{Definition} & \textbf{Introduced in}\\
\hline
\endfirsthead
\hline
\textbf{Symbol} & \textbf{Definition} & \textbf{Introduced in}\\
\hline
\endhead
\hline
\endfoot
\multicolumn{3}{l}{\emph{Problem, oracle and method}}\\
$\abr{\cdot,\cdot}$, $\|\cdot\|$, $B_r(x)$, $(a)_+$, $\ind_A$ & Euclidean inner product and norm; closed ball of radius $r$ around $x$; positive part; indicator of $A$ & Section~\ref{sec:setup}\\
$f$, $g(x,\xi)$, $\D$, $x^*$, $f^*$ & objective, unbiased stochastic oracle with $\xi\sim\D$ ($\E_\xi g(x,\xi)=\nabla f(x)$ on $Q$), a minimizer $x^*$ and $f^*=f(x^*)$ & \eqref{eq:problem}, Section~\ref{sec:setup}\\
$x^0$, $R$ & starting point and a number $R\ge\|x^0-x^*\|>0$ & Section~\ref{sec:setup}\\
$Q=B_{\rho R}(x^*)$ & ball on which all assumptions are imposed; $\rho=2$ for \clipsgd, $\rho=3$ for \clipsstm & Section~\ref{sec:setup}\\
$L$ & local smoothness constant on $Q$ & Assumption~\ref{as:smooth}\\
$\G_k$ & $\sigma$-algebra generated by $x^0,\xi^0,\dots,\xi^{k-1}$ ($\G_0=\sigma(x^0)$) & Section~\ref{sec:setup}\\
$\sigma_0$, $\sigma_1$, $\alpha$ & additive and multiplicative noise scales and tail index, $\alpha\in(1,2]$ & Assumptions~\ref{as:moment}, \ref{as:weak}\\
$\wnorm{Y}$, $L^{\alpha,\infty}$ & weak-$L^\alpha$ quasi-norm $\sup_{u>0}u\,\PP\{|Y|>u\}^{1/\alpha}$ and the corresponding space & Definition~\ref{def:weak}, \eqref{eq:weak-qn}\\
$d_Y$, $Y^*$, $Y^{**}$ & distribution function $d_Y(u)=\PP\{|Y|>u\}$, non-increasing rearrangement $Y^*(t)=\inf\{u>0:d_Y(u)\le t\}$ and its average $Y^{**}(t)=\frac1t\int_0^tY^*$ & Appendix~\ref{app:spaces}\\
$\Mnorm{Y}$, $M_\alpha$ & Marcinkiewicz norm $\sup_{0<t\le1}t^{1/\alpha}Y^{**}(t)$ and space ($\alpha>1$) & Definition~\ref{def:marc}\\
$\|Y\|_{2,s}$, $L^{2,s}$ & Lorentz quasi-norm $(\int_0^1[t^{1/2}Y^*(t)]^s\frac{\dif t}{t})^{1/s}$, $2\le s<\infty$, $\|Y\|_{2,\infty}=\wnorm{Y}|_{\alpha=2}$, and the corresponding space & Appendix~\ref{app:lorentz}\\
$\clip(v,\lambda)$ & clipping operator $\min\{1,\lambda/\|v\|\}v$ & Section~\ref{sec:setup}\\
$\lambda$, $\lambda_k$, $\gamma$, $K$ & clipping level(s), stepsize, horizon: \clipsgd makes $K+1$ iterations $k=0,\dots,K$, \clipsstm makes $K$ & Algorithms~\ref{alg:clip-sgd}, \ref{alg:clip-sstm}\\
$g^k$, $\tg^k$, $\bar x^K$ & stochastic gradient, clipped stochastic gradient, averaged iterate $\frac{1}{K+1}\sum_{k=0}^Kx^k$ of \clipsgd & Algorithm~\ref{alg:clip-sgd}\\
$x^k,y^k,z^k$, $a$, $\omega_{k+1}$, $A_k$ & \clipsstm sequences; parameter $a\ge1$; weights $\omega_{k+1}=\frac{k+2}{2aL}$, $A_{k+1}=A_k+\omega_{k+1}$, $A_k=\frac{k(k+3)}{4aL}$ & Algorithm~\ref{alg:clip-sstm}, \eqref{eq:Ak}\\
$\beta$ & failure probability of the high-probability statements & Theorem~\ref{thm:main}\\
$\ell$, $H$, $h$ & $\ell=\ln\frac{4(K+1)}{\beta}$ (\clipsgd), $H=\ln\frac{4K}{\beta}$ (\clipsstm), $h=\ln\frac4\beta$ (horizon-free confidence) & Theorems~\ref{thm:main}, \ref{thm:sstm}, \eqref{eq:h}\\
$A(x)$ & affine noise scale $\sigma_0^\alpha+\sigma_1^\alpha\|x\|^\alpha$ (so that $A(\nabla f(x))$ bounds the tail or the moment of the noise at $x$) & Section~\ref{sec:clipping}\\
$\theta_k$, $\theta_k^u$, $\theta_k^b$ & error of the clipped gradient $\tg^k-\nabla f(x^k)$ and its decomposition into the martingale part $\tg^k-\E[\tg^k\mid\G_k]$ and the bias $\E[\tg^k\mid\G_k]-\nabla f(x^k)$ & \eqref{eq:theta}\\
$R_k$, $\delta_k$ & $\|x^k-x^*\|$ and $f(x^k)-f^*$ & Appendix~\ref{app:main}\\
\multicolumn{3}{l}{\emph{Clipping constants}}\\
$b_\alpha$ & bias constant under the weak-$L^\alpha$ condition, $\frac{2^{\alpha-1}}{\alpha-1}$ & Lemma~\ref{lem:weak-clip}\\
$v_\alpha(s)$, $v_\alpha$ & variance constant under the weak-$L^\alpha$ condition, $(\frac32)^{2-\alpha}\min\{\frac{2}{2-\alpha},1+\frac2\alpha\ln(1+s)\}$ for $s\in[0,\infty]$ ($\frac{2}{2-\alpha}=+\infty$ at $\alpha=2$); $v_\alpha=v_\alpha(\infty)=\frac{2}{2-\alpha}(\frac32)^{2-\alpha}$ for $\alpha<2$ & \eqref{eq:BC-alpha}, Section~\ref{sec:clipping}\\
$b_\alpha^{\rm mom}$, $v_\alpha^{\rm mom}$ & bias and variance constants under the moment condition, $\frac{2^{\alpha-1}(\alpha-1)^{\alpha-1}}{\alpha^\alpha}\in[\frac12,1]$ and $(\frac32)^{2-\alpha}\in[1,\frac32]$ & \eqref{eq:moment-clipping-constants}\\
$(b,v)$ & generic clipping constants: $(b_\alpha^{\rm mom},v_\alpha^{\rm mom})$ under Assumption~\ref{as:moment}, $(b_\alpha,v_\alpha(\Lambda))$ under Assumption~\ref{as:weak}, with $\Lambda=\Lambda_K$ (Theorem~\ref{thm:main}), $\Lambda_\lambda$ (Theorem~\ref{thm:fixed}), $\Lambda_K^{\rm S}$ (Theorem~\ref{thm:sstm}) or $\Lambda_{\gamma,\lambda}$ (Proposition~\ref{prop:master}) & Section~\ref{sec:results}\\
$c$, $c_\alpha^{\rm mom}$, $c_\alpha(\Lambda)$ & rate constant $c=180\max\{v,b/3\}$: $c_\alpha^{\rm mom}=180(\frac32)^{2-\alpha}\in[180,270)$ under Assumption~\ref{as:moment}, $c_\alpha(\Lambda)=180\max\{v_\alpha(\Lambda),b_\alpha/3\}$ under Assumption~\ref{as:weak} & \eqref{eq:moment-rate-constant}, Section~\ref{sec:results}\\
$\Lambda_\lambda$, $\Lambda_K$ & $\Lambda_\lambda=\frac{(K+1)(3\lambda/2)^\alpha}{(K+1)\sigma_0^\alpha+2^{\alpha/2}\sigma_1^\alpha(LR)^\alpha}$ (ratio of the clipping level to the average noise scale; $0$ if $\sigma_0=\sigma_1=0$), $\Lambda_K=\max\{80(K+1),\Lambda_{2\sqrt2LR}\}$ & \eqref{eq:Lambda}\\
$\Lambda_{\gamma,\lambda}$ & $\frac{(K+1)(3\lambda/2)^\alpha}{\Sigma_{K+1}}\le\Lambda_\lambda$, the ratio used in the proofs for \clipsgd & \eqref{eq:Lambda-gl}\\
$\Lambda_K^{\rm S}$, $\Lambda_K^0$ & $\Lambda_K^{\rm S}=\max\{200bK,\Lambda_K^0\}$ with $\Lambda_K^0=[(\frac{\sigma_0(K+1)}{17640LRH})^\alpha+(\frac{\sigma_1}{3})^\alpha]^{-1}$ ($0$ if $\sigma_0=\sigma_1=0$), the analog of $\Lambda_K$ for \clipsstm & \eqref{eq:Lambda-sstm}\\
\multicolumn{3}{l}{\emph{\clipsgd with the tuned clipping level (Theorem~\ref{thm:main}, Corollary~\ref{cor:rate})}}\\
$\gamma_L,\gamma_0,\gamma_1,\gamma_2$ & stepsize thresholds $\frac{1}{80\sqrt2L\ell}$, $\frac{R}{(c40^{\alpha-1}(K+1)\ell^{\alpha-1})^{1/\alpha}\sigma_0}$, $\frac{1}{(c2^\alpha40^{\alpha-1}\ell^{\alpha-1})^{2/\alpha}\sigma_1^2L(K+1)^{(2-\alpha)/\alpha}}$, $\frac{1}{(c2^{\alpha/2}40^{\alpha-1}\ell^{\alpha-1})^{1/\alpha}\sigma_1L}$; the level is $\lambda=\frac{R}{40\gamma\ell}$ & Theorem~\ref{thm:main}\\
$\Delta_L,\Delta_0,\Delta_1$ ($\Delta_2$) & the three terms of the rate, $\Delta_j=\frac{2R^2}{\gamma_j(K+1)}$ & Corollary~\ref{cor:rate}\\
$\Sigma_T$, $\Phi_T$ & $\Sigma_T=\sigma_0^\alpha T+\sigma_1^\alpha2^\alpha L^{\alpha/2}R^\alpha\gamma^{-\alpha/2}T^{(2-\alpha)/2}+\sigma_1^\alpha2^{\alpha/2}L^\alpha R^\alpha$ (accumulated noise scale over $T$ steps) and $\Phi_T=\gamma\lambda^{1-\alpha}\Sigma_T$ (accumulated bias scale) & \eqref{eq:Sigma-def}\\
\multicolumn{3}{l}{\emph{\clipsstm (Theorem~\ref{thm:sstm}, Corollaries~\ref{cor:sstm-complexity}, \ref{cor:uniform})}}\\
$r$ & $\frac{\alpha-1}{\alpha}$, the exponent of the additive term & Theorem~\ref{thm:sstm}\\
$q_\alpha$, $c_{0,\alpha}$, $c_{1,\alpha}$ & $q_\alpha=\min\{\frac{1}{1350v30^{\alpha-2}},\frac{1}{15b30^{\alpha-1}}\}$, $c_{0,\alpha}=(2/q_\alpha)^{1/\alpha}$, $c_{1,\alpha}=49(4/q_\alpha)^{2/\alpha}$ (at $\alpha=2$ under Assumption~\ref{as:moment}: $q_2=\frac{1}{1350}$, $c_{0,2}=\sqrt{2700}$, $c_{1,2}=49\cdot5400$) & Theorem~\ref{thm:sstm}\\
$a_*(K,\beta)$ & $\max\{176400H^2,c_{0,\alpha}\frac{\sigma_0}{LR}(K+1)K^{1/\alpha}H^r,c_{1,\alpha}\sigma_1^2K^{2/\alpha}H^{2r}\}$, the prescribed value of $a$; the levels are $\lambda_k=\frac{R}{30\omega_{k+1}H}$ & Theorem~\ref{thm:sstm}\\
$C_\alpha$, $M_\varepsilon$ & $C_\alpha=\max\{1058400,12c_{0,\alpha},6c_{1,\alpha}\}$ and the horizon scale $M_\varepsilon$ of the oracle complexity & Corollary~\ref{cor:sstm-complexity}\\
$\bar\sigma$, $a_{\rm uni}$ & uniform noise scale $(\sigma_0^\alpha+(3LR\sigma_1)^\alpha)^{1/\alpha}$ on $Q=B_{3R}(x^*)$ and the corresponding value of $a$ & Section~\ref{sec:sstm}, Corollary~\ref{cor:uniform}\\
$C_{p,s}$, $p_K$ & $C_{p,s}=(\frac{2(s-p)}{s(2-p)})^{1/p-1/s}$ (embedding constant $\|Y\|_p\le C_{p,s}\|Y\|_{2,s}$) and the horizon-dependent exponent $p_K=2-\min\{\frac12,\frac1{\ln(eK)}\}$ of the moment route & Remark~\ref{rem:lorentz-moment}\\
\multicolumn{3}{l}{\emph{Prescribed clipping levels (Theorems~\ref{thm:fixed}, \ref{thm:any-lambda}, \ref{thm:lower}, Corollary~\ref{cor:fixed})}}\\
$S_0,S_1,S_2$, $\Sigma_{K+1}(\gamma)$ & $S_0=\sigma_0^\alpha(K+1)$, $S_1=2^\alpha\sigma_1^\alpha L^{\alpha/2}R^\alpha(K+1)^{(2-\alpha)/2}$, $S_2=2^{\alpha/2}\sigma_1^\alpha(LR)^\alpha$, and $\Sigma_{K+1}(\gamma)=S_0+S_1\gamma^{-\alpha/2}+S_2$ (equal to $\Sigma_{K+1}$ of~\eqref{eq:Sigma-def}) & Theorem~\ref{thm:fixed}\\
$\gamma_b^\lambda$, $\gamma_v^\lambda$ & explicit stepsize thresholds for the bias condition~\eqref{eq:fixed-bias} and the variance condition~\eqref{eq:fixed-var} & \eqref{eq:fixed-thresholds}\\
$\mathcal N_\lambda$, $\mathcal V_{\lambda,K}$, $\Xi$ & neighborhood $\max\{\frac{120bR\sigma_0^\alpha}{\lambda^{\alpha-1}},2(\frac{60\cdot2^\alpha bRL^{\alpha/2}\sigma_1^\alpha}{\lambda^{\alpha-1}})^{2/(2-\alpha)}\}$, the terms vanishing with $K$, and the constant of the rate for $\lambda=\lambda_0(K+1)^{1/\alpha}$ & \eqref{eq:neighborhood}, \eqref{eq:V-terms}, \eqref{eq:horizon-bound}\\
$\bar b_\lambda$ & bias scale $\frac{\sigma_0^\alpha}{\max\{\lambda,\sigma_0\}^{\alpha-1}}=\min\{\frac{\sigma_0^\alpha}{\lambda^{\alpha-1}},\sigma_0\}$ & \eqref{eq:bias-scale}\\
$(\kappa_1,\kappa_2)$, $c_\perp$, $\bar\sigma_1$ & $(1,1)$ under Assumption~\ref{as:moment}, $(\frac{\alpha}{\alpha-1},\frac{2}{2-\alpha})$ under Assumption~\ref{as:weak} with $\alpha<2$; $c_\perp=2^{\alpha-1}(\kappa_2+3\kappa_1)$; threshold $\bar\sigma_1=\frac{1}{2\cdot28^{1/\alpha}}$ on the multiplicative noise & Section~\ref{sec:planned}\\
$\bar A$, $V_\lambda$, $\varepsilon_N$, $\varepsilon_M$ & $\bar A=\sigma_0^\alpha+\sigma_1^\alpha(\sqrt2LR)^\alpha$, $V_\lambda=\min\{4\lambda^2,\kappa_2(2\lambda)^{2-\alpha}\bar A\}$ (variance scale of the clipped gradient), neighborhoods $\varepsilon_N=48\sqrt2c_\perp R\bar b_\lambda$ and $\varepsilon_M=384c_\perp\sigma_1^\alpha LR^2\min\{1,96c_\perp\sigma_1^\alpha\}$ & Theorem~\ref{thm:any-lambda}\\
$\varepsilon_\lambda$, $s_\lambda$ & lower bound $\frac{1}{48\cdot4^\alpha}\min\{\frac{R\sigma_0^\alpha}{\lambda^{\alpha-1}},R\sigma_0,LR^2\}$ and growth rate $\frac{1}{4^{\alpha+1}}\min\{\frac{\sigma_0^\alpha}{\lambda^{\alpha-1}},\lambda\}$ & Theorem~\ref{thm:lower}\\
\end{longtable}
\end{center}

\section{Extended Related Work}\label{app:related}

We expand the comparison from Section~\ref{sec:related}, distinguishing the noise assumptions, the dependence of its scale on the current point, the clipping schedule, and the type of guarantee. Our analysis concerns smooth convex problems and high-probability bounds. The non-convex, non-smooth, light-tailed, and in-expectation results below provide complementary context.

\paragraph{Light-tailed noise and boosting.} Logarithmic dependence on the inverse failure probability is classical under sub-Gaussian noise \citep{nemirovski2009robust} and extends to sub-Weibull noise, which has all moments \citep{li2022high}. For strongly convex problems, \citet{davis2021low} obtain high-confidence guarantees from low-confidence solvers using robust distance estimation. Their concrete stochastic-gradient instantiations assume light tails or bounded variance, whereas the abstract reduction applies whenever suitable solvers for the regularized subproblems are available. Below, we focus on direct analyses of clipping and on alternative methods for heavy-tailed noise.

\paragraph{Clipped methods for convex problems under bounded moments.} Under bounded variance, \citet{nazin2019algorithms} analyze truncated Mirror Descent, and \citet{gorbunov2020stochastic} analyze \clipsgd and \clipsstm for smooth convex and strongly convex problems. Related guarantees cover monotone and structured non-monotone variational inequalities \citep{gorbunov2022clipped}, non-smooth convex problems \citep{gorbunov2024near}, and subgradient schemes \citep{parletta2024high}. \citet{sadiev2023high} replace bounded variance with a bounded central moment of order $\alpha\in(1,2]$ for smooth convex, strongly convex, and monotone variational inequality problems. We recover their smooth convex rates when $\sigma_1=0$ (Table~\ref{tab:complexity}). The smooth convex analyses of \citet{gorbunov2020stochastic,sadiev2023high} tune the clipping level to the horizon and bound the noise scale by a constant on a ball around the solution.

\citet{nguyen2023improved} remove the horizon logarithm from the confidence factor for clipped Mirror Descent and its accelerated variant. \citet{liu2023stochastic} obtain a similar confidence dependence for non-smooth convex problems, together with in-expectation rates and adaptation to the initial distance. Our Theorem~\ref{thm:maximal} gives the corresponding reduction via a maximal Freedman inequality. \citet{liu2026clipped} refine the non-smooth analysis using a generalized effective dimension and derive high-probability and in-expectation lower bounds, which we compare with ours in Section~\ref{sec:results}.

Further extensions cover composite and distributed problems and variational inequalities \citep{gorbunov2024high}, distributed \clipsgd over networks \citep{yang2026high}, and, in expectation, distributed methods with local updates \citep{lee2025efficient}. Clipping has also been analyzed for AdaGrad-Norm and Adam-Norm \citep{chezhegov2025clipping}, convex $(L_0,L_1)$-smooth problems \citep{chezhegov2025convergence}, weakly convex problems \citep{zhu2025stochastic}, and zeroth-order non-smooth convex optimization \citep{kornilov2023accelerated}. Other settings include parameter-free online learning \citep{zhang2022parameter}, differential privacy with an arbitrary clipping level \citep{vatankhah2026differentially}, and streaming estimation \citep{tsai2022heavy,das2024near}. These guarantees use moment or variance bounds.

\paragraph{Concurrent work.} \citet{lobanov2026accelerated} (arXiv, 27 September 2026) analyze RCSAG -- a restarted clipped stochastic accelerated gradient method -- for differentiable convex and strongly convex objectives under $(H_0,H_1)$-smoothness, i.e., $f(y)\le f(x)+\abr{\nabla f(x),y-x}+(H_0+H_1(f(x)-f^*))\|y-x\|^2$ for $\|y-x\|\le r_H$ ($L$-smoothness corresponds to $H_1=0$ with $L=2H_0$), and a conditionally unbiased oracle satisfying, in our notation, $\E\|g(x,\xi)-\nabla f(x)\|^\alpha\le\sigma_0^\alpha+\sigma_1^\alpha\|\nabla f(x)\|^\alpha+\sigma_2^\alpha(f(x)-f^*)^{\alpha/2}$, $\alpha\in(1,2]$, at every query point (their Assumption~3). This is a moment condition: for $\sigma_2=0$, it coincides with Assumption~\ref{as:moment} imposed on $\R^d$, and otherwise it has an additional term decaying with the function gap.

In the convex case, the method runs a logarithmic number of phases starting from an upper bound on $f(x^0)-f^*$; each phase starts from a point whose gap is at most the current bound, runs an accelerated scheme of Nesterov type with one clipped sample per iteration, whose auxiliary sequence is projected onto a bounded convex set $Q\ni x^0,x^*$ of diameter $D$, with the clipping level and the number of iterations chosen according to this bound, $D$, and the noise and smoothness parameters, and halves the bound with high probability (their Theorem~8, which relies on a maximal Freedman inequality for a stopped potential).

The total number of oracle calls (their Theorem~9) is $\widetilde O\big(D\sqrt{H_0/\varepsilon}+\sqrt{H_1}DJ_H+(\sigma_0D/\varepsilon)^{\alpha/(\alpha-1)}+(\sigma_1D[\sqrt{H_0/\varepsilon}+\sqrt{H_1}J_H^{(\alpha-1)/\alpha}])^{\alpha/(\alpha-1)}+(\sigma_2D/\sqrt\varepsilon)^{\alpha/(\alpha-1)}+S\big)$, where $S=\lceil\log_2(\Delta_0/\varepsilon)\rceil$ for an initial bound $\Delta_0\ge f(x^0)-f^*$ and $0<\varepsilon\le\Delta_0$, and $J_H\le S$ counts the phases with $H_1 2^{-s}\Delta_0>H_0$; for $H_1=0$ and $\sigma_2=0$, this has the same polynomial dependence, up to constants and logarithmic factors, as the complexity of Theorem~\ref{thm:sstm-main} in Table~\ref{tab:complexity}, with $D=2R$ in place of $R$ (their Appendix~G.1 takes $Q=B_R(x^0)$ when $R$ is known), including the dependence on $\sigma_1$, whose optimality is open in both works.

Under strong convexity (their Theorem~10), the method uses shrinking balls centered at the restart points, and only the $\sigma_0$-term is polynomial in $1/\varepsilon$. For the noise satisfying $\E\|g(x,\xi)-\nabla f(x)\|^\alpha\lesssim(f(x)-f^*)^{\varrho}$, the stochastic cost is polynomial, logarithmic, or bounded in $1/\varepsilon$ depending on whether $\varrho$ is below, at, or above $\alpha$ for convex and $\alpha/2$ for strongly convex objectives (their Theorem~11). Their lower bounds for the convex case and the $\sigma_0$- and $\sigma_2$-terms (their Theorems~13 and~15) are based on a two-point construction with a rare informative response, similar to the one used in our Propositions~\ref{prop:lower-rate} and~\ref{prop:confidence-lower}, and have the factor $\ln(1/(2\rho))$ in terms of the failure probability $\rho$.

The two works are complementary: \citet{lobanov2026accelerated} consider generalized smoothness, strong convexity, and the noise decaying with the function gap, under moment bounds, and analyze a restarted and projected method whose parameters depend on the bound on $f(x^0)-f^*$, $D$, $\alpha$, $\sigma_i$, and $H_i$ (the horizon-driven variant from their Appendix~I removes the last three from the inner updates, and their Appendix~G constructs the initial bound from stochastic gradients); we consider \clipsgd and \clipsstm without restarts and projections, under the tail condition on a ball, and also cover the weak-$L^2$ endpoint with infinite variance, prescribed clipping levels, and the corresponding lower bounds.

\paragraph{Beyond moment bounds: symmetry, medians, and tail conditions.} Several approaches relax moment assumptions by imposing structure on the noise. Under symmetry, \citet{jakovetic2023nonlinear} analyze a general class of non-linear SGD methods (clipping, sign, normalization, and their component-wise versions) for strongly convex objectives under the noise whose components have a symmetric density and finite first moment only, and derive almost sure convergence and mean-squared-error rates; \citet{armacki2023high} derive high-probability bounds for non-convex and strongly convex problems in the same noise model, and \citet{armacki2025high} extend this framework to online learning, treating sign, quantization, and clipping non-linearities in a unified way, for symmetric noise without any moment assumption and for mixtures of symmetric and non-symmetric components, with convergence to a neighborhood in the latter case; \citet{armacki2026large} derive large-deviation and mean-squared-error bounds under a symmetric density that is positive near zero and possibly unbounded moments; \citet{yu2026smoothed} obtain mean-squared-error rates for smoothed clipping with error feedback for decentralized strongly convex problems; and \citet{kornilov2024median} use median clipping for zeroth-order convex problems and bandits under symmetric noise with bounded $\kappa$-th moment for some $\kappa>0$, in which case the noise may have no finite expectation.

\citet{puchkin2024breaking} stabilize the stochastic gradients via smoothed medians of means before clipping and derive, under a structural condition on the marginal densities of the noise (a symmetric heavy-tailed part plus an antisymmetric remainder with finite second moment), high-probability rates for (quasi-)strongly convex problems of the order $\widetilde O(K^{-1/2})$ in general, which improves upon the heavy-tailed rate $O(K^{-2(\alpha-1)/\alpha})$ for $\alpha<4/3$, and of the order $\widetilde O(K^{-1})$ for symmetric noise.

A tail condition also appears in \citet{shestakov2026from}: there, a regularly varying tail $\PP\{\|g(x^*,\xi)-\nabla f(x^*)\|\ge u\}\sim cu^{-\alpha}$ at the minimizer quantifies how heavy-tailed data affect the finite-sum minimization and the generalization of ClipSGD, with the bounds in expectation over the algorithm.

The weak-$L^p$ tail condition itself appears in \citet{elbakkali2026stochastic}, who impose it on the sample-gradient noise with a constant scale on $\R^d$ (their Assumption~1.3) for zeroth-order optimization of smooth non-convex objectives under global sample-wise $L$-smoothness (their Assumption~1.2): their method clips the scalar directional derivatives of a two-point estimator before aggregation, and they derive high-probability guarantees for finding $\varepsilon$-stationary points using $\widetilde O(d^{p/(2(p-1))}\varepsilon^{-(3p-2)/(p-1)})$ function evaluations, which gives $\widetilde O(d\varepsilon^{-4})$ at $p=2$ under a condition allowing infinite variance.

Their analysis of the base method controls the error of the estimator at every step separately, via a vector Bernstein inequality for the mini-batch and a union bound over the steps; therefore, their base method requires mini-batches of size $\widetilde\Theta\big((\sqrt d\,(\sqrt{L(f(x^0)-f^*)}+\sigma)/\varepsilon)^{p/(p-1)}\big)$ at each of its $O(L(f(x^0)-f^*)/\varepsilon^2)$ steps, where $\sigma$ is their noise scale (their Remark~2.4); at $p=2$, both the batch size and the number of steps grow as $\varepsilon^{-2}$.

Their momentum variant reduces the running batch size in proportion to one minus the momentum parameter, down to a single two-point estimate per step (two function evaluations with shared randomness), at the price of the stepsize reduced in the same proportion, a warm-start batch of the full size, and the same total number of function evaluations up to logarithmic factors and the additive warm-start cost (their Theorem~3.1 and Remark~3.2).

Our Assumption~\ref{as:weak} differs from the symmetry-based and minimizer-based conditions in that it is a weak-$L^\alpha$ bound on the whole ball $Q$, without any symmetry or density assumptions; it has the same tail form as the condition from \citet{elbakkali2026stochastic}, in the affine and local version, and we use it for first-order methods on convex problems with one oracle call per iteration, where the polynomial rates coincide with those for the moment case. We characterize the additional logarithmic factor at the weak-$L^2$ endpoint with infinite variance, analyze prescribed clipping levels, and derive lower bounds.

Methods without clipping can also converge under infinite variance \citep{wang2021convergence,fatkhullin2025can,liu2026expectation}. These results and the in-expectation analysis under H\"older smoothness by \citet{zaman2026convergence} use moment bounds. For convex composite problems, \citet{he2025accelerated} analyze an accelerated stochastic proximal method without clipping under a uniform bounded-moment assumption. Their heavy-tailed high-probability bounds have polynomial confidence dependence; logarithmic dependence is obtained under stronger sub-Weibull assumptions. Their bounded-domain and proximal setting differs from our local analysis of unconstrained first-order methods.

\paragraph{Non-convex problems and alternatives to clipping.} \citet{zhang2020adaptive} show that clipped SGD attains the optimal in-expectation rate $K^{-(\alpha-1)/(3\alpha-2)}$ for the gradient norm under a bounded $\alpha$-th moment assumption. \citet{cutkosky2021high} derive high-probability bounds by combining clipping, normalization, and momentum. \citet{liu2023breaking} improve the analysis and obtain acceleration beyond the general lower bound when the objective is an expectation of smooth sample functions.

\citet{hubler2025gradient} analyze normalized SGD with mini-batches or momentum without clipping: they obtain in-expectation convergence without knowledge of the problem parameters and high-probability guarantees for the mini-batched variant. \citet{liu2025nonconvex} show that normalized SGD with momentum and mini-batches achieves the optimal in-expectation rate when the tail index is known; when it is unknown, their rate is $K^{-(\alpha-1)/(2\alpha)}$. \citet{sun2025revisiting} clarify the roles of normalization and clipping, and \citet{kornilov2025sign} analyze sign methods with high probability under $(L_0,L_1)$-smoothness. \citet{li2022high} obtain high-probability guarantees for non-convex SGD under sub-Weibull noise. These results concern non-convex objectives and complement our smooth convex analysis.

\paragraph{Affine and multiplicative noise.} Affine variance \citep{bottou2018optimization}, expected smoothness \citep{gower2019sgd}, and the ABC condition \citep{khaled2023better} are standard assumptions in the in-expectation analysis of SGD. Adaptive methods under affine variance are studied by \citet{faw2022power,wang2023convergence} and, under $(L_0,L_1)$-smoothness, by \citet{faw2023beyond}. These works concern non-convex objectives and include high-probability guarantees with polynomial dependence on the inverse failure probability. \citet{attia2023sgd} obtain high-probability bounds under an almost sure affine noise bound, with a sub-Gaussian extension; these assumptions exclude infinite variance.

State-dependent finite-variance bounds also appear in streaming estimation. For mean estimation and linear regression, \citet{tsai2022heavy} analyze clipped SGD with a decaying stepsize and variance growing quadratically with the distance to the solution. Their clipping level scales as $\sqrt N$ and depends on the confidence level, and their squared-error bound has order $1/N$ with logarithmic confidence dependence. \citet{das2024near} derive high-probability guarantees for smooth strongly convex objectives under a covariance bound growing quadratically with the distance to the solution (their Assumption~2 and Theorem~2). Under infinite variance, \citet{wang2021convergence} analyze unmodified SGD with state-dependent noise and a finite $p$-th moment, $p\in[1,2)$, for a class of strongly convex objectives satisfying a $p$-positive-definiteness condition on the Hessian. They obtain $L^p$ rates and a generalized central limit theorem.

Multiplicative noise also explains how heavy-tailed iterates can arise from light-tailed data \citep{gurbuzbalaban2021heavy,hodgkinson2021multiplicative}; \citet{jiao2024emergence} study this phenomenon in homogenized SGD. Heavy-tailed gradient noise is documented and modeled by \citet{simsekli2019tail,nguyen2019first,zhang2020adaptive}, and \citet{barsbey2021heavy} relate heavy tails to compressibility and generalization. Under global strong growth or multiplicative finite-variance conditions, accelerated methods attain faster rates in expectation \citep{vaswani2019fast,gupta2024nesterov}; see the comparison in Appendix~\ref{app:sstm}.

Our assumptions combine a gradient-dependent scale with heavy tails. Replacing this scale by $\sigma=\sigma_0+\sigma_1\sup_Q\|\nabla f\|$ recovers a constant-scale condition but loses the separation between the additive and multiplicative contributions (Corollary~\ref{cor:uniform}).

\paragraph{Prescribed clipping levels and clipping bias.} Clipping with a fixed level may introduce a persistent bias. Under bounded variance, \citet{koloskova2023revisiting} derive tight in-expectation guarantees for convergence to a neighborhood and show that decreasing the stepsize alone cannot remove the bias. \citet{chen2020understanding} study this effect in private SGD through the symmetry of the gradient distribution. \citet{mai2021stability} establish stability estimates beyond Lipschitz continuity, including a fixed-threshold variant.

\citet{zhang2020why} explain how clipping can accelerate optimization under $(L_0,L_1)$-smoothness. For convex objectives in this class, \citet{gorbunov2025methods} analyze gradient descent with smoothed clipping and its stochastic finite-sum variant under a common-minimizer assumption, in expectation. \citet{gaash2025convergence} derive high-probability bounds for clipped SGD with double sampling under sub-Gaussian noise, and \citet{chezhegov2025convergence} obtain such bounds under a bounded $\alpha$-th moment without exponentially large factors. Under strong growth and interpolation, \citet{lobanov2026avoiding} analyze clipping and normalization under generalized smoothness and show that a suitable batch size can eliminate the error floor. Their heavy-tailed extension concerns normalized SGD and uses a moment bound.

Under standard smoothness and a bounded $\alpha$-th moment, \citet{vatankhah2026differentially} obtain high-probability guarantees for the best iterate with any clipping level. Our Theorem~\ref{thm:fixed} controls the averaged iterate under affine weak-$L^\alpha$ noise for $\lambda\ge2\sqrt2LR$ (and $\lambda\ge240b\sigma_1^2LR$ when $\alpha=2$ and $\sigma_1>0$). Theorem~\ref{thm:any-lambda} covers any $\lambda>0$ for the best iterate when $\sigma_1\le\bar\sigma_1$ (under Assumption~\ref{as:weak}, for $\alpha<2$). For additive noise, Theorem~\ref{thm:lower} gives the unavoidable neighborhood $\frac{1}{48\cdot4^\alpha}\min\{R\sigma_0^\alpha/\lambda^{\alpha-1},R\sigma_0,LR^2\}$ through a distribution satisfying the moment condition.

\paragraph{Lower bounds.} Stochastic convex lower bounds go back to \citet{nemirovski1983problem}. Under infinite variance, \citet{vural2022mirror} prove that $K^{-(\alpha-1)/\alpha}$ is optimal for non-smooth convex problems and that Mirror Descent attains it in expectation. \citet{zhang2020adaptive} derive the non-convex counterpart $K^{-(\alpha-1)/(3\alpha-2)}$. \citet{liu2026clipped} obtain high-probability and in-expectation lower bounds for non-smooth convex problems in terms of two noise scales.

Our Propositions~\ref{prop:lower-rate} and~\ref{prop:confidence-lower} concern smooth unconstrained problems, give explicit constants, and capture the failure probability. Proposition~\ref{prop:lorentz-opt-lower} establishes the necessity of the logarithmic factors on the Lorentz scale at $\alpha=2$, in the regimes stated there. We do not derive a lower bound for the multiplicative-noise term.

\paragraph{Robust estimation.} Another approach replaces each stochastic gradient with a robust estimate computed from a batch, using, for example, coordinate-wise Catoni-type estimators \citep{holland2019efficient} or median-of-means-type estimators \citep{prasad2020robust}. \citet{lugosi2019mean} survey mean estimation under heavy tails. These analyses use batches to estimate the mean gradient, whereas the methods studied here clip individual stochastic gradients.

\section{\texorpdfstring{Weak-$L^\alpha$ and Marcinkiewicz Spaces}{Weak-L-alpha and Marcinkiewicz Spaces}}\label{app:spaces}

In this section, we recall the basic facts about weak-$L^\alpha$ and Marcinkiewicz spaces that we use in the paper. Rearrangement-invariant function spaces are the natural language for power-law tails. Marcinkiewicz spaces are one of the basic classes of such spaces, along with Lorentz and Orlicz spaces; we refer to the monographs by \citet{krein2002interpolation}, \citet{bennett1988interpolation}, \citet{lindenstrauss2013classical}, and \citet{braverman1994independent} for the details. The connection between rearrangement-invariant spaces and probabilistic inequalities for independent random variables is surveyed by \citet{astashkin2010independent}, and martingale inequalities in such spaces are studied by \citet{kikuchi1999averaging,kikuchi2004new}. In this paper, we use the Marcinkiewicz space with fundamental function $\varphi(t)=t^{1/\alpha}$; as a set, it coincides with weak-$L^\alpha=L^{\alpha,\infty}$.

Let $Y$ be a real random variable on $(\Omega,\F,\PP)$ with tail function $d_Y(u):=\PP\{|Y|>u\}$, $u\ge0$. Its non-increasing rearrangement $Y^*:(0,1]\to[0,\infty]$ and maximal rearrangement are defined as follows:
\begin{align*}
    Y^*(t)&:=\inf\{u>0:\ d_Y(u)\le t\},\\
    Y^{**}(t)&:=\frac1t\int_0^tY^*(s)\dif s .
\end{align*}

\begin{definition}[Weak-$L^\alpha$ quasi-norm]\label{def:weak}
For $\alpha>0$, let
\begin{equation}\label{eq:weak-qn}
    \wnorm{Y}:=\sup_{u>0}u\,\PP\{|Y|>u\}^{1/\alpha}=\sup_{0<t\le1}t^{1/\alpha}Y^*(t),
\end{equation}
and let $L^{\alpha,\infty}$ be the set of all $Y$ with $\wnorm{Y}<\infty$.
\end{definition}

The functional~\eqref{eq:weak-qn} is positively homogeneous and rearrangement invariant, but, in general, it does not satisfy the triangle inequality, i.e., it is only a quasi-norm \citep{bennett1988interpolation}. Nevertheless, for $\alpha>1$, the same class of random variables admits an equivalent norm.

\begin{definition}[Marcinkiewicz norm]\label{def:marc}
For $\alpha>1$ and a random variable $Y$, let $\Mnorm{Y}:=\sup_{0<t\le1}t^{1/\alpha}Y^{**}(t)$. The space $M_\alpha$ of all $Y$ with $\Mnorm{Y}<\infty$ is the Marcinkiewicz space with fundamental function $\varphi(t)=t^{1/\alpha}$.
\end{definition}

\begin{proposition}[Equivalence of weak-$L^\alpha$ and $M_\alpha$]\label{prop:eq-norms}
For $1<\alpha<\infty$, the spaces $L^{\alpha,\infty}$ and $M_\alpha$ coincide as sets and
\begin{equation}\label{eq:norm-equivalence}
    \wnorm{Y}\le\Mnorm{Y}\le\frac{\alpha}{\alpha-1}\wnorm{Y}.
\end{equation}
\end{proposition}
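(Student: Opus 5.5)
The plan is to work entirely with the rearrangements $Y^*$ and $Y^{**}$ on $(0,1]$, using the second expression in~\eqref{eq:weak-qn}. Both inequalities then reduce to pointwise comparisons between $Y^*$ and $Y^{**}$. Once both bounds hold, the two spaces coincide as sets, since finiteness of one quantity is equivalent to finiteness of the other.

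\emph{Lower bound.} The rearrangement $Y^*$ is non-increasing. Hence
\[
Y^{**}(t)=\frac1t\int_0^tY^*(s)\dif s\ge Y^*(t)
\]
for every $t\in(0,1]$. Multiplying by $t^{1/\alpha}$ and taking the supremum over $t$ gives $\wnorm{Y}\le\Mnorm{Y}$.

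\emph{Upper bound.} Set $W:=\wnorm{Y}$ and assume $W<\infty$, since otherwise there is nothing to prove. By~\eqref{eq:weak-qn}, $Y^*(s)\le Ws^{-1/\alpha}$ for all $s\in(0,1]$. Because $\alpha>1$, the function $s^{-1/\alpha}$ is integrable at $0$, and
\[
\int_0^tY^*(s)\dif s\le W\int_0^ts^{-1/\alpha}\dif s=\frac{\alpha}{\alpha-1}Wt^{1-1/\alpha}.
\]
Dividing by $t$ gives $t^{1/\alpha}Y^{**}(t)\le\frac{\alpha}{\alpha-1}W$, and taking the supremum over $t$ yields~\eqref{eq:norm-equivalence}.

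\emph{Identity in~\eqref{eq:weak-qn}.} The only step needing care is the identity $\sup_{u>0}u\,d_Y(u)^{1/\alpha}=\sup_{0<t\le1}t^{1/\alpha}Y^*(t)$, if one does not take it as part of the definition. We use that $d_Y$ is right-continuous and non-increasing, which gives the equivalence $Y^*(t)\le u$ if and only if $d_Y(u)\le t$.

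For the inequality "$\le$", fix $u$ with $t:=d_Y(u)>0$. For every $u'<u$ we have $d_Y(u')\ge t>t-\epsilon$ for any small $\epsilon>0$, so $Y^*(t-\epsilon)\ge u'$. Hence $\sup_s s^{1/\alpha}Y^*(s)\ge(t-\epsilon)^{1/\alpha}u'$, and letting $\epsilon\to0$ and $u'\to u$ gives the bound.

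For the inequality "$\ge$", fix $t$ and any $u<Y^*(t)$. Then $d_Y(u)>t$, so $u\,d_Y(u)^{1/\alpha}\ge ut^{1/\alpha}$. Letting $u\uparrow Y^*(t)$ gives the bound; the case $Y^*(t)=\infty$ is handled the same way, giving an infinite supremum on both sides.

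I expect this bookkeeping with strict versus non-strict inequalities, together with the possibility that $Y^*$ takes the value $\infty$, to be the only real obstacle. The norm comparison itself is the short computation above.
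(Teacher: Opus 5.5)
Your proof is correct and follows the paper's argument: the lower bound uses $Y^{**}\ge Y^*$, which holds because $Y^*$ is non-increasing, and the upper bound integrates $Y^*(s)\le\wnorm{Y}s^{-1/\alpha}$. Your additional check of the identity between the two expressions in~\eqref{eq:weak-qn}, which the paper takes as part of the definition, is also sound, including the passage to $t-\epsilon$ needed when $d_Y$ is flat at level $t$.
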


\begin{proposition}[Fundamental function and monotonicity]\label{prop:fund}
For every event $A\in\F$, $\Mnorm{\ind_A}=\PP(A)^{1/\alpha}$. If $|Y|\le|Z|$ almost surely, then $\Mnorm{Y}\le\Mnorm{Z}$. The norm depends only on the distribution of $|Y|$, i.e., $M_\alpha$ is rearrangement invariant.
\end{proposition}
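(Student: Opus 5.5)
The three claims reduce to statements about the rearrangement $Y^*$ on $(0,1]$, and I would prove them in the order fundamental function, monotonicity, rearrangement invariance.

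\emph{Rearrangement invariance.} By definition, $Y^*$ is determined by the tail function $d_Y(u)=\PP\{|Y|>u\}$. Hence $Y^*$, $Y^{**}$ and $\Mnorm{Y}$ depend only on the distribution of $|Y|$.

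\emph{Monotonicity.} Suppose $|Y|\le|Z|$ almost surely. Then $\{|Y|>u\}\subseteq\{|Z|>u\}$, so $d_Y(u)\le d_Z(u)$ for every $u$. Consequently the set $\{u>0:d_Z(u)\le t\}$ is contained in $\{u>0:d_Y(u)\le t\}$. Taking infima gives $Y^*(t)\le Z^*(t)$ for every $t\in(0,1]$. Averaging over $(0,t)$ gives $Y^{**}(t)\le Z^{**}(t)$, and taking the supremum after multiplying by $t^{1/\alpha}$ gives $\Mnorm{Y}\le\Mnorm{Z}$.

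\emph{Fundamental function.} Let $p:=\PP(A)$. The tail function of $\ind_A$ is $d(u)=p$ for $0\le u<1$ and $d(u)=0$ for $u\ge1$. It follows that $\ind_A^*(t)=1$ for $t<p$ and $\ind_A^*(t)=0$ for $t\ge p$; at $t=p$ the infimum over $u$ with $d(u)\le p$ is $0$. Therefore
\[
\ind_A^{**}(t)=\frac{\min\{t,p\}}{t},\qquad t^{1/\alpha}\ind_A^{**}(t)=
\begin{cases}
t^{1/\alpha}, & t\le p,\\
p\,t^{1/\alpha-1}, & t>p.
\end{cases}
\]
The first branch is increasing in $t$. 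Since $\alpha>1$, the exponent $1/\alpha-1$ is negative, so the second branch is decreasing in $t$. The supremum over $(0,1]$ is therefore attained at $t=p$ and equals $p^{1/\alpha}$. If $p=0$, both sides are $0$.

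The only step needing care is the handling of the right-continuous conventions in the definition of $Y^*$ at the jump $t=p$. This point does not affect $Y^{**}$, since it is a Lebesgue integral and changing $Y^*$ at a single point does not change it.
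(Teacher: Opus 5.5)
Your proof is correct and follows the paper's argument essentially step by step: the same computation of $\ind_A^*$ and $\ind_A^{**}$ (with $\ind_A^*(p)=0$ at the jump), the same monotonicity chain $d_Y\le d_Z\Rightarrow Y^*\le Z^*\Rightarrow Y^{**}\le Z^{**}$, and the same observation that $Y^*$ is determined by the law of $|Y|$. You only spell out steps the paper leaves implicit: the inclusion of the sets defining the infima, the monotonicity of the two branches that puts the supremum at $t=p$, and the degenerate case $p=0$.
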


Therefore, the absence of the triangle inequality for the tail quasi-norm does not cause any difficulties in the clipping lemma (Lemma~\ref{lem:weak-clip}): it is convenient to prove the lemma using the quasi-norm, and one can always pass to the equivalent Marcinkiewicz norm. The next statement clarifies the exact relation between the moment and the weak-$L^\alpha$ settings.

\begin{proposition}[Embeddings]\label{prop:embeddings}
Let $\alpha>0$.
\begin{enumerate}
    \item If $Y\in L^\alpha$, then $\wnorm{Y}\le\left(\E|Y|^\alpha\right)^{1/\alpha}$.
    \item If $Y\in L^{\alpha,\infty}$ and $0<q<\alpha$, then $Y\in L^q$ and
    \begin{equation}\label{eq:weak-to-q}
        \E|Y|^q\le\frac{\alpha}{\alpha-q}\wnorm{Y}^q .
    \end{equation}
\end{enumerate}
In general, $Y\in L^{\alpha,\infty}$ does not imply $Y\in L^\alpha$.
\end{proposition}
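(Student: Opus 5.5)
Both parts of Proposition~\ref{prop:embeddings} follow from the layer-cake representation $\E|Y|^q=\int_0^\infty q u^{q-1}\PP\{|Y|>u\}\dif u$ together with Markov's inequality, and a Pareto-type variable supplies the counterexample.

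\emph{Part 1.} For every $u>0$, Markov's inequality gives $\PP\{|Y|>u\}\le u^{-\alpha}\E|Y|^\alpha$. Hence $u\,\PP\{|Y|>u\}^{1/\alpha}\le(\E|Y|^\alpha)^{1/\alpha}$. Taking the supremum over $u>0$ in~\eqref{eq:weak-qn} gives the claim.

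\emph{Part 2.} Set $w:=\wnorm{Y}$; we may assume $0<w<\infty$, since the case $w=0$ is trivial. Split the layer-cake integral at a threshold $s>0$.
\begin{itemize}
    \item On $[0,s]$, bound the tail by $1$. This contributes at most $s^q$.
    \item On $[s,\infty)$, use $\PP\{|Y|>u\}\le w^\alpha u^{-\alpha}$. Since $q<\alpha$, the integral converges and contributes $\frac{q}{\alpha-q}w^\alpha s^{q-\alpha}$.
\end{itemize}
Now choose $s=w$. The total becomes $w^q\bigl(1+\frac{q}{\alpha-q}\bigr)=\frac{\alpha}{\alpha-q}w^q$, which is exactly~\eqref{eq:weak-to-q}. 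The choice $s=w$ is also the minimizer of $s^q+\frac{q}{\alpha-q}w^\alpha s^{q-\alpha}$, so no better constant comes out of this argument.

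\emph{Counterexample.} Take $Y=\mathsf R$ with $\PP\{\mathsf R>u\}=\min\{1,u^{-\alpha}\}$, as in Section~\ref{sec:pareto}.
\begin{itemize}
    \item For $u\ge1$ we have $u\,\PP\{\mathsf R>u\}^{1/\alpha}=1$, and for $u<1$ it equals $u<1$. Hence $\wnorm{\mathsf R}=1$.
    \item By the layer-cake formula with $q=\alpha$, $\E\mathsf R^\alpha\ge\int_1^\infty\alpha u^{\alpha-1}u^{-\alpha}\dif u=\infty$.
\end{itemize}
So $\mathsf R\in L^{\alpha,\infty}\setminus L^\alpha$.

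No step is a genuine obstacle. The only care needed is justifying the layer-cake formula for nonnegative variables, which follows from Tonelli's theorem, and choosing the split point $s=w$ to obtain the sharp constant $\alpha/(\alpha-q)$.
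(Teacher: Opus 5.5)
Your proposal is correct and follows the paper's proof essentially step for step: Markov's inequality for part 1, the layer-cake formula split at $\wnorm{Y}$ for part 2, and the tail $\min\{1,u^{-\alpha}\}$ as the counterexample. Your added check that $s=\wnorm{Y}$ minimizes the split bound is a correct extra remark.
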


\begin{remark}[Direction of the implications]\label{rem:direction}
For the same exponent $\alpha$, weak-$L^\alpha$ control does \emph{not} imply a bound on the $\alpha$-th moment. The correct statements are the following: strong $L^\alpha$ control implies weak-$L^\alpha$ control with the same constant, and weak-$L^\alpha$ control implies finiteness of every moment of order $q<\alpha$ with the explicit constant~\eqref{eq:weak-to-q}. In particular, Assumption~\ref{as:weak} implies Assumption~\ref{as:moment} with $\alpha$ replaced by any $q\in(1,\alpha)$ and $\sigma_j$ replaced by $(\alpha/(\alpha-q))^{1/q}\sigma_j$, $j\in\{0,1\}$ (it is sufficient to use~\eqref{eq:weak-to-q} and $(s+t)^{q/\alpha}\le s^{q/\alpha}+t^{q/\alpha}$). That is, it is the \emph{result} obtained under the weaker assumption that is stronger.
\end{remark}

\subsection{Proof of Proposition~\ref{prop:eq-norms}}

\begin{proof}
Since $Y^*$ is non-increasing, we have $Y^{**}(t)=\frac1t\int_0^tY^*(s)\dif s\ge Y^*(t)$, which gives the first inequality in~\eqref{eq:norm-equivalence}.

{Next, let $M:=\wnorm{Y}$. If $M=\infty$, the second inequality in~\eqref{eq:norm-equivalence} is immediate. Otherwise, $Y^*(s)\le Ms^{-1/\alpha}$ for every $s\in(0,1]$, and therefore}
\[
    Y^{**}(t)\le\frac Mt\int_0^ts^{-1/\alpha}\dif s=\frac{\alpha}{\alpha-1}Mt^{-1/\alpha}.
\]
Multiplying both sides by $t^{1/\alpha}$ and taking the supremum over $t$, we get the second inequality in~\eqref{eq:norm-equivalence}. Finally, the completeness of the space with norm $\Mnorm{\cdot}$ is a standard property of Marcinkiewicz spaces \citep{bennett1988interpolation,braverman1994independent}. Thus, for $\alpha>1$, the weak-$L^\alpha$ quasi-norm is equivalent to a Banach norm, which concludes the proof.
\end{proof}

\subsection{Proof of Proposition~\ref{prop:fund}}

\begin{proof}
Let $s:=\PP(A)$. For $Y=\ind_A$, we have
\[
    Y^*(t)=\begin{cases}1,&0<t<s,\\0,&s\le t\le1,\end{cases}\qquad
    Y^{**}(t)=\begin{cases}1,&0<t\le s,\\ s/t,&s<t\le1,\end{cases}
\]
since $d_Y(u)=s$ for $0<u<1$ and $d_Y(u)=0$ for $u\ge1$, meaning that $Y^*(t)=0$ as soon as $t\ge s$ (in particular, at $t=s$). Thus, $\sup_{0<t\le1}t^{1/\alpha}Y^{**}(t)=s^{1/\alpha}$. Next, if $|Y|\le|Z|$, then $Y^*\le Z^*$ and, as a consequence, $Y^{**}\le Z^{**}$, which gives the lattice monotonicity of the norm. Finally, if $|Y|$ and $|Z|$ are equidistributed, then $Y^*=Z^*$, and the norms coincide, which concludes the proof.
\end{proof}

\subsection{Proof of Proposition~\ref{prop:embeddings}}

\begin{proof}
The first claim follows from Markov's inequality: $\PP\{|Y|>u\}=\PP\{|Y|^\alpha>u^\alpha\}\le\E|Y|^\alpha/u^\alpha$. Thus, $u\,\PP\{|Y|>u\}^{1/\alpha}\le(\E|Y|^\alpha)^{1/\alpha}$, and it remains to take the supremum over $u>0$.

{For the second estimate, let $M:=\wnorm{Y}$. If $M=0$, then $Y=0$ almost surely and the claim is immediate; if $M=\infty$, the estimate with an infinite right-hand side is trivial. It remains to consider $0<M<\infty$. Then $\PP\{|Y|>u\}\le\min\{1,M^\alpha/u^\alpha\}$, and the layer-cake representation gives}
\begin{align*}
    \E|Y|^q&=q\int_0^\infty u^{q-1}\PP\{|Y|>u\}\dif u
    \le q\int_0^M u^{q-1}\dif u+qM^\alpha\int_M^\infty u^{q-\alpha-1}\dif u\\
    &=M^q+\frac{q}{\alpha-q}M^q=\frac{\alpha}{\alpha-q}M^q .
\end{align*}
{To prove the last claim for every $\alpha>0$, take a nonnegative random variable $Y$ with $\PP\{Y>u\}=\min\{1,u^{-\alpha}\}$ for $u>0$. Then $\wnorm{Y}=1$, whereas the layer-cake formula gives $\E Y^\alpha\ge\alpha\int_1^\infty u^{-1}\dif u=\infty$. This counterexample requires no centering or finite first moment and concludes the proof.}
\end{proof}

\subsection{The Pareto Example}\label{app:pareto-example}

The following example, which we summarize in Section~\ref{sec:pareto}, shows that the strict inclusion $L^\alpha\subsetneq L^{\alpha,\infty}$ is attained by the simplest heavy-tailed oracle. {For $L,\sigma>0$, we consider the one-dimensional quadratic problem $f(x)=\frac L2x^2$, $\nabla f(x)=Lx$, with}
\begin{equation}\label{eq:pareto-g}
    g(x,\xi)=Lx+\xi,\qquad \xi=\sigma\varepsilon\mathsf{R},
\end{equation}
where $\varepsilon$ is a Rademacher variable independent of $\mathsf{R}$, and $\mathsf{R}$ has the Pareto tail:
\begin{equation}\label{eq:pareto-tail}
    \PP\{\mathsf{R}>u\}=\begin{cases}1,&0<u<1,\\ u^{-\alpha},&u\ge1,\end{cases}\qquad 1<\alpha\le2.
\end{equation}

\begin{proposition}[Pareto noise at the critical index]\label{prop:pareto}
For the noise~\eqref{eq:pareto-g}--\eqref{eq:pareto-tail}, $\E\xi=0$, $\wnorm{\xi}=\sigma$ and $\E|\xi|^\alpha=\infty$. Moreover, $\E|\xi|^q=\sigma^q\frac{\alpha}{\alpha-q}$ for every $0<q<\alpha$.
\end{proposition}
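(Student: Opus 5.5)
The plan is to verify the four claims of Proposition~\ref{prop:pareto} directly from the tail~\eqref{eq:pareto-tail}, using that $|\xi|=\sigma\mathsf R$ because $|\varepsilon|=1$.

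First, the mean. Since $\mathsf R\ge1$ almost surely and $\alpha>1$, the layer-cake formula gives
\[
\E\mathsf R=1+\int_1^\infty u^{-\alpha}\dif u=\frac{\alpha}{\alpha-1}<\infty .
\]
Hence $\xi$ is integrable. Independence of $\varepsilon$ and $\mathsf R$ then gives $\E\xi=\sigma\,\E\varepsilon\,\E\mathsf R=0$.

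Second, the weak-$L^\alpha$ quasi-norm. We have $\PP\{|\xi|>u\}=\PP\{\mathsf R>u/\sigma\}$, which equals $1$ for $u<\sigma$ and $\sigma^\alpha u^{-\alpha}$ for $u\ge\sigma$. Therefore $u\,\PP\{|\xi|>u\}^{1/\alpha}$ equals $u$ for $u<\sigma$ and is identically $\sigma$ for $u\ge\sigma$. The supremum in~\eqref{eq:weak-qn} is thus exactly $\sigma$, and it is attained for every $u\ge\sigma$.

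Third, the moments. For $0<q<\alpha$, the layer-cake representation gives
\[
\E|\xi|^q=q\int_0^\sigma u^{q-1}\dif u+q\sigma^\alpha\int_\sigma^\infty u^{q-\alpha-1}\dif u=\sigma^q+\frac{q}{\alpha-q}\sigma^q=\frac{\alpha}{\alpha-q}\sigma^q .
\]
This is exactly the computation in the proof of Proposition~\ref{prop:embeddings}, but with equalities, since the tail bound holds with equality here. For $q=\alpha$, the second integral becomes $\sigma^\alpha\alpha\int_\sigma^\infty u^{-1}\dif u=\infty$, so $\E|\xi|^\alpha=\infty$. This matches the counterexample already given in that proof.

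No step is a real obstacle. The only points needing care are confirming integrability before computing $\E\xi$, and noting that the tail formula is piecewise, with the break at $u=\sigma$.
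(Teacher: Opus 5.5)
Your proof is correct and follows the paper's argument essentially step by step. The only difference is cosmetic: you compute $\E\mathsf R$ and the $q$-th moments with the layer-cake formula applied to the piecewise tail of $|\xi|$, whereas the paper integrates against the Pareto density $\alpha r^{-\alpha-1}$ of $\mathsf R$ on $[1,\infty)$ and then multiplies by $\sigma^q$.
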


That is, the moment-based theory with exponent $\alpha$ is not applicable to this example, while Assumption~\ref{as:weak} holds with $\sigma_0=\sigma$, $\sigma_1=0$, and the exact scale.

\subsection{Proof of Proposition~\ref{prop:pareto}}

\begin{proof}
By the independence of $\varepsilon$ and $\mathsf R$ and the symmetry of $\varepsilon$, we have $\E\xi=\sigma\,\E\varepsilon\,\E\mathsf R=0$, where $\E\mathsf R<\infty$ since $\alpha>1$. Next, since $|\xi|=\sigma\mathsf R$, for $u\ge\sigma$ we have $\PP\{|\xi|>u\}=\PP\{\mathsf R>u/\sigma\}=(\sigma/u)^\alpha$, and for $0<u<\sigma$ this probability equals $1$. Thus, $\sup_{u>0}u\,\PP\{|\xi|>u\}^{1/\alpha}=\sigma$.

Finally, the density of $\mathsf R$ on $[1,\infty)$ is $p(r)=\alpha r^{-\alpha-1}$. Therefore, $\E\mathsf R^\alpha=\alpha\int_1^\infty r^{-1}\dif r=\infty$, while for $0<q<\alpha$ we have $\E\mathsf R^q=\alpha\int_1^\infty r^{q-\alpha-1}\dif r=\frac{\alpha}{\alpha-q}$. Multiplying the latter by $\sigma^q$, we get the last claim, which concludes the proof.
\end{proof}

\section{Proofs of the Clipping Lemmas}\label{app:clipping}

\subsection{Separate Statements and Remarks}\label{app:clip-statements}

In this section, we provide the proofs of the clipping lemmas. Lemma~\ref{lem:clip} summarizes the following two statements, which we prove in Sections~\ref{app:clip-weak-proof} and~\ref{app:clip-moment-proof}; both of them also hold conditionally (see Corollary~\ref{cor:conditional}).

\begin{lemma}[Clipping under the weak-$L^\alpha$ condition]\label{lem:weak-clip}
{Let $1<\alpha\le2$ and $\lambda>0$, and let $X$ be a random vector in $\R^d$ with $\E X=x$ and $\PP\{\|X-x\|>u\}\le A(x)u^{-\alpha}$ for all $u>0$.} Let $\tX:=\clip(X,\lambda)$ and assume $\|x\|\le\lambda/2$. Then $\|\tX-\E\tX\|\le2\lambda$ almost surely, $\E\|\tX-\E\tX\|^2\le\E\|\tX-x\|^2$,
\begin{align}
    \|\E\tX-x\|&\le b_\alpha\,\frac{A(x)}{\lambda^{\alpha-1}},\label{eq:weak-clip-bias}\\
    \E\|\tX-x\|^2&\le v_\alpha\!\left(\frac{(3\lambda/2)^\alpha}{A(x)}\right)\lambda^{2-\alpha}A(x),\label{eq:weak-clip-second}
\end{align}
where the right-hand side of~\eqref{eq:weak-clip-second} is $0$ if $A(x)=0$, and $b_\alpha$, $v_\alpha(\cdot)$ are defined in~\eqref{eq:moment-clipping-constants}--\eqref{eq:BC-alpha}.
\end{lemma}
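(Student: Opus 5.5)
The plan follows the argument of \citet[Lemma~5.1]{sadiev2023high}. The moment bound there is replaced by the tail integral $\int\PP\{\|X-x\|>u\}\,\dif u$, evaluated against $A(x)u^{-\alpha}$. Throughout, write $Y:=\|X-x\|$ and $A:=A(x)$, and note that $\clip(\cdot,\lambda)$ is the Euclidean projection onto $B_\lambda(0)$.

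\emph{Trivial claims.} Since $\|\tX\|\le\lambda$ almost surely, also $\|\E\tX\|\le\lambda$, and hence $\|\tX-\E\tX\|\le2\lambda$. The inequality $\E\|\tX-\E\tX\|^2\le\E\|\tX-x\|^2$ holds because the mean minimizes $c\mapsto\E\|\tX-c\|^2$; both sides are finite since $\tX$ is bounded.

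\emph{Bias.} On $\{\|X\|\le\lambda\}$ we have $\tX=X$. Using $\E X=x$, we get $\E\tX-x=\E[(\tX-X)\ind_{\{\|X\|>\lambda\}}]$. On the event $\{\|X\|>\lambda\}$, the clipping rule gives $\|\tX-X\|=\|X\|-\lambda\le Y+\|x\|-\lambda\le Y-\lambda/2$. Therefore
\[
\|\E\tX-x\|\le\E(Y-\lambda/2)_+=\int_{\lambda/2}^\infty\PP\{Y>u\}\,\dif u\le A\int_{\lambda/2}^\infty u^{-\alpha}\,\dif u=\frac{2^{\alpha-1}A}{(\alpha-1)\lambda^{\alpha-1}},
\]
which is exactly $b_\alpha A\lambda^{1-\alpha}$.

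\emph{Second moment.} Set $M:=3\lambda/2$. First I show $\|\tX-x\|\le\min\{Y,M\}$. The bound by $Y$ holds because projection onto the convex set $B_\lambda(0)$ is nonexpansive and $x$ lies in that ball. The bound by $M$ follows from $\|\tX\|\le\lambda$ and $\|x\|\le\lambda/2$. The layer-cake formula then gives
\[
\E\|\tX-x\|^2\le\int_0^M2u\min\{1,Au^{-\alpha}\}\,\dif u .
\]
If $A=0$ this vanishes. If $u_0:=A^{1/\alpha}\ge M$, the integral is at most $M^2\le M^{2-\alpha}A$, which is dominated by both expressions in $v_\alpha$.

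Otherwise $u_0<M$, and the integral is at most $u_0^2+2A\int_{u_0}^Mu^{1-\alpha}\,\dif u$. There are two estimates for the tail integral.
\begin{itemize}
\item For $\alpha<2$, integrating exactly gives $\frac{2A}{2-\alpha}(M^{2-\alpha}-u_0^{2-\alpha})$. Since $u_0^2=Au_0^{2-\alpha}\le\frac{2A}{2-\alpha}u_0^{2-\alpha}$, the total is at most $\frac{2}{2-\alpha}M^{2-\alpha}A$.
\item For the logarithmic estimate, valid for all $\alpha\in(1,2]$, use $u^{1-\alpha}\le M^{2-\alpha}/u$ on $[u_0,M]$. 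This gives $2AM^{2-\alpha}\ln(M/u_0)=\frac2\alpha AM^{2-\alpha}\ln(M^\alpha/A)$. Adding $u_0^2\le AM^{2-\alpha}$ yields $AM^{2-\alpha}\bigl(1+\frac2\alpha\ln s\bigr)$ with $s=(3\lambda/2)^\alpha/A$, and $\ln s\le\ln(1+s)$.
\end{itemize}
Taking the minimum of the two and writing $M^{2-\alpha}=(3/2)^{2-\alpha}\lambda^{2-\alpha}$ gives~\eqref{eq:weak-clip-second}.

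\emph{Main obstacle.} The only delicate step is the case bookkeeping in the second-moment bound. One has to check that the $u_0^2$ term is absorbed in both regimes, and that the case $u_0\ge M$ (i.e.\ $s\le1$) is dominated by $v_\alpha(s)$ even though $\ln s<0$ there. I expect both checks to go through with the constants above.
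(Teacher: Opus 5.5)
Your proposal is correct and follows the paper's proof essentially step by step: the same tail-integral bias bound via $(Y-\lambda/2)_+$, the same contraction $\|\tX-x\|\le\min\{Y,3\lambda/2\}$, and the same layer-cake split at $u_0=A^{1/\alpha}$ for the logarithmic branch, including the separate case $u_0\ge 3\lambda/2$. The differences are cosmetic. You get the contraction from nonexpansiveness of the projection onto $B_\lambda(0)$, whereas the paper uses the explicit identity $(s-\lambda)(s+\lambda-2\abr{e,x})\ge0$. For the branch $\alpha<2$ you also split at $u_0$, whereas the paper integrates $Au^{1-\alpha}$ directly from $0$.
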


\begin{lemma}[Clipping under the moment condition]\label{lem:moment-clip}
{Let $\lambda>0$, let $X$ be a random vector in $\R^d$, and let $\tX:=\clip(X,\lambda)$.} Then $\|\tX-\E\tX\|\le2\lambda$ almost surely. If, in addition, $\E X=x$, $\E\|X-x\|^\alpha\le A(x)<\infty$ for some $\alpha\in(1,2]$, and $\|x\|\le\lambda/2$, then, with $b_\alpha^{\rm mom}$ and $v_\alpha^{\rm mom}$ from~\eqref{eq:moment-clipping-constants},
\begin{align}
    \|\E\tX-x\|&\le\frac{b_\alpha^{\rm mom}A(x)}{\lambda^{\alpha-1}},\label{eq:moment-bias}\\
    \E\|\tX-\E\tX\|^2\le\E\|\tX-x\|^2&\le v_\alpha^{\rm mom}\lambda^{2-\alpha}A(x).\label{eq:moment-second}
\end{align}
Here $1/2\le b_\alpha^{\rm mom}\le1$ and $1\le v_\alpha^{\rm mom}\le3/2$; in particular, $b_2^{\rm mom}=1/2$ and $v_2^{\rm mom}=1$.
\end{lemma}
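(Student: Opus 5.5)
The plan is to prove the three claims separately. Each rests on a pointwise inequality for $\|\tX-x\|$ or $\|\tX-X\|$, followed by taking expectations. Throughout, write $Y:=\|X-x\|$, and use that $\clip(\cdot,\lambda)$ is the Euclidean projection onto the ball $B_\lambda(0)$.

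\emph{Almost sure bound and variance.} Since $\|\tX\|\le\lambda$ pointwise, also $\|\E\tX\|\le\lambda$, and hence $\|\tX-\E\tX\|\le2\lambda$; this needs no assumption on $X$. For the first inequality in~\eqref{eq:moment-second}, I would use the bias--variance identity
\[
\E\|\tX-x\|^2=\E\|\tX-\E\tX\|^2+\|\E\tX-x\|^2,
\]
which holds because $x$ is deterministic and $\tX$ is bounded, so all quantities are finite.

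\emph{Second moment.} The key pointwise bound is
\[
\|\tX-x\|\le\min\{Y,\,3\lambda/2\}.
\]
The first part holds because $\|x\|\le\lambda/2\le\lambda$ means $x$ lies in the ball, so $\clip(x,\lambda)=x$, and projection onto a convex set is nonexpansive, giving $\|\tX-x\|\le\|X-x\|$. The second part follows from $\|\tX\|\le\lambda$ and $\|x\|\le\lambda/2$. Since $\alpha\le2$, we have $\min\{Y,c\}^2\le c^{2-\alpha}Y^\alpha$ with $c=3\lambda/2$. Taking expectations gives
\[
\E\|\tX-x\|^2\le(3/2)^{2-\alpha}\lambda^{2-\alpha}A(x),
\]
which is the claim with $v_\alpha^{\rm mom}=(3/2)^{2-\alpha}$.

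\emph{Bias.} Since $\E X=x$, we have $\E\tX-x=\E[\tX-X]$, so $\|\E\tX-x\|\le\E\|\tX-X\|$. Now $\tX-X=0$ on $\{\|X\|\le\lambda\}$. Otherwise $\|\tX-X\|=\|X\|-\lambda\le Y+\lambda/2-\lambda$, hence
\[
\|\tX-X\|\le(Y-\lambda/2)_+ .
\]
The remaining step, where the constant comes from, is the sharp inequality
\[
(y-c)_+\le\kappa\,y^\alpha/c^{\alpha-1}\quad\text{for all } y\ge0,
\]
with $\kappa=\sup_{y>c}(y-c)c^{\alpha-1}/y^\alpha$. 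Maximizing in $y$ gives $y=\alpha c/(\alpha-1)$ and $\kappa=(\alpha-1)^{\alpha-1}/\alpha^\alpha$. With $c=\lambda/2$ this yields
\[
\|\E\tX-x\|\le\frac{2^{\alpha-1}(\alpha-1)^{\alpha-1}}{\alpha^\alpha}\cdot\frac{\E Y^\alpha}{\lambda^{\alpha-1}}\le\frac{b_\alpha^{\rm mom}A(x)}{\lambda^{\alpha-1}}.
\]

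\emph{Constants.} The ranges of the constants are elementary. The bound $1\le(3/2)^{2-\alpha}\le3/2$ holds for $\alpha\in(1,2]$. For $b_\alpha^{\rm mom}$, one checks that $\ln b_\alpha^{\rm mom}$ is monotone, or bounds it directly. Its limit as $\alpha\downarrow1$ is $1$, and its value at $\alpha=2$ is $2\cdot1/4=1/2$. The only mildly delicate point is this monotonicity check; the probabilistic content is entirely in the two pointwise inequalities above.
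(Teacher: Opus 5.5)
Your proposal is correct and follows the paper's proof essentially step by step. It uses the same pointwise bounds $\|\tX-X\|\le(Y-\lambda/2)_+$ and $\|\tX-x\|\le\min\{Y,3\lambda/2\}$, the same maximization of $(y-c)/y^\alpha$ at $y=\alpha c/(\alpha-1)$, the same inequality $\min\{y^2,d^2\}\le d^{2-\alpha}y^\alpha$, and the same variance decomposition.

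The only differences are cosmetic. You get $\|\tX-x\|\le\|X-x\|$ from nonexpansiveness of the projection onto $B_\lambda(0)$, using $x\in B_\lambda(0)$, whereas the paper expands $\|X-x\|^2-\|\lambda e-x\|^2=(s-\lambda)(s+\lambda-2\abr{e,x})\ge0$ directly. The monotonicity of $b_\alpha^{\rm mom}$ that you defer follows from $\frac{\dif}{\dif\alpha}\ln b_\alpha^{\rm mom}=\ln\frac{2(\alpha-1)}{\alpha}\le0$ on $(1,2]$.
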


\paragraph{The constant $v_\alpha(s)$.} For $\alpha<2$, we have $v_\alpha(s)\le v_\alpha:=v_\alpha(\infty)=\frac{2}{2-\alpha}(\frac32)^{2-\alpha}$, and, therefore, \eqref{eq:weak-clip-second} implies $\E\|\tX-x\|^2\le v_\alpha\lambda^{2-\alpha}A(x)$, where the constant does not depend on $\lambda$ but blows up as $\alpha\uparrow2$. The logarithmic branch in~\eqref{eq:BC-alpha} is finite for every $s<\infty$, including the case $\alpha=2$. It is the smaller one if and only if $\ln(1+s)<\frac{\alpha^2}{2(2-\alpha)}$, i.e., unless $\lambda$ exceeds the tail scale $A(x)^{1/\alpha}$ by a factor exponential in $(2-\alpha)^{-1}$. When the noise scale varies along the trajectory, the logarithms are combined using concavity: $s\mapsto s\,v_\alpha(C/s)$ is concave and non-decreasing on $[0,\infty)$ for every $C\ge0$ (see Lemma~\ref{lem:log-sum} in Appendix~\ref{app:clipping}), and, therefore, only the average noise scale enters the logarithm.

\begin{remark}[The endpoint $\alpha=2$]\label{rem:alpha-two}
{At $\alpha=2$ and $A(x)>0$, Lemma~\ref{lem:weak-clip} gives $\E\|\tX-x\|^2\le A(x)\big(1+\ln\big(1+\frac{9\lambda^2}{4A(x)}\big)\big)$; the bound is zero if $A(x)=0$. The logarithm is necessary even for centered scalar noise. Indeed, let $A>0$, $u_0:=A^{1/\alpha}$, and let $Y\ge0$ satisfy $\PP\{Y>u\}=\min\{1,A/u^\alpha\}$, with $1<\alpha\le2$. For an independent Rademacher sign $\epsilon$, the variable $X:=\epsilon Y$ has mean zero and weak-$L^\alpha$ quasi-norm $A^{1/\alpha}$. For $\lambda\ge u_0$, direct integration gives}
\[
    {\E|\clip(X,\lambda)|^2=\E\min\{Y^2,\lambda^2\}
    =\begin{cases}
    u_0^2+\dfrac{2A}{2-\alpha}\big(\lambda^{2-\alpha}-u_0^{2-\alpha}\big),&1<\alpha<2,\\[0.4em]
    A\big(1+2\ln(\lambda/\sqrt A)\big),&\alpha=2.
    \end{cases}}
\]
{At $\alpha=2$, the second expression grows logarithmically with $\lambda$, while $\E X^2=\infty$. For $\alpha<2$, dividing the first expression by $A\lambda^{2-\alpha}$ and taking the supremum over $\lambda\ge u_0$ gives $2/(2-\alpha)$. Thus, any second-moment bound of the form $vA\lambda^{2-\alpha}$ with $v$ independent of $\lambda$ must have $v\ge2/(2-\alpha)$. For each fixed ratio $\lambda/u_0$, the expression instead has the finite logarithmic limit as $\alpha\uparrow2$. In contrast, Lemma~\ref{lem:moment-clip} holds at $\alpha=2$ with $v_2^{\rm mom}=1$.}
\end{remark}

\begin{remark}[The endpoint $\alpha=1$]\label{rem:alpha-one}
{The factor $(\alpha-1)^{-1}$ in the bias bound is necessary within the class of centered noise with a finite mean. To see this, fix $1<\alpha\le2$ and let}
\[
    {\lambda_\alpha:=\left(\frac{2\alpha}{\alpha-1}\right)^{1/(\alpha-1)},\qquad
    p:=\lambda_\alpha^{-\alpha},\qquad d:=\frac{1}{2(1-p)}.}
\]
{Let $Y\ge\lambda_\alpha$ have the Pareto tail $\PP\{Y>u\}=(\lambda_\alpha/u)^\alpha$ for $u\ge\lambda_\alpha$, and let $X$ equal an independent copy of $Y$ with probability $p$ and equal $-d$ otherwise. Since $p\E Y=p\alpha\lambda_\alpha/(\alpha-1)=1/2=(1-p)d$, we have $\E X=0$. Also, $\lambda_\alpha\ge4$, $p\le1/4$, and $d<1$. For $u<d$ the tail bound $\PP\{|X|>u\}\le u^{-\alpha}$ is trivial; for $d\le u<\lambda_\alpha$ it follows from $p\le u^{-\alpha}$; and for $u\ge\lambda_\alpha$ it is an equality. Therefore, $\wnorm{X}=1$, while}
\[
    {\big|\E\clip(X,\lambda_\alpha)\big|
    =\frac12-p\lambda_\alpha
    =\frac1{2\alpha}
    =\frac{\lambda_\alpha^{1-\alpha}}{\alpha-1}.}
\]
{Consequently, any bias constant valid uniformly over centered weak-$L^\alpha$ noise is at least $1/(\alpha-1)$. At $\alpha=1$, weak-tail control alone need not even give a finite mean: for $A>0$ and $\PP\{Y>u\}=\min\{1,A/u\}$, we have $\E(Y-a)_+=\int_a^\infty\PP\{Y>u\}\dif u=\infty$ for every $a>0$.}
\end{remark}

\subsection{Proof of Lemma~\ref{lem:weak-clip}}\label{app:clip-weak-proof}

\begin{proof}
Let $Y:=\|X-x\|$ and $A:=A(x)$. By assumption,
\begin{equation}\label{eq:app-tail}
    \PP\{Y>u\}\le Au^{-\alpha},\qquad u>0.
\end{equation}

\emph{Step 1: boundedness.} By the definition of clipping, we have $\|\tX\|\le\lambda$ almost surely. By Jensen's inequality, $\|\E\tX\|\le\E\|\tX\|\le\lambda$. Thus, $\|\tX-\E\tX\|\le\|\tX\|+\|\E\tX\|\le2\lambda$.

\emph{Step 2: bias.} Since $\E X=x$, we have $\|\E\tX-x\|=\|\E(\tX-X)\|\le\E\|\tX-X\|$. For the clipping operator, we have $\|\tX-X\|=(\|X\|-\lambda)_+$. Since $\|x\|\le\lambda/2$, we have $\|X\|\le Y+\lambda/2$ and, therefore, $(\|X\|-\lambda)_+\le(Y-\lambda/2)_+$. Using the identity $\E(Y-a)_+=\int_a^\infty\PP\{Y>u\}\dif u$ with $a=\lambda/2$ and~\eqref{eq:app-tail}, we get
\[
    \|\E\tX-x\|\le A\int_{\lambda/2}^\infty u^{-\alpha}\dif u=\frac{2^{\alpha-1}}{\alpha-1}\,\frac{A}{\lambda^{\alpha-1}}.
\]

\emph{Step 3: second moment around $x$.} We start with showing that
\begin{equation}\label{eq:clip-contract}
    \|\tX-x\|\le\min\left\{\|X-x\|,\ \tfrac{3\lambda}{2}\right\}.
\end{equation}
The bound by $3\lambda/2$ follows from $\|\tX\|\le\lambda$ and $\|x\|\le\lambda/2$. For the first bound, if $\|X\|\le\lambda$, then $\tX=X$. Otherwise, let $s:=\|X\|>\lambda$ and $e:=X/s$; then $\tX=\lambda e$ and
\[
    \|X-x\|^2-\|\lambda e-x\|^2=(s-\lambda)\left(s+\lambda-2\abr{e,x}\right)\ge0,
\]
since $\abr{e,x}\le\|x\|\le\lambda/2$. Next, let $a:=3\lambda/2$. By~\eqref{eq:clip-contract} and the layer-cake representation, we have
\[
    \E\|\tX-x\|^2\le\E\min\{Y^2,a^2\}=2\int_0^{a}u\,\PP\{Y>u\}\dif u .
\]
If $A=0$, then $Y=0$ almost surely and both sides vanish; therefore, we assume that $A>0$. For $\alpha<2$, \eqref{eq:app-tail} gives $\E\min\{Y^2,a^2\}\le2A\int_0^au^{1-\alpha}\dif u=\frac{2}{2-\alpha}Aa^{2-\alpha}$. Next, we consider the logarithmic branch and let $u_0:=A^{1/\alpha}$. If $a\le u_0$, then $\E\min\{Y^2,a^2\}\le a^2\le Aa^{2-\alpha}$, since $a^\alpha\le A$. If $a>u_0$, we use $\PP\{Y>u\}\le1$ on $(0,u_0)$ and~\eqref{eq:app-tail} on $(u_0,a)$:
\begin{align*}
    \E\min\{Y^2,a^2\}&\le u_0^2+2A\int_{u_0}^au^{2-\alpha}\,\frac{\dif u}{u}\le Au_0^{2-\alpha}+2Aa^{2-\alpha}\ln\frac a{u_0}\\
    &\le Aa^{2-\alpha}\left(1+\frac2\alpha\ln\left(1+\frac{a^\alpha}{A}\right)\right),
\end{align*}
since $u_0^2=Au_0^{2-\alpha}\le Aa^{2-\alpha}$ and $\ln(a/u_0)=\frac1\alpha\ln(a^\alpha/A)$. In all cases, we have $\E\min\{Y^2,a^2\}\le Aa^{2-\alpha}\min\{\frac{2}{2-\alpha},1+\frac2\alpha\ln(1+\frac{a^\alpha}{A})\}=v_\alpha(a^\alpha/A)\lambda^{2-\alpha}A$, which is exactly~\eqref{eq:weak-clip-second}.

\emph{Step 4: centered second moment.} Finally, in the Euclidean space, we have $\E\|\tX-x\|^2=\E\|\tX-\E\tX\|^2+\|\E\tX-x\|^2$, implying $\E\|\tX-\E\tX\|^2\le\E\|\tX-x\|^2$, which concludes the proof.
\end{proof}

\begin{corollary}[Conditional versions]\label{cor:conditional}
Let $\G$ be a $\sigma$-algebra, let $x$ be $\G$-measurable, and let $\E[X\mid\G]=x$ and $\|x\|\le\lambda/2$. If, almost surely, $\PP\{\|X-x\|>u\mid\G\}\le A(x)u^{-\alpha}$ for all $u>0$ (resp.\ $\E[\|X-x\|^\alpha\mid\G]\le A(x)$), then the conclusions of Lemma~\ref{lem:weak-clip} (resp.\ Lemma~\ref{lem:moment-clip}) hold almost surely with $\E[\cdot]$ replaced by $\E[\cdot\mid\G]$; in particular, $\|\tX-\E[\tX\mid\G]\|\le2\lambda$ almost surely.
\end{corollary}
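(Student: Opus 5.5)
The plan is to rerun the unconditional proofs of Lemmas~\ref{lem:weak-clip} and~\ref{lem:moment-clip} with expectations replaced by conditional expectations given $\G$. To do this cleanly, we work with a regular conditional distribution of $X$ given $\G$. Such a distribution exists because $X$ takes values in the Polish space $\R^d$.

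Let $\mu_\omega$ denote a version of the conditional law of $X$ given $\G$. Then, for almost every $\omega$, the following hold: $\int y\,\mu_\omega(\dif y)=x(\omega)$; the tail bound (resp.\ the moment bound) holds under $\mu_\omega$ with the constant $A(x(\omega))$; and $\|x(\omega)\|\le\lambda/2$. For the tail bound we need it simultaneously for all $u>0$ outside a single null set. It suffices to impose it for rational $u$ and then pass to all $u$ using right-continuity of $u\mapsto\mu_\omega\{\|y-x(\omega)\|>u\}$ together with continuity of $u^{-\alpha}$.

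On this full-measure set, fix $\omega$ and apply the unconditional Lemma~\ref{lem:weak-clip} (resp.\ Lemma~\ref{lem:moment-clip}). The random vector has law $\mu_\omega$, mean $x(\omega)$, and scale $A(x(\omega))$; here $x(\omega)$ is a deterministic vector. This yields the bias bound, the second-moment bound, the centered-variance comparison, and the constant $v_\alpha((3\lambda/2)^\alpha/A(x(\omega)))$ with $\omega$ fixed.

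We then translate back. For any Borel $h\ge0$ or integrable $h$, we have $\E[h(X,x)\mid\G](\omega)=\int h(y,x(\omega))\,\mu_\omega(\dif y)$ almost surely. This is the disintegration property, and it holds because $x$ is $\G$-measurable. Applying it with $h=\clip(y,\lambda)$ and $h=\|\clip(y,\lambda)-x\|^2$ turns each fixed-$\omega$ bound into the claimed almost sure conditional bound.

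The almost sure bound $\|\tX-\E[\tX\mid\G]\|\le2\lambda$ needs no disintegration. It follows from $\|\tX\|\le\lambda$ and conditional Jensen, $\|\E[\tX\mid\G]\|\le\lambda$ almost surely.

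The main technical point is the measure-theoretic bookkeeping. We must justify that the hypotheses given almost surely transfer to $\mu_\omega$ outside one null set, which means handling the uncountably many $u$ via rationals. We must also justify that $A(x(\omega))$ can be treated as a constant when the lemma is applied pointwise. Both follow from standard properties of regular conditional distributions.
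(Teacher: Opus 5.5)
Your proposal is correct, but it handles the conditioning differently from the paper. The paper reruns the proofs of Lemmas~\ref{lem:weak-clip} and~\ref{lem:moment-clip} step by step with conditional expectations:
\begin{itemize}
\item the pointwise inequalities of the moment case and the contraction~\eqref{eq:clip-contract} survive by monotonicity of $\E[\cdot\mid\G]$;
\item the layer-cake identities become conditional Tonelli formulas, $\E[(Y-c)_+\mid\G]=\int_c^\infty\PP\{Y>u\mid\G\}\dif u$ and $\E[\min\{Y^2,c^2\}\mid\G]=2\int_0^cu\,\PP\{Y>u\mid\G\}\dif u$, for the deterministic levels $c\in\{\lambda/2,3\lambda/2\}$;
\item the last step uses the conditional variance decomposition.
\end{itemize}
You instead disintegrate. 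You apply the unconditional lemma as a black box to each regular conditional law $\mu_\omega$, where $x(\omega)$ and $A(x(\omega))$ are genuine constants. You then transfer back via $\E[h(X,Z)\mid\G](\omega)=\int h(y,Z(\omega))\,\mu_\omega(\dif y)$ for $\G$-measurable $Z$.

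What your route buys is that no inequality needs rechecking. All conclusions, including the random argument of $v_\alpha$, follow at once. The price is invoking regular conditional distributions on $\R^d$ and the disintegration theorem. The paper's route stays with conditional expectations but must verify each step. Both arguments need the same rational-$u$ and right-continuity device to obtain the tail bound for all $u>0$ off a single null set.

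Two small points should be made explicit.
\begin{itemize}
\item The centered comparison $\E[\|\tX-\E[\tX\mid\G]\|^2\mid\G]\le\E[\|\tX-x\|^2\mid\G]$ uses the disintegration with $Z=\E[\tX\mid\G]$, taking the version $\int\clip(y,\lambda)\,\mu_\omega(\dif y)$. This is a third function beyond the two you list.
\item The hypothesis ``$\E X=x(\omega)$'' of the unconditional lemma requires $\int\|y\|\,\mu_\omega(\dif y)<\infty$ almost surely. This follows from $X\in L^1$, which is implicit in $\E[X\mid\G]=x$, or directly from the conditional tail or moment bound.
\end{itemize}
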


\begin{proof}
All steps of the proofs of Lemmas~\ref{lem:weak-clip} and~\ref{lem:moment-clip} remain valid after replacing expectations and probabilities with their conditional counterparts given $\G$. The quantity $A(x)$ is $\G$-measurable and, therefore, plays the role of a fixed coefficient in the conditional integral bounds. The conditional tail bounds can be assumed to hold for all $u>0$ simultaneously on one event of full probability: it is sufficient to take versions for rational $u$ and extend them by monotonicity and right-continuity. Then, the conditional Tonelli theorem gives, for deterministic $c>0$, $\E[(Y-c)_+\mid\G]=\int_c^\infty\PP\{Y>u\mid\G\}\dif u$ and $\E[\min\{Y^2,c^2\}\mid\G]=2\int_0^cu\,\PP\{Y>u\mid\G\}\dif u$, which we use with $c=\lambda/2$ and $c=3\lambda/2$. For Lemma~\ref{lem:moment-clip}, the two pointwise inequalities from its proof have deterministic coefficients and remain valid after taking conditional expectations, with $A(x)$ replaced by any finite $\G$-measurable upper bound on $\E[\|X-x\|^\alpha\mid\G]$. Finally, the conditional variance decomposition reads $\E[\|\tX-x\|^2\mid\G]=\E[\|\tX-\E[\tX\mid\G]\|^2\mid\G]+\|\E[\tX\mid\G]-x\|^2$, which concludes the proof.
\end{proof}

\subsection{Proof of Lemma~\ref{lem:moment-clip}}\label{app:clip-moment-proof}

\begin{proof}
The bound $\|\tX-\E\tX\|\le2\lambda$ is proved in the same way as in Step~1 of the proof of Lemma~\ref{lem:weak-clip}. Let $Y:=\|X-x\|$ and $c:=\lambda/2$. As in Step~2 of the proof of Lemma~\ref{lem:weak-clip}, we have $\|\E\tX-x\|\le\E(Y-c)_+$. For $y>c$, we have
\[
    \frac{\dif}{\dif y}\,\frac{y-c}{y^\alpha}=y^{-\alpha-1}\big(\alpha c-(\alpha-1)y\big),
\]
meaning that the maximum of $(y-c)_+/y^\alpha$ over $y>0$ is attained at $y=\alpha c/(\alpha-1)$ and equals $c^{1-\alpha}(\alpha-1)^{\alpha-1}/\alpha^\alpha$. Thus, $(Y-c)_+\le\frac{(\alpha-1)^{\alpha-1}}{\alpha^\alpha}c^{1-\alpha}Y^\alpha$ pointwise, and, taking expectations with $c^{1-\alpha}=2^{\alpha-1}\lambda^{1-\alpha}$, we get~\eqref{eq:moment-bias}.

Next, to bound the second moment, we use the contraction~\eqref{eq:clip-contract} (its proof uses only $\|x\|\le\lambda/2$), which gives $\|\tX-x\|^2\le\min\{Y^2,d^2\}$ with $d:=3\lambda/2$. For $y\ge0$, $d>0$, and $1<\alpha\le2$, we have
\[
    \min\{y^2,d^2\}\le d^{2-\alpha}y^\alpha:
\]
for $y\le d$, this follows from $y^{2-\alpha}\le d^{2-\alpha}$, and, for $y\ge d$, from $d^\alpha\le y^\alpha$. Taking expectations, we get $\E\|\tX-x\|^2\le(3/2)^{2-\alpha}\lambda^{2-\alpha}A(x)$, which is the last inequality in~\eqref{eq:moment-second}. The bound on $\E\|\tX-\E\tX\|^2$ follows from the variance decomposition, as in Step~4 of the proof of Lemma~\ref{lem:weak-clip}. Finally, $\frac{\dif}{\dif\alpha}\ln b_\alpha^{\rm mom}=\ln\frac{2(\alpha-1)}{\alpha}\le0$ on $(1,2]$ and $b_\alpha^{\rm mom}\to1$ as $\alpha\downarrow1$, and, therefore, $1/2=b_2^{\rm mom}\le b_\alpha^{\rm mom}\le1$; the bounds on $v_\alpha^{\rm mom}$ follow immediately from the definition, which concludes the proof.
\end{proof}

\Needspace{10\baselineskip}
\subsection{Summing the Variance Bounds}

\begin{lemma}[Summing the variance bounds]\label{lem:log-sum}
Let $\alpha\in(1,2]$ and $C\ge0$, and let $\phi(s):=s\,v_\alpha(C/s)$ for $s>0$ and $\phi(0):=0$. Then $\phi$ is concave and non-decreasing on $[0,\infty)$. Consequently, for all $T\ge1$, $\bar B>0$ and $A_0,\dots,A_{T-1}\ge0$ with $\sum_{t<T}A_t\le\bar B$,
\begin{equation}\label{eq:log-sum}
    \sum_{t=0}^{T-1}\phi(A_t)\le\bar B\,v_\alpha\!\left(\frac{TC}{\bar B}\right).
\end{equation}
\end{lemma}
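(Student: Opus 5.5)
We prove the lemma by writing $\phi$ as the minimum of two concave non-decreasing functions. Recall $v_\alpha(s)=(\tfrac32)^{2-\alpha}\min\{\tfrac{2}{2-\alpha},\,1+\tfrac2\alpha\ln(1+s)\}$. Hence, for $s>0$,
\[
    \phi(s)=\left(\tfrac32\right)^{2-\alpha}\min\Big\{\tfrac{2}{2-\alpha}\,s,\ \psi(s)\Big\},\qquad \psi(s):=s+\tfrac2\alpha\,s\ln\!\Big(1+\tfrac Cs\Big).
\]
At $\alpha=2$ the first branch is $+\infty$ and simply drops out.

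The linear branch is trivially concave and non-decreasing. A pointwise minimum of concave functions is concave, and a minimum of non-decreasing functions is non-decreasing. So the main step is to check that $\psi$ is concave and non-decreasing on $(0,\infty)$ and that $\psi(s)\to0$ as $s\downarrow0$, so that $\phi(0)=0$ extends $\phi$ continuously. If $C=0$, then $\psi(s)=s$ and there is nothing to prove, so assume $C>0$.

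Set $h(s):=s\ln(1+C/s)=s\ln(s+C)-s\ln s$. Differentiating gives
\[
    h'(s)=\ln\!\Big(1+\tfrac Cs\Big)-\tfrac{C}{s+C}\ \ge 0,
\]
because $\ln(1+t)\ge t/(1+t)$ with $t=C/s$. Differentiating again,
\[
    h''(s)=\frac{1}{s+C}-\frac1s+\frac{C}{(s+C)^2}=\frac{-C^2}{s(s+C)^2}\le0 .
\]
Therefore $h$ is concave and non-decreasing, and so is $\psi=s+\tfrac2\alpha h$. Moreover, $h(s)=s\ln(s+C)-s\ln s\to0$ as $s\downarrow0$, so $\phi(s)\to0=\phi(0)$. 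A continuous extension of a concave function to the endpoint keeps concavity on $[0,\infty)$, and monotonicity is preserved because $\phi\ge0$.

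For the sum bound~\eqref{eq:log-sum}, apply Jensen's inequality to the concave $\phi$:
\[
    \sum_{t<T}\phi(A_t)\le T\,\phi\Big(\tfrac1T\textstyle\sum_{t<T}A_t\Big).
\]
Since $\phi$ is non-decreasing, the right-hand side is at most $T\phi(\bar B/T)$. By definition, $T\phi(\bar B/T)=T\cdot\frac{\bar B}{T}\,v_\alpha(TC/\bar B)=\bar B\,v_\alpha(TC/\bar B)$. Zero entries $A_t=0$ cause no trouble, since $\phi(0)=0$ is consistent with concavity on the closed half-line.

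The only computation requiring care is the second derivative of $h$; everything else is closure of concavity under minima together with Jensen's inequality.
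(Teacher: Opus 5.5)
Your proof is correct and follows the paper's argument essentially step by step. You write $\phi$ as the minimum of the linear branch and $s+\frac2\alpha h(s)$ with $h(s)=s\ln(1+C/s)$, verify $h'\ge0$, $h''=-C^2/(s(s+C)^2)\le0$ and $h(s)\to0$ as $s\downarrow0$, use that a minimum of concave non-decreasing functions is concave and non-decreasing, and conclude by Jensen's inequality and monotonicity, exactly as the paper does (including the continuous extension to $s=0$, which the paper handles by setting $h(0):=0$).
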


\begin{proof}
For $s>0$, we have $\phi(s)=(\frac32)^{2-\alpha}\min\{\frac{2s}{2-\alpha},\,s+\frac2\alpha h(s)\}$ with $h(s):=s\ln(1+C/s)$, where the first term is absent for $\alpha=2$. If $C=0$, then $h\equiv0$. If $C>0$, then $h'(s)=\ln(1+\frac Cs)-\frac{C}{s+C}>0$, since $\ln(1+y)>\frac{y}{1+y}$ for $y>0$, and $h''(s)=-\frac{C^2}{s(s+C)^2}<0$. Moreover, $0\le h(s)=s\ln(s+C)-s\ln s\to0$ as $s\downarrow0$. {Thus, with $h(0):=0$, the function $h$ is continuous, concave, and non-decreasing on $[0,\infty)$. Both branches defining $\phi$ have these properties. Their minimum is concave because its hypograph is the intersection of their convex hypographs; it is also continuous and non-decreasing. The same conclusion holds when only one branch is present.} Next, let $B:=\sum_{t<T}A_t$; then $B\le\bar B$ by assumption ($\bar B$ is the upper bound from the statement, and $\phi$ is non-decreasing). By Jensen's inequality and monotonicity, we get $\sum_{t<T}\phi(A_t)\le T\phi(B/T)\le T\phi(\bar B/T)=\bar B\,v_\alpha(TC/\bar B)$, which concludes the proof.
\end{proof}

\section{Auxiliary Results}\label{app:aux}

\paragraph{Measurability conventions.} In the setting of Section~\ref{sec:setup}, the oracle $g(x,\xi)$ is a jointly measurable and finite vector field with $\E_\xi[g(x,\xi)]=\nabla f(x)$ for $x\in Q$; we do not assume any sample-wise gradient representation. The noise $g(x,\xi)-\nabla f(x)$ is controlled only at the points $x\in Q$; Lemma~\ref{lem:adaptive} (see Appendix~\ref{app:adaptive}) transfers these pointwise conditions to random query points in $Q$. Outside $Q$, $g$ is an arbitrary jointly measurable and finite extension, $f$ is not necessarily differentiable, and, for definiteness, we set $\nabla f(x):=0$ there; this extension is used only to define auxiliary random variables on the whole probability space, and it never enters the bounds, which are derived on the events where the relevant points belong to $Q$.

{We assume that $f:\R^d\to\R$ is Borel measurable; no differentiability, smoothness, or convexity is required outside $Q$. Consequently, the objective gaps used in the auxiliary random variables and stopping times are measurable on the whole probability space.}

\paragraph{The gradient bound.} Assumption~\ref{as:grad-bound} is stated separately to simplify the comparison with \citet{sadiev2023high}; however, it is implied by the other two local assumptions, since $Q$ is a ball centered at the minimizer.

\begin{lemma}[Gradient bound from local smoothness and convexity]\label{lem:grad-bound}
Let $f$ be differentiable on $Q=B_r(x^*)$, $r>0$, where $x^*$ is a minimizer of $f$ on $\R^d$, and let Assumptions~\ref{as:smooth} and~\ref{as:convex} hold on $Q$. Then $\|\nabla f(x)\|^2\le2L(f(x)-f^*)$ for all $x\in Q$.
\end{lemma}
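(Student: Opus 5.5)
The plan is to derive the bound from two standard inequalities for differentiable convex $L$-smooth functions, taking care that every point we use stays inside $Q=B_r(x^*)$. Since $x^*$ is a minimizer of the differentiable function on the interior point $x^*$ of $Q$, we have $\nabla f(x^*)=0$. For $x\in Q$, the natural candidate is the gradient step $y:=x-\frac1L\nabla f(x)$. If $y\in Q$, the descent lemma gives $f^*\le f(y)\le f(x)-\frac1{2L}\|\nabla f(x)\|^2$, which is exactly the claim. The descent lemma uses only Assumption~\ref{as:smooth} along the segment $[x,y]$, and that segment lies in $Q$ by convexity of the ball.

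The obstacle is that $y$ need not lie in $Q$ when $x$ is near the boundary. We avoid it with a localized argument along the segment from $x^*$. Let $\phi(t):=f(x^*+t(x-x^*))$ for $t\in[0,1]$, and write $g(t):=\nabla f(x^*+t(x-x^*))$, so $g(0)=0$.

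By convexity and smoothness on $Q$, we want the co-coercivity-type inequality
\[
\abr{\nabla f(u)-\nabla f(w),u-w}\ge\tfrac1L\|\nabla f(u)-\nabla f(w)\|^2
\]
for $u,w$ on this segment. The standard proof of co-coercivity again uses points outside the segment, namely $u-\frac1L(\nabla f(u)-\nabla f(w))$. I would therefore try a different route.

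Set $h(x):=f(x)-f^*$ and $G:=\|\nabla f(x)\|$. By convexity on $Q$, $h(x)\le\abr{\nabla f(x),x-x^*}$. Consider instead the point $y_s:=x-s\nabla f(x)/G$ for small $s>0$.

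Alternatively, use the one-dimensional reduction along the direction $e=\nabla f(x)/G$, starting from $x$ toward decreasing values. The function $\psi(s)=f(x-se)$ is convex and $L$-smooth as long as $x-se\in Q$, with $\psi'(0)=-G$ and $\psi'(s)\le -G+Ls$. Hence, for $s\le G/L$ with $x-se\in Q$, we get $\psi(s)\le f(x)-Gs+\frac L2s^2$, and $\psi(s)\ge f^*$.

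Now we check that $x-se\in Q$ for $s\le G/L$. We have
\[
\|x-se-x^*\|^2=\|x-x^*\|^2-2s\abr{e,x-x^*}+s^2.
\]
Using $\abr{\nabla f(x),x-x^*}\ge\frac1L G^2$ would give $\abr{e,x-x^*}\ge G/L\ge s$, so the distance does not increase and the point stays in $Q$. This last inequality is co-coercivity between $x$ and $x^*$, which is exactly where the difficulty concentrates.

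I would prove it via the one-dimensional function $\phi$ above. The map $t\mapsto\abr{g(t),x-x^*}$ is nondecreasing by convexity and $L\|x-x^*\|^2$-Lipschitz. By smoothness, $\|g(1)\|\le L\|x-x^*\|$, but that bound is insufficient on its own.

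If this fails, the fallback is a continuity/bootstrapping argument. Let $S$ be the set of $x\in Q$ where the claim holds. The claim holds near $x^*$, since for $\|x-x^*\|\le r/2$ the step $y$ satisfies $\|y-x^*\|\le\|x-x^*\|+\frac GL\le 2\|x-x^*\|\le r$. For general $x$, we then use the directional descent with step $s\le G/L$ truncated at the boundary of $Q$.

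I expect this boundary issue to be the main obstacle, and I would first attempt the direct co-coercivity proof restricted to the ball centered at the minimizer.
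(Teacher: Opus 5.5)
Your reduction to the gradient step is correct and matches the second step of the paper's proof. If $\langle\nabla f(x),x-x^*\rangle\ge\frac1L\|\nabla f(x)\|^2$ on $Q$, then $y=x-\frac1L\nabla f(x)$ satisfies $\|y-x^*\|\le\|x-x^*\|$, and the descent lemma on $[x,y]\subset Q$ finishes. The gap is that you never establish this inequality, or any substitute guaranteeing $y\in Q$ for $x$ away from $x^*$, and that is the entire difficulty of the lemma.

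Your one-dimensional route cannot supply it. Convexity of $\phi$ and the Lipschitz bound along the segment give only $0\le\langle\nabla f(x),x-x^*\rangle\le L\|x-x^*\|^2$ and $\|\nabla f(x)\|\le L\|x-x^*\|$. These are compatible with $g(t)=tL\|x-x^*\|w$ for a unit vector $w\perp x-x^*$, and then the needed inequality fails. First-order data on the segment cannot control the component of $\nabla f(x)$ orthogonal to $x-x^*$.

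The fallback fails as stated too. A directional step truncated at $\partial Q$ with $s<G/L$ yields only $f(x)-f^*\ge Gs-\frac L2s^2<\frac{G^2}{2L}$. Proving the claim on $B_{r/2}(x^*)$ gives no mechanism for propagating it outward.

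The paper fills the gap with second-order information. For $f\in C^2$ on an open convex set, $\nabla f(x)-\nabla f(z)=A(x-z)$ with $A=\int_0^1\nabla^2 f(z+t(x-z))\,\mathrm{d}t$ and $0\preceq A\preceq LI$. Hence $A^2\preceq LA$, which is exactly co-coercivity and uses only the Hessian along the segment. The general case follows by mollifying $f$ on the concentric balls $\{\|x-x^*\|<r-\delta\}$ and letting $\delta\to0$.

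Alternatively, your bootstrap idea can be made to work without mollification, but through openness rather than truncation. Suppose $\|x-x^*\|<r$ and $f(x)-f^*\ge\frac1{2L}\|\nabla f(x)\|^2$. Then convexity gives
\[ \|x-\tfrac1L\nabla f(x)-x^*\|^2\le\|x-x^*\|^2-\tfrac2L(f(x)-f^*)+\tfrac1{L^2}\|\nabla f(x)\|^2\le\|x-x^*\|^2<r^2. \]
By continuity of $\nabla f$, the gradient step from every $x'$ near $x$ stays in $Q$, and the descent lemma gives the claim at $x'$. So the set where the claim holds contains $x^*$ and is open and relatively closed in the connected open ball. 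It is therefore the whole open ball, and continuity extends the claim to $\partial Q$. Neither argument appears in your proposal.
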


\begin{proof}
\emph{Step 1: cocoercivity on $Q$.} We claim that $\abr{\nabla f(x)-\nabla f(z),x-z}\ge\frac1L\|\nabla f(x)-\nabla f(z)\|^2$ for all $x,z\in Q$. (a)~First, we assume that $f$ is twice continuously differentiable on an open convex set $U$ on which Assumptions~\ref{as:smooth} and~\ref{as:convex} hold, and let $x,z\in U$. Then $\nabla f(x)-\nabla f(z)=A(x-z)$ with $A:=\int_0^1\nabla^2f(z+t(x-z))\,\dif t$. Convexity gives $\nabla^2f\succeq0$ on $U$ and the Lipschitz bound gives $\nabla^2f\preceq LI$ on $U$; hence, $0\preceq A\preceq LI$. Since $A$ and $LI-A$ commute, we have $A(LI-A)\succeq0$, i.e., $A^2\preceq LA$, and, therefore, $\|A(x-z)\|^2\le L\abr{A(x-z),x-z}$, which is exactly the claim. (b)~In the general case, let $\rho_\delta$ be a smooth probability density supported in $B_\delta(0)$ and $f_\delta:=f*\rho_\delta$ on $U_\delta:=\{x:\|x-x^*\|<r-\delta\}$. For $x\in U_\delta$ and $\|u\|\le\delta$, we have $x-u\in Q$, meaning that $f_\delta$ is well-defined and smooth on $U_\delta$, convex on $U_\delta$ as an average of the convex functions $x\mapsto f(x-u)$, and $\nabla f_\delta=(\nabla f)*\rho_\delta$ is $L$-Lipschitz on $U_\delta$; moreover, $\|\nabla f_\delta(x)-\nabla f(x)\|\le L\delta$ for $x\in U_\delta$. Applying~(a) to $f_\delta$ on $U_\delta$ and letting $\delta\to0$, we get the claim for all $x,z$ from the open ball $\{\|x-x^*\|<r\}$, and then for all $x,z\in Q$ by the continuity of $\nabla f$ on $Q$.

\emph{Step 2.} Let $x\in Q$. Since $x^*$ is an interior point of $Q$ minimizing $f$ on $\R^d$, we have $\nabla f(x^*)=0$, and Step~1 with $z=x^*$ gives $\abr{\nabla f(x),x-x^*}\ge\frac1L\|\nabla f(x)\|^2$. Next, let $y:=x-\frac1L\nabla f(x)$. Then
\[
    \|y-x^*\|^2=\|x-x^*\|^2-\frac2L\abr{\nabla f(x),x-x^*}+\frac{1}{L^2}\|\nabla f(x)\|^2\le\|x-x^*\|^2-\frac{1}{L^2}\|\nabla f(x)\|^2\le r^2,
\]
meaning that $y\in Q$ and the segment $[x,y]$ belongs to $Q$. Along this segment, the gradient is $L$-Lipschitz; hence, $f(y)\le f(x)+\abr{\nabla f(x),y-x}+\frac L2\|y-x\|^2=f(x)-\frac{1}{2L}\|\nabla f(x)\|^2$, and $f(y)\ge f^*$ concludes the proof.
\end{proof}

Throughout the appendix, we use the following decomposition of the error of the clipped stochastic gradient (with the convention $\nabla f:=0$ outside $Q$, meaning that all quantities below are defined on the whole probability space):
\begin{equation}\label{eq:theta}
    \theta_k:=\tg^k-\nabla f(x^k)=\theta_k^u+\theta_k^b,\qquad
    \theta_k^u:=\tg^k-\E[\tg^k\mid\G_k],\qquad
    \theta_k^b:=\E[\tg^k\mid\G_k]-\nabla f(x^k),
\end{equation}
where $\E[\theta_k^u\mid\G_k]=0$ by construction.

\begin{lemma}[Basic inequality]\label{lem:basic}
{Let Assumptions~\ref{as:grad-bound} and~\ref{as:convex} hold on $Q$, let $0<\gamma\le1/(2L)$, and let $x^0,\dots,x^{t-1}\in Q$ for some integer $t\ge1$.} Then
\begin{align}
    \gamma\sum_{l=0}^{t-1}\left(f(x^l)-f^*\right)&\le\|x^0-x^*\|^2-\|x^t-x^*\|^2-2\gamma\sum_{l=0}^{t-1}\abr{x^l-x^*-\gamma\nabla f(x^l),\theta_l}+\gamma^2\sum_{l=0}^{t-1}\|\theta_l\|^2,\label{eq:basic-sum}
\end{align}
and the left-hand side of~\eqref{eq:basic-sum} is at least $\gamma t\left(f(\bar x^{t-1})-f^*\right)$, where $\bar x^{t-1}:=\frac1t\sum_{l=0}^{t-1}x^l$.
\end{lemma}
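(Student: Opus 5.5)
The plan is to run the standard one-step expansion of the squared distance to $x^*$ and telescope it. From the update $x^{l+1}=x^l-\gamma\tg^l$ and $\tg^l=\nabla f(x^l)+\theta_l$ (by~\eqref{eq:theta}, valid since $x^l\in Q$), we get
\[
    \|x^{l+1}-x^*\|^2=\|x^l-x^*\|^2-2\gamma\abr{x^l-x^*,\nabla f(x^l)+\theta_l}+\gamma^2\|\nabla f(x^l)+\theta_l\|^2 .
\]
We expand $\|\nabla f(x^l)+\theta_l\|^2=\|\nabla f(x^l)\|^2+2\abr{\nabla f(x^l),\theta_l}+\|\theta_l\|^2$. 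The cross term $2\gamma^2\abr{\nabla f(x^l),\theta_l}$ then combines with $-2\gamma\abr{x^l-x^*,\theta_l}$ into $-2\gamma\abr{x^l-x^*-\gamma\nabla f(x^l),\theta_l}$. This is exactly the inner product in~\eqref{eq:basic-sum}, and it explains why no factor $2$ appears in front of $\gamma^2\|\theta_l\|^2$.

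Next we handle the deterministic part. Convexity on $Q$ (Assumption~\ref{as:convex} with $x=x^l$, $y=x^*$, both in $Q$) gives $\abr{x^l-x^*,\nabla f(x^l)}\ge f(x^l)-f^*$. Assumption~\ref{as:grad-bound} gives $\gamma^2\|\nabla f(x^l)\|^2\le2L\gamma^2(f(x^l)-f^*)\le\gamma(f(x^l)-f^*)$, using $\gamma\le1/(2L)$. Hence
\[
    -2\gamma\abr{x^l-x^*,\nabla f(x^l)}+\gamma^2\|\nabla f(x^l)\|^2\le-\gamma(f(x^l)-f^*) .
\]
Rearranging and summing over $l=0,\dots,t-1$ telescopes the distances and yields~\eqref{eq:basic-sum}. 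Only $x^0,\dots,x^{t-1}\in Q$ is needed; $x^t$ appears only through $\|x^t-x^*\|^2$, which requires no assumption.

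The final claim follows from convexity of $f$ on the convex set $Q$. Since $x^0,\dots,x^{t-1}\in Q$, their average $\bar x^{t-1}$ lies in $Q$. The first-order convexity condition on an open neighborhood implies Jensen's inequality there; alternatively, apply the gradient inequality at $\bar x^{t-1}$ with $y=x^l$ and average. This gives $f(\bar x^{t-1})\le\frac1t\sum_l f(x^l)$, so the left-hand side is at least $\gamma t(f(\bar x^{t-1})-f^*)$.

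There is no real obstacle here. The only point needing care is the bookkeeping of the cross term, so that it is absorbed into the shifted inner product rather than bounded by Young's inequality, together with checking that every use of the assumptions involves points of $Q$ only.
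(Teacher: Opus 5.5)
Your proposal is correct and follows essentially the same route as the paper: expand $\|x^{l+1}-x^*\|^2$, absorb the cross term $2\gamma^2\abr{\nabla f(x^l),\theta_l}$ into the shifted inner product, bound the deterministic part via convexity and Assumption~\ref{as:grad-bound} with $\gamma\le1/(2L)$, telescope, and finish with Jensen's inequality on the convex set $Q\ni\bar x^{t-1}$. Your alternative justification of the last step, applying the gradient inequality at $\bar x^{t-1}$ with $y=x^l$ and averaging, is also valid, since both points lie in $Q$.
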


\begin{proof}
By the update rule, we have $x^{l+1}=x^l-\gamma\nabla f(x^l)-\gamma\theta_l$. Expanding the squared distance to $x^*$, we get
\begin{align*}
    \|x^{l+1}-x^*\|^2&=\|x^l-x^*\|^2-2\gamma\abr{x^l-x^*,\nabla f(x^l)}+\gamma^2\|\nabla f(x^l)\|^2\\
    &\quad-2\gamma\abr{x^l-x^*-\gamma\nabla f(x^l),\theta_l}+\gamma^2\|\theta_l\|^2 .
\end{align*}
By convexity, we have $\abr{x^l-x^*,\nabla f(x^l)}\ge f(x^l)-f^*$. By Assumption~\ref{as:grad-bound} and $\gamma\le1/(2L)$, we also have $\gamma^2\|\nabla f(x^l)\|^2\le2L\gamma^2(f(x^l)-f^*)\le\gamma(f(x^l)-f^*)$. Therefore,
\[
    \gamma\left(f(x^l)-f^*\right)\le\|x^l-x^*\|^2-\|x^{l+1}-x^*\|^2-2\gamma\abr{x^l-x^*-\gamma\nabla f(x^l),\theta_l}+\gamma^2\|\theta_l\|^2 .
\]
Summing up the above inequality over $l=0,\dots,t-1$, we get~\eqref{eq:basic-sum}. The last claim follows from Jensen's inequality for the convex function $f$ on the convex set $Q\ni\bar x^{t-1}$, which concludes the proof.
\end{proof}

\begin{lemma}[Freedman's inequality]\label{lem:freedman}
Let $(\F_i)_{i\ge0}$ be a filtration and let $X_i$ be $\F_i$-measurable with $\E[X_i\mid\F_{i-1}]=0$ and $|X_i|\le c$ almost surely. Let $v_i:=\E[X_i^2\mid\F_{i-1}]$. Then for all $b,G>0$,
\begin{equation}\label{eq:freedman}
    \PP\left\{\left|\sum_{i=1}^nX_i\right|>b\ \text{ and }\ \sum_{i=1}^nv_i\le G\right\}\le2\exp\left(-\frac{b^2}{2G+2cb/3}\right).
\end{equation}
The same bound holds for the larger event $\big\{\exists m\le n:\ |\sum_{i=1}^mX_i|>b\ \text{ and }\ \sum_{i=1}^mv_i\le G\big\}$ (maximal form).
\end{lemma}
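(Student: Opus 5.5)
The plan is the classical exponential-supermartingale route to Freedman's inequality, written with the stated constants. Set $S_m:=\sum_{i=1}^mX_i$ and $V_m:=\sum_{i=1}^mv_i$. For $\theta\in(0,3/c)$, let $\psi(\theta):=\frac{\theta^2/2}{1-c\theta/3}$. The first step is a pointwise bound: for a random variable $X$ with $|X|\le c$, we have $e^{\theta X}\le1+\theta X+X^2\frac{e^{\theta c}-1-\theta c}{c^2}$, because $u\mapsto(e^u-1-u)/u^2$ is non-decreasing. Then we need the elementary estimate $e^{u}-1-u\le\frac{u^2/2}{1-u/3}$ for $0\le u<3$, which follows by comparing Taylor coefficients, $1/k!\le\frac12 3^{-(k-2)}$. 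Taking conditional expectations and using $\E[X_i\mid\F_{i-1}]=0$ gives
\[
\E[e^{\theta X_i}\mid\F_{i-1}]\le1+\psi(\theta)v_i\le e^{\psi(\theta)v_i}.
\]
Consequently, $M_m:=\exp(\theta S_m-\psi(\theta)V_m)$ is a nonnegative supermartingale with $M_0=1$. Here we use that $V_m$ is $\F_{m-1}$-measurable, i.e., predictable.

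Second, we derive the maximal one-sided bound. Let $\tau:=\min\{m\le n:S_m>b,\ V_m\le G\}$, with $\tau:=n$ if no such $m$ exists. Since $V$ is non-decreasing, on the event $\{\tau\text{ attained}\}$ we have $M_\tau\ge\exp(\theta b-\psi(\theta)G)$. Optional stopping for the bounded stopping time $\tau$ gives $\E M_\tau\le1$. Hence
\[
\PP\{\exists m\le n:\ S_m>b,\ V_m\le G\}\le\exp(-\theta b+\psi(\theta)G).
\]
Alternatively, Ville's maximal inequality applied to $M$ gives the same conclusion. Now choose $\theta=\frac{b}{G+cb/3}$; this is less than $3/c$. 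With this choice, $1-c\theta/3=\frac{G}{G+cb/3}$, so
\[
\psi(\theta)G=\frac{b^2}{2(G+cb/3)},
\]
and the exponent equals $-\frac{b^2}{2G+2cb/3}$.

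Third, apply the same argument to $-X_i$, which satisfies the same hypotheses with the same $v_i$. A union bound over the two signs gives the factor $2$. This proves the maximal form, and the non-maximal form follows because its event is contained in the maximal event with $m=n$. The degenerate case $G=0$ is handled by the same choice of $\theta$; alternatively, one can let $G\downarrow0$ and use continuity of the bound.

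The main obstacle is the elementary exponential estimate together with the bookkeeping of the constant $3$ in $\psi$. The Taylor-coefficient comparison should settle it. The remaining care is in verifying that the stopping argument correctly handles the requirement $V_m\le G$ at the crossing time, and predictability of $V$ together with its monotonicity ensures this.
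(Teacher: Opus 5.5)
Your proposal is correct and follows essentially the same route as the paper: a Bernstein-type bound on the conditional moment generating function with the constant $3$ coming from $k!\ge2\cdot3^{k-2}$, the exponential supermartingale $\exp(\theta S_m-\psi(\theta)V_m)$, the choice $\theta=b/(G+cb/3)$, a union bound over the two signs, and Ville's inequality (equivalently, optional stopping) for the maximal form. The differences are cosmetic. You pass through the monotonicity of $(e^u-1-u)/u^2$ and derive the fixed-$n$ bound from the maximal one, whereas the paper bounds the Taylor series of $e^{\mu x}$ directly and proves the fixed-$n$ bound by Markov's inequality before invoking Ville.
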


\begin{proof}
This is the martingale version of Bernstein's inequality \citep{freedman1975tail,dzhaparidze2001bernstein}; for completeness, we provide the full proof. For $c=0$, the claim is trivial; therefore, we assume that $c>0$. Since $m!\ge2\cdot3^{m-2}$ for $m\ge2$, for $|x|\le c$ and $0<\mu<3/c$ we have
\[
    e^{\mu x}\le1+\mu x+\frac{\mu^2x^2}{2}\sum_{j=0}^\infty\left(\frac{\mu c}{3}\right)^j=1+\mu x+\frac{\mu^2x^2}{2(1-\mu c/3)} .
\]
Let $\psi(\mu):=\frac{\mu^2}{2(1-\mu c/3)}$. Taking conditional expectations and using $1+u\le e^u$, we get $\E[e^{\mu X_i}\mid\F_{i-1}]\le\exp(\psi(\mu)v_i)$. Thus, $M_m:=\exp\left(\mu\sum_{i\le m}X_i-\psi(\mu)\sum_{i\le m}v_i\right)$ is a non-negative supermartingale with $M_0=1$, implying $\E M_n\le1$. On the event $\{\sum_{i\le n}X_i>b,\ \sum_{i\le n}v_i\le G\}$, we have $M_n>e^{\mu b-\psi(\mu)G}$; by Markov's inequality, the probability of this event is at most $e^{-\mu b+\psi(\mu)G}$. The choice $\mu=\frac{b}{G+cb/3}<\frac3c$ gives $-\mu b+\psi(\mu)G=-\frac{b^2}{2G+2cb/3}$. Applying the same bound to $-X_i$ and taking the union of the two events, we get~\eqref{eq:freedman}. To prove the maximal form, note that on $\{\exists m\le n:\sum_{i\le m}X_i>b,\ \sum_{i\le m}v_i\le G\}$ we have $\max_{m\le n}M_m>e^{\mu b-\psi(\mu)G}$, and Ville's maximal inequality for the non-negative supermartingale $(M_m)$, $\PP\{\max_{m\le n}M_m\ge a\}\le\E M_0/a$, gives the same bound $e^{-\mu b+\psi(\mu)G}$ \citep[Theorem~1.6]{freedman1975tail}, which concludes the proof.
\end{proof}

\begin{lemma}[Jensen for powers]\label{lem:jensen-power}
For $0<r\le1$ and non-negative $a_1,\dots,a_n$, $\sum_{i=1}^na_i^r\le n^{1-r}\left(\sum_{i=1}^na_i\right)^r$.
\end{lemma}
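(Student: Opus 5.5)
This is the power-mean (Jensen) inequality for the concave map $s\mapsto s^r$, and I would prove it in one step from concavity. Let $S:=\sum_{i=1}^na_i$. If $S=0$, then all $a_i=0$ and both sides vanish, so we may assume $S>0$.

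For $0<r\le1$, the function $\phi(s)=s^r$ is concave on $[0,\infty)$; for $r=1$ it is linear and the claim is an equality. Jensen's inequality with uniform weights $1/n$ gives
\[
    \frac1n\sum_{i=1}^na_i^r\le\left(\frac1n\sum_{i=1}^na_i\right)^r=n^{-r}S^r .
\]
Multiplying by $n$ yields $\sum_{i=1}^na_i^r\le n^{1-r}S^r$.

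If one prefers to avoid invoking Jensen, Hölder's inequality gives the same bound. Take exponents $p=1/r$ and $p'=1/(1-r)$ (for $r<1$), applied to the vectors $(a_i^r)_i$ and $(1)_i$. Then
\[
    \sum_i a_i^r\cdot1\le\left(\sum_i a_i\right)^r n^{1-r}.
\]

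There is no real obstacle here. The only points needing care are the degenerate cases $S=0$ and $r=1$, which are immediate.
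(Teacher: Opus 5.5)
Your proposal is correct and matches the paper's proof: concavity of $t\mapsto t^r$, Jensen's inequality with uniform weights $1/n$, then multiplication by $n$. The H\"older alternative and the degenerate-case remarks are also correct, though the paper does not need them.
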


\begin{proof}
The function $t\mapsto t^r$ is concave for $0<r\le1$, and, by Jensen's inequality, $\frac1n\sum_{i=1}^na_i^r\le\left(\frac1n\sum_{i=1}^na_i\right)^r$. Multiplying both sides by $n$, we get the claim.
\end{proof}

\section{Upper Bounds for Clip-SGD: Full Statements and Proofs}\label{app:main}

\subsection{Full Statements}\label{app:main-statements}

Throughout this section, $(b,v)$, $c$, and $\Lambda_K$ are defined as in Section~\ref{sec:results}. More precisely, under Assumption~\ref{as:moment}, since $b_\alpha^{\rm mom}/3<1\le v_\alpha^{\rm mom}$, the variance branch of $c$ is active and
\begin{equation}\label{eq:moment-rate-constant}
    c=c_\alpha^{\rm mom}:=180\left(\tfrac32\right)^{2-\alpha}\in[180,270),\qquad c_2^{\rm mom}=180 ;
\end{equation}
under Assumption~\ref{as:weak}, $c=c_\alpha(\Lambda_K)$ with $\Lambda_K:=\max\{80(K+1),\Lambda_{2\sqrt2LR}\}$, where, for $\lambda>0$,
\begin{equation}\label{eq:Lambda}
    \Lambda_\lambda:=\frac{(K+1)(3\lambda/2)^\alpha}{(K+1)\sigma_0^\alpha+2^{\alpha/2}\sigma_1^\alpha(LR)^\alpha}
\end{equation}
($\Lambda_\lambda:=0$ if $\sigma_0=\sigma_1=0$); in particular, $\Lambda_\lambda=(3\lambda/(2\sigma_0))^\alpha$ if {$\sigma_1=0$ and $\sigma_0>0$}.

\begin{theorem}[\clipsgd with the tuned level: full statement of Theorem~\ref{thm:sgd}]\label{thm:main}
{Let Assumptions~\ref{as:smooth}--\ref{as:convex} hold for $Q=B_{2R}(x^*)$, where $R\ge\|x^0-x^*\|>0$, and let either Assumption~\ref{as:moment} or Assumption~\ref{as:weak} hold with $\alpha\in(1,2]$; let $c$ be defined as in Section~\ref{sec:results} with $\Lambda=\Lambda_K$ under Assumption~\ref{as:weak}. Let $\beta\in(0,1)$, $K\in\{0,1,\dots\}$, $\ell:=\ln\frac{4(K+1)}{\beta}$, and let the stepsize satisfy $0<\gamma\le\min\{\gamma_L,\gamma_0,\gamma_1,\gamma_2\}$, where}
\begin{align*}
    \gamma_L&:=\frac{1}{80\sqrt2\,L\ell},\\
    \gamma_0&:=\frac{R}{\left(c\,40^{\alpha-1}(K+1)\,\ell^{\alpha-1}\right)^{1/\alpha}\sigma_0},\\
    \gamma_1&:=\frac{1}{\left(c\,2^\alpha 40^{\alpha-1}\ell^{\alpha-1}\right)^{2/\alpha}\sigma_1^2L\,(K+1)^{\frac{2-\alpha}{\alpha}}},\\
    \gamma_2&:=\frac{1}{\left(c\,2^{\alpha/2}40^{\alpha-1}\ell^{\alpha-1}\right)^{1/\alpha}\sigma_1L},
\end{align*}
with $\gamma_0:=+\infty$ if $\sigma_0=0$ and $\gamma_1=\gamma_2:=+\infty$ if $\sigma_1=0$. Then, with probability at least $1-\beta$, the iterates produced by Algorithm~\ref{alg:clip-sgd} with $\lambda_k\equiv\lambda:=\frac{R}{40\gamma\ell}$ satisfy $x^k\in B_{\sqrt2R}(x^*)$ for all $k=0,1,\dots,K+1$ and
\begin{equation}\label{eq:main-bound}
    f(\bar x^K)-f^*\le\frac{2R^2}{\gamma(K+1)}.
\end{equation}
\end{theorem}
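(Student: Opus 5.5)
We follow the induction scheme of \citet{sadiev2023high}, adapted to the affine noise scale. For $t=0,\dots,K+1$, let $E_t$ be the event that for all $k\le t$ we have
\[
R_k^2\le 2R^2,\qquad \gamma\sum_{l<k}\delta_l+R_k^2\le 2R^2 .
\]
We show by induction that $\PP(E_t)\ge1-t\beta/(K+1)$. On $E_t$ all $x^k$, $k\le t$, lie in $B_{\sqrt2R}(x^*)\subset Q$. Then $\|\nabla f(x^k)\|\le\sqrt2LR\le\lambda/2$, which follows from $\gamma\le\gamma_L$ and $\lambda=R/(40\gamma\ell)$. Hence Lemma~\ref{lem:clip}, in its conditional form (Corollary~\ref{cor:conditional}), applies to $\tg^k$ given $\G_k$ with the bound $A(\nabla f(x^k))$. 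To keep the argument measurable, we use the truncated vectors $\eta_l:=(x^l-x^*-\gamma\nabla f(x^l))\ind_{E_l}$. These satisfy $\|\eta_l\|\le 2R$ (using $\gamma\le1/(2L)$), so every summand is bounded. Lemma~\ref{lem:basic}, with $\theta_l=\theta_l^u+\theta_l^b$ and $\|\theta_l\|^2\le2\|\theta_l^u\|^2+2\|\theta_l^b\|^2$, reduces the claim $\gamma\sum_{l<t}\delta_l+R_t^2\le2R^2$ on $E_{t}$ to showing that five sums are each at most $R^2/5$:
\[
\circledOne=-2\gamma\sum\abr{\eta_l,\theta^u_l},\quad \circledTwo=-2\gamma\sum\abr{\eta_l,\theta^b_l},\quad \circledThree=2\gamma^2\sum\E_l\|\theta^u_l\|^2,
\]
\[
\circledFour=2\gamma^2\sum(\|\theta^u_l\|^2-\E_l\|\theta^u_l\|^2),\quad \circledFive=2\gamma^2\sum\|\theta^b_l\|^2 .
\]

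The new ingredient is the affine scale. We control $\sum_{l<t}A(\nabla f(x^l))$ on $E_{t-1}$ by combining Assumption~\ref{as:grad-bound} with the induction hypothesis:
\[
\sum_l\|\nabla f(x^l)\|^2\le 2L\sum_l\delta_l\le 4LR^2/\gamma .
\]
For $\alpha<2$, Lemma~\ref{lem:jensen-power} with exponent $\alpha/2$ then gives
\[
\sum_l\|\nabla f(x^l)\|^\alpha\le (K+1)^{(2-\alpha)/2}(4LR^2/\gamma)^{\alpha/2}.
\]
Separately, the index $l=0$ (or any single term) is bounded by $\|\nabla f\|\le\sqrt2LR$. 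This produces exactly
\[
\Sigma_{K+1}=\sigma_0^\alpha(K+1)+\sigma_1^\alpha2^\alpha L^{\alpha/2}R^\alpha\gamma^{-\alpha/2}(K+1)^{(2-\alpha)/2}+\sigma_1^\alpha2^{\alpha/2}(LR)^\alpha,
\]
and explains the three thresholds $\gamma_0,\gamma_1,\gamma_2$. With this budget, the bias terms follow from Cauchy--Schwarz and the bias bound $bA/\lambda^{\alpha-1}$:
\[
\circledTwo\le 4\gamma R\,b\lambda^{1-\alpha}\Sigma_{K+1},
\]
and $\circledFive$ is bounded similarly, using $\|\theta^b_l\|\le\lambda$ to linearize. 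The variance term uses the second-moment bound $v\lambda^{2-\alpha}A$ summed over $l$. Under Assumption~\ref{as:weak}, Lemma~\ref{lem:log-sum} turns the per-step factors $v_\alpha((3\lambda/2)^\alpha/A_l)$ into a single factor $v_\alpha(\Lambda)$ evaluated at the average scale. Monotonicity, together with the choice $\Lambda_K=\max\{80(K+1),\Lambda_{2\sqrt2LR}\}$, dominates it, since $\lambda\ge 2\sqrt2LR$ and $\lambda\le R/(40\gamma_0\ell)$ scale appropriately. This gives $\circledThree\le 2\gamma^2v\lambda^{2-\alpha}\Sigma_{K+1}$. Substituting $\lambda=R/(40\gamma\ell)$, each bound reduces to $\gamma^\alpha\Sigma_{K+1}\lesssim R^\alpha/(c\,40^{\alpha-1}\ell^{\alpha-1})$, which is guaranteed term by term by $\gamma\le\gamma_0,\gamma_1,\gamma_2$. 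The constant $c=180\max\{v,b/3\}$ absorbs the factors.

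The martingale sums $\circledOne$ and $\circledFour$ are handled with Freedman's inequality (Lemma~\ref{lem:freedman}). The increments are bounded by $4\gamma R\cdot2\lambda=R^2/(5\ell)$ (up to constants) and by $8\gamma^2\lambda^2$, respectively. The conditional variances are bounded by $4\gamma^2(2R)^2\E_l\|\theta^u_l\|^2$, which is controlled by the same $\circledThree$-type estimate. Choosing the variance bound $G\sim R^4/(100\ell)$ and the deviation $R^2/5$ gives failure probability at most $\beta/(2(K+1))$ each. Note that the variance bound holds deterministically once the indicators are included, so no extra event is needed. A union bound over the $K+1$ steps completes the induction, and Lemma~\ref{lem:basic}'s Jensen step gives $f(\bar x^K)-f^*\le 2R^2/(\gamma(K+1))$.

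The main obstacle is the self-referential gradient budget. The bound on $\sum\|\nabla f(x^l)\|^\alpha$ uses the function-gap sum, which is itself what the induction controls. The constants must therefore close with $1/5$ margins, and at $\alpha=2$ the logarithmic factor $v_2(\Lambda)$ must be shown not to depend on the random trajectory. I would first verify the $\alpha=2$ weak case, where $\gamma_1$ and $\gamma_2$ merge and $v$ contains the logarithm, using Lemma~\ref{lem:log-sum} with $\bar B=\Sigma_{K+1}$.
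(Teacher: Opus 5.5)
Your overall structure is the paper's: nested $\G_k$-measurable events, Lemma~\ref{lem:basic}, the five-term split, the affine budget $\Sigma_{K+1}$ built from Assumption~\ref{as:grad-bound} and Lemma~\ref{lem:jensen-power} with one gradient bounded by $\sqrt2LR$, and Freedman for the two martingale sums. Under Assumption~\ref{as:moment} this closes with $v=v_\alpha^{\rm mom}$.

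\textbf{The gap: the variance constant under Assumption~\ref{as:weak}.} Lemma~\ref{lem:log-sum} with $\bar B=\Sigma_T$ already gives the deterministic bound $\sum_{l<T}\E[\|\theta_l^u\|^2\mid\G_l]\le\lambda^{2-\alpha}\Sigma_T\,v_\alpha(\Lambda_{\gamma,\lambda})$, where $\Lambda_{\gamma,\lambda}:=(K+1)(3\lambda/2)^\alpha/\Sigma_{K+1}$. So dependence on the random trajectory is not the real obstacle. The obstacle is replacing $v_\alpha(\Lambda_{\gamma,\lambda})$ by $v=v_\alpha(\Lambda_K)$, and your justification has the inequality backwards. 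The condition $\gamma\le\gamma_0$ gives $\lambda=R/(40\gamma\ell)\ge R/(40\gamma_0\ell)$, not $\le$. The theorem allows every $\gamma$ below the thresholds, so $\lambda$ is unbounded. For example, with $\sigma_1=0$ one has $\Lambda_{\gamma,\lambda}=(3\lambda/(2\sigma_0))^\alpha\to\infty$ as $\gamma\to0$. At $\alpha=2$ your intermediate claim (variance sum at most $v_2(\Lambda_K)\Sigma_{K+1}$) then fails, because the clipped second moment of Pareto noise genuinely grows like $\ln\lambda$ (Remark~\ref{rem:alpha-two}). Even at $\gamma=\gamma_*:=\min\{\gamma_L,\gamma_0,\gamma_1,\gamma_2\}$ the comparison is circular: $\gamma_*$ is defined through $c=c_\alpha(\Lambda_K)$, while $\Lambda_{\gamma_*,\lambda_*}$ depends on $\gamma_*$.

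\textbf{The two missing steps.} First, at $\gamma=\gamma_*$ one has $\lambda_*=\max_j R/(40\gamma_j\ell)$.
\begin{itemize}
\item[(a)] If the maximum is $2\sqrt2LR$, then $\Lambda_{\gamma_*,\lambda_*}\le\Lambda_{2\sqrt2LR}$.
\item[(b)] In each case $j\in\{0,1,2\}$, the definition of $\gamma_j$ together with the matching term of $\Sigma_{K+1}$ gives $\Lambda_{\gamma_*,\lambda_*}\le(3/2)^\alpha c_\alpha(\Lambda_K)(K+1)/(40\ell)$.
\item[(c)] This is at most $\Lambda_K$ because $\Lambda\mapsto\Lambda/c_\alpha(\Lambda)$ is non-decreasing and $(3/2)^\alpha c_\alpha(80(K+1))\le3200\ell$. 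This is exactly what the term $80(K+1)$ in $\Lambda_K$ is for. For $\alpha\le1.1$ the bias branch $60b_\alpha$ dominates and $c_\alpha$ is constant, so nothing needs checking.
\end{itemize}
Second, for $\gamma<\gamma_*$, set $u(\gamma):=\gamma^\alpha\Sigma_{K+1}(\gamma)$, which is non-decreasing in $\gamma$. With $\lambda=R/(40\gamma\ell)$ one has $\Lambda_{\gamma,\lambda}=C'/u$, where $C':=(K+1)(3R/(80\ell))^\alpha$, and
$\gamma^2\lambda^{2-\alpha}\Sigma_{K+1}v_\alpha(\Lambda_{\gamma,\lambda})=(R/(40\ell))^{2-\alpha}u\,v_\alpha(C'/u)$.
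By Lemma~\ref{lem:log-sum} this is non-decreasing in $u$. Hence the variance condition verified at $\gamma_*$ transfers to every smaller $\gamma$, even though $v_\alpha(\Lambda_{\gamma,\lambda})$ itself grows.

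\textbf{Minor slips, all easily fixed.}
\begin{itemize}
\item The a priori bound is $\|\theta_l^b\|\le3\lambda/2$, not $\lambda$.
\item $\|\eta_l\|\le2R$ needs $L\gamma\le\sqrt2-1$ (which $\gamma_L$ provides), not merely $\gamma\le1/(2L)$.
\item The separately bounded gradient term is $l=t-1$, the one the induction hypothesis does not cover, not $l=0$.
\item With deviation $R^2/5$ and increments $R^2/(5\ell)$, Freedman gives exponent exactly $\ell$ only for $G=R^4/(150\ell)$. This is what $c\ge180v$ is calibrated to deliver; $G=R^4/(100\ell)$ would not suffice.
\end{itemize}
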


\begin{corollary}[Rate and complexity]\label{cor:rate}
Let the conditions of Theorem~\ref{thm:main} hold and $\gamma=\min\{\gamma_L,\gamma_0,\gamma_1,\gamma_2\}$. Then, with probability at least $1-\beta$, we have $f(\bar x^K)-f^*\le\max\{\Delta_L,\Delta_0,\Delta_1\}$, where
\begin{align*}
    \Delta_L&:=\frac{160\sqrt2\,LR^2\ell}{K+1},\\
    \Delta_0&:=\frac{2\left(c\,40^{\alpha-1}\right)^{1/\alpha}\sigma_0R\,\ell^{(\alpha-1)/\alpha}}{(K+1)^{(\alpha-1)/\alpha}},\\
    \Delta_1&:=\frac{2\left(c\,2^\alpha40^{\alpha-1}\right)^{2/\alpha}\sigma_1^2LR^2\ell^{2(\alpha-1)/\alpha}}{(K+1)^{2(\alpha-1)/\alpha}}.
\end{align*}
That is, \clipsgd achieves $f(\bar x^K)-f^*\le\varepsilon$ with probability at least $1-\beta$ after
\begin{align*}
    K=\widetilde O\Bigg(\max\Bigg\{\frac{LR^2}{\varepsilon},\left(\frac{\sigma_0R}{\varepsilon}\right)^{\frac{\alpha}{\alpha-1}},\left(\frac{\sigma_1^2LR^2}{\varepsilon}\right)^{\frac{\alpha}{2(\alpha-1)}}\Bigg\}\Bigg)
\end{align*}
iterations, where $\widetilde O$ hides factors depending only on $\alpha$ and logarithmic factors in $1/\beta$ and the problem parameters (including $c=c_\alpha(\Lambda_K)$ under Assumption~\ref{as:weak}).
\end{corollary}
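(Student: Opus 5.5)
This corollary follows from Theorem~\ref{thm:main} by plugging in $\gamma=\min\{\gamma_L,\gamma_0,\gamma_1,\gamma_2\}$. The bound~\eqref{eq:main-bound} then reads
\[
    f(\bar x^K)-f^*\le\frac{2R^2}{\gamma(K+1)}=\max_{j\in\{L,0,1,2\}}\frac{2R^2}{\gamma_j(K+1)}=\max\{\Delta_L,\Delta_0,\Delta_1,\Delta_2\},
\]
with the convention that a term is absent when the corresponding $\gamma_j=+\infty$.

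The first step is a direct substitution for each threshold:
\begin{itemize}
    \item $2R^2/(\gamma_L(K+1))=160\sqrt2LR^2\ell/(K+1)=\Delta_L$.
    \item $2R^2/(\gamma_0(K+1))=2(c40^{\alpha-1}\ell^{\alpha-1})^{1/\alpha}\sigma_0R(K+1)^{1/\alpha-1}$, which is $\Delta_0$.
    \item $2R^2/(\gamma_1(K+1))=2(c2^\alpha40^{\alpha-1}\ell^{\alpha-1})^{2/\alpha}\sigma_1^2LR^2(K+1)^{(2-\alpha)/\alpha-1}$. The exponent is $(2-2\alpha)/\alpha=-2(\alpha-1)/\alpha$, so this is $\Delta_1$.
\end{itemize}

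The only non-mechanical step is to absorb the fourth term
\[
    \Delta_2=\frac{2(c2^{\alpha/2}40^{\alpha-1}\ell^{\alpha-1})^{1/\alpha}\sigma_1LR^2}{K+1}
\]
into the others. Write $X:=(c2^{\alpha/2}40^{\alpha-1}\ell^{\alpha-1})^{1/\alpha}\sigma_1$, so $\Delta_2=2XLR^2/(K+1)$. Since $2^{\alpha/2}\le2^\alpha$, we have $\Delta_1\ge2X^2LR^2(K+1)^{-2(\alpha-1)/\alpha}$.

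If $X\ge1$, then $X\le X^2$, and $(K+1)^{-1}\le(K+1)^{-2(\alpha-1)/\alpha}$ because $2(\alpha-1)/\alpha\le1$. Hence $\Delta_2\le\Delta_1$.

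If $X<1$, then $\Delta_2\le 2LR^2/(K+1)\le\Delta_L$, since $\ell\ge\ln4>0$ and $160\sqrt2\ell\ge2$.

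In both cases $\max\{\Delta_L,\Delta_0,\Delta_1,\Delta_2\}=\max\{\Delta_L,\Delta_0,\Delta_1\}$, which is the claimed rate.

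For the complexity, it suffices to make each of $\Delta_L,\Delta_0,\Delta_1$ at most $\varepsilon$, treating $\ell$ and $c$ as logarithmic factors. Solving each inequality for $K+1$:
\begin{itemize}
    \item $\Delta_L\le\varepsilon$ holds once $K+1\gtrsim LR^2\ell/\varepsilon$.
    \item $\Delta_0\le\varepsilon$ holds once $K+1\gtrsim c^{1/(\alpha-1)}\ell\,(\sigma_0R/\varepsilon)^{\alpha/(\alpha-1)}$.
    \item $\Delta_1\le\varepsilon$ holds once $K+1\gtrsim c^{1/(\alpha-1)}\ell\,(\sigma_1^2LR^2/\varepsilon)^{\alpha/(2(\alpha-1))}$.
\end{itemize}
The implied constants depend only on $\alpha$.

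The main subtlety is that $\ell$, and under Assumption~\ref{as:weak} also $c=c_\alpha(\Lambda_K)$, depend on $K$ themselves. Both grow only logarithmically in $K$: for $c$ this comes from $v_\alpha(\Lambda_K)\le(\tfrac32)^{2-\alpha}(1+\tfrac2\alpha\ln(1+\Lambda_K))$. So the implicit inequality $K+1\ge M\cdot\mathrm{polylog}(K/\beta)$ is satisfied by $K=O(M\,\mathrm{polylog}(M/\beta))$, by the standard argument that $K\ge M\log K$ holds for $K\approx 2M\log M$. This is exactly what the $\widetilde O$ notation hides.
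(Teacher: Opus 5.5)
Your proposal is correct and has the same skeleton as the paper's proof. You substitute $\gamma=\min_j\gamma_j$ into \eqref{eq:main-bound}, identify $2R^2/(\gamma_j(K+1))$ with $\Delta_L,\Delta_0,\Delta_1,\Delta_2$, show that $\Delta_2$ is redundant, and invert each remaining term, with the logarithms hidden in $\widetilde O$.

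The one genuine difference is how you absorb $\Delta_2$. The paper treats $\sigma_1=0$ separately and otherwise uses a geometric mean: it computes $\Delta_2/\sqrt{\Delta_L\Delta_1}$ explicitly and uses $\ell>1$ and $r\le\frac12$ to get $\Delta_2\le\frac1{15}\sqrt{\Delta_L\Delta_1}\le\frac1{15}\max\{\Delta_L,\Delta_1\}$. Your case split on $X\ge1$ versus $X<1$ is more elementary and covers $\sigma_1=0$ automatically. It yields exactly $\Delta_2\le\max\{\Delta_L,\Delta_1\}$, equivalently $\gamma_2\ge\min\{\gamma_L,\gamma_1\}$, which is all the corollary needs. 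The geometric-mean route additionally gives the quantitative margin $\gamma_2\ge15\min\{\gamma_L,\gamma_1\}$ quoted in the remark after the corollary.

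One small completion in the complexity part concerns Assumption~\ref{as:weak}. There, $c=c_\alpha(\Lambda_K)$ carries logarithms of the problem parameters through $\Lambda_{2\sqrt2LR}$ (e.g., of $LR/\sigma_0$, or of $1/\sigma_1$ when $\sigma_0=0$), not only of $K/\beta$. For $\alpha<2$ it is simply bounded by $c_\alpha(\infty)$. Since the statement's $\widetilde O$ allows such factors, your inversion argument goes through unchanged.
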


We notice that the threshold $\gamma_2$ from Theorem~\ref{thm:main} is redundant: with $\Delta_2:=2R^2/((K+1)\gamma_2)$, the proof of Corollary~\ref{cor:rate} gives $\Delta_2\le\max\{\Delta_L,\Delta_1\}/15$, i.e., $\gamma_2\ge15\min\{\gamma_L,\gamma_1\}$, and, therefore, it never determines the stepsize. The logarithmic dependence on the horizon in $\ell$ comes from the union bound over the $K+1$ steps in the proof: under Assumption~\ref{as:moment}, and under Assumption~\ref{as:weak} with $\alpha<2$ and $v=v_\alpha(\infty)$, Theorem~\ref{thm:main} holds with $\ell$ replaced by $\ln(4/\beta)$ (see Theorem~\ref{thm:maximal} in Appendix~\ref{app:maximal}, which relies on a maximal inequality, as in \citet{nguyen2023improved} for the moment case), and the resulting dependence on $\beta$ is optimal (see Proposition~\ref{prop:confidence-lower}). The same holds for Theorem~\ref{thm:sstm}; moreover, Theorem~\ref{thm:fixed} and Corollary~\ref{cor:fixed} admit the replacement under either assumption and for every $\alpha\in(1,2]$ (see item~1 of Theorem~\ref{thm:maximal}), since their clipping level is prescribed and $\Lambda_\lambda$ does not depend on $\ell$. With the horizon-dependent constant $v_\alpha(\Lambda_K)$, in particular, at $\alpha=2$, the logarithm remains; the endpoint is reached without it via the moment route (see Corollary~\ref{cor:lorentz-upper}).

\begin{corollary}[Pareto noise]\label{cor:pareto}
{Consider the problem from Appendix~\ref{app:pareto-example} with $\sigma>0$. Then Assumptions~\ref{as:smooth}--\ref{as:convex} and~\ref{as:weak} hold with $\sigma_0=\sigma$ and $\sigma_1=0$, although $\E|\xi|^\alpha=\infty$.} Therefore, by Corollary~\ref{cor:rate}, with probability at least $1-\beta$, we have
\[
    f(\bar x^K)-f^*=O_\alpha\left(\frac{LR^2\ell}{K}+\frac{c^{1/\alpha}\sigma R\,\ell^{(\alpha-1)/\alpha}}{K^{(\alpha-1)/\alpha}}\right),
\]
where $c=c_\alpha(\Lambda_K)\le c_\alpha(\infty)$ for $\alpha<2$, and $c=O(1+\ln(K+1)+\ln(1+LR/\sigma))$ for $\alpha=2$.
In contrast, Assumption~\ref{as:moment} holds for this problem only with $\alpha$ replaced by some $q\in(1,\alpha)$ and $\sigma_0=\left(\frac{\alpha}{\alpha-q}\right)^{1/q}\sigma$. For a fixed $q$, Theorem~\ref{thm:main} gives the slower rate $K^{-(q-1)/q}$; if $q$ approaches $\alpha$ with the horizon, one recovers the exponent $(\alpha-1)/\alpha$ only up to an additional factor of the order $(\ln K)^{1/\alpha}$, which comes from the constant $(\alpha/(\alpha-q))^{1/q}$.
\end{corollary}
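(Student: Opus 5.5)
The statement to prove is Corollary~\ref{cor:pareto}, and the argument splits into three parts: verify the assumptions, apply Corollary~\ref{cor:rate}, and then analyze the moment route.

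\textbf{Assumptions and rate.} For $f(x)=\frac L2x^2$ on $\R$, Assumptions~\ref{as:smooth}--\ref{as:convex} hold globally. Assumption~\ref{as:grad-bound} is the identity $L^2x^2=2L\cdot\frac L2x^2$. By Proposition~\ref{prop:pareto}, the noise $\xi$ has mean zero and $\wnorm{\xi}=\sigma$, and it does not depend on $x$. Hence Assumption~\ref{as:weak} holds with $\sigma_0=\sigma$ and $\sigma_1=0$, while $\E|\xi|^\alpha=\infty$.

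Corollary~\ref{cor:rate} then applies with $\Delta_1=0$. The bound $\max\{\Delta_L,\Delta_0\}\le\Delta_L+\Delta_0$ gives the displayed estimate, with $K+1$ replaced by $K$ at the cost of a constant. For the constant $c$, we have $c=180\max\{v_\alpha(\Lambda_K),b_\alpha/3\}$. For $\alpha<2$, monotonicity of $v_\alpha$ gives $c\le c_\alpha(\infty)$.

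At $\alpha=2$, we use $\Lambda_{2\sqrt2LR}=(3\sqrt2LR/\sigma)^2$. This gives $\ln(1+\Lambda_K)\le\ln(1+80(K+1))+\ln(1+18L^2R^2/\sigma^2)$. Therefore $c=O(1+\ln(K+1)+\ln(1+LR/\sigma))$.

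\textbf{Moment route.} By Remark~\ref{rem:direction}, or directly from Proposition~\ref{prop:pareto}, $\E|\xi|^q=\sigma^q\alpha/(\alpha-q)$ for every $q<\alpha$. So Assumption~\ref{as:moment} holds at exponent $q$ with $\sigma_0=(\alpha/(\alpha-q))^{1/q}\sigma$. It fails at exponent $\alpha$ itself, since the $\alpha$-th moment is infinite.

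For fixed $q$, Theorem~\ref{thm:main} under Assumption~\ref{as:moment} yields the additive term $\sigma_0R(\ell/K)^{(q-1)/q}$, which has the slower exponent $(q-1)/q$. If instead $q=q_K\to\alpha$, the additive term compares to $K^{-(\alpha-1)/\alpha}$ through the factor
\[
\Big(\frac{\alpha}{\alpha-q}\Big)^{1/q}K^{\frac{\alpha-1}{\alpha}-\frac{q-1}{q}}=\Big(\frac{\alpha}{\alpha-q}\Big)^{1/q}K^{\frac{\alpha-q}{\alpha q}}.
\]
Setting $\alpha-q=\alpha q/\ln K$, which is roughly $\alpha^2/\ln K$, makes the power of $K$ equal to $e$. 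The prefactor is then of order $(\ln K/q)^{1/q}$, which is $\asymp(\ln K)^{1/\alpha}$ because $q\to\alpha$ and $(\ln K)^{1/q-1/\alpha}=O(1)$.

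The step needing the most care is showing this choice is essentially optimal, i.e., that no $q$ does better than $(\ln K)^{1/\alpha}$. Write $\delta=\alpha-q$ and minimize $\delta^{-1/q}\exp(\delta\ln K/(\alpha q))$ over $\delta$. The minimum is attained at $\delta\asymp1/\ln K$, with value $\asymp(\ln K)^{1/\alpha}$. This yields the stated order of the loss. The other moment constants $c^{\rm mom}_q$ are bounded by $270$ and do not affect the order.
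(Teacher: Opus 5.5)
Your proposal is correct and follows the paper's route: verify the assumptions via Proposition~\ref{prop:pareto}, apply Corollary~\ref{cor:rate} with $\sigma_1=0$, and bound $c$ through $\Lambda_K$ using $\Lambda_{2\sqrt2LR}=(3\sqrt2LR/\sigma)^2$ at $\alpha=2$; the moment route then uses $\E|\xi|^q=\sigma^q\alpha/(\alpha-q)$ with $\alpha-q\asymp1/\ln K$. Your additional check that no choice of $q$ beats $(\ln K)^{1/\alpha}$ goes slightly beyond what the paper makes explicit, and it is sound: use $1/q\ge1/\alpha$, $q<\alpha$ and $\delta<1$ to reduce to minimizing $\delta^{-1/\alpha}K^{\delta/\alpha^2}$.
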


For example, for $\alpha=3/2$, the weak-$L^{3/2}$ analysis gives the exponent $1/3$ with a constant independent of $K$, while the moment-based analysis gives only $1/4$ for the fixed $q=4/3$, and $1/3$ up to the factor $(\ln K)^{2/3}$ for a horizon-dependent $q$. For $\alpha=2$, the noise has infinite variance, and the weak-$L^2$ analysis gives the exponent $1/2$ of the finite-variance case up to a logarithmic factor, in the original weak-$L^2$ class and with explicit finite-horizon parameters. We also notice that, for $\alpha<2$ close to $2$, the logarithmic branch of $v_\alpha$ improves the constant for a fixed horizon as well: $c_\alpha(\Lambda_K)\le c_\alpha(\infty)$ for every $K$, with a strict gain when $\ln\Lambda_K\ll(2-\alpha)^{-1}$ (since $\Lambda_K\ge80$, the logarithmic branch can be active only for $\alpha>1.68$); since $c_\alpha(\Lambda_K)\to c_\alpha(\infty)$ as $K\to\infty$, this improves the bound for a fixed horizon but not uniformly in $K$.

\begin{theorem}[Prescribed clipping level: full statement]\label{thm:fixed}
{Let Assumptions~\ref{as:smooth}--\ref{as:convex} hold for $Q=B_{2R}(x^*)$, where $R\ge\|x^0-x^*\|>0$, and let either Assumption~\ref{as:moment} or Assumption~\ref{as:weak} hold with $\alpha\in(1,2]$. Let $\beta\in(0,1)$, $K\in\{0,1,\dots\}$, $\ell:=\ln\frac{4(K+1)}{\beta}$, let $\lambda\ge2\sqrt2LR$ be arbitrary, and let}
\begin{gather*}
    S_0:=\sigma_0^\alpha(K+1),\qquad S_2:=2^{\alpha/2}\sigma_1^\alpha(LR)^\alpha,\\
    S_1:=2^\alpha\sigma_1^\alpha L^{\alpha/2}R^\alpha(K+1)^{\frac{2-\alpha}{2}} .
\end{gather*}
{Let the stepsize $0<\gamma\le\frac{R}{40\lambda\ell}$ satisfy the following bias and variance conditions:}
\begin{align}
    60\,b\,\gamma\lambda^{1-\alpha}\Sigma_{K+1}(\gamma)&\le3R,\label{eq:fixed-bias}\\
    7200\,\ell\,v\,\gamma^2\lambda^{2-\alpha}\Sigma_{K+1}(\gamma)&\le3R^2,\label{eq:fixed-var}
\end{align}
where $\Sigma_{K+1}(\gamma):=S_0+S_1\gamma^{-\alpha/2}+S_2$. Then, with probability at least $1-\beta$, the iterates produced by Algorithm~\ref{alg:clip-sgd} with $\lambda_k\equiv\lambda$ satisfy $x^k\in B_{\sqrt2R}(x^*)$ for all $k=0,1,\dots,K+1$ and
\[
    f(\bar x^K)-f^*\le\frac{2R^2}{\gamma(K+1)}.
\]
\end{theorem}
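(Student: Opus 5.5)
We follow the standard induction-on-good-events scheme of \citet{sadiev2023high}. For $t=0,\dots,K+1$, let $E_t$ be the event that for all $k\le t$ we have $x^k\in B_{\sqrt2R}(x^*)$ and
\[
    \gamma\sum_{l<k}(f(x^l)-f^*)+\|x^k-x^*\|^2\le 2R^2 .
\]
We prove $\PP(E_t)\ge1-t\beta/(K+1)$ by induction. On $E_{t-1}$ all $x^l$, $l<t$, lie in $B_{\sqrt2R}(x^*)\subset Q$. By Lemma~\ref{lem:grad-bound} and smoothness, $\|\nabla f(x^l)\|\le\sqrt2LR\le\lambda/2$, so Lemma~\ref{lem:clip} (conditionally, Corollary~\ref{cor:conditional}) applies at every step. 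Since $\gamma\le R/(40\lambda\ell)\le 1/(2L)$ (using $\lambda\ge2\sqrt2LR$), Lemma~\ref{lem:basic} gives the bound on the left-hand side of $E_t$ by $R^2$ plus the cross term $-2\gamma\sum\abr{\eta_l,\theta_l^u+\theta_l^b}$ plus $\gamma^2\sum\|\theta_l\|^2$, where $\eta_l:=x^l-x^*-\gamma\nabla f(x^l)$. We use truncated versions $\eta_l\ind\{\|\eta_l\|\le\sqrt2R\}$ so that the martingale terms are bounded on the whole space. On $E_{t-1}$ this truncation coincides with $\eta_l$, since $\|\eta_l\|\le\|x^l-x^*\|$ by cocoercivity.

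We split the error into five sums, as in \citet{sadiev2023high}:
\begin{itemize}[leftmargin=*]
    \item \circledOne: the martingale $\sum\abr{\eta_l,\theta_l^u}$;
    \item \circledTwo: the bias $\sum\abr{\eta_l,\theta_l^b}$;
    \item \circledThree: the conditional second moments $\sum\E_l\|\theta_l^u\|^2$;
    \item \circledFour: the martingale $\sum(\|\theta_l^u\|^2-\E_l\|\theta_l^u\|^2)$;
    \item \circledFive: the bias squares $\sum\|\theta_l^b\|^2$.
\end{itemize}
Each must be at most $R^2/5$. The almost-sure bounds $\|\theta_l^u\|\le2\lambda$ and $\gamma\lambda\le R/(40\ell)$ give the boundedness constants needed for Freedman's inequality (Lemma~\ref{lem:freedman}) with $b\sim R^2$ and $c\sim R^2/\ell$. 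The two Freedman applications then cost $\beta/(2(K+1))$ each. This is where $\ell$ enters and why the variance condition~\eqref{eq:fixed-var} carries the factor $\ell$.

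The key step is to bound the accumulated noise scale $\sum_{l<t}A(\nabla f(x^l))=\sum_l(\sigma_0^\alpha+\sigma_1^\alpha\|\nabla f(x^l)\|^\alpha)$ by $\Sigma_{K+1}(\gamma)$. On $E_{t-1}$, Assumption~\ref{as:grad-bound} gives $\|\nabla f(x^l)\|^2\le2L(f(x^l)-f^*)$. Hence Lemma~\ref{lem:jensen-power} with $r=\alpha/2$ yields
\[
    \sum_{l<t-1}\|\nabla f(x^l)\|^\alpha\le(K+1)^{(2-\alpha)/2}\bigl(2L\cdot2R^2/\gamma\bigr)^{\alpha/2}.
\]
This produces the $S_1\gamma^{-\alpha/2}$ term. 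The last index $l=t-1$, for which only $\|\nabla f\|\le\sqrt2LR$ is available, gives $S_2$; the additive part gives $S_0$.

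With this bound, Lemma~\ref{lem:clip} controls the two bias-driven sums. For \circledTwo, using $\|\eta_l\|\le\sqrt2R$, we get $2\gamma\sqrt2R\,b\lambda^{1-\alpha}\Sigma_{K+1}$, and \circledFive\ is bounded similarly. Both are then handled by the bias condition~\eqref{eq:fixed-bias}. For the weak case, the variance sum \circledThree\ is $\sum v_\alpha((3\lambda/2)^\alpha/A_l)\lambda^{2-\alpha}A_l$. Lemma~\ref{lem:log-sum} bounds it by $v_\alpha(\Lambda)\lambda^{2-\alpha}\Sigma$ with $\Lambda\le\Lambda_\lambda$, which motivates the constant $v$. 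Condition~\eqref{eq:fixed-var} then handles \circledThree\ and supplies the variance proxy for both Freedman applications.

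Closing the induction, all terms sum to at most $2R^2$ on $E_t$, which also yields $\|x^t-x^*\|\le\sqrt2R$. At $t=K+1$, the last claim of Lemma~\ref{lem:basic} gives
\[
    f(\bar x^K)-f^*\le\frac{2R^2}{\gamma(K+1)} .
\]
The main obstacle is the self-referential gradient budget: the bound on $\sum\|\nabla f(x^l)\|^\alpha$ must be derived from the very function-gap sum controlled by the induction. Handling it needs careful index bookkeeping, so that step $t-1$ uses only the crude bound, and the truncations must keep the Freedman variance proxies deterministic. The numerical constants $60$ and $7200$ are then obtained by allotting $R^2/5$ to each sum.
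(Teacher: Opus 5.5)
Your proposal is correct and is essentially the paper's argument. The paper derives Theorem~\ref{thm:fixed} from Proposition~\ref{prop:master}, whose proof is exactly this induction over good events: the same five-term split, two Freedman applications per step at cost $\beta/(2(K+1))$ each, the gradient budget from Lemma~\ref{lem:jensen-power} with the last index bounded crudely to produce $S_2$, and Lemma~\ref{lem:log-sum} with $\Lambda_{\gamma,\lambda}\le\Lambda_\lambda$ to justify $v=v_\alpha(\Lambda_\lambda)$. The remaining differences are cosmetic: you state the good event via the function-gap budget $\gamma\sum_{l<k}(f(x^l)-f^*)+\|x^k-x^*\|^2\le2R^2$ rather than via the noise sums, and you truncate $\eta_l$ at $\sqrt2R$ using cocoercivity instead of at $2R$ using $\|\eta_l\|\le\sqrt2R(1+L\gamma)$, which only adds slack to the constants.
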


Both left-hand sides are non-decreasing in $\gamma$. Requiring each of the three terms of $\Sigma_{K+1}(\gamma)$ to contribute at most one third, we get the explicit thresholds $\gamma\le\min\{\gamma_b^\lambda,\gamma_v^\lambda\}$ from~\eqref{eq:fixed-thresholds} below. For $\alpha=2$, the $S_1$-part of~\eqref{eq:fixed-bias} does not depend on $\gamma$ and requires $\lambda\ge240\,b\,\sigma_1^2LR$.

\begin{corollary}[Neighborhood and horizon-dependent levels: full statement of Theorem~\ref{thm:fixed-main}]\label{cor:fixed}
Let the conditions of Theorem~\ref{thm:fixed} hold, let either $\alpha<2$, or $\alpha=2$ and $\lambda\ge240\,b\,\sigma_1^2LR$ (in the latter case, the $S_1$-term of $\gamma_b^\lambda$ and the second term of $\mathcal N_\lambda$ are absent), and let $\gamma:=\min\{\frac{R}{40\lambda\ell},\gamma_b^\lambda,\gamma_v^\lambda\}$ with the thresholds from~\eqref{eq:fixed-thresholds}. Then, with probability at least $1-\beta$, we have
\begin{equation}\label{eq:fixed-bound}
    f(\bar x^K)-f^*\le\max\left\{\frac{80\lambda R\ell}{K+1},\,\mathcal N_\lambda,\,\mathcal V_{\lambda,K}\right\},
\end{equation}
where the neighborhood
\begin{equation}\label{eq:neighborhood}
    \mathcal N_\lambda:=\max\left\{\frac{120\,bR\sigma_0^\alpha}{\lambda^{\alpha-1}},2\left(\frac{60\cdot2^\alpha bRL^{\frac\alpha2}\sigma_1^\alpha}{\lambda^{\alpha-1}}\right)^{\frac2{2-\alpha}}\right\}
\end{equation}
does not depend on $K$, while
\begin{align}
    \mathcal V_{\lambda,K}:=\max\Bigg\{&120\sqrt2\,R\sqrt{\frac{v\lambda^{2-\alpha}\sigma_0^\alpha\ell}{K+1}},\ \frac{120\sqrt2\,b\,\sigma_1^\alpha LR^2}{K+1},\ \frac{120\sqrt2\,R\sqrt{v\lambda^{2-\alpha}S_2\,\ell}}{K+1},\notag\\
    &\frac{2R^2}{K+1}\left(\frac{7200\,\ell\,v\lambda^{2-\alpha}S_1}{R^2}\right)^{\frac{2}{4-\alpha}}\Bigg\}\label{eq:V-terms}
\end{align}
vanishes as $K\to\infty$ (its terms are of the orders $K^{-1/2}$, $K^{-1}$, $K^{-1}$, and $K^{-2/(4-\alpha)}$, up to logarithmic factors). Moreover, if $\sigma_1=0$ and $\lambda=\lambda_0(K+1)^{1/\alpha}$ with $\lambda_0>0$ and $K+1\ge(2\sqrt2LR/\lambda_0)^\alpha$, then, with probability at least $1-\beta$, we have
\begin{equation}\label{eq:horizon-bound}
    f(\bar x^K)-f^*\le\frac{\Xi R}{(K+1)^{(\alpha-1)/\alpha}},
\end{equation}
where $\Xi:=80\lambda_0\ell+120\,b\,\sigma_0^\alpha\lambda_0^{1-\alpha}+120\sqrt{2v\ell}\,\sigma_0^{\alpha/2}\lambda_0^{1-\alpha/2}$.
For $\sigma_1>0$, the same choice of $\lambda$ adds the terms decaying faster than $(K+1)^{-(\alpha-1)/\alpha}$.
\end{corollary}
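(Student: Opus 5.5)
The plan is to derive Corollary~\ref{cor:fixed} from Theorem~\ref{thm:fixed}. First I would check that the chosen $\gamma$ satisfies \eqref{eq:fixed-bias}--\eqref{eq:fixed-var}. Then I would bound $2R^2/(\gamma(K+1))$ separately for each threshold that can attain the minimum.

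\textbf{Defining the thresholds.} Since both left-hand sides are non-decreasing in $\gamma$, it suffices to require that each of the three summands $S_0$, $S_1\gamma^{-\alpha/2}$, $S_2$ of $\Sigma_{K+1}(\gamma)$ uses at most one third of the right-hand side. For \eqref{eq:fixed-bias} this gives
\[
60b\gamma\lambda^{1-\alpha}S_0\le R,\qquad 60b\gamma^{1-\alpha/2}\lambda^{1-\alpha}S_1\le R,\qquad 60b\gamma\lambda^{1-\alpha}S_2\le R.
\]
For \eqref{eq:fixed-var} it gives the analogous three conditions with $7200\ell v\gamma^2\lambda^{2-\alpha}$ in place of $60b\gamma\lambda^{1-\alpha}$ and $R^2$ in place of $R$, the $S_1$-condition now involving $\gamma^{2-\alpha/2}$. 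Solving each for $\gamma$ defines $\gamma_b^\lambda$ and $\gamma_v^\lambda$ as minima of explicit powers, which are the thresholds \eqref{eq:fixed-thresholds}. At $\alpha=2$ the $S_1$-part of the bias condition does not contain $\gamma$. It reads $60b\lambda^{-1}\cdot4\sigma_1^2LR^2\le R$, which is exactly the hypothesis $\lambda\ge240b\sigma_1^2LR$, and this term is then dropped.

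\textbf{Converting thresholds into rate terms.} With $\gamma$ the minimum, $2R^2/(\gamma(K+1))$ is at most the maximum of $2R^2/(\gamma_j(K+1))$ over the active constraints $j$. I would compute each term; the expected outcomes are as follows.
\begin{itemize}
\item[(i)] $\gamma=R/(40\lambda\ell)$ gives $80\lambda R\ell/(K+1)$.
\item[(ii)] The $S_0$-bias threshold gives $120bR\sigma_0^\alpha/\lambda^{\alpha-1}$.
\item[(iii)] The $S_1$-bias threshold gives $2R^2/(K+1)\cdot(60bS_1/(R\lambda^{\alpha-1}))^{2/(2-\alpha)}$. Because $S_1\propto(K+1)^{(2-\alpha)/2}$, the factor $K+1$ cancels. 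Collecting the powers of $R$ via $(R^{\alpha-1})^{2/(2-\alpha)}R^2=R^{2/(2-\alpha)}$ yields the second, $K$-free term of $\mathcal N_\lambda$.
\item[(iv)] The $S_2$-bias threshold gives $120bRS_2/(\lambda^{\alpha-1}(K+1))$. Using $\lambda\ge2\sqrt2LR$ we have $S_2/\lambda^{\alpha-1}\le\sqrt2\sigma_1^\alpha LR$, which gives $120\sqrt2b\sigma_1^\alpha LR^2/(K+1)$.
\item[(v)] The variance thresholds give $2R\sqrt{7200\ell v\lambda^{2-\alpha}S_j}/(K+1)$ for $j=0,2$; since $2\sqrt{7200}=120\sqrt2$, these are the first and third terms of \eqref{eq:V-terms}. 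For $S_1$ the exponent $2/(4-\alpha)$ gives the last term of \eqref{eq:V-terms}.
\end{itemize}
The orders claimed in the statement follow by inspection. The only delicate point is bookkeeping in the $S_1$-terms, where fractional powers of $\gamma$ appear and the $K$-dependence must cancel in the bias term but not in the variance term. I expect this exponent tracking, rather than any probabilistic argument, to be the main obstacle.

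\textbf{Horizon-dependent level.} For $\sigma_1=0$ and $\lambda=\lambda_0(K+1)^{1/\alpha}$, the assumption $K+1\ge(2\sqrt2LR/\lambda_0)^\alpha$ is exactly $\lambda\ge2\sqrt2LR$, so \eqref{eq:fixed-bound} applies with only three nonzero terms. Substituting $\lambda$ gives:
\begin{itemize}
\item[(i)] $80\lambda_0\ell R(K+1)^{1/\alpha-1}$;
\item[(ii)] $120bR\sigma_0^\alpha\lambda_0^{1-\alpha}(K+1)^{-(\alpha-1)/\alpha}$;
\item[(iii)] $120\sqrt2R\sqrt{v\ell}\,\sigma_0^{\alpha/2}\lambda_0^{1-\alpha/2}(K+1)^{(2-\alpha)/(2\alpha)-1/2}$.
\end{itemize}
All three exponents equal $-(\alpha-1)/\alpha$. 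Bounding the maximum by the sum gives \eqref{eq:horizon-bound} with the stated $\Xi$. For $\sigma_1>0$ the same substitution in the remaining terms gives the extra contributions:
\begin{itemize}
\item[(i)] the second term of $\mathcal N_\lambda$ decays like $(K+1)^{-2(\alpha-1)^2/(\alpha(2-\alpha))}$;
\item[(ii)] the $S_2$-variance term decays like $(K+1)^{-(\alpha+2)/(2\alpha)}$;
\item[(iii)] the $S_1$-variance term decays like $(K+1)^{-2(\alpha-1)/(\alpha(4-\alpha))}$.
\end{itemize}
All of these are smaller than $(K+1)^{-(\alpha-1)/\alpha}$ only for part of the range of $\alpha$. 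For example, at $\alpha=3/2$ the $S_1$-variance exponent is $-4/15>-1/3$. The faster-decay claim in the statement therefore needs to be checked against the explicit thresholds \eqref{eq:fixed-thresholds} before it is asserted.
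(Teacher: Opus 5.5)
Your derivation of \eqref{eq:fixed-bound} and \eqref{eq:horizon-bound} follows the paper's proof and is correct. You give each summand of $\Sigma_{K+1}(\gamma)$ one third of the budget to obtain the thresholds \eqref{eq:fixed-thresholds}. You then write $2R^2/(\gamma(K+1))$ as the maximum over the seven thresholds (six at $\alpha=2$). Finally you substitute $\lambda=\lambda_0(K+1)^{1/\alpha}$ and bound the maximum by the sum.

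Your last paragraph is wrong, and the doubt you raise about the final claim comes from miscomputed exponents. With $\lambda^{\alpha-1}\propto(K+1)^{(\alpha-1)/\alpha}$, $\lambda^{2-\alpha}\propto(K+1)^{(2-\alpha)/\alpha}$ and $S_1\propto(K+1)^{(2-\alpha)/2}$, the extra terms behave as follows.

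The second term of $\mathcal N_\lambda$ is of order $(K+1)^{-2(\alpha-1)/(\alpha(2-\alpha))}$, not $(K+1)^{-2(\alpha-1)^2/(\alpha(2-\alpha))}$. The $S_2$-bias term is $O((K+1)^{-1})$. The $S_2$-variance term is of order $(K+1)^{\frac{2-\alpha}{2\alpha}-1}=(K+1)^{-(3\alpha-2)/(2\alpha)}$; you flipped the sign of $\frac{2-\alpha}{2\alpha}$. For the $S_1$-variance term, $\lambda^{2-\alpha}S_1\propto(K+1)^{(4-\alpha^2)/(2\alpha)}$, so the term is of order
\[ (K+1)^{\frac{4-\alpha^2}{\alpha(4-\alpha)}-1}=(K+1)^{-\frac{4(\alpha-1)}{\alpha(4-\alpha)}} . \]
Your value $-2(\alpha-1)/(\alpha(4-\alpha))$ is exactly what you get if $\lambda^{2-\alpha}$ is replaced by $\lambda$.

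Each correct exponent is strictly below $-(\alpha-1)/\alpha$ for every $\alpha\in(1,2)$, because $\frac{2}{2-\alpha}>1$, $3\alpha-2>2(\alpha-1)$, and $\frac{4}{4-\alpha}>1$. In particular, at $\alpha=3/2$ the $S_1$-variance exponent is $-8/15<-1/3$, not $-4/15$. At $\alpha=2$, the second term of $\mathcal N_\lambda$ is absent and $\lambda\ge240\,b\,\sigma_1^2LR$ holds for large $K$. The remaining extra terms are then $O(1/(K+1))$ up to the factors $\ell$ and $v$, which is faster than $(K+1)^{-1/2}$.

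So the last sentence of the statement is true and follows from the same substitution you already use. As written, your proposal leaves it unproved and wrongly suggests it may fail.
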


\paragraph{Discussion.} If we write $\lambda_0=\rho\sigma_0$ in~\eqref{eq:horizon-bound}, then the price of misspecifying the noise level is the factor $80\rho\ell+120b\rho^{1-\alpha}+120\sqrt{2v\ell}\,\rho^{1-\alpha/2}$ in front of $\sigma_0R(K+1)^{-(\alpha-1)/\alpha}$, which grows linearly in the case of overestimation ($\rho\to\infty$) and as $\rho^{-(\alpha-1)}$ in the case of underestimation ($\rho\to0$). We emphasize that Theorems~\ref{thm:main} and~\ref{thm:fixed} both follow from one general bound (Proposition~\ref{prop:master} below), where $\gamma$ and $\lambda$ are coupled only through the bias condition, involving $b$, and the variance condition, involving $v$; under Assumption~\ref{as:moment}, Theorem~\ref{thm:main} is the special case $\lambda=R/(40\gamma\ell)$ of Theorem~\ref{thm:fixed}, while under Assumption~\ref{as:weak} the variance constants $v_\alpha(\Lambda_K)$ and $v_\alpha(\Lambda_\lambda)$ are not comparable in general, meaning that neither of the theorems contains the other one. For $\sigma_1=0$ and the moment condition, {the bound of \citet{vatankhah2026differentially}, for $\lambda>4LR$ and the best iterate, reduces to the same three terms as in Corollary~\ref{cor:fixed} after applying the initial-point bound $f(x^0)-f^*\le LR^2/2$ (see the discussion of Table~\ref{tab:prescribed}):} a $\lambda R\ell/K$ term, a neighborhood of the order $R\sigma_0^\alpha/\lambda^{\alpha-1}$, and a cross term of the order $R\lambda^{1-\alpha/2}\sigma_0^{\alpha/2}\sqrt{\ell/K}$; Corollary~\ref{cor:fixed} gives them for the averaged iterate, under affine weak-$L^\alpha$ noise and with explicit constants, and, at $\alpha=2$ under Assumption~\ref{as:weak}, only the cross term has the logarithm of $v_2$.

\subsection{A General Bound and the Proofs}\label{app:main-proofs}

Throughout this section, we use the notation $\ell:=\ln\frac{4(K+1)}{\beta}$, $R_k:=\|x^k-x^*\|$, and $\delta_k:=f(x^k)-f^*\ge0$, and the decomposition~\eqref{eq:theta}. Note that $\ell>\ln4>1$ since $4(K+1)/\beta>4$. For $\gamma,\lambda>0$ and $T\ge1$, we define
\begin{equation}\label{eq:Sigma-def}
    \Sigma_T:=\sigma_0^\alpha T+\sigma_1^\alpha2^\alpha L^{\alpha/2}R^\alpha\gamma^{-\alpha/2}T^{\frac{2-\alpha}{2}}+\sigma_1^\alpha2^{\alpha/2}L^\alpha R^\alpha,\qquad
    \Phi_T:=\gamma\lambda^{1-\alpha}\Sigma_T .
\end{equation}
Both quantities are non-decreasing in $T$, while $\Sigma_T/T$ is non-increasing in $T$. Next, let $(b,v):=(b_\alpha^{\rm mom},v_\alpha^{\rm mom})$ under Assumption~\ref{as:moment} and $(b,v):=(b_\alpha,v_\alpha(\Lambda_{\gamma,\lambda}))$ under Assumption~\ref{as:weak}, where
\begin{equation}\label{eq:Lambda-gl}
    \Lambda_{\gamma,\lambda}:=\frac{(K+1)(3\lambda/2)^\alpha}{\Sigma_{K+1}}\qquad(\Lambda_{\gamma,\lambda}:=0\ \text{ if }\sigma_0=\sigma_1=0),
\end{equation}
and let $c:=180\max\{v,b/3\}$, meaning that
\begin{equation}\label{eq:c-props}
    c\ge180v\qquad\text{and}\qquad c\ge60b .
\end{equation}
Since $\Sigma_{K+1}\ge(K+1)\sigma_0^\alpha+2^{\alpha/2}\sigma_1^\alpha(LR)^\alpha$, we have $\Lambda_{\gamma,\lambda}\le\Lambda_\lambda$, where $\Lambda_\lambda$ is defined in~\eqref{eq:Lambda}; hence, under Assumption~\ref{as:weak}, this $c$ is at most $c_\alpha(\Lambda_\lambda)$.

\begin{proposition}[A general high-probability bound]\label{prop:master}
{Let Assumptions~\ref{as:smooth}--\ref{as:convex} hold for $Q=B_{2R}(x^*)$, where $R\ge\|x^0-x^*\|>0$, and let either Assumption~\ref{as:moment} or Assumption~\ref{as:weak} hold with $\alpha\in(1,2]$. Let $\beta\in(0,1)$, $K\in\{0,1,\dots\}$, and let $\gamma,\lambda>0$ satisfy, with $(b,v)$ defined above,}
\begin{equation}\label{eq:master-conds}
    \lambda\ge2\sqrt2LR,\qquad\gamma\lambda\le\frac{R}{40\ell},\qquad b\,\Phi_{K+1}\le\frac{R}{20},\qquad v\gamma^2\lambda^{2-\alpha}\Sigma_{K+1}\le\frac{R^2}{2400\,\ell}.
\end{equation}
Then, with probability at least $1-\beta$, the iterates produced by Algorithm~\ref{alg:clip-sgd} with $\lambda_k\equiv\lambda$ satisfy $x^k\in B_{\sqrt2R}(x^*)$ for all $k=0,1,\dots,K+1$ and $f(\bar x^K)-f^*\le\frac{2R^2}{\gamma(K+1)}$.
\end{proposition}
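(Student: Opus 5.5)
We follow the induction-on-events scheme of \citet{sadiev2023high}, adapted to the affine noise scale. For $k=0,\dots,K+1$, let $E_k$ be the event that, for all $t\le k$, the following hold: $R_t^2\le 2R^2$, and
\[
\gamma\sum_{l<t}\delta_l\le R^2-R_t^2+R^2 .
\]
Equivalently, the right-hand side of~\eqref{eq:basic-sum} is at most $2R^2$ for every prefix $t\le k$. We will show $\PP(E_k)\ge1-k\beta/(K+1)$. The conclusion then follows from $E_{K+1}$ together with the Jensen part of Lemma~\ref{lem:basic}. Note that $\gamma\le1/(2L)$ follows from $\gamma\lambda\le R/(40\ell)$ and $\lambda\ge2\sqrt2LR$, so Lemma~\ref{lem:basic} applies.

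On $E_k$, the iterates $x^t$ ($t\le k$) lie in $B_{\sqrt2R}(x^*)\subset Q$. Hence $\|\nabla f(x^t)\|\le\sqrt2LR\le\lambda/2$, and Lemma~\ref{lem:clip} (in its conditional form, Corollary~\ref{cor:conditional}) applies with $A_t:=A(\nabla f(x^t))$. We introduce the truncated vectors $\eta_l:=(x^l-x^*-\gamma\nabla f(x^l))\ind_{E_l}$, which satisfy $\|\eta_l\|\le\sqrt2R+\gamma\sqrt2LR\le2R$. We then decompose the right-hand side of~\eqref{eq:basic-sum} at step $k$, with $\theta_l=\theta_l^u+\theta_l^b$ and $\|\theta_l\|^2\le2\|\theta_l^u\|^2+2\|\theta_l^b\|^2$, into five sums:
\begin{itemize}
\item \circledOne $=-2\gamma\sum\langle\eta_l,\theta_l^u\rangle$;
\item \circledTwo $=-2\gamma\sum\langle\eta_l,\theta_l^b\rangle$;
\item \circledThree $=2\gamma^2\sum(\|\theta_l^u\|^2-\E_l\|\theta_l^u\|^2)$;
\item \circledFour $=2\gamma^2\sum\E_l\|\theta_l^u\|^2$;
\item \circledFive $=2\gamma^2\sum\|\theta_l^b\|^2$.
\end{itemize}
On $E_k$, the truncated and untruncated sums coincide.

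The key new ingredient is a bound on the accumulated noise scale $\sum_{l<T}A_l$ on $E_{T}$. Write $\sum_l A_l=\sigma_0^\alpha T+\sigma_1^\alpha\sum_l\|\nabla f(x^l)\|^\alpha$. By Assumption~\ref{as:grad-bound}, $\|\nabla f(x^l)\|^\alpha\le(2L\delta_l)^{\alpha/2}$. Lemma~\ref{lem:jensen-power} with $r=\alpha/2$ and the event bound $\sum\delta_l\le2R^2/\gamma$ then give
\[
\sum_l\|\nabla f(x^l)\|^\alpha\le T^{(2-\alpha)/2}(4LR^2/\gamma)^{\alpha/2}=2^\alpha L^{\alpha/2}R^\alpha\gamma^{-\alpha/2}T^{(2-\alpha)/2}.
\]
The extra index $l=T$ (needed at step $T$) contributes at most $(\sqrt2LR)^\alpha$. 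Together these reproduce exactly $\Sigma_T$ from~\eqref{eq:Sigma-def}.

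With this in hand, the deterministic terms are bounded as follows. For \circledTwo, $2\gamma\cdot2R\cdot b\lambda^{1-\alpha}\sum A_l\le4Rb\Phi_{K+1}\le R^2/5$. For \circledFive, since $\|\theta^b_l\|\le b A_l\lambda^{1-\alpha}\le\lambda$, we get \circledFive $\le2\gamma^2\lambda\, b\lambda^{1-\alpha}\sum A_l\le2\gamma\lambda\, b\Phi\le R^2/(400\ell)$. For \circledFour, the variance bound $v\lambda^{2-\alpha}\sum A_l$ is used directly under Assumption~\ref{as:moment}. Under Assumption~\ref{as:weak}, the variance factor is $v_\alpha((3\lambda/2)^\alpha/A_l)$ at each step, and we combine these via Lemma~\ref{lem:log-sum} with $C=(3\lambda/2)^\alpha$ and $\bar B=\Sigma_{K+1}$. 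This yields $v_\alpha(\Lambda_{\gamma,\lambda})\lambda^{2-\alpha}\Sigma_{K+1}$, so \circledFour $\le 2R^2/(2400\ell)$.

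The martingale terms \circledOne and \circledThree are handled by Freedman's inequality (Lemma~\ref{lem:freedman}).
\begin{itemize}
\item For \circledOne, the summands are bounded by $2\gamma\cdot2R\cdot2\lambda\le R^2/(5\ell)$, and the conditional variances sum to at most $16\gamma^2R^2 v\lambda^{2-\alpha}\Sigma\le R^4/(150\ell)$.
\item For \circledThree, the summands are bounded by $8\gamma^2\lambda^2\le R^2/(200\ell^2)$, and the conditional variances are at most $8\gamma^2\lambda^2\cdot$\circledFour-scale.
\end{itemize}
Choosing $b=R^2/5$ in~\eqref{eq:freedman} for each sum gives failure probability at most $\beta/(2(K+1))$ per sum, using $\ell=\ln(4(K+1)/\beta)$. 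A union bound over the two sums and over steps then gives the probability claim. Summing all five contributions yields at most $R^2$ in total, which closes the inequality in $E_{k+1}$. Then $R_{k+1}^2\le2R^2$ follows from the same inequality, since $\sum\delta_l\ge0$.

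The main obstacle is the gradient-dependent step: the budget $\sum A_l$ must be controlled through the event itself without circularity. The truncation by $\ind_{E_l}$ keeps every summand predictable and bounded, and the weak-tail logarithmic constants are merged via concavity in Lemma~\ref{lem:log-sum}. Some slack in the constants (the factors $40$, $20$, $2400$) must absorb the crude splits above, and I expect that bookkeeping to need adjustment.
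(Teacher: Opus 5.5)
Your proposal is correct and follows essentially the same route as the paper: induction over nested events with a union bound, the same five-term decomposition with Freedman's inequality for $\circledOne$ and $\circledThree$ at the same constants, and the same affine budget $\sum_l A_l\le\Sigma_T$ via Assumption~\ref{as:grad-bound}, Lemma~\ref{lem:jensen-power}, and Lemma~\ref{lem:log-sum}. Tracking $\gamma\sum_{l<t}\delta_l+R_t^2\le2R^2$ instead of the noise sum, and truncating $\eta_l$ by $\ind_{E_l}$ rather than by its norm, are harmless bookkeeping changes.

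One small slip is in $\circledFive$. The inequality $bA_l\lambda^{1-\alpha}\le\lambda$ does not follow from~\eqref{eq:master-conds}: take $\gamma$ tiny and $\sigma_0$ large. Use instead $\|\theta_l^b\|\le\frac32\lambda$ on the event, which holds because $\|\E[\tg^l\mid\G_l]\|\le\lambda$ and $\|\nabla f(x^l)\|\le\lambda/2$. Alternatively, use the paper's bound $\circledFive\le2(b\,\Phi_T)^2\le R^2/200$.
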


The third condition in~\eqref{eq:master-conds} controls the accumulated clipping bias, and the fourth one controls the accumulated variance. Both of them follow from the single condition $\Phi_{K+1}\le3R/c$: by~\eqref{eq:c-props}, $b\Phi_{K+1}\le3bR/c\le R/20$, and $v\gamma^2\lambda^{2-\alpha}\Sigma_{K+1}=v\,\gamma\lambda\,\Phi_{K+1}\le\frac{R}{40\ell}\cdot\frac{3vR}{c}\le\frac{R^2}{2400\ell}$. This is exactly how we derive Theorem~\ref{thm:main} below. For a prescribed $\lambda$, keeping the two conditions separate is what makes the neighborhood in Theorem~\ref{thm:fixed} independent of the variance constant $v$ (and of its logarithm at $\alpha=2$).

\paragraph{Proof of Proposition~\ref{prop:master}.} Our proof is induction-based: by induction, we show that the iterates do not leave the ball $B_{\sqrt2R}(x^*)$ with high probability. We start with the consequences of~\eqref{eq:master-conds}. By the first two conditions and $\ell>1$, we have
\begin{equation}\label{eq:gamma-L}
    \gamma\le\frac{R}{40\ell\lambda}\le\frac{1}{80\sqrt2L}<\frac1{2L},\qquad L\gamma\le\sqrt2-1,\qquad \gamma\lambda\le\frac{R}{40\ell}\le\frac R{40},\qquad \sqrt2LR\le\frac\lambda2 .
\end{equation}

\paragraph{Events and induction hypothesis.} To formulate the statement rigorously, we introduce probability event $E_k$ for each $k=0,1,\dots,K+1$ as follows: $E_0:=\Omega$ and, for $k=1,\dots,K+1$,
\begin{equation}\label{eq:events}
    E_k:=\left\{\forall t=1,\dots,k:\ -2\gamma\sum_{l=0}^{t-1}\abr{x^l-x^*-\gamma\nabla f(x^l),\theta_l}+\gamma^2\sum_{l=0}^{t-1}\|\theta_l\|^2\le R^2\ \text{ and }\ R_t\le\sqrt2R\right\}.
\end{equation}
We will prove by induction that $\PP(E_k)\ge1-\frac{k\beta}{K+1}$ for all $k=0,1,\dots,K+1$. The base of the induction ($k=0$) is trivial. Next, we assume that the statement holds for $k=T-1$ with some $T\in\{1,\dots,K+1\}$, i.e., $\PP(E_{T-1})\ge1-\frac{(T-1)\beta}{K+1}$. Given this, we need to prove that $\PP(E_T)\ge1-\frac{T\beta}{K+1}$. Throughout the proof, empty sums are equal to zero.

\paragraph{Localization.} First, we show that $E_{T-1}$ implies that the iterates $x^0,\dots,x^T$ belong to $Q$. Indeed, on $E_{T-1}$, we have $R_t\le\sqrt2R$ for $t=1,\dots,T-1$ and $R_0\le R$, and, therefore, $x^t\in Q$ for $t\le T-1$. Moreover, by~\eqref{eq:gamma-L}, $R_T\le R_{T-1}+\gamma\|\tg^{T-1}\|\le\sqrt2R+\gamma\lambda\le2R$, i.e., $x^T\in Q$. Since $x^*$ is an interior minimizer of the differentiable convex function $f$ on $Q$, we have $\nabla f(x^*)=0$. Thus, $E_{T-1}$ implies that, for $t=0,\dots,T-1$,
\begin{equation}\label{eq:grad-small}
    \|\nabla f(x^t)\|\le LR_t\le\sqrt2LR\le\frac\lambda2,\qquad
    \|x^t-x^*-\gamma\nabla f(x^t)\|\le\sqrt2R(1+L\gamma)\le2R .
\end{equation}
Before we proceed, we introduce a new notation: for $t=0,\dots,T-1$, we define the $\G_t$-measurable vectors
\[
    \eta_t:=\begin{cases}x^t-x^*-\gamma\nabla f(x^t),&\text{if }\|x^t-x^*-\gamma\nabla f(x^t)\|\le2R,\\ 0,&\text{otherwise}.\end{cases}
\]
By construction, $\|\eta_t\|\le2R$ almost surely, and $E_{T-1}$ implies $\eta_t=x^t-x^*-\gamma\nabla f(x^t)$ for $t\le T-1$.

\paragraph{Basic inequality.} By~\eqref{eq:gamma-L}, Lemma~\ref{lem:basic} can be applied on $E_{T-1}$ with $t=T$. Since $f(\bar x^{T-1})\ge f^*$, it gives
\begin{equation}\label{eq:RT}
    R_T^2\le R^2-2\gamma\sum_{t=0}^{T-1}\abr{x^t-x^*-\gamma\nabla f(x^t),\theta_t}+\gamma^2\sum_{t=0}^{T-1}\|\theta_t\|^2 .
\end{equation}

\paragraph{Clipping bounds.} By construction, we have $\|\theta_t^u\|\le2\lambda$ almost surely. Next, on $E_{T-1}$, for $t\le T-1$, we have $\|\nabla f(x^t)\|\le\lambda/2$ by~\eqref{eq:grad-small}. Formally, let $\mathcal C_t:=\{x^t\in Q,\ \|\nabla f(x^t)\|\le\lambda/2\}\in\G_t$; then $E_{T-1}\subseteq\mathcal C_t$, and let $\check x^t:=x^t\ind_{\mathcal C_t}+x^*\ind_{\mathcal C_t^c}\in Q$. By Lemma~\ref{lem:adaptive}, Corollary~\ref{cor:conditional} can be applied with $\G=\G_t$, $x=\nabla f(\check x^t)$, and $X=g(\check x^t,\xi^t)$; here we use Lemma~\ref{lem:moment-clip} under Assumption~\ref{as:moment} and Lemma~\ref{lem:weak-clip} under Assumption~\ref{as:weak}. The clipped vector coincides with $\tg^t$ on $\mathcal C_t$; therefore, on $\mathcal C_t$, the conditional bias and second moment of $\tg^t$ satisfy the same bounds (see Step~2 of Appendix~\ref{app:sstm-proof} for the details). Here we always condition on $\G_t$ and never on $E_{T-1}$. Let $A_t:=\sigma_0^\alpha+\sigma_1^\alpha(2L\delta_t)^{\alpha/2}$, and let $\phi_\lambda(s):=v_\alpha^{\rm mom}s$ under Assumption~\ref{as:moment} and $\phi_\lambda(s):=s\,v_\alpha((3\lambda/2)^\alpha/s)$ under Assumption~\ref{as:weak} (with $\phi_\lambda(0):=0$). By Assumption~\ref{as:grad-bound}, we have $\|\nabla f(x^t)\|^\alpha\le(2L\delta_t)^{\alpha/2}$, meaning that the noise scale at $x^t$ is at most $A_t$. Since the right-hand sides of the clipping lemmas are non-decreasing in $A(x)$ (see Lemma~\ref{lem:log-sum} for $\phi_\lambda$), we get
\begin{equation}\label{eq:clip-bounds}
    \|\theta_t^b\|\le\frac{b\,A_t}{\lambda^{\alpha-1}},\qquad
    \E\left[\|\theta_t^u\|^2\mid\G_t\right]\le\lambda^{2-\alpha}\phi_\lambda(A_t).
\end{equation}

\paragraph{Upper bound for $\sum_t\delta_t^{\alpha/2}$.} {On $E_{T-1}$, $L$-smoothness on $Q$ and $\nabla f(x^*)=0$ give $\delta_{T-1}\le\frac L2R_{T-1}^2\le LR^2$. If $T=1$, the sum over $t=0,\dots,T-2$ is empty. If $T\ge2$, Lemma~\ref{lem:basic} with $t=T-1$ and the first condition in~\eqref{eq:events} give $\gamma\sum_{t=0}^{T-2}\delta_t\le R_0^2+R^2\le2R^2$. Applying Lemma~\ref{lem:jensen-power} with $r=\alpha/2$ and $n=T-1$, we obtain}
\[
    \sum_{t=0}^{T-2}\delta_t^{\alpha/2}\le(T-1)^{\frac{2-\alpha}{2}}\left(\sum_{t=0}^{T-2}\delta_t\right)^{\alpha/2}\le T^{\frac{2-\alpha}{2}}\left(\frac{2R^2}{\gamma}\right)^{\alpha/2}.
\]
{The final upper bound also holds for $T=1$, since the empty sum is zero. Therefore, on $E_{T-1}$, with $\Sigma_T$ defined in~\eqref{eq:Sigma-def}, we have}
\begin{equation}\label{eq:Sigma}
    \sum_{t=0}^{T-1}A_t=\sum_{t=0}^{T-1}\left(\sigma_0^\alpha+\sigma_1^\alpha(2L\delta_t)^{\alpha/2}\right)
    \le\Sigma_T .
\end{equation}
Moreover, $E_{T-1}$ implies
\begin{equation}\label{eq:var-sum}
    \sum_{t=0}^{T-1}\E\left[\|\theta_t^u\|^2\mid\G_t\right]\le v\lambda^{2-\alpha}\Sigma_T .
\end{equation}
Indeed, under Assumption~\ref{as:moment}, this follows from~\eqref{eq:clip-bounds} and~\eqref{eq:Sigma}. Under Assumption~\ref{as:weak}, if $\Sigma_T=0$, then all $A_t=0$ and both sides vanish. Otherwise, Lemma~\ref{lem:log-sum} with $C=(3\lambda/2)^\alpha$ and $\bar B=\Sigma_T$, the inequality $T/\Sigma_T\le(K+1)/\Sigma_{K+1}$, and the monotonicity of $v_\alpha$ give $\sum_{t<T}\phi_\lambda(A_t)\le\Sigma_Tv_\alpha(TC/\Sigma_T)\le\Sigma_Tv_\alpha(\Lambda_{\gamma,\lambda})=v\Sigma_T$.

\paragraph{The key inequalities.} Since $\Sigma_T$ and $\Phi_T$ are non-decreasing in $T$ and $T\le K+1$, the last two conditions in~\eqref{eq:master-conds} imply
\begin{equation}\label{eq:master}
    b\,\Phi_T=b\,\gamma\lambda^{1-\alpha}\Sigma_T\le\frac{R}{20}
\end{equation}
and
\begin{equation}\label{eq:master2}
    v\gamma^2\lambda^{2-\alpha}\Sigma_T\le\frac{R^2}{2400\,\ell}.
\end{equation}

\paragraph{Decomposition into five terms.} Using $\theta_t=\theta_t^u+\theta_t^b$, $\|\theta_t\|^2\le2\|\theta_t^u\|^2+2\|\theta_t^b\|^2$, and $\eta_t=x^t-x^*-\gamma\nabla f(x^t)$ on $E_{T-1}$, we derive from~\eqref{eq:RT} that $E_{T-1}$ implies
\begin{equation}\label{eq:five-terms}
    -2\gamma\sum_{t=0}^{T-1}\abr{x^t-x^*-\gamma\nabla f(x^t),\theta_t}+\gamma^2\sum_{t=0}^{T-1}\|\theta_t\|^2\le\circledOne+\circledTwo+\circledThree+\circledFour+\circledFive,
    \qquad R_T^2\le R^2+\circledOne+\circledTwo+\circledThree+\circledFour+\circledFive,
\end{equation}
where
\begin{gather*}
    \circledOne:=-2\gamma\sum_{t=0}^{T-1}\abr{\theta_t^u,\eta_t},\qquad
    \circledTwo:=-2\gamma\sum_{t=0}^{T-1}\abr{\theta_t^b,\eta_t},\qquad
    \circledThree:=2\gamma^2\sum_{t=0}^{T-1}\left(\|\theta_t^u\|^2-\E\left[\|\theta_t^u\|^2\mid\G_t\right]\right),\\
    \circledFour:=2\gamma^2\sum_{t=0}^{T-1}\E\left[\|\theta_t^u\|^2\mid\G_t\right],\qquad
    \circledFive:=2\gamma^2\sum_{t=0}^{T-1}\|\theta_t^b\|^2 .
\end{gather*}
That is, it remains to derive the upper bounds for $\circledOne$, $\circledTwo$, $\circledThree$, $\circledFour$, $\circledFive$. More precisely, we show that each of these terms is at most $R^2/5$ in absolute value on $E_{T-1}$ intersected with two events of probability at least $1-\frac{\beta}{2(K+1)}$ each.

\paragraph{Upper bound for $\circledTwo$.} On $E_{T-1}$, by~\eqref{eq:clip-bounds}, \eqref{eq:Sigma}, and~\eqref{eq:master}, we have
\[
    |\circledTwo|\le2\gamma\sum_{t=0}^{T-1}\|\theta_t^b\|\|\eta_t\|\le4\gamma R\sum_{t=0}^{T-1}\|\theta_t^b\|\le4bR\,\gamma\lambda^{1-\alpha}\Sigma_T=4R\,b\,\Phi_T\le\frac{R^2}{5}.
\]

\paragraph{Upper bound for $\circledFour$.} On $E_{T-1}$, by~\eqref{eq:var-sum} and~\eqref{eq:master2}, we have
\[
    \circledFour\le2v\gamma^2\lambda^{2-\alpha}\Sigma_T\le\frac{R^2}{1200\,\ell}\le\frac{R^2}{5}.
\]

\paragraph{Upper bound for $\circledFive$.} On $E_{T-1}$, since the summands are non-negative, by~\eqref{eq:clip-bounds} and~\eqref{eq:master}, we have
\[
    \circledFive\le2\left(\gamma\sum_{t=0}^{T-1}\|\theta_t^b\|\right)^2\le2\left(b\,\Phi_T\right)^2\le\frac{R^2}{200}\le\frac{R^2}{5}.
\]

\paragraph{Upper bound for $\circledOne$.} To estimate this sum, we will use Bernstein's inequality in the martingale form (Lemma~\ref{lem:freedman}). Let $X_t:=-2\gamma\abr{\theta_t^u,\eta_t}$. The summands have conditional expectations equal to zero: since $\eta_t$ is $\G_t$-measurable and $\E[\theta_t^u\mid\G_t]=0$, the sequence $\{X_t\}_{t=0}^{T-1}$ is a martingale difference sequence w.r.t.\ $\{\G_{t+1}\}_{t=0}^{T-1}$. Next, the summands are bounded:
\[
    |X_t|\le2\gamma\|\theta_t^u\|\|\eta_t\|\le8\gamma\lambda R\le\frac{R^2}{5\ell}=:c_1 .
\]
Finally, conditional variances $\sigma_t^2:=\E[X_t^2\mid\G_t]\le4\gamma^2\|\eta_t\|^2\E[\|\theta_t^u\|^2\mid\G_t]\le16\gamma^2R^2\E[\|\theta_t^u\|^2\mid\G_t]$ are bounded on $E_{T-1}$: by~\eqref{eq:var-sum} and~\eqref{eq:master2},
\begin{equation}\label{eq:var-T1}
    \sum_{t=0}^{T-1}\sigma_t^2\le16R^2\,v\gamma^2\lambda^{2-\alpha}\Sigma_T\le\frac{16R^4}{2400\ell}=\frac{R^4}{150\ell}=:G .
\end{equation}
Applying Lemma~\ref{lem:freedman} with $b_1=R^2/5$, $c_1$, and $G$, we get
\[
    \PP\left\{|\circledOne|>\frac{R^2}{5}\ \text{and}\ \sum_{t=0}^{T-1}\sigma_t^2\le G\right\}\le2\exp\left(-\frac{b_1^2}{2G+2c_1b_1/3}\right)=2\exp(-\ell)=\frac{\beta}{2(K+1)}.
\]
The above is equivalent to $\PP(E_{\circledOne})\ge1-\frac{\beta}{2(K+1)}$ for the event
\[
    E_{\circledOne}:=\left\{\text{either }\sum_{t=0}^{T-1}\sigma_t^2>G\ \text{ or }\ |\circledOne|\le\frac{R^2}{5}\right\}.
\]
Moreover, by~\eqref{eq:var-T1}, $E_{T-1}\cap E_{\circledOne}$ implies $|\circledOne|\le R^2/5$.

\paragraph{Upper bound for $\circledThree$.} Let $Y_t:=2\gamma^2\left(\|\theta_t^u\|^2-\E[\|\theta_t^u\|^2\mid\G_t]\right)$. Then $\{Y_t\}_{t=0}^{T-1}$ is a martingale difference sequence w.r.t.\ $\{\G_{t+1}\}_{t=0}^{T-1}$, and, since $\|\theta_t^u\|^2\le4\lambda^2$, the summands are bounded:
\[
    |Y_t|\le2\gamma^2\left(\|\theta_t^u\|^2+\E[\|\theta_t^u\|^2\mid\G_t]\right)\le16\gamma^2\lambda^2\le\frac{R^2}{100\ell^2}\le\frac{R^2}{5\ell}=c_1 .
\]
Next, for a random variable $0\le Z\le B$, we have $\E(Z-\E Z)^2\le\E Z^2\le B\,\E Z$. With $Z=\|\theta_t^u\|^2$ and $B=4\lambda^2$, the conditional variances satisfy $\widetilde\sigma_t^2:=\E[Y_t^2\mid\G_t]\le16\gamma^4\lambda^2\E[\|\theta_t^u\|^2\mid\G_t]$. On $E_{T-1}$, by~\eqref{eq:var-T1}, we have
\[
    \sum_{t=0}^{T-1}\widetilde\sigma_t^2\le16\gamma^2\lambda^2\cdot\gamma^2\sum_{t=0}^{T-1}\E[\|\theta_t^u\|^2\mid\G_t]\le\frac{R^2}{100\ell^2}\cdot\frac{1}{16R^2}\cdot\frac{R^4}{150\ell}\le G .
\]
Similarly to the case of $\circledOne$, Lemma~\ref{lem:freedman} implies that the event
\[
    E_{\circledThree}:=\left\{\text{either }\sum_{t=0}^{T-1}\widetilde\sigma_t^2>G\ \text{ or }\ |\circledThree|\le\frac{R^2}{5}\right\}
\]
satisfies $\PP(E_{\circledThree})\ge1-\frac{\beta}{2(K+1)}$, and $E_{T-1}\cap E_{\circledThree}$ implies $|\circledThree|\le R^2/5$.

\paragraph{Completing the induction.} That is, we derived the upper bounds for $\circledOne$--$\circledFive$. More precisely, $E_{T-1}\cap E_{\circledOne}\cap E_{\circledThree}$ implies that all five bounds hold. By~\eqref{eq:five-terms}, the left-hand side of the first condition in~\eqref{eq:events} with $t=T$ is at most $R^2$, and $R_T^2\le R^2+5\cdot\frac{R^2}{5}=2R^2$. Together with the conditions for $t\le T-1$ contained in $E_{T-1}$, this means that $E_{T-1}\cap E_{\circledOne}\cap E_{\circledThree}\subseteq E_T$, and the union bound gives
\[
    \PP(E_T)\ge1-\PP(E_{T-1}^c)-\PP(E_{\circledOne}^c)-\PP(E_{\circledThree}^c)\ge1-\frac{(T-1)\beta}{K+1}-\frac{\beta}{K+1}=1-\frac{T\beta}{K+1}.
\]
This finishes the induction. In particular, $\PP(E_{K+1})\ge1-\beta$.

\paragraph{Conclusion of the proof of Proposition~\ref{prop:master}.} Finally, $E_{K+1}$ implies that all iterates $x^0,\dots,x^{K+1}$ belong to $B_{\sqrt2R}(x^*)$. Lemma~\ref{lem:basic} with $t=K+1$ and the first condition in~\eqref{eq:events} with $t=K+1$ give $\gamma(K+1)\left(f(\bar x^K)-f^*\right)\le R_0^2+R^2\le2R^2$, which concludes the proof.
\hfill$\square$

\paragraph{Proof of Theorem~\ref{thm:main}.} Let $c_K$ denote the constant $c$ from Theorem~\ref{thm:main}, i.e., $c_K=c_\alpha^{\rm mom}$ under Assumption~\ref{as:moment} and $c_K=c_\alpha(\Lambda_K)$ under Assumption~\ref{as:weak}, and let $\gamma_*:=\min\{\gamma_L,\gamma_0,\gamma_1,\gamma_2\}$, computed with $c_K$. Let $\gamma\le\gamma_*$ and $\lambda=\frac{R}{40\gamma\ell}$. Then $\gamma\lambda=\frac{R}{40\ell}$, and $\lambda\ge\frac{80\sqrt2L\ell R}{40\ell}=2\sqrt2LR$ since $\gamma\le\gamma_L$. It remains to verify that $\Phi_{K+1}\le3R/c$ for the constant $c=180\max\{v,b/3\}$ from Appendix~\ref{app:main}, which implies the last two conditions in~\eqref{eq:master-conds} (see the discussion after Proposition~\ref{prop:master}).

\emph{Step 1: the bound with $c_K$.} Raising $\gamma\le\gamma_0$, $\gamma\le\gamma_1$ and $\gamma\le\gamma_2$ to the powers $\alpha$, $\alpha/2$ and $\alpha$, respectively, we get
\begin{equation}\label{eq:P123}
\begin{aligned}
    &\gamma^\alpha(40\ell)^{\alpha-1}\sigma_0^\alpha (K+1)\le\frac{R^\alpha}{c_K},\qquad
    \gamma^{\alpha/2}(40\ell)^{\alpha-1}\sigma_1^\alpha2^\alpha L^{\alpha/2}(K+1)^{\frac{2-\alpha}{2}}\le\frac1{c_K},\\
    &\gamma^\alpha(40\ell)^{\alpha-1}\sigma_1^\alpha2^{\alpha/2}L^\alpha\le\frac1{c_K} .
\end{aligned}
\end{equation}
Using $\lambda^{1-\alpha}=(40\gamma\ell/R)^{\alpha-1}$ and~\eqref{eq:P123}, we get
\begin{align*}
    \Phi_{K+1}
    &=R^{1-\alpha}\gamma^\alpha(40\ell)^{\alpha-1}\sigma_0^\alpha(K+1)
    +R\,\gamma^{\alpha/2}(40\ell)^{\alpha-1}\sigma_1^\alpha2^\alpha L^{\alpha/2}(K+1)^{\frac{2-\alpha}{2}}\\
    &\quad+R\,\gamma^\alpha(40\ell)^{\alpha-1}\sigma_1^\alpha2^{\alpha/2}L^\alpha
    \le\frac{3R}{c_K}.
\end{align*}
Under Assumption~\ref{as:moment}, we have $c=c_K$; thus, \eqref{eq:master-conds} holds, and Proposition~\ref{prop:master} gives the result, including~\eqref{eq:main-bound}.

\emph{Step 2: Assumption~\ref{as:weak} and $\gamma=\gamma_*$.} Let $\lambda_*:=\frac{R}{40\gamma_*\ell}$ and $\Lambda_*:=\Lambda_{\gamma_*,\lambda_*}$. We show that $c\le c_K$ at $\gamma=\gamma_*$. If $\sigma_0=\sigma_1=0$, then $\Sigma_{K+1}=0$, $\Phi_{K+1}=0$ and, by the convention in~\eqref{eq:Lambda-gl}, $c=c_\alpha(0)\le c_K$; thus, Steps~2 and~3 hold trivially, and, therefore, we assume that $\Sigma_{K+1}>0$ in both steps. If $\alpha\le1.1$, then $b_\alpha/3\ge b_{1.1}/3>3.5>v_{1.1}(\infty)\ge v_\alpha(\infty)\ge v_\alpha(s)$ for all $s$, since $b_\alpha$ is decreasing and $v_\alpha(\infty)=\frac{2}{2-\alpha}(\frac32)^{2-\alpha}$ is increasing in $\alpha$. In this case, $c_\alpha(\cdot)\equiv60b_\alpha$, and there is nothing to prove. Next, let $\alpha>1.1$, implying $b_\alpha<b_{1.1}<10.8$; we show that $\Lambda_*\le\Lambda_K$. With $\lambda_j:=\frac{R}{40\gamma_j\ell}$ for $j\in\{L,0,1,2\}$, we have $\lambda_*=\max_j\lambda_j$. If $\lambda_*=\lambda_L=2\sqrt2LR$, then $\Lambda_*\le\Lambda_{2\sqrt2LR}\le\Lambda_K$. If $\lambda_*=\lambda_0$, then $\Sigma_{K+1}\ge(K+1)\sigma_0^\alpha$ and the definition of $\gamma_0$ give
\[
    \Lambda_*\le\left(\frac{3\lambda_0}{2\sigma_0}\right)^\alpha=\left(\frac32\right)^\alpha\frac{c_K(K+1)}{40\ell}.
\]
If $\lambda_*=\lambda_2$, the same bound follows from $\Sigma_{K+1}\ge2^{\alpha/2}\sigma_1^\alpha(LR)^\alpha$ and the definition of $\gamma_2$. If $\lambda_*=\lambda_1$, then $\gamma_*=\gamma_1$, and the same bound follows from $\Sigma_{K+1}\ge\sigma_1^\alpha2^\alpha L^{\alpha/2}R^\alpha\gamma_1^{-\alpha/2}(K+1)^{\frac{2-\alpha}{2}}$ and the definition of $\gamma_1$. It remains to show that $(\frac32)^\alpha\frac{c_K(K+1)}{40\ell}\le\Lambda_K$. The map $\Lambda\mapsto\Lambda/c_\alpha(\Lambda)$ is non-decreasing, since $\Lambda\mapsto\Lambda/(1+\frac2\alpha\ln(1+\Lambda))$ is increasing and the other branches of $c_\alpha$ are constant. Since $\Lambda_K\ge80(K+1)$, it is sufficient to show that $(\frac32)^\alpha c_\alpha(80(K+1))\le3200\ell$. We have $(\frac32)^\alpha\cdot180v_\alpha(s)\le405(1+2\ln(1+s))$, and $405(1+2\ln(1+80(K+1)))<3200\ln(4(K+1))<3200\ell$, since the difference of the two sides of the first inequality is increasing in $K$ and positive for $K=0$. Moreover, $(\frac32)^\alpha\cdot60b_\alpha<2.25\cdot60\cdot10.8<3200\ln4<3200\ell$. Thus, $\Lambda_*\le\Lambda_K$, i.e., $c\le c_K$, and $\Phi_{K+1}\,c\le3R$ at $\gamma=\gamma_*$ by Step~1.

\emph{Step 3: Assumption~\ref{as:weak} and $\gamma\le\gamma_*$.} Let $u(\gamma):=\gamma^\alpha\Sigma_{K+1}$, which is non-decreasing in $\gamma$, and $C':=(K+1)(\frac{3R}{80\ell})^\alpha$. For $\lambda=\frac{R}{40\gamma\ell}$, we have $\Phi_{K+1}=(\frac{40\ell}{R})^{\alpha-1}u(\gamma)$ and $\Lambda_{\gamma,\lambda}=C'/u(\gamma)$, which gives
\[
    \Phi_{K+1}\,c=180\left(\frac{40\ell}{R}\right)^{\alpha-1}\max\left\{u\,v_\alpha\!\left(\frac{C'}{u}\right),\ \frac{u\,b_\alpha}3\right\}\Bigg|_{u=u(\gamma)} .
\]
By Lemma~\ref{lem:log-sum}, $u\mapsto u\,v_\alpha(C'/u)$ is non-decreasing, and, therefore, $\Phi_{K+1}\,c$ is non-decreasing in $\gamma$. Since $\Phi_{K+1}\,c\le3R$ at $\gamma=\gamma_*$ by Step~2, the same holds for every $\gamma\le\gamma_*$. Thus, \eqref{eq:master-conds} holds, and Proposition~\ref{prop:master} gives the result, including~\eqref{eq:main-bound}, which concludes the proof.
\hfill$\square$

\paragraph{Proof of Corollary~\ref{cor:rate}.} For $\gamma=\min\{\gamma_L,\gamma_0,\gamma_1,\gamma_2\}$, we have $\frac{2R^2}{\gamma(K+1)}=\max_{j\in\{L,0,1,2\}}\frac{2R^2}{\gamma_j(K+1)}$, and direct computations give $\frac{2R^2}{\gamma_j(K+1)}=\Delta_j$ for $j\in\{L,0,1\}$ and
\[
    \frac{2R^2}{\gamma_2(K+1)}=\Delta_2:=\frac{2\left(c\,2^{\alpha/2}40^{\alpha-1}\right)^{1/\alpha}\sigma_1LR^2\ell^{(\alpha-1)/\alpha}}{K+1}.
\]
For example, $\frac{2R^2}{\gamma_1(K+1)}=2\left(c\,2^\alpha40^{\alpha-1}\right)^{2/\alpha}\sigma_1^2LR^2\ell^{2(\alpha-1)/\alpha}(K+1)^{\frac{2-\alpha}{\alpha}-1}$ with $\frac{2-\alpha}{\alpha}-1=-\frac{2(\alpha-1)}{\alpha}$. {The term $\Delta_2$ is redundant. If $\sigma_1=0$, then $\Delta_1=\Delta_2=0$ and the claim is immediate. If $\sigma_1>0$, then $\Delta_L\Delta_1>0$, and, with $r:=\frac{\alpha-1}{\alpha}\le\frac12$, we have}
\begin{gather*}
    \sqrt{\Delta_L\Delta_1}=\sqrt{320\sqrt2}\left(c\,2^\alpha40^{\alpha-1}\right)^{1/\alpha}\sigma_1LR^2\,\frac{\ell^{r+1/2}}{(K+1)^{r+1/2}},\\
    \frac{\Delta_2}{\sqrt{\Delta_L\Delta_1}}=\frac{2\cdot2^{1/2}}{2\sqrt{320\sqrt2}}\,\ell^{-1/2}(K+1)^{r-1/2}\le\frac{1}{15},
\end{gather*}
since $\ell>1$ and $r\le\frac12$; hence, $\Delta_2\le\sqrt{\Delta_L\Delta_1}\le\max\{\Delta_L,\Delta_1\}$, and $\frac{2R^2}{\gamma(K+1)}=\max\{\Delta_L,\Delta_0,\Delta_1\}$. To derive the complexity bound, we use the fact that each of the three terms has the form $\mathrm{const}\cdot\ln^p(K/\beta)/K^q$ with $p\ge0$ and $q>0$. Such a term is at most $\varepsilon$ as soon as $K\ge A\left(\frac{\mathrm{const}}{\varepsilon}\right)^{1/q}\ln^{p/q}\frac{\mathrm{const}}{\varepsilon\beta}$ for a sufficiently large constant $A=A(p,q)$, which depends only on $\alpha$ through the exponents $(p,q)$ (and blows up as $\alpha\downarrow1$, where $q\downarrow0$), since in this case $\ln(K/\beta)=O\left(\ln\frac{\mathrm{const}}{\varepsilon\beta}\right)$. Under Assumption~\ref{as:weak}, $c=c_\alpha(\Lambda_K)\le\max\{270(1+2\ln(1+80(K+1))+2\ln(1+\Lambda_{2\sqrt2LR})),60b_\alpha\}$ adds a factor that is polylogarithmic in $K$ and in the problem parameters, which can be handled in the same way. Taking the maximum over the three terms with $(q,p)=(1,1)$, $\left(\frac{\alpha-1}{\alpha},\frac{\alpha-1}{\alpha}\right)$, and $\left(\frac{2(\alpha-1)}{\alpha},\frac{2(\alpha-1)}{\alpha}\right)$, we get the stated complexity, which concludes the proof.
\hfill$\square$

\paragraph{Proof of Theorem~\ref{thm:fixed}.} The first condition in~\eqref{eq:master-conds} is assumed, and the second one follows from $\gamma\le\frac{R}{40\lambda\ell}$. Under Assumption~\ref{as:weak}, the variance constant from Proposition~\ref{prop:master} is $v_\alpha(\Lambda_{\gamma,\lambda})\le v_\alpha(\Lambda_\lambda)=v$, since $\Lambda_{\gamma,\lambda}\le\Lambda_\lambda$ and $v_\alpha(\cdot)$ is non-decreasing. By~\eqref{eq:Sigma-def}, we have $\Sigma_{K+1}=S_0+S_1\gamma^{-\alpha/2}+S_2=\Sigma_{K+1}(\gamma)$, meaning that \eqref{eq:fixed-bias} is the third condition in~\eqref{eq:master-conds}, $b\Phi_{K+1}\le R/20$, and~\eqref{eq:fixed-var} is the fourth one, $v\gamma^2\lambda^{2-\alpha}\Sigma_{K+1}\le R^2/(2400\ell)$. Therefore, the result follows from Proposition~\ref{prop:master}.
\hfill$\square$

\paragraph{Explicit thresholds.} Let
\begin{equation}\label{eq:fixed-thresholds}
\begin{aligned}
    \gamma_b^\lambda&:=\min\left\{\frac{R\lambda^{\alpha-1}}{60\,b\,S_0},\ \left(\frac{R\lambda^{\alpha-1}}{60\,b\,S_1}\right)^{\frac{2}{2-\alpha}},\ \frac{R\lambda^{\alpha-1}}{60\,b\,S_2}\right\},\\
    \gamma_v^\lambda&:=\min\left\{\frac{R}{\sqrt{7200\,\ell\,v\lambda^{2-\alpha}S_0}},\ \left(\frac{R^2}{7200\,\ell\,v\lambda^{2-\alpha}S_1}\right)^{\frac{2}{4-\alpha}},\ \frac{R}{\sqrt{7200\,\ell\,v\lambda^{2-\alpha}S_2}}\right\},
\end{aligned}
\end{equation}
with the convention that a term with $S_j=0$ equals $+\infty$; for $\alpha=2$, the second term of $\gamma_b^\lambda$ is replaced by $+\infty$ and the condition $\lambda\ge60\,b\,S_1/R=240\,b\,\sigma_1^2LR$ is imposed instead. Then every $\gamma\le\min\{\gamma_b^\lambda,\gamma_v^\lambda\}$ satisfies~\eqref{eq:fixed-bias} and~\eqref{eq:fixed-var}. Indeed, \eqref{eq:fixed-bias} holds if each of the three terms of $b\gamma\lambda^{1-\alpha}\Sigma_{K+1}(\gamma)$ is at most $R/60$:
\begin{align*}
    b\gamma\lambda^{1-\alpha}S_0\le\frac R{60}\quad&\Longleftrightarrow\quad\gamma\le\frac{R\lambda^{\alpha-1}}{60\,b\,S_0},\\
    b\gamma^{1-\alpha/2}\lambda^{1-\alpha}S_1\le\frac R{60}\quad&\Longleftrightarrow\quad\gamma^{\frac{2-\alpha}{2}}\le\frac{R\lambda^{\alpha-1}}{60\,b\,S_1}\quad\Longleftrightarrow\quad\gamma\le\left(\frac{R\lambda^{\alpha-1}}{60\,b\,S_1}\right)^{\frac2{2-\alpha}},\\
    b\gamma\lambda^{1-\alpha}S_2\le\frac R{60}\quad&\Longleftrightarrow\quad\gamma\le\frac{R\lambda^{\alpha-1}}{60\,b\,S_2},
\end{align*}
where the middle line is for $\alpha<2$; for $\alpha=2$, the middle term equals $bS_1/\lambda=4b\sigma_1^2LR^2/\lambda$, which is at most $R/60$ since $\lambda\ge240\,b\,\sigma_1^2LR$. Similarly, \eqref{eq:fixed-var} holds if each of the three terms of $v\gamma^2\lambda^{2-\alpha}\Sigma_{K+1}(\gamma)$ is at most $R^2/(7200\ell)$:
\begin{align*}
    v\gamma^2\lambda^{2-\alpha}S_0\le\frac{R^2}{7200\ell}\quad&\Longleftrightarrow\quad\gamma\le\frac{R}{\sqrt{7200\,\ell\,v\lambda^{2-\alpha}S_0}},\\
    v\gamma^{2-\alpha/2}\lambda^{2-\alpha}S_1\le\frac{R^2}{7200\ell}\quad&\Longleftrightarrow\quad\gamma\le\left(\frac{R^2}{7200\,\ell\,v\lambda^{2-\alpha}S_1}\right)^{\frac{2}{4-\alpha}},\\
    v\gamma^2\lambda^{2-\alpha}S_2\le\frac{R^2}{7200\ell}\quad&\Longleftrightarrow\quad\gamma\le\frac{R}{\sqrt{7200\,\ell\,v\lambda^{2-\alpha}S_2}},
\end{align*}
(for $\alpha=2$, the middle line reads $\gamma\le R^2/(7200\ell vS_1)$, which is the stated term).

\paragraph{Proof of Corollary~\ref{cor:fixed}.} By the previous paragraph, $\gamma=\min\{\frac{R}{40\lambda\ell},\gamma_b^\lambda,\gamma_v^\lambda\}$ is admissible in Theorem~\ref{thm:fixed}, and $\frac{2R^2}{\gamma(K+1)}$ is the maximum of $\frac{2R^2}{(K+1)}$ divided by each of the seven thresholds (six for $\alpha=2$, where the $S_1$ bias threshold is $+\infty$ and the corresponding term below is absent). The first one gives $\frac{2R^2}{\frac{R}{40\lambda\ell}(K+1)}=\frac{80\lambda R\ell}{K+1}$. The three bias thresholds give
\begin{gather*}
    \frac{2R^2\cdot60\,b\,S_0}{R\lambda^{\alpha-1}(K+1)}=\frac{120\,bR\sigma_0^\alpha}{\lambda^{\alpha-1}},\qquad
    \frac{2R^2}{K+1}\left(\frac{60\,b\,S_1}{R\lambda^{\alpha-1}}\right)^{\frac2{2-\alpha}}=2\left(\frac{60\cdot2^\alpha bRL^{\alpha/2}\sigma_1^\alpha}{\lambda^{\alpha-1}}\right)^{\frac2{2-\alpha}},\\
    \frac{2R^2\cdot60\,b\,S_2}{R\lambda^{\alpha-1}(K+1)}=\frac{120\cdot2^{\alpha/2}b\,\sigma_1^\alpha L^\alpha R^{\alpha+1}}{\lambda^{\alpha-1}(K+1)}\le\frac{120\cdot2^{(3-2\alpha)/2}b\,\sigma_1^\alpha LR^2}{K+1}\le\frac{120\sqrt2\,b\,\sigma_1^\alpha LR^2}{K+1},
\end{gather*}
where the second identity uses $S_1^{2/(2-\alpha)}=(2^\alpha\sigma_1^\alpha L^{\alpha/2}R^\alpha)^{2/(2-\alpha)}(K+1)$ and $R^2(R^{\alpha-1})^{2/(2-\alpha)}=R^{2/(2-\alpha)}$, and the last line uses $\lambda\ge2^{3/2}LR$ and $\alpha>1$. The three variance thresholds give, with $\sqrt{7200}=60\sqrt2$,
\begin{gather*}
    \frac{2R\sqrt{7200\,\ell\,v\lambda^{2-\alpha}S_0}}{K+1}=120\sqrt2\,R\sqrt{\frac{v\lambda^{2-\alpha}\sigma_0^\alpha\ell}{K+1}},\\
    \frac{2R^2}{K+1}\left(\frac{7200\,\ell\,v\lambda^{2-\alpha}S_1}{R^2}\right)^{\frac{2}{4-\alpha}},\qquad
    \frac{120\sqrt2\,R\sqrt{v\lambda^{2-\alpha}S_2\,\ell}}{K+1}.
\end{gather*}
This proves~\eqref{eq:fixed-bound}. The orders in $K$ follow from $S_1\propto(K+1)^{(2-\alpha)/2}$: the fourth term of $\mathcal V_{\lambda,K}$ is of the order $(K+1)^{\frac{2-\alpha}{4-\alpha}-1}=(K+1)^{-\frac{2}{4-\alpha}}$. Next, if $\sigma_1=0$ and $\lambda=\lambda_0(K+1)^{1/\alpha}$, then $K+1\ge(2\sqrt2LR/\lambda_0)^\alpha$ is equivalent to $\lambda\ge2\sqrt2LR$, and, substituting $\lambda$, we get $\frac{80\lambda R\ell}{K+1}=\frac{80\lambda_0R\ell}{(K+1)^{(\alpha-1)/\alpha}}$, $\frac{120bR\sigma_0^\alpha}{\lambda^{\alpha-1}}=\frac{120b\,\sigma_0^\alpha\lambda_0^{1-\alpha}R}{(K+1)^{(\alpha-1)/\alpha}}$, and
\[
    120\sqrt2\,R\sqrt{\frac{v\lambda^{2-\alpha}\sigma_0^\alpha\ell}{K+1}}=120\sqrt{2v\ell}\,\sigma_0^{\alpha/2}\lambda_0^{1-\alpha/2}R\,(K+1)^{\frac{2-\alpha}{2\alpha}-\frac12}=\frac{120\sqrt{2v\ell}\,\sigma_0^{\alpha/2}\lambda_0^{1-\alpha/2}R}{(K+1)^{(\alpha-1)/\alpha}} .
\]
Bounding the maximum of the three quantities by their sum, we get~\eqref{eq:horizon-bound}. Finally, for $\sigma_1>0$, the remaining four terms are of the orders $(K+1)^{-\frac{2(\alpha-1)}{\alpha(2-\alpha)}}$ (the second part of $\mathcal N_\lambda$), $(K+1)^{-1}$, $(K+1)^{\frac{2-\alpha}{2\alpha}-1}$, and $(K+1)^{\frac{2(2-\alpha)}{\alpha(4-\alpha)}-\frac{2}{4-\alpha}}=(K+1)^{-\frac{4(\alpha-1)}{\alpha(4-\alpha)}}$, and each exponent is smaller than $-\frac{\alpha-1}{\alpha}$ for $\alpha\in(1,2)$, which concludes the proof.
\hfill$\square$

\section{Clip-SGD with an Arbitrary Clipping Level: Proof of Theorem~\ref{thm:any-lambda}}\label{app:any-lambda}

\subsection{Full Statement}\label{app:any-statement}

\begin{theorem}[Any clipping level: full statement of Theorem~\ref{thm:any-main}]\label{thm:any-lambda}
{Let Assumptions~\ref{as:smooth}--\ref{as:convex} hold for $Q=B_{2R}(x^*)$, where $R\ge\|x^0-x^*\|>0$, and let Assumption~\ref{as:moment} (with $\alpha\in(1,2]$) or Assumption~\ref{as:weak} (with $\alpha\in(1,2)$) hold with $\sigma_1\le\bar\sigma_1$. Let $\lambda>0$ be arbitrary, $\beta\in(0,1)$, $K\in\{0,1,\dots\}$, $\ell:=\ln\frac{4(K+1)}{\beta}$, $\bar A:=\sigma_0^\alpha+\sigma_1^\alpha(\sqrt2LR)^\alpha$, $V_\lambda:=\min\{4\lambda^2,\kappa_2(2\lambda)^{2-\alpha}\bar A\}$, and let the stepsize satisfy}
\[
    {0<\gamma\le\min\left\{\frac{1}{48L},\,\frac{R}{40\lambda\ell},\,\frac{R}{\sqrt{1200\,\ell\,V_\lambda(K+1)}}\right\},}
\]
where the last term is $+\infty$ if $\bar A=0$. Then, with probability at least $1-\beta$, the iterates produced by Algorithm~\ref{alg:clip-sgd} with $\lambda_k\equiv\lambda$ satisfy
\begin{align*}
    \min_{0\le t\le K}f(x^t)-f^*\le\max\Bigg\{\varepsilon_N,\ \varepsilon_M,\ \frac{4R^2}{\gamma(K+1)},\ \frac{4608LR^4}{\gamma^2\lambda^2(K+1)^2}\Bigg\},
\end{align*}
where $\varepsilon_N:=48\sqrt2\,c_\perp R\,\bar b_\lambda$ and $\varepsilon_M:=384\,c_\perp\sigma_1^\alpha LR^2\min\{1,\,96\,c_\perp\sigma_1^\alpha\}$.
\end{theorem}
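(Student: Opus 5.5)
The plan is to run the same induction-on-events scheme as in the proof of Proposition~\ref{prop:master}, but stopped at the first time $\tau$ when an iterate reaches the target neighborhood. Let $\varepsilon:=\max\{\varepsilon_N,\varepsilon_M,\dots\}$ denote the right-hand side of the bound. We argue by contradiction: suppose $f(x^t)-f^*>\varepsilon$ for all $t\le K$. We then show that, with high probability, the distance $R_t$ stays below $\sqrt2R$ and the potential $\sum_t\gamma\abr{\E[\tg^t\mid\G_t],x^t-x^*}$ is large. Together these give a contradiction.

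The key new ingredient is a clipping lemma for arbitrary $\lambda$ (Lemma~\ref{lem:arb-clip}, to be proved first). For $x\in B_{\sqrt2R}(x^*)$ with gradient $G:=\nabla f(x)$, it should state one of two alternatives. If $\|G\|\le\lambda/2$, Lemma~\ref{lem:clip} applies as before, with bias at most $\kappa_1 2^{\alpha-1}A(G)\lambda^{1-\alpha}$. If $\|G\|>\lambda/2$ and $\|G\|\gtrsim\sigma_0$, then two bounds hold, using $\sigma_1\le\bar\sigma_1$ to absorb $\sigma_1^\alpha\|G\|^\alpha$:
\begin{itemize}
\item the parallel component satisfies $\abr{\E\tX,G/\|G\|}\ge\min\{\|G\|,\lambda\}/6$;
\item the orthogonal component is at most $c_\perp\lambda A(G)/\max\{\|G\|,\lambda\}^\alpha$.
\end{itemize}
The variance bound $\E\|\tX-\E\tX\|^2\le V_\lambda$ follows directly from $\|\tX\|\le\lambda$ and the moment/tail integrals, with $\kappa_2$ appearing as in Step~3 of Lemma~\ref{lem:weak-clip}.

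With this lemma, I expand $R_{t+1}^2=R_t^2-2\gamma\abr{\tg^t,x^t-x^*}+\gamma^2\|\tg^t\|^2$ and split $\E[\tg^t\mid\G_t]$ into its component along $\nabla f(x^t)$ and its orthogonal remainder. Convexity gives $\abr{\nabla f(x^t),x^t-x^*}\ge\delta_t$. The parallel part therefore contributes at least $\gamma\frac{\min\{\|\nabla f\|,\lambda\}}{6\|\nabla f\|}\delta_t$. The orthogonal and bias parts cost at most $2\gamma\sqrt2R\cdot(\text{bias})$. Using $\|\nabla f(x^t)\|\le\sqrt{2L\delta_t}$, $\delta_t>\varepsilon_N$ makes the additive bias $\lesssim R\bar b_\lambda$ at most a fraction of the descent term, and $\delta_t>\varepsilon_M$ does the same for the multiplicative part $\sigma_1^\alpha\|\nabla f\|^\alpha$.

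The steps are then as follows:
\begin{enumerate}
\item Telescope the recursion.
\item Control the martingale terms $\abr{\theta_t^u,\eta_t}$ and $\|\theta_t^u\|^2-\E[\cdot]$ by Freedman's inequality (Lemma~\ref{lem:freedman}), with bounds $c_1=8\gamma\lambda R$ and variances $\le16\gamma^2R^2V_\lambda(K+1)$. The choice $\gamma\le R/\sqrt{1200\ell V_\lambda(K+1)}$ makes each term at most a fixed fraction of $R^2$, and a union bound over $K+1$ steps supplies $\ell$.
\item Bound $\gamma^2\|\tg^t\|^2$ by $\gamma^2\lambda^2$, or by $2\gamma^2\|\nabla f\|^2+$noise, using $\gamma\le1/(48L)$.
\end{enumerate}
This keeps $R_t\le\sqrt2R$ on the good event. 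It also yields $\sum_{t\le K}\gamma\min\{\delta_t,\lambda\delta_t/\|\nabla f(x^t)\|\}\lesssim R^2$.

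If the first branch dominates for the majority of steps, we get $\min_t\delta_t\le 4R^2/(\gamma(K+1))$. Otherwise $\lambda\delta_t/\|\nabla f\|\ge\lambda\sqrt{\delta_t}/\sqrt{2L}$, so $\sqrt{\min\delta_t}\lesssim\sqrt L R^2/(\gamma\lambda(K+1))$, which squares to the term $4608LR^4/(\gamma^2\lambda^2(K+1)^2)$.

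The main obstacle is Lemma~\ref{lem:arb-clip} when the gradient itself is clipped. There the clipping map is not a contraction toward $x$, so the bias cannot be bounded by $\E(Y-\lambda/2)_+$. My first attempt would be to condition on the event $\{\|X-G\|\le\|G\|/2\}$, whose probability is $\ge1-2^\alpha A(G)/\|G\|^\alpha$ by the tail bound. On this event $\tX$ lies in a cone around $G$ with norm $\ge\min\{\|G\|/2,\lambda\}$, which gives the parallel lower bound; the complement's contribution is at most $\lambda\cdot2^\alpha A/\|G\|^\alpha$. This is where $\|G\|\gtrsim\sigma_0$ and $\sigma_1\le\bar\sigma_1$ (so $28\cdot2^\alpha\sigma_1^\alpha<1$) are needed. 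For the orthogonal part, I would symmetrize: on the good event the orthogonal component of $\tX$ is $\lambda\,(X-G)_\perp/\|X\|$, whose mean is offset by $\E(X-G)_\perp=0$ up to tail terms controlled via $\kappa_1,\kappa_2$. The case $\alpha=2$ in the weak setting is excluded precisely because $\kappa_2$ blows up.
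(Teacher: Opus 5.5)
Your route is the paper's: a clipping lemma for arbitrary levels (variance, a parallel lower bound $\frac16\min\{G,\lambda\}$ when $\PP\{\|z\|>G/2\}\le\frac1{14}$, and an orthogonal bound $c_\perp\lambda AD^{-\alpha}$ via $\E[s(a)P_\perp z]=0$), the drift $\abr{\E\tX,u}\ge\frac34w\delta$ outside $\max\{\varepsilon_N,\varepsilon_M\}$, a stopping time at first entry, two Freedman bounds, and the final majority split. The genuine hole is the small-clipping regime, which is exactly where the saturated scale $\bar b_\lambda=\sigma_0^\alpha/\max\{\lambda,\sigma_0\}^{\alpha-1}$ in $\varepsilon_N$ comes from.

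Your lemma covers only $G\le\lambda/2$ and $G>\lambda/2$ with $G\gtrsim\sigma_0$. For $\lambda/2<G<G_1:=2\cdot28^{1/\alpha}\sigma_0$ you give no drift bound, and none exists in general: the bad event need not be rare, and the mean clipped field can point away from $x^*$ (instance~B of Theorem~\ref{thm:lower}). The missing step is to show that such points already lie in the neighborhood. This regime forces $\lambda<2G_1$, so $\max\{\lambda,\sigma_0\}\le4\cdot28^{1/\alpha}\sigma_0$ and $\sigma_0\le(4\cdot28^{1/\alpha})^{\alpha-1}\bar b_\lambda$. Hence $\delta\le\sqrt2RG<\sqrt2RG_1\le14\cdot4^\alpha\sqrt2R\bar b_\lambda\le\varepsilon_N$, using $c_\perp\ge2^{\alpha+1}$.

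Similarly, for $G\le\lambda/2$ with $\lambda<\sigma_0$, the bias bound from Lemma~\ref{lem:clip} is of order $\sigma_0^\alpha\lambda^{1-\alpha}\gg\sigma_0=\bar b_\lambda$. Your claim that the additive bias is $\lesssim\bar b_\lambda$ therefore needs the trivial bound $\|\E\tX-\nabla f(x)\|\le\lambda+G\le\frac32\lambda<\frac32\bar b_\lambda$. Without these two observations, the drift inequality is not established at all points outside $\max\{\varepsilon_N,\varepsilon_M\}$ when $\lambda\lesssim\sigma_0$, and the induction does not close in that regime.

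There are two smaller points. First, when the gradient itself is clipped, the variance bound must come from non-expansiveness of $\clip(\cdot,\lambda)$, i.e., projection onto $B_\lambda(0)$. This gives $\|\tX-\clip(\nabla f(x),\lambda)\|\le\min\{\|z\|,2\lambda\}$. The contraction~\eqref{eq:clip-contract} you cite requires $\|\nabla f(x)\|\le\lambda/2$.

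Second, the stated constants ($w=\min\{G,\lambda\}/(6G)$, hence $48\sqrt2\,c_\perp$ and $4608$) need the sharper on-event estimate $\abr{\tX,\hat a}=s(y)\abr{y,\hat a}\ge\min\{G/2,\lambda/3\}\ge\frac13\min\{G,\lambda\}$. Your cone-angle-times-norm estimate gives only $\frac16\min\{G,\lambda\}$ on the event. After subtracting $p\lambda\le\frac17\min\{G,\lambda\}$, this leaves $\frac1{84}\min\{G,\lambda\}$ rather than $\frac16\min\{G,\lambda\}$.
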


For $\gamma$ equal to the minimum, the last two terms are
\begin{align*}
    O\Bigg(\frac{LR^2+\lambda R\ell}{K}+R\sqrt{\frac{\ell V_\lambda}{K}}+\frac{L^3R^4}{\lambda^2K^2}+\frac{LR^2\ell^2}{K^2}+\frac{\ell LR^2V_\lambda}{\lambda^2K}\Bigg)
\end{align*}
with an absolute constant, and they vanish as $K\to\infty$ (note that $V_\lambda\le\max\{4,2^{2-\alpha}\kappa_2\}\lambda(\bar b_\lambda+\sigma_1^\alpha(\sqrt2LR)^\alpha\lambda^{1-\alpha})$). This is exactly the term $\mathcal V'_{\lambda,K}$ from Theorem~\ref{thm:any-main}.

Throughout this section, $\sigma_1\le\bar\sigma_1=1/(2\cdot28^{1/\alpha})$, and $(\kappa_1,\kappa_2)$, $c_\perp$, $\bar b_\lambda$, $\bar A$, and $V_\lambda$ are defined as in Section~\ref{sec:planned}; $A(x)=\sigma_0^\alpha+\sigma_1^\alpha\|x\|^\alpha$ is defined as in Section~\ref{sec:clipping}, and $(b,v)$ are the clipping constants. We will use the following three consequences of the noise assumptions. Let $z$ be a random vector with $\PP\{\|z\|>u\}\le Au^{-\alpha}$ for all $u>0$ (under Assumption~\ref{as:weak}) or with $\E\|z\|^\alpha\le A$ (under Assumption~\ref{as:moment}). Then, for every $c>0$, we have
\begin{equation}\label{eq:tail-facts}
    \PP\{\|z\|>c\}\le Ac^{-\alpha},\qquad
    \E\left[\|z\|\ind_{\{\|z\|>c\}}\right]\le\kappa_1Ac^{1-\alpha},\qquad
    \E\min\{\|z\|,c\}^2\le\kappa_2Ac^{2-\alpha}.
\end{equation}
Indeed, under Assumption~\ref{as:weak}, the second bound follows from $\E[Y\ind_{\{Y>c\}}]=c\PP\{Y>c\}+\int_c^\infty\PP\{Y>u\}\dif u$, and the third one follows from $\E\min\{Y,c\}^2=2\int_0^cu\PP\{Y>u\}\dif u$. Under Assumption~\ref{as:moment}, the first bound is Markov's inequality, and the other two follow from $Y\ind_{\{Y>c\}}\le Y^\alpha c^{1-\alpha}$ and $\min\{Y,c\}^2\le Y^\alpha c^{2-\alpha}$, where $Y:=\|z\|$ and $\alpha\le2$.

\subsection{Clipping with an Arbitrary Level}

\begin{lemma}[Clipping with an arbitrary level]\label{lem:arb-clip}
Let $a\in\R^d$ with $G:=\|a\|>0$, let $\lambda>0$, $D:=\max\{G,\lambda\}$, $\hat a:=a/G$ and $P_\perp:=I-\hat a\hat a^\top$. Let $z$ be a random vector with $\E z=0$ satisfying~\eqref{eq:tail-facts} with some $A\ge0$, and let $\tX:=\clip(a+z,\lambda)$. Then the following statements hold:
\begin{enumerate}
    \item[(i)] $\E\|\tX-\clip(a,\lambda)\|^2\le\E\min\{\|z\|,2\lambda\}^2\le\min\{4\lambda^2,\kappa_2(2\lambda)^{2-\alpha}A\}$;
    \item[(ii)] $\|P_\perp\E\tX\|\le c_\perp\lambda AD^{-\alpha}$;
    \item[(iii)] if $G\ge\lambda/2$ and $\PP\{\|z\|>G/2\}\le\frac1{14}$, then $\abr{\E\tX,\hat a}\ge\frac16\min\{G,\lambda\}$.
\end{enumerate}
\end{lemma}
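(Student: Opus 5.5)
The plan is to treat the three items separately, writing $\tX=m(a+z)$ with the random factor $m:=\min\{1,\lambda/\|a+z\|\}$ and the deterministic factor $m_0:=\min\{1,\lambda/G\}$. A fact used repeatedly is $m_0\le\lambda/D$: if $G\ge\lambda$, then $m_0=\lambda/G=\lambda/D$; if $G<\lambda$, then $m_0=1=\lambda/D$.

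For (i), I will use that $\clip(\cdot,\lambda)$ is the Euclidean projection onto $B_\lambda(0)$. It is therefore $1$-Lipschitz, which gives $\|\tX-\clip(a,\lambda)\|\le\|z\|$. Both points lie in $B_\lambda(0)$, which gives $\|\tX-\clip(a,\lambda)\|\le2\lambda$. Squaring and taking expectations yields $\E\min\{\|z\|,2\lambda\}^2$. The third inequality in~\eqref{eq:tail-facts} with $c=2\lambda$, together with the trivial bound $4\lambda^2$, then gives (i).

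For (ii), note that $P_\perp a=0$, so $P_\perp\tX=mP_\perp z$. Since $\E z=0$ and $m_0$ is deterministic, $\E[m_0P_\perp z]=0$, hence
\[
    P_\perp\E\tX=\E\big[(m-m_0)P_\perp z\big].
\]
I split on the event $\{\|z\|\le D/2\}$.

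On the small-noise part, I want $|m-m_0|\le C\lambda\,\|z\|/D^2$ with an absolute constant $C$. The function $u\mapsto\min\{1,\lambda/u\}$ is constant on $(0,\lambda]$ and has derivative $-\lambda/u^2$ beyond $\lambda$. Also $\big|\|a+z\|-G\big|\le\|z\|$. I will check the Lipschitz estimate by cases $G\ge\lambda$ and $G<\lambda$; when $G<\lambda$ the map is only nonconstant above $\lambda=D$, so the bound holds there as well. This contribution is then at most $C\lambda D^{-2}\,\E\min\{\|z\|,D/2\}^2\le C\kappa_2\lambda A D^{-2}(D/2)^{2-\alpha}$.

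On the large-noise part, I bound pointwise $\|(m-m_0)P_\perp z\|\le\|mP_\perp z\|+m_0\|z\|$. The first term is at most $\lambda$, since $mP_\perp z=P_\perp\tX$ and $\|\tX\|\le\lambda$. The second is at most $(\lambda/D)\|z\|$. The first and second inequalities of~\eqref{eq:tail-facts} with $c=D/2$ then bound this contribution by $\lambda A(D/2)^{-\alpha}+(\lambda/D)\kappa_1A(D/2)^{1-\alpha}$. Collecting powers of $2$ should produce a constant of the form $2^{\alpha-1}(\kappa_2+3\kappa_1)$. Getting this exact constant, rather than some absolute multiple of it, is the delicate bookkeeping step. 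The Lipschitz constant for $m$ on $\{\|z\|\le D/2\}$ is the place to be careful.

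For (iii), write $\abr{\tX,\hat a}=m\,(G+\abr{z,\hat a})$ and again split on $\{\|z\|\le G/2\}$.
\begin{enumerate}
    \item[(a)] On this event $\|a+z\|\in[G/2,3G/2]$ and $G+\abr{z,\hat a}\ge G/2$. Since $m\ge\min\{1,2\lambda/(3G)\}$, the integrand is at least $\min\{G/2,\lambda/3\}\ge\frac13\min\{G,\lambda\}$.
    \item[(b)] On the complement, $\abr{\tX,\hat a}\ge-\lambda$ pointwise. Because $G\ge\lambda/2$, we have $\lambda\le2\min\{G,\lambda\}$.
\end{enumerate}
Combining the two parts with $\PP\{\|z\|>G/2\}\le\frac1{14}$ gives
\[
    \abr{\E\tX,\hat a}\ge\Big(\tfrac{13}{14}\cdot\tfrac13-\tfrac{2}{14}\Big)\min\{G,\lambda\}=\tfrac16\min\{G,\lambda\}.
\]
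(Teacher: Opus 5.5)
Your proposal follows the paper's proof in all three items: the projection argument for (i), the identity $P_\perp\E\tX=\E[(m-m_0)P_\perp z]$ with the split at $\|z\|=D/2$ for (ii), and the same two-event estimate with $\PP\{\|z\|>G/2\}\le\frac1{14}$ for (iii). The one step you leave open is the constant $C$ in the small-noise estimate, and its value matters.

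With your large-noise bound, the total for (ii) is $2^{\alpha-1}\big(\tfrac C2\kappa_2+2+\kappa_1\big)\lambda AD^{-\alpha}$. Reaching $c_\perp=2^{\alpha-1}(\kappa_2+3\kappa_1)$ therefore requires $C\le2$, both in the moment case $\kappa_1=\kappa_2=1$ and under the weak condition as $\alpha\uparrow2$. The derivative bound you sketch does not give this. Take $G\ge2\lambda$ and $\|a+z\|=G/2$. The supremum of $\lambda/u^2$ over $[G/2,G]$ is then $4\lambda/D^2$, so $C=4$. In the moment case this gives $5\cdot2^{\alpha-1}$ instead of $4\cdot2^{\alpha-1}$.

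The fix, which is what the paper does, is to replace the mean value theorem by the exact difference of reciprocals. Let $M(v):=\max\{\|v\|,\lambda\}$, which is $1$-Lipschitz with $M(a)=D$. Then
\[
    |m-m_0|=\frac{\lambda\,|M(a)-M(a+z)|}{M(a)\,M(a+z)}\le\frac{\lambda\|z\|}{D\,M(a+z)}\le\frac{2\lambda\|z\|}{D^2}\quad\text{on }\{\|z\|\le D/2\}.
\]
The last step holds because $M(a+z)\ge D/2$ there: if $G\ge\lambda$, then $\|a+z\|\ge G/2$; otherwise $M(a+z)\ge\lambda=D$. This gives $2^{\alpha-1}\kappa_2\lambda AD^{-\alpha}$ for the small-noise part.

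Your large-noise part, $2^{\alpha-1}(2+\kappa_1)\lambda AD^{-\alpha}$, is at most $3\cdot2^{\alpha-1}\kappa_1\lambda AD^{-\alpha}$ since $\kappa_1\ge1$. The paper reaches the same bound slightly differently: it writes $\lambda<2\lambda\|z\|/D$ on $\{\|z\|>D/2\}$ and uses only the truncated first-moment bound. Summing the two parts gives at most $c_\perp\lambda AD^{-\alpha}$, as stated.
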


\begin{proof}
(i) The map $\clip(\cdot,\lambda)$ is the Euclidean projection onto $B_\lambda(0)$; therefore, it is $1$-Lipschitz, and both $\tX$ and $\clip(a,\lambda)$ belong to $B_\lambda(0)$. Thus, $\|\tX-\clip(a,\lambda)\|\le\min\{\|z\|,2\lambda\}$, and the claim follows from~\eqref{eq:tail-facts} with $c=2\lambda$.

(ii) Let $y:=a+z$, $M(v):=\max\{\|v\|,\lambda\}$, and $s(v):=\lambda/M(v)=\min\{1,\lambda/\|v\|\}$; then $\tX=s(y)y$. Since $P_\perp a=0$ and $\E P_\perp z=0$, we have $P_\perp\E\tX=\E[s(y)P_\perp z]=\E[(s(y)-s(a))P_\perp z]$. The map $v\mapsto M(v)$ is $1$-Lipschitz and $M(a)=D$; hence, $|s(y)-s(a)|=\lambda|M(a)-M(y)|/(M(a)M(y))\le\lambda\|z\|/(DM(y))$. On the event $\{\|z\|\le D/2\}$, we have $M(y)\ge D/2$ (if $G\ge\lambda$, then $\|y\|\ge G-\|z\|\ge G/2$; otherwise, $M(y)\ge\lambda=D$). Thus, $\|(s(y)-s(a))P_\perp z\|\le2\lambda\|z\|^2/D^2$ on this event, and, by~\eqref{eq:tail-facts},
\[
    \E\left[\|(s(y)-s(a))P_\perp z\|\ind_{\{\|z\|\le D/2\}}\right]\le\frac{2\lambda}{D^2}\E\min\left\{\|z\|,\frac D2\right\}^2\le2^{\alpha-1}\kappa_2\lambda AD^{-\alpha}.
\]
On the event $\{\|z\|>D/2\}$, we use $\|s(y)P_\perp z\|=\|P_\perp\tX\|\le\lambda<2\lambda\|z\|/D$ and $s(a)\|P_\perp z\|\le\lambda\|z\|/D$. Thus, $\|(s(y)-s(a))P_\perp z\|\le3\lambda\|z\|/D$ on this event, and
\[
    \E\left[\|(s(y)-s(a))P_\perp z\|\ind_{\{\|z\|>D/2\}}\right]\le\frac{3\lambda}{D}\E\left[\|z\|\ind_{\{\|z\|>D/2\}}\right]\le3\cdot2^{\alpha-1}\kappa_1\lambda AD^{-\alpha}.
\]
Summing up the two bounds, we get (ii).

(iii) Let $p:=\PP\{\|z\|>G/2\}\le\frac1{14}$. On the event $\{\|z\|\le G/2\}$, we have $\abr{y,\hat a}=G+\abr{z,\hat a}\ge G/2$ and $\|y\|\le3G/2$; hence, $s(y)\ge\min\{1,\frac{2\lambda}{3G}\}$ and $\abr{\tX,\hat a}=s(y)\abr{y,\hat a}\ge\min\{\frac G2,\frac\lambda3\}\ge\frac13\min\{G,\lambda\}$. On the complementary event, we have $\abr{\tX,\hat a}\ge-\|\tX\|\ge-\lambda$. Since $G\ge\lambda/2$, we have $\lambda\le2\min\{G,\lambda\}$, and, therefore,
\[
    \abr{\E\tX,\hat a}\ge\frac{1-p}{3}\min\{G,\lambda\}-p\lambda\ge\left(\frac13-\frac{7p}{3}\right)\min\{G,\lambda\}\ge\frac16\min\{G,\lambda\}.
\]
\end{proof}

In the proof of Theorem~\ref{thm:any-lambda}, we apply these deterministic statements pointwise at random points via the freezing formula (see below). This allows us to avoid conditional versions with random thresholds such as $G/2$ and $D/2$.

\Needspace{14\baselineskip}
\subsection{The Mean Clipped Field}

\begin{lemma}[Mean clipped field]\label{lem:drift}
Let the assumptions of Theorem~\ref{thm:any-lambda} hold, and let $x\in B_{\sqrt2R}(x^*)$, $u:=x-x^*$, $\delta:=f(x)-f^*$, $G:=\|\nabla f(x)\|$, and $X:=g(x,\xi)$ with $\xi\sim\D$. Let
\[
    w:=1\ \ \text{if}\ G\le\frac\lambda2,\qquad w:=\frac{\min\{G,\lambda\}}{6G}\ \ \text{if}\ G>\frac\lambda2,
\]
and let $\varepsilon_N=48\sqrt2\,c_\perp R\,\bar b_\lambda$ and $\varepsilon_M=384\,c_\perp\sigma_1^\alpha LR^2\min\{1,96\,c_\perp\sigma_1^\alpha\}$ be defined as in Theorem~\ref{thm:any-lambda}. If $\delta>\max\{\varepsilon_N,\varepsilon_M\}$, then $\abr{\E\clip(X,\lambda),u}\ge\frac34w\delta$.
\end{lemma}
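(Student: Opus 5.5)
The plan is to split according to whether the true gradient is clipped, i.e.\ $G\le\lambda/2$ or $G>\lambda/2$. In both cases we lower-bound $\abr{\E\clip(X,\lambda),u}$ by a main term of size at least $w\delta$ and show that the error terms are at most $\frac14w\delta$ once $\delta>\max\{\varepsilon_N,\varepsilon_M\}$. We use four facts throughout.
\begin{itemize}
\item Convexity gives $\abr{\nabla f(x),u}\ge\delta$.
\item $\|u\|\le\sqrt2R$, and $\delta\le\abr{\nabla f(x),u}\le\sqrt2RG$, so $G\ge\delta/(\sqrt2R)$.
\item $G\le L\|u\|\le\sqrt2LR$, and $G\le\sqrt{2L\delta}$ by Assumption~\ref{as:grad-bound}.
\item $c_\perp\ge b$ in both noise models. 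Indeed, $b_\alpha=2^{\alpha-1}/(\alpha-1)\le2^{\alpha-1}\kappa_1$ under Assumption~\ref{as:weak}, and $b^{\rm mom}_\alpha\le1\le c_\perp$ under Assumption~\ref{as:moment}.
\end{itemize}
The noise $z:=X-\nabla f(x)$ satisfies~\eqref{eq:tail-facts} with $A=A(\nabla f(x))=\sigma_0^\alpha+\sigma_1^\alpha G^\alpha$.

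\emph{Case $G\le\lambda/2$ ($w=1$).} Write $\E\clip(X,\lambda)=\nabla f(x)+e$, so that $\abr{\E\clip(X,\lambda),u}\ge\delta-\sqrt2R\|e\|$. It suffices to show $\sqrt2R\|e\|\le\delta/4$.
\begin{itemize}
\item If $\lambda\ge\sigma_0$, Lemma~\ref{lem:clip} gives $\|e\|\le bA\lambda^{1-\alpha}$.
  \begin{itemize}
  \item The additive part is $b\bar b_\lambda$, and $\sqrt2Rb\bar b_\lambda\le\delta/8$ because $\delta>\varepsilon_N$.
  \item For the multiplicative part, $G\le\lambda/2$ gives $\sigma_1^\alpha G^\alpha\lambda^{1-\alpha}\le2^{1-\alpha}\sigma_1^\alpha G$. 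We then need $c_\perp\sigma_1^\alpha RG\lesssim\delta$.
  \end{itemize}
\item If $\lambda<\sigma_0$, we use the crude bound $\|e\|\le\lambda+G\le\frac32\lambda<\frac32\sigma_0=\frac32\bar b_\lambda$. Again $\delta>\varepsilon_N$ absorbs it.
\end{itemize}

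The requirement $c_\perp\sigma_1^\alpha RG\lesssim\delta$ is where $\varepsilon_M$ enters. It has two routes, matching the two branches of the $\min$ in $\varepsilon_M$.
\begin{itemize}
\item If $96c_\perp\sigma_1^\alpha\ge1$, we use $G\le\sqrt2LR$, which needs $\delta\gtrsim c_\perp\sigma_1^\alpha LR^2$.
\item If $96c_\perp\sigma_1^\alpha<1$, we use $G\le\sqrt{2L\delta}$. The requirement becomes $\sqrt\delta\gtrsim c_\perp\sigma_1^\alpha R\sqrt L$, i.e.\ $\delta\gtrsim(c_\perp\sigma_1^\alpha)^2LR^2$.
\end{itemize}
In either case $\delta>\varepsilon_M$ is exactly what is needed, with the constants $384$ and $96$ leaving room.

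\emph{Case $G>\lambda/2$.} Decompose $\E\clip(X,\lambda)=\abr{\E\clip(X,\lambda),\hat a}\hat a+P_\perp\E\clip(X,\lambda)$ with $a=\nabla f(x)$. Since $\abr{\hat a,u}\ge\delta/G\ge0$, Lemma~\ref{lem:arb-clip}(iii) makes the parallel part at least $\frac16\min\{G,\lambda\}\delta/G=w\delta$.

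Item (iii) requires $\PP\{\|z\|>G/2\}\le\frac1{14}$. By~\eqref{eq:tail-facts}, this probability is at most $2^\alpha\sigma_0^\alpha G^{-\alpha}+2^\alpha\sigma_1^\alpha$. The assumption $\sigma_1\le\bar\sigma_1$ gives $2^\alpha\sigma_1^\alpha\le\frac1{28}$, so it remains to show $2^\alpha\sigma_0^\alpha G^{-\alpha}\le\frac1{28}$.
\begin{itemize}
\item If $\lambda<\sigma_0$, then $G\ge\delta/(\sqrt2R)>48c_\perp\sigma_0$.
\item If $\lambda\ge\sigma_0$, then $G^\alpha\ge G(\lambda/2)^{\alpha-1}>2^{1-\alpha}\cdot48c_\perp\sigma_0^\alpha$.
\end{itemize}
In both cases $c_\perp\ge4\cdot2^{\alpha-1}$ makes the bound hold.

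For the orthogonal part, Lemma~\ref{lem:arb-clip}(ii) gives $|\abr{P_\perp\E\clip(X,\lambda),u}|\le\sqrt2c_\perp R\lambda AD^{-\alpha}$. The identity $\lambda G=D\min\{G,\lambda\}$ turns the target $\le\frac14w\delta$ into $24\sqrt2c_\perp RAD^{1-\alpha}\le\delta$.
\begin{itemize}
\item For the additive piece, $D\ge\lambda$ and $D\ge G\gtrsim\sigma_0$ (shown above) give $\sigma_0^\alpha D^{1-\alpha}\lesssim\bar b_\lambda$, which is handled by $\varepsilon_N$.
\item For the multiplicative piece, $\sigma_1^\alpha G^\alpha D^{1-\alpha}\le\sigma_1^\alpha G$. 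This reduces to the same requirement $c_\perp\sigma_1^\alpha RG\lesssim\delta$ as in the first case, handled by $\varepsilon_M$ via the same two routes.
\end{itemize}

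The main obstacle is this bookkeeping: checking that the constants $48\sqrt2$, $384$ and $96$ in $\varepsilon_N$ and $\varepsilon_M$ simultaneously cover the tail condition of (iii), the additive bias and the two multiplicative routes, each with a factor-$2$ split of $\frac14w\delta$ between the additive and multiplicative errors. The saturation $\bar b_\lambda=\sigma_0$ for $\lambda<\sigma_0$ must also be treated separately in both cases.
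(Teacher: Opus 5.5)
Your proposal is correct and follows the paper's proof essentially step for step: the same split into $G\le\lambda/2$ (Lemma~\ref{lem:clip}, with the crude bound $\frac32\lambda$ when $\lambda<\sigma_0$) and $G>\lambda/2$ (parallel part via Lemma~\ref{lem:arb-clip}(iii), orthogonal part via Lemma~\ref{lem:arb-clip}(ii)), and the same two-branch use of $\varepsilon_M$ to obtain $C\sigma_1^\alpha RG\le\delta$. The differences are only organizational, and the constants close in both versions. You derive $G\ge2\cdot28^{1/\alpha}\sigma_0$ directly from $\delta>\varepsilon_N$, whereas the paper rules out $\lambda/2<G<2\cdot28^{1/\alpha}\sigma_0$ as a separate case. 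In the first branch of $\varepsilon_M$, you bound $G\le\sqrt2LR$, whereas the paper argues by a dichotomy on the size of $G$.
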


\begin{proof}
Let $\tX:=\clip(X,\lambda)$ and $z:=X-\nabla f(x)$; then $\E z=0$ and~\eqref{eq:tail-facts} holds with $A=A(\nabla f(x))=\sigma_0^\alpha+\sigma_1^\alpha G^\alpha$. By convexity, we have $\abr{\nabla f(x),u}\ge\delta$, and $\delta\le G\|u\|\le\sqrt2RG$. In particular, $G>0$. By Assumption~\ref{as:grad-bound}, we also have $\delta\ge G^2/(2L)$. We start with the following consequence of $\delta>\varepsilon_M$: for every $C\in(0,96\sqrt2\,c_\perp]$, we have
\begin{equation}\label{eq:eps-M-use}
    C\sigma_1^\alpha RG\le\delta .
\end{equation}
Indeed, if $\varepsilon_M=384c_\perp\sigma_1^\alpha LR^2$, then either $G\ge2CL\sigma_1^\alpha R$, and then $C\sigma_1^\alpha RG\le G^2/(2L)\le\delta$, or $G<2CL\sigma_1^\alpha R$, and then $\delta\le\sqrt2RG<2\sqrt2C\sigma_1^\alpha LR^2\le384c_\perp\sigma_1^\alpha LR^2=\varepsilon_M$, which is excluded. If $\varepsilon_M=36864c_\perp^2\sigma_1^{2\alpha}LR^2$, then $C\sigma_1^\alpha RG\le C\sigma_1^\alpha R\sqrt{2L\delta}\le\delta$, since $\delta>\varepsilon_M\ge2C^2\sigma_1^{2\alpha}LR^2$ (note that $2(96\sqrt2c_\perp)^2=36864c_\perp^2$). Next, we consider three cases.

\emph{Case $G\le\lambda/2$.} By Lemma~\ref{lem:weak-clip} or Lemma~\ref{lem:moment-clip}, we have $\|\E\tX-\nabla f(x)\|\le bA\lambda^{1-\alpha}=b\sigma_0^\alpha\lambda^{1-\alpha}+b\sigma_1^\alpha G^\alpha\lambda^{1-\alpha}$, and also $\|\E\tX-\nabla f(x)\|\le\lambda+G\le\frac32\lambda$. If $\lambda\ge\sigma_0$, then the first bound contains $b\sigma_0^\alpha\lambda^{1-\alpha}=b\bar b_\lambda$; otherwise, the second one is at most $\frac32\sigma_0=\frac32\bar b_\lambda$. In both cases, since $G\le\lambda/2$ implies $G^\alpha\lambda^{1-\alpha}\le2^{1-\alpha}G$, we have
\[
    \|\E\tX-\nabla f(x)\|\le\max\left\{b,\tfrac32\right\}\bar b_\lambda+2^{1-\alpha}b\,\sigma_1^\alpha G .
\]
Thus, by $\|u\|\le\sqrt2R$, $\max\{b,\frac32\}\le c_\perp$, and~\eqref{eq:eps-M-use} with $C=8\sqrt2\cdot2^{1-\alpha}b\le8\sqrt2c_\perp$, we get
\[
    \abr{\E\tX,u}\ge\delta-\sqrt2R\,c_\perp\bar b_\lambda-\sqrt2\cdot2^{1-\alpha}b\,\sigma_1^\alpha RG\ge\delta-\frac{\varepsilon_N}{48}-\frac\delta8\ge\frac34\delta=\frac34w\delta .
\]
Here $\max\{b,\frac32\}\le c_\perp$ since $c_\perp\ge2^{\alpha-1}\cdot4\ge4$; under Assumption~\ref{as:weak}, $c_\perp\ge3\cdot2^{\alpha-1}\kappa_1=3\alpha b_\alpha\ge b_\alpha$; and under Assumption~\ref{as:moment}, $c_\perp=2^{\alpha+1}\ge4>1\ge b$.

\emph{Case $G>\lambda/2$ and $G\ge G_1:=2\cdot28^{1/\alpha}\sigma_0$.} By~\eqref{eq:tail-facts}, $G\ge G_1$, and $\sigma_1\le\bar\sigma_1$, we have
\[
    \PP\{\|z\|>G/2\}\le2^\alpha AG^{-\alpha}=\frac{2^\alpha\sigma_0^\alpha}{G^\alpha}+2^\alpha\sigma_1^\alpha\le\frac1{28}+\frac1{28}=\frac1{14},
\]
and Lemma~\ref{lem:arb-clip}(iii) gives $\abr{\E\tX,\hat a}\ge\frac16\min\{G,\lambda\}=wG$, where $\hat a:=\nabla f(x)/G$. Since $\abr{\hat a,u}\ge\delta/G\ge0$, the component of $\E\tX$ along $\hat a$ contributes at least $w\delta$ to $\abr{\E\tX,u}$. Next, let $D:=\max\{G,\lambda\}$; then $w=\lambda/(6D)$ in this case, $D\ge\max\{\lambda,\sigma_0\}$ (since $G_1\ge\sigma_0$), and $D<2G$ (since $\lambda<2G$). By Lemma~\ref{lem:arb-clip}(ii), the orthogonal component contributes at least
\begin{align*}
    -\sqrt2R\,c_\perp\lambda AD^{-\alpha}&=-\sqrt2R\,c_\perp\lambda\sigma_0^\alpha D^{-\alpha}-\sqrt2R\,c_\perp\lambda\sigma_1^\alpha\left(\frac GD\right)^\alpha\\
    &\ge-w\cdot6\sqrt2\,c_\perp R\sigma_0^\alpha D^{1-\alpha}-w\cdot6\sqrt2\,c_\perp R\sigma_1^\alpha D .
\end{align*}
The first term is at least $-w\cdot6\sqrt2c_\perp R\bar b_\lambda=-w\varepsilon_N/8\ge-w\delta/8$, since $D\ge\max\{\lambda,\sigma_0\}$. The second one is at least $-w\cdot12\sqrt2c_\perp R\sigma_1^\alpha G\ge-w\delta/8$, due to $D<2G$ and~\eqref{eq:eps-M-use} with $C=96\sqrt2c_\perp$. Thus, $\abr{\E\tX,u}\ge w(\delta-\delta/8-\delta/8)=\frac34w\delta$.

\emph{Case $\lambda/2<G<G_1$.} This case is impossible when $\delta>\varepsilon_N$. Indeed, it requires $\lambda<2G_1=4\cdot28^{1/\alpha}\sigma_0$. Then $\max\{\lambda,\sigma_0\}\le4\cdot28^{1/\alpha}\sigma_0$, and, therefore, $\sigma_0\le(4\cdot28^{1/\alpha})^{\alpha-1}\bar b_\lambda$. Therefore,
\[
    \delta\le\sqrt2RG<\sqrt2RG_1=2\cdot28^{1/\alpha}\sqrt2R\sigma_0\le2\cdot28\cdot4^{\alpha-1}\sqrt2R\,\bar b_\lambda=14\cdot4^\alpha\sqrt2R\,\bar b_\lambda\le\varepsilon_N,
\]
where the last step uses $c_\perp\ge2^{\alpha-1}\cdot4=2^{\alpha+1}$ (since $\kappa_1,\kappa_2\ge1$) and $14\cdot4^\alpha\le48\cdot2^{\alpha+1}$, i.e., $14\cdot2^\alpha\le96$, for $\alpha\le2$. This concludes the proof.
\end{proof}

\subsection{Proof of Theorem~\ref{thm:any-lambda}}

Let $\varepsilon_N$ and $\varepsilon_M$ be defined as in Lemma~\ref{lem:drift}, let $\varepsilon_*:=\max\{\varepsilon_N,\varepsilon_M\}$, let us introduce the notation $R_t:=\|x^t-x^*\|$, $u_t:=x^t-x^*$, $\delta_t:=f(x^t)-f^*$, $G_t:=\|\nabla f(x^t)\|$, and let $w_t$ be the weight from Lemma~\ref{lem:drift} at $x^t$. Let $\tg^t:=\clip(g^t,\lambda)$, $\theta_t^u:=\tg^t-\E[\tg^t\mid\G_t]$ (as in~\eqref{eq:theta}), and $\widetilde m_t:=\|\tg^t-\clip(\nabla f(x^t),\lambda)\|^2$. Let
\[
    \tau:=\inf\{t\ge0:\ \delta_t\le\varepsilon_*\},\qquad \iota_t:=\ind_{\{t<\tau\}},\qquad \eta_t:=u_t\ind_{\{R_t\le\sqrt2R\}} .
\]
Since $\delta_t$ is $\G_t$-measurable, $\tau$ is a stopping time, and $\iota_t$, $w_t$, and $\eta_t$ are $\G_t$-measurable.

\paragraph{One-step inequality.} Let $t$ be such that $\iota_t=1$ and $R_t\le\sqrt2R$. Then we have
\begin{equation}\label{eq:one-step}
    R_{t+1}^2\le R_t^2-\gamma w_t\delta_t-2\gamma\abr{\theta_t^u,\eta_t}+2\gamma^2\widetilde m_t .
\end{equation}
Indeed, $R_{t+1}^2=R_t^2-2\gamma\abr{\tg^t,u_t}+\gamma^2\|\tg^t\|^2$, $u_t=\eta_t$, and $\|\tg^t\|^2\le2\min\{G_t,\lambda\}^2+2\widetilde m_t$, since $\|\clip(\nabla f(x^t),\lambda)\|=\min\{G_t,\lambda\}$. Let $F(x):=\E_\xi\clip(g(x,\xi),\lambda)$ and $\Phi(x):=\E_\xi\|\clip(g(x,\xi),\lambda)-\clip(\nabla f(x),\lambda)\|^2$. These functions are bounded and measurable. Since $x^t$ is $\G_t$-measurable and $\xi^t$ is independent of $\G_t$, the freezing formula (as in the proof of Lemma~\ref{lem:adaptive}) gives $\E[\tg^t\mid\G_t]=F(x^t)$ and $\E[\widetilde m_t\mid\G_t]=\Phi(x^t)$ almost surely. Thus, Lemma~\ref{lem:drift}, applied at the point $x^t(\omega)\in B_{\sqrt2R}(x^*)$, gives $\abr{\E[\tg^t\mid\G_t],u_t}\ge\frac34w_t\delta_t$. Finally, $2\gamma^2\min\{G_t,\lambda\}^2\le\frac12\gamma w_t\delta_t$. For $G_t\le\lambda/2$, this follows from $G_t^2\le2L\delta_t$ and $\gamma\le1/(8L)$. For $G_t>\lambda/2$, it is equivalent to $24\gamma\min\{G_t,\lambda\}G_t\le\delta_t$, which follows from $\min\{G_t,\lambda\}G_t\le G_t^2\le2L\delta_t$ and $\gamma\le1/(48L)$.

\paragraph{Variance bounds.} If $R_t\le\sqrt2R$, then $\|\nabla f(x^t)\|\le LR_t\le\sqrt2LR$, implying $A(\nabla f(x^t))\le\bar A$, and Lemma~\ref{lem:arb-clip}(i), applied at the point $x^t(\omega)$ with $A=A(\nabla f(x^t))$ via the freezing formula, gives $\E[\|\theta_t^u\|^2\mid\G_t]\le\E[\widetilde m_t\mid\G_t]\le V_\lambda$, where we use that the bound in Lemma~\ref{lem:arb-clip}(i) is non-decreasing in $A$. Moreover, we always have $\|\theta_t^u\|\le2\lambda$ and $\widetilde m_t\le4\lambda^2$.

\paragraph{Events and induction.} As in the proof of Proposition~\ref{prop:master}, our proof is induction-based. For $T\ge1$, let $S_T:=-2\gamma\sum_{t<T}\iota_t\abr{\theta_t^u,\eta_t}+2\gamma^2\sum_{t<T}\iota_t\widetilde m_t$. We introduce probability event $E_k$ for each $k=0,1,\dots,K+1$ as follows: $E_0:=\Omega$ and, for $k=1,\dots,K+1$,
\[
    E_k:=\left\{\forall T\in\{1,\dots,k\}:\ S_T\le R^2\ \text{ and }\ \left(T\le\tau\ \Rightarrow\ R_T\le\sqrt2R\right)\right\}.
\]
We will prove by induction that $\PP(E_k)\ge1-\frac{k\beta}{K+1}$ for all $k=0,1,\dots,K+1$. The base of the induction is trivial. Next, we assume that the statement holds for $k=T-1$ with some $T\in\{1,\dots,K+1\}$ and prove it for $k=T$. On $E_{T-1}$, every $t<T$ with $\iota_t=1$ satisfies $R_t\le\sqrt2R$ (for $t=0$, $R_0\le R$), meaning that $x^t\in Q$ and the variance bounds hold for all active $t<T$.

Let $X_t:=-2\gamma\iota_t\abr{\theta_t^u,\eta_t}$. This is a martingale difference sequence with $|X_t|\le4\sqrt2\gamma\lambda R\le\frac{R^2}{5\ell}=:c_1$, since $\gamma\lambda\le R/(40\ell)$. On $E_{T-1}$, its conditional variances satisfy $\sum_{t<T}\E[X_t^2\mid\G_t]\le8\gamma^2R^2V_\lambda(K+1)\le\frac{R^4}{150\ell}=:G_F$, since $\gamma^2V_\lambda(K+1)\le\frac{R^2}{1200\ell}$. Next, let $Y_t:=2\gamma^2\iota_t(\widetilde m_t-\E[\widetilde m_t\mid\G_t])$. Then $Y_t$ is a martingale difference with $|Y_t|\le8\gamma^2\lambda^2\le\frac{R^2}{200\ell^2}\le c_1$, and, since $\iota_t$ is $\G_t$-measurable with $\iota_t^2=\iota_t$, $\E[Y_t^2\mid\G_t]=4\gamma^4\iota_t\Var(\widetilde m_t\mid\G_t)\le4\gamma^4\iota_t\E[\widetilde m_t^2\mid\G_t]\le16\gamma^4\lambda^2\iota_t\E[\widetilde m_t\mid\G_t]$. On $E_{T-1}$, every active $t<T$ satisfies $x^t\in Q$, and, therefore, $\iota_t\E[\widetilde m_t\mid\G_t]\le\iota_tV_\lambda$, while for inactive $t$ both sides vanish; hence, on $E_{T-1}$, $\sum_{t<T}\E[Y_t^2\mid\G_t]\le16\gamma^4\lambda^2V_\lambda(K+1)=16(\gamma\lambda)^2\gamma^2V_\lambda(K+1)\le G_F$. Lemma~\ref{lem:freedman} with $b_1=R^2/5$, $c_1$, and $G_F$, applied in the same way as in the proof of Proposition~\ref{prop:master}, gives events $E_X$ and $E_Y$, each of probability at least $1-\frac{\beta}{2(K+1)}$, such that $|\sum_{t<T}X_t|\le\frac{R^2}{5}$ and $|\sum_{t<T}Y_t|\le\frac{R^2}5$ on $E_{T-1}\cap E_X\cap E_Y$. Since, in addition, $2\gamma^2\sum_{t<T}\iota_t\E[\widetilde m_t\mid\G_t]\le2\gamma^2V_\lambda(K+1)\le\frac{R^2}{600}$ on $E_{T-1}$, we get $S_T\le\frac{R^2}5+\frac{R^2}5+\frac{R^2}{600}<R^2$ on this event. If, moreover, $T\le\tau$, then all $t<T$ are active, and, summing up~\eqref{eq:one-step} over $t<T$, we get
\begin{equation}\label{eq:any-lambda-sum}
    R_T^2+\gamma\sum_{t<T}w_t\delta_t\le R_0^2+S_T\le2R^2 .
\end{equation}
Thus, $E_{T-1}\cap E_X\cap E_Y\subseteq E_T$, and the union bound gives $\PP(E_T)\ge1-\frac{T\beta}{K+1}$, which finishes the induction.

\paragraph{Conclusion.} Finally, we consider the event $E_{K+1}$, which has probability at least $1-\beta$. If $\tau\le K$, then $\min_{t\le K}\delta_t\le\delta_\tau\le\varepsilon_*$. Otherwise, \eqref{eq:any-lambda-sum} with $T=K+1$ gives $\sum_{t\le K}w_t\delta_t\le2R^2/\gamma$. Let $\mathcal T_1:=\{t\le K:\ G_t\le\lambda/2\}$ and $\mathcal T_2:=\{0,\dots,K\}\setminus\mathcal T_1$. If $|\mathcal T_1|\ge\frac{K+1}2$, then $\frac{K+1}{2}\min_t\delta_t\le\sum_{t\in\mathcal T_1}\delta_t\le\frac{2R^2}\gamma$, since $w_t=1$ on $\mathcal T_1$. Otherwise, $|\mathcal T_2|>\frac{K+1}2$. For $t\in\mathcal T_2$, we have $\min\{G_t,\lambda\}\ge\lambda/2$ and $G_t\le\sqrt{2L\delta_t}$; hence, $w_t\delta_t\ge\frac{\lambda\delta_t}{12G_t}\ge\frac{\lambda\sqrt{\delta_t}}{12\sqrt{2L}}$, and
\[
    \frac{K+1}{2}\cdot\frac{\lambda\sqrt{\min_t\delta_t}}{12\sqrt{2L}}\le\frac{2R^2}{\gamma},\qquad\text{i.e.,}\qquad \min_t\delta_t\le\frac{4608LR^4}{\gamma^2\lambda^2(K+1)^2}.
\]
In all cases, we have $\min_{t\le K}\delta_t\le\max\{\varepsilon_N,\varepsilon_M,\frac{4R^2}{\gamma(K+1)},\frac{4608LR^4}{\gamma^2\lambda^2(K+1)^2}\}$, which concludes the proof.
\hfill$\square$

\section{Lower Bounds for a Fixed Clipping Level: Full Statement, Comparison and Proofs}\label{app:lower}

\subsection{Full Statement and Comparison with Existing Results}\label{app:lower-statement}

{In Table~\ref{tab:prescribed}, we compare Theorems~\ref{thm:fixed-main}, \ref{thm:any-main}, and~\ref{thm:lower-main} with \citet{koloskova2023revisiting} and \citet{vatankhah2026differentially}; see also the comparison in Section~\ref{sec:lower}. The table separates the output criterion, the neighborhood, and the terms that vanish with the horizon. We then give the full statement of Theorem~\ref{thm:lower-main}, including the growth of the expected error with the total stepsize.}

{Here $R\ge\|x^0-x^*\|$, $x^t$ are the iterates with $\lambda_k\equiv\lambda$, and $\bar x^K$ is their average. The noise is additive ($\sigma_1=0$), with level $\sigma_0$ and index $\alpha\in(1,2]$, under Assumption~\ref{as:moment} or~\ref{as:weak}. Unless a note states otherwise, the upper bounds hold with probability at least $1-\beta$, with a stepsize chosen for the horizon $K$. The criterion is bounded by the neighborhood plus the vanishing terms. The vanishing terms suppress constants depending on $\alpha$ and logarithmic factors in $K$ and $1/\beta$.}

{Our neighborhood terms retain their constants: $b=b_\alpha^{\rm mom}\in[1/2,1]$ under Assumption~\ref{as:moment}, and $b=b_\alpha=2^{\alpha-1}/(\alpha-1)$ under Assumption~\ref{as:weak}. The constant $c_\perp=2^{\alpha-1}(\kappa_2+3\kappa_1)$ is defined in Section~\ref{sec:planned} and depends only on $\alpha$.}

\begin{table}[tp]
\caption{{Bounds for \clipsgd with a prescribed clipping level under additive noise. Our results are highlighted in light blue.}}\label{tab:prescribed}
\begin{center}
\scriptsize
\begin{threeparttable}
\renewcommand{\arraystretch}{1.6}
\setlength{\tabcolsep}{3pt}
\begin{tabular}{lllll}
\textbf{Result} & \textbf{Level} & \textbf{Criterion} & \textbf{Neighborhood term} & \textbf{Vanishing terms}\\
\hline
\stk{\citet{koloskova2023revisiting},\\Thm.~3.3\tnote{(1)}} & any $\lambda>0$ & $\min_{t\le K}\E\|\nabla f(x^t)\|$ & $O(\min\{\sigma_0,\sigma_0^2/\lambda\})$ & \stk{$(LF_0\sigma_0^2/K)^{1/4}$\\${}+\max\{1,\sigma_0/\lambda\}\sqrt{LF_0/K}$\\${}+LF_0/(\lambda K)$}\\
\citet{vatankhah2026differentially}\tnote{(2)} & $\lambda>4LR$ & $\min_{t\le K}f(x^t)-f^*$ & $O(R\sigma_0^\alpha/\lambda^{\alpha-1})$ & $\lambda R/K+R\sqrt{\lambda^{2-\alpha}\sigma_0^\alpha/K}$\\
\rowcolor{blue!10} Theorem~\ref{thm:fixed-main} & $\lambda\ge2\sqrt2LR$ & $f(\bar x^K)-f^*$ & $120\,b\,R\sigma_0^\alpha/\lambda^{\alpha-1}$ & $\lambda R/K+R\sqrt{\lambda^{2-\alpha}\sigma_0^\alpha/K}$\\
\rowcolor{blue!10} Theorem~\ref{thm:any-main}\tnote{(3)} & any $\lambda>0$ & $\min_{t\le K}f(x^t)-f^*$ & \stk{$48\sqrt2\,c_\perp R\times{}$\\$\min\{\sigma_0,\sigma_0^\alpha/\lambda^{\alpha-1}\}$} & \stk{$(LR^2+\lambda R)/K$\\${}+R\sqrt{\lambda^{2-\alpha}\sigma_0^\alpha/K}$\\${}+L^3R^4/(\lambda^2K^2)$}\\
\stk{\citet{koloskova2023revisiting},\\Thms.~3.1--3.2\tnote{(4)}} & any $\lambda>0$ & \stk{lower bound: $\|\nabla f(\hat x)\|$\\at the fixed points $\hat x$} & $\ge\min\{\sigma_0/12,\,\sigma_0^2/(6\lambda)\}$ & ---\\
\rowcolor{blue!10} Theorem~\ref{thm:lower}\tnote{(5)} & any $\lambda>0$ & \stk{lower bound: $\E f(\hat x)-f^*$,\\$\hat x$ an iterate or a\\deterministic average;\\deterministic stepsizes} & \stk{$\ge\frac{1}{48\cdot4^\alpha}\times{}$\\$\min\{R\sigma_0^\alpha/\lambda^{\alpha-1},R\sigma_0,LR^2\}$} & ---\\
\rowcolor{blue!10} \stk{Thm.~\ref{thm:fixed-main} with\tnote{(6)}\\$\lambda=\lambda_0(K+1)^{1/\alpha}$} & any $\lambda_0>0$ & $f(\bar x^K)-f^*$ & --- & \stk{$\big(\lambda_0+\sigma_0^\alpha\lambda_0^{1-\alpha}$\\${}+\sigma_0^{\alpha/2}\lambda_0^{1-\alpha/2}\big)$\\${}\times R\,K^{-(\alpha-1)/\alpha}$}\\
\end{tabular}
{\footnotesize{\begin{tablenotes}[flushleft]
\item[(1)] {Expected gradient norm for non-convex smooth functions under bounded variance ($\alpha=2$).}
\item[(2)] {Under Assumption~\ref{as:moment}; simplified using the initial-point bound, as explained in the text.}
\item[(3)] Under Assumption~\ref{as:weak}, for $\alpha<2$.
\item[(4)] {Gradient-norm lower bound at fixed points of the mean clipped field under bounded variance.}
\item[(5)] {Expected objective gap for every iterate and every convex average with deterministic weights; the text also gives the simultaneous probability comparison.}
\item[(6)] {The level depends on the horizon and stays fixed within each run; the horizon condition is stated in the text.}
\end{tablenotes}}}
\end{threeparttable}
\end{center}
\end{table}

{The first row specializes Theorem~3.3 of \citet{koloskova2023revisiting}, which treats non-convex $(L_0,L_1)$-smooth functions under $\E\|g(x,\xi)-\nabla f(x)\|^2\le\sigma_0^2$, to $L_0=L$ and $L_1=0$. It uses $\eta=\min\{1/(9L),(F_0/(L\sigma_0^2K))^{1/2}\}$, where $F_0:=f(x^0)-f^*\le LR^2/2$ in our setting. Their upper and lower bounds characterize the gradient-norm neighborhood $\min\{\sigma_0,\sigma_0^2/\lambda\}=\bar b_\lambda$ up to absolute constants.}

{The second row uses the full non-private bound of \citet{vatankhah2026differentially}. With $u:=R^2/(\gamma(K+1))$, their bound is $4u+64Lu^2/\lambda^2$. The best-iterate gap is also at most $f(x^0)-f^*\le LR^2/2$. For $\lambda>4LR$,}
\[
    {\min\left\{\frac{LR^2}{2},\,4u+\frac{64Lu^2}{\lambda^2}\right\}\le8u.}
\]
{Indeed, if $u\le LR^2$, then $64Lu^2/\lambda^2\le4u$; otherwise, the first term in the minimum suffices. Choosing $\gamma$ as the minimum of their full stepsize bounds, with no privacy noise and $\zeta_\lambda=0$, gives}
\[
    {u=O\left(\frac{\lambda R\ell}{K+1}+\frac{R\sigma_0^\alpha}{\lambda^{\alpha-1}}+R\sqrt{\frac{\lambda^{2-\alpha}\sigma_0^\alpha\ell}{K+1}}\right).}
\]
{This yields the displayed row, including its finite-horizon terms. In particular, the additional quadratic neighborhood term $LR^2\sigma_0^{2\alpha}/\lambda^{2\alpha}$ in their unsimplified bound is absorbed by this argument.}

{Theorems~3.1--3.2 of \citet{koloskova2023revisiting} construct, for every $\lambda>0$, a one-dimensional strongly convex quadratic with $L=1$ and two-point noise of variance $\sigma_0^2$, for which every fixed point $\hat x$ of the mean clipped field, $\E\clip(g(\hat x,\xi),\lambda)=0$, satisfies the displayed gradient bound. This is a statement about the fixed points governing the small-stepsize limit. The finite-horizon upper bound concerns $\min_{t\le K}\E\|\nabla f(x^t)\|$. The factor $R$ in our function-value neighborhood reflects the linear growth of the Huber objective away from its quadratic region. For $f(x)=Lx^2/2$, a gradient norm of magnitude $B$ corresponds to a function gap $B^2/(2L)$.}

{Theorem~\ref{thm:lower} gives the expected lower bound for every iterate and every convex combination with deterministic weights, under any positive deterministic stepsize sequence. If $0<\gamma_k\le R/(16\lambda)$ for all $k$, then with probability at least $3/4$ all iterates and all their convex averages simultaneously have error at least half the displayed bound. This also bounds the best iterate on the same event. We use this probability statement to compare with the high-probability upper bounds, whose stepsizes satisfy the required cap. Consequently, the neighborhood $R\min\{\sigma_0,\sigma_0^\alpha/\lambda^{\alpha-1}\}$ is unavoidable up to a factor depending only on $\alpha$ until it reaches the natural scale $LR^2$. Above that scale, returning the initial point already gives $f(x^0)-f^*\le LR^2/2$; the lower bound therefore includes the truncation by $LR^2$.}

{In the final row, $\lambda=\lambda_0(K+1)^{1/\alpha}$ grows with the horizon but remains constant during a run. The condition $K+1\ge(2\sqrt2LR/\lambda_0)^\alpha$ ensures that Theorem~\ref{thm:fixed-main} applies. This choice recovers the rate in Table~\ref{tab:complexity}.}

% Revision: allow the lower-bound theorem to follow the comparison without a forced page break.
\begin{theorem}[Lower bound for a fixed clipping level: full statement of Theorem~\ref{thm:lower-main}]\label{thm:lower}
Let $L,R,\sigma_0,\lambda>0$ and $\alpha\in(1,2]$, and let
\begin{gather*}
    \varepsilon_\lambda:=\frac{1}{48\cdot4^\alpha}\min\left\{\frac{R\sigma_0^\alpha}{\lambda^{\alpha-1}},\,R\sigma_0,\,LR^2\right\},\\
    s_\lambda:=\frac{1}{4^{\alpha+1}}\min\left\{\frac{\sigma_0^\alpha}{\lambda^{\alpha-1}},\,\lambda\right\}.
\end{gather*}
For $\lambda\ge\sigma_0$, the first minimum is $\min\{R\sigma_0^\alpha/\lambda^{\alpha-1},LR^2\}$ and $s_\lambda=\sigma_0^\alpha/(4^{\alpha+1}\lambda^{\alpha-1})$; in the notation of~\eqref{eq:bias-scale}, $\varepsilon_\lambda=\frac{1}{48\cdot4^\alpha}\min\{R\bar b_\lambda,LR^2\}$ and $s_\lambda=\min\{\sigma_0,\lambda\}^\alpha/(4^{\alpha+1}\lambda^{\alpha-1})$.
There exist a convex $L$-smooth $f:\R\to\R$ with minimizer $x^*=0$, a point $x^0$ with $|x^0-x^*|=R$, and an oracle $g(x,\xi)=f'(x)+\zeta$ with $\E\zeta=0$ and $\E|\zeta|^\alpha\le\sigma_0^\alpha$ (i.e., Assumptions~\ref{as:smooth}--\ref{as:weak} hold on $\R$ with $\sigma_1=0$) such that \clipsgd with $\lambda_k\equiv\lambda$ has the following properties.
\begin{enumerate}
    \item {For any deterministic stepsizes $\gamma_k>0$ (used instead of $\gamma$ at iteration $k$) and all $k,K\in\{0,1,\dots\}$,}
    \[
        \E f(x^k)-f^*\ge\varepsilon_\lambda\left(1+\frac{s_\lambda}{R}\sum_{j<k}\gamma_j\right),
    \]
    and $\E f(\hat x)-f^*\ge\varepsilon_\lambda$ for every convex combination $\hat x$ of $x^0,\dots,x^K$ with deterministic weights. For a constant stepsize $\gamma$, moreover, $\E f(\bar x^K)-f^*\ge\varepsilon_\lambda\left(1+\frac{s_\lambda\gamma K}{2R}\right)$.
    \item {For any deterministic stepsizes $0<\gamma_k\le\frac{R}{16\lambda}$, with probability at least $3/4$, $f(x^k)-f^*\ge\varepsilon_\lambda/2$ for all $k\in\{0,1,\dots\}$ and $f(\bar x^K)-f^*\ge\varepsilon_\lambda/2$ for all $K\in\{0,1,\dots\}$.}
\end{enumerate}
\end{theorem}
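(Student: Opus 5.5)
The plan is to build a one-dimensional Huber instance whose slope is smaller than the clipping bias of an asymmetric two-point noise. Then the mean clipped field has one sign on all of $\R$ and pushes the iterates away from $x^*=0$.

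\textbf{Construction.} Take
\[
f(x)=\tfrac L2x^2 \text{ for } |x|\le h,\qquad f(x)=Lh(|x|-\tfrac h2) \text{ otherwise},
\]
which is convex, $L$-smooth, minimized at $0$, and has $|f'|\le Lh$. Set $x^0=R$. For the noise, let $\zeta=M$ with probability $p$ and $\zeta=-m$ with probability $1-p$, where $m=pM/(1-p)$, so that $\E\zeta=0$. The parameters are chosen so that $pM^\alpha+(1-p)m^\alpha\le\sigma_0^\alpha$.

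In the regime $\lambda\ge\sigma_0$, I would take $M=2\lambda$ (up to a constant) and $p\asymp\sigma_0^\alpha/\lambda^\alpha$. Then $m\le\lambda/4$, and for $|f'|\le Lh\le\lambda/4$ the pointwise bound $\clip(v,\lambda)\le v-(v-\lambda)_+$ gives
\[
\E\clip(f'(x)+\zeta,\lambda)\le f'(x)-p(M-\lambda-Lh)\le Lh-c\,p\lambda .
\]
Choosing $Lh$ to be a small constant times $p\lambda\asymp\sigma_0^\alpha\lambda^{1-\alpha}$, and capping $h\le R/2$, yields
\[
\E\clip(f'(x)+\zeta,\lambda)\le-s_\lambda \quad\text{for all } x\in\R .
\]
In the regime $\lambda<\sigma_0$, I would take $p$ to be a constant of order $4^{-\alpha}$ and $M\asymp\sigma_0$, so that $m\ge 2\lambda$. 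The frequent value is then clipped to $-\lambda$ whenever $Lh\le m-\lambda$, and the mean is at most $\lambda(2p-1)\le-s_\lambda$. This allows a slope $Lh\asymp\sigma_0$, which gives $\varepsilon_\lambda\asymp R\sigma_0$. Both cases together give $Lh\,R\gtrsim 48\cdot4^\alpha\varepsilon_\lambda$ after the truncation $h\le R/2$, which is what produces the $\min$ with $LR^2$. Matching the constants $1/(48\cdot4^\alpha)$ and $4^{-(\alpha+1)}$ is bookkeeping.

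\textbf{Part 1 (expectation).} Since $x^{k+1}=x^k-\gamma_k\clip(g^k,\lambda)$ and the field is at most $-s_\lambda$ everywhere, the tower property gives $\E x^k\ge R+s_\lambda\sum_{j<k}\gamma_j$. Jensen's inequality for the convex $f$, together with $f(x)\ge Lh(x-h/2)\ge \tfrac{Lh}{2}x$ for $x\ge h$, gives
\[
\E f(x^k)\ge f(\E x^k)\ge \varepsilon_\lambda\Bigl(1+\tfrac{s_\lambda}{R}\textstyle\sum_{j<k}\gamma_j\Bigr).
\]
For a deterministic convex combination $\hat x$, linearity gives $\E\hat x\ge R$, and Jensen again yields $\E f(\hat x)\ge\varepsilon_\lambda$. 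For a constant stepsize, averaging $\E x^k\ge R+s_\lambda\gamma k$ gives the stated bound for $\bar x^K$.

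\textbf{Part 2 (probability $3/4$).} This is the main obstacle, because the horizon is infinite and the accumulated variance is unbounded. I would use the exponential process $Z_k:=\exp(-\mu(x^k-x^0))$ and show it is a nonnegative supermartingale. Write $c_k=\clip(g^k,\lambda)$, so $|c_k|\le\lambda$ and $\E[c_k\mid\G_k]\le-s_\lambda$. The key point is $\E[c_k^2\mid\G_k]\le C\lambda s_\lambda$: the rare value contributes $p\lambda^2\asymp\lambda s_\lambda$, and the frequent value contributes $\min\{m,\lambda\}^2$, which is of the same order in both regimes. Using $e^y\le1+y+y^2$ for $|y|\le1$, we get
\[
\E\bigl[e^{\mu\gamma_kc_k}\mid\G_k\bigr]\le1-\mu\gamma_ks_\lambda+\mu^2\gamma_k^2C\lambda s_\lambda\le1
\]
as soon as $\mu\gamma_k\lambda\le1/C$. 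With $\gamma_k\le R/(16\lambda)$ this holds for $\mu=16/(CR)$. Ville's maximal inequality then gives
\[
\PP\{\inf_k x^k\le R/2\}\le e^{-\mu R/2}=e^{-8/C}.
\]
I expect some effort in arranging the constants so that $e^{-8/C}\le1/4$, possibly by shrinking $p$ or $h$. On the complementary event, every $x^k$ and every average $\bar x^K$ is at least $R/2\ge h$. Hence $f\ge Lh(R/2-h/2)\ge\varepsilon_\lambda/2$ at all of these points simultaneously.
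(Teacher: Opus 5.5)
Your architecture matches the paper's. You use a one-dimensional Huber function whose slope is below the clipping bias of an asymmetric two-point noise, so the mean clipped field has one sign on $\R$. Part~1 follows from the tower property and Jensen's inequality, and Part~2 from an exponential supermartingale with Ville's inequality.

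The genuine gap is the case split at $\lambda=\sigma_0$, which leaves an interval of clipping levels uncovered. In your regime $\lambda<\sigma_0$, the frequent value $-m$ must be clipped to $-\lambda$: you ask for $m\ge2\lambda$, and you need at least $m\ge\lambda+Lh$ with $Lh\gtrsim\sigma_0/4^\alpha$ to get $\varepsilon_\lambda\asymp R\sigma_0$. This is impossible for $\lambda$ near $\sigma_0$. Indeed, $\E\zeta=0$ gives $\E|\zeta|=pM+(1-p)m=2(1-p)m$, and $\E|\zeta|\le(\E|\zeta|^\alpha)^{1/\alpha}\le\sigma_0$. Hence $m\le\sigma_0/(2(1-p))$, which is below $\sigma_0$ for every $p<\frac12$ (and $p<\frac12$ is forced, since the mean field is $\lambda(2p-1)$). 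For $p\le\frac14$ it gives $m\le\frac23\sigma_0$, so $m\ge2\lambda$ already fails once $\lambda>\sigma_0/3$.

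Your other construction is set up only for $\lambda\ge\sigma_0$. Its parameter $p\asymp\sigma_0^\alpha/\lambda^\alpha$ grows as $\lambda$ decreases, and once $m=pM/(1-p)$ exceeds $\lambda-Lh$ the frequent value is clipped from below and your field estimate breaks. Relatedly, the pointwise bound $\clip(v,\lambda)\le v-(v-\lambda)_+$ is false for $v<-\lambda$, so you may only use it while the frequent value stays above $-\lambda$.

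The missing idea is to cap the noise level at $\lambda$. Split at $\lambda\asymp\sigma_0/c_0$, and for $\sigma_0/c_0\le\lambda<\sigma_0$ run the first construction with $\sigma_\lambda:=\min\{\sigma_0,\lambda\}$ in place of $\sigma_0$. This is admissible and costs only the factor $c_0$ in $\varepsilon_\lambda$, because $R\lambda\ge R\sigma_0/c_0=R\bar b_\lambda/c_0$. This is exactly the paper's instance~A, with $p=\frac12(\sigma_\lambda/(4\lambda))^\alpha$ used for all $\lambda\ge\sigma_0/(2\cdot3^{1/\alpha})$. The $\pm\lambda$ construction (instance~B) is used only below that threshold. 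The loss $\lambda\ge\sigma_0/6$ is where the factor $48=8\cdot6$ in $\varepsilon_\lambda$ comes from.

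In Part~2, compare the second moment with the \emph{actual} drift of the instance, not the nominal $s_\lambda$. In the $\pm\lambda$ regime, $\E[c_k^2\mid\G_k]=\lambda^2=4^{\alpha+1}\lambda s_\lambda$, so your constant is $C=4^{\alpha+1}$ and $e^{-8/C}>1/4$; shrinking $p$ or $h$ does not change this. Your argument only needs some $s>0$ with $\E[c_k\mid\G_k]\le-s$ and $\E[c_k^2\mid\G_k]\le C\lambda s$. Taking the true drift $s=\lambda(1-2p)$ gives $C=1/(1-2p)$ and failure probability at most $e^{-8(1-2p)}$. The value bound on $\{x^k\ge R/2\}$ uses only the Huber slope, so it is unaffected. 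In the first regime, do the same check with the drift $\approx p\lambda$ against $\E c_k^2\approx p\lambda^2$.

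The paper handles this explicitly. In instance~A it splits off the martingale part, whose conditional variance is at most $2p\lambda^2$. It takes $\vartheta=1/(4\bar\gamma\lambda)$ with $\bar\gamma=\sup_k\gamma_k$. Then the compensator $2p\vartheta^2\lambda^2\sum_{j<k}\gamma_j^2\le\vartheta\Gamma_k\cdot p\lambda/2$, where $\Gamma_k=\sum_{j<k}\gamma_j$, is absorbed exactly by the drift $s=p\lambda/2$, giving $e^{-R/(8\bar\gamma\lambda)}\le e^{-2}$. In instance~B it uses the exact moment generating function $\frac34e^{-v}+\frac14e^{v}\le1$ on $[0,\ln3]$.
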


\paragraph{Two further consequences.} First, due to the linear growth of the expected error (part~1 of Theorem~\ref{thm:lower}), to keep the expected error of the last iterate, or of the averaged iterate with a constant stepsize, of the order $\varepsilon_\lambda$, one needs the total stepsize to be $O_\alpha(R\lambda^{\alpha-1}/\sigma_0^\alpha)$ when $\sigma_0\le\lambda$ (for the averaged iterate under a general deterministic schedule, the proof controls the weighted sum $\frac{1}{K+1}\sum_{j<K}(K-j)\gamma_j$ instead of the total stepsize, since a late step moves only a few of the averaged iterates), which is exactly the scale of the bias condition in Theorem~\ref{thm:fixed-main} (there, the bias threshold for the stepsize is $R\lambda^{\alpha-1}/(60b\sigma_0^\alpha(K+1))$ for $\sigma_1=0$, and a total stepsize $\gamma(K+1)$ much larger than $R\lambda^{\alpha-1}/(60b\sigma_0^\alpha)$ would increase the error); the best iterate in Theorem~\ref{thm:any-main} does not require this condition, and part~2 of Theorem~\ref{thm:lower} covers its stepsizes. Second, the bias is not caused by the linear growth of the Huber function: for $f(x)=\frac L2x^2$ and the noise from the first construction, \clipsgd with $\lambda_k\equiv\lambda$ and any stepsizes with $\sum_k\gamma_k=\infty$ and $\sum_k\gamma_k^2<\infty$ converges almost surely to a point $x_\lambda\ne x^*$ with $f(x_\lambda)-f^*\ge\min\{\sigma_0^{2\alpha}/\lambda^{2(\alpha-1)},\lambda^2\}/(8\cdot16^\alpha L)$ (see Proposition~\ref{prop:quadratic} in Section~\ref{app:quadratics}).

\subsection{Proof of Theorem~\ref{thm:lower}}

\begin{proof}
\emph{Huber functions.} For $s>0$, let
\[
    f_s(x):=\begin{cases}\frac L2x^2,&|x|\le s/L,\\ s|x|-\frac{s^2}{2L},&|x|>s/L.\end{cases}
\]
Then $f_s$ is convex and $L$-smooth on $\R$, and $f_s'(x)=\max\{-s,\min\{Lx,s\}\}$, implying $|f_s'|\le s$. Moreover, $x^*=0$ is the unique minimizer, $f^*=0$, and $f_s$ is even and non-decreasing in $|x|$. Assumption~\ref{as:grad-bound} holds on $\R$: $(f_s'(x))^2=2Lf_s(x)$ for $|x|\le s/L$, and $s^2\le2Ls|x|-s^2=2Lf_s(x)$ otherwise. Moreover, for every $r\ge0$, we have
\begin{equation}\label{eq:huber-value}
    f_s(-r)=f_s(r)\ge\min\left\{\frac{sr}{2},\frac{Lr^2}{2}\right\},
\end{equation}
since $sr-\frac{s^2}{2L}\ge\frac{sr}{2}$ for $r\ge s/L$. We use two instances depending on $\lambda$. In both of them, $f=f_s$ for a suitable $s$, and the noise satisfies $\E\zeta=0$ and $\E|\zeta|^\alpha\le\sigma_0^\alpha$. Thus, Assumption~\ref{as:moment} holds on $\R$ with $(\sigma_0,0)$, and so does Assumption~\ref{as:weak} by Proposition~\ref{prop:embeddings}. The iterates use independent copies $\zeta_k$ of $\zeta$.

\emph{Instance~A: $\lambda\ge\sigma_0/(2\cdot3^{1/\alpha})$, $x^0=-R$.} Let $\sigma_\lambda:=\min\{\sigma_0,\lambda\}$ and
\[
    p:=\frac12\left(\frac{\sigma_\lambda}{4\lambda}\right)^\alpha,\qquad \nu:=\frac{2p\lambda}{1-p},\qquad s:=\frac{p\lambda}{2}.
\]
Since $\sigma_\lambda\le\lambda$ and $\alpha>1$, we have $p\le\frac18$. Let $\zeta$ be equal to $\nu$ with probability $1-p$ and to $-2\lambda$ with probability $p$. Then $\E\zeta=(1-p)\nu-2p\lambda=0$ and $p(2\lambda)^\alpha=\sigma_\lambda^\alpha/2^{\alpha+1}\le\sigma_\lambda^\alpha/4$. Since $1-p\ge\frac78$, $4^\alpha\ge4$, and $\sigma_\lambda\le\lambda$, we have
\[
    \nu\le\frac{16}{7}p\lambda=\frac87\cdot\frac{\sigma_\lambda}{4^\alpha}\left(\frac{\sigma_\lambda}{\lambda}\right)^{\alpha-1}\le\frac27\sigma_\lambda ,
\]
hence, $\E|\zeta|^\alpha=(1-p)\nu^\alpha+p(2\lambda)^\alpha\le\left(\frac27+\frac14\right)\sigma_\lambda^\alpha\le\sigma_0^\alpha$. Since $s\le\lambda/16$ and $\nu\le2\lambda/7$, for every $x\in\R$ we have $f'(x)+\nu\in(-\lambda,\lambda)$ and $f'(x)-2\lambda<-\lambda$. Thus, $\clip(f'(x)+\zeta,\lambda)$ equals $f'(x)+\nu$ with probability $1-p$ and $-\lambda$ with probability $p$. Using $(1-p)\nu=2p\lambda$ and $|f'(x)|\le s$, we get
\begin{equation}\label{eq:mean-field}
    h(x):=\E\clip(f'(x)+\zeta,\lambda)=(1-p)(f'(x)+\nu)-p\lambda=(1-p)f'(x)+p\lambda\ge p\lambda-s=s .
\end{equation}
Moreover, the variance of $\clip(f'(x)+\zeta,\lambda)$ is $p(1-p)(f'(x)+\nu+\lambda)^2\le p\left(\frac{1}{16}+\frac27+1\right)^2\lambda^2\le2p\lambda^2$. Finally, $s=\frac{\sigma_\lambda^\alpha}{4^{\alpha+1}\lambda^{\alpha-1}}=s_\lambda$ and $\frac{sR}2=\frac{R\sigma_\lambda^\alpha}{8\cdot4^\alpha\lambda^{\alpha-1}}$. If $\lambda\ge\sigma_0$, then this equals $\frac{R\bar b_\lambda}{8\cdot4^\alpha}$. Otherwise, $\sigma_\lambda=\lambda\ge\sigma_0/6$ and $\bar b_\lambda=\sigma_0$, and, therefore, it is at least $\frac{R\bar b_\lambda}{48\cdot4^\alpha}$. In both cases, $\frac{sR}{2}\ge\frac{R\bar b_\lambda}{48\cdot4^\alpha}$.

\emph{Instance~B: $\lambda<\sigma_0/(2\cdot3^{1/\alpha})$, $x^0=R$.} Let $a:=\sigma_0/3^{1/\alpha}$ (meaning that $\lambda<a/2$) and $s:=a-\lambda>a/2$. Let $\zeta$ be equal to $-a$ with probability $3/4$ and to $3a$ with probability $1/4$. Then $\E\zeta=0$ and $\E|\zeta|^\alpha=\frac{3+3^\alpha}{4}a^\alpha\le3a^\alpha=\sigma_0^\alpha$. For every $x\in\R$, we have $f'(x)-a\le s-a=-\lambda$ and $f'(x)+3a\ge3a-s=2a+\lambda\ge\lambda$. Thus, the clipped oracle equals $-\lambda$ with probability $3/4$ and $\lambda$ with probability $1/4$, independently of $x$, and $x^{k+1}=x^k+\gamma_k\lambda\chi_k$ with i.i.d.\ $\chi_k\in\{-1,1\}$, $\PP\{\chi_k=1\}=\frac34$. In particular, $\E[x^{k+1}\mid\G_k]=x^k+\gamma_k\lambda/2$, and $\lambda/2\ge\lambda/4^{\alpha+1}=s_\lambda$. Finally, $\bar b_\lambda=\sigma_0$ and $\frac{sR}2\ge\frac{aR}4\ge\frac{R\sigma_0}{12}\ge\frac{R\bar b_\lambda}{48\cdot4^\alpha}$.

\emph{Value of the bound.} In both instances, $\frac{sR}{2}\ge\frac{R\bar b_\lambda}{48\cdot4^\alpha}$ and $\frac{LR^2}{2}\ge\frac{LR^2}{48\cdot4^\alpha}$, and, therefore, \eqref{eq:huber-value} gives $f(r)\ge\varepsilon_\lambda$ for every $r\ge R$, and $f(R/2)\ge\min\{\frac{sR}{4},\frac{LR^2}{8}\}\ge\frac{\varepsilon_\lambda}{2}$.

\emph{Part~1.} Let $\Gamma_k:=\sum_{j<k}\gamma_j$. Since $|x^{k+1}-x^k|\le\gamma_k\lambda$, for each fixed $k$ we have $|x^k|\le|x^0|+\lambda\Gamma_k$; thus, each $x^k$ is bounded by a deterministic constant and, therefore, integrable, and the same holds for every finite convex combination of the iterates. In instance~A, \eqref{eq:mean-field} gives $\E[x^{k+1}\mid\G_k]\le x^k-\gamma_ks$; hence, $\E x^k\le-R-s_\lambda\Gamma_k$. In instance~B, we have $\E x^k=R+\lambda\Gamma_k/2\ge R+s_\lambda\Gamma_k$. In both cases, $|\E x^k|\ge R+s_\lambda\Gamma_k$ with $\E x^k$ on the side of $x^0$. Thus, $|\E\hat x|\ge R$ for every convex combination $\hat x$ of the iterates with deterministic weights, and $|\E\bar x^K|\ge R+s_\lambda\gamma K/2$ for a constant stepsize $\gamma$, since $\frac1{K+1}\sum_{k=0}^K\Gamma_k=\gamma K/2$. Since $f$ is convex, even, and non-decreasing in $|x|$, Jensen's inequality gives $\E f(\hat x)\ge f(R)\ge\varepsilon_\lambda$, $\E f(x^k)\ge f(R+s_\lambda\Gamma_k)$, and, for a constant stepsize, $\E f(\bar x^K)\ge f(R+s_\lambda\gamma K/2)$. By~\eqref{eq:huber-value} with $r=R(1+s_\lambda\Gamma_k/R)$ (and, similarly, with $r=R+s_\lambda\gamma K/2$), we get
\[
    f(R+s_\lambda\Gamma_k)\ge\min\left\{\frac{sR}{2},\frac{LR^2}{2}\right\}\left(1+\frac{s_\lambda\Gamma_k}{R}\right)\ge\varepsilon_\lambda\left(1+\frac{s_\lambda\Gamma_k}{R}\right).
\]

\emph{Part~2.} Let $\bar\gamma:=\sup_k\gamma_k\le R/(16\lambda)$. We show that, with probability at least $1-e^{-2}\ge\frac34$, all iterates satisfy $|x^k|\ge R/2$ and belong to the side of $x^0$. Then all averages $\bar x^K$ belong to it as well, and $f(x^k)-f^*\ge f(R/2)\ge\varepsilon_\lambda/2$ and $f(\bar x^K)-f^*\ge\varepsilon_\lambda/2$.

In instance~A, let $e_k:=\clip(f'(x^k)+\zeta_k,\lambda)-h(x^k)$ and $D_k:=\sum_{j<k}\gamma_je_j$. Then $x^k=-R-\sum_{j<k}\gamma_jh(x^j)-D_k\le-R-s\Gamma_k-D_k$. Let $\vartheta:=1/(4\bar\gamma\lambda)$. Since $|\vartheta\gamma_je_j|\le\frac12$ and $e^v\le1+v+v^2$ for $|v|\le1$, we have $\E[e^{-\vartheta\gamma_je_j}\mid\G_j]\le1+\vartheta^2\gamma_j^2\E[e_j^2\mid\G_j]\le\exp(2p\vartheta^2\gamma_j^2\lambda^2)$. Thus, $M_k:=\exp\left(-\vartheta D_k-2p\vartheta^2\lambda^2\sum_{j<k}\gamma_j^2\right)$ is a non-negative supermartingale with $M_0=1$. On the event $\{x^k>-R/2\}$, we have $-D_k>\frac R2+s\Gamma_k$. Since $\sum_{j<k}\gamma_j^2\le\bar\gamma\Gamma_k$ and $2p\vartheta\lambda^2\bar\gamma=\frac{p\lambda}2=s$, on this event we have
\[
    M_k\ge\exp\left(\frac{\vartheta R}2+\vartheta\Gamma_k\left(s-2p\vartheta\lambda^2\bar\gamma\right)\right)=e^{\vartheta R/2}.
\]
By Ville's maximal inequality, $\PP\{\exists k:\ x^k>-R/2\}\le e^{-\vartheta R/2}=e^{-R/(8\bar\gamma\lambda)}\le e^{-2}$.

In instance~B, let $\kappa:=\ln3/(\bar\gamma\lambda)$. The function $\varphi(v):=\frac34e^{-v}+\frac14e^{v}$ is convex with $\varphi(0)=\varphi(\ln3)=1$; hence, $\E e^{-\kappa\gamma_k\lambda\chi_k}=\varphi(\kappa\gamma_k\lambda)\le1$. Therefore, $N_k:=\exp(-\kappa(x^k-R/2))$ is a non-negative supermartingale, and $\PP\{\exists k:\ x^k<R/2\}\le\PP\{\sup_kN_k>1\}\le N_0=3^{-R/(2\bar\gamma\lambda)}\le3^{-8}\le e^{-2}$, which concludes the proof.
\end{proof}

\subsection{Optimality of the Stochastic Term}\label{app:lower-rate}

\begin{proposition}[Lower bound for the stochastic term]\label{prop:lower-rate}
{Let $L,R,\sigma_0>0$, $\alpha\in(1,2]$, $K\in\{0,1,\dots\}$, and}
\[
    s:=\min\left\{\frac{\sigma_0}{2(K+1)^{(\alpha-1)/\alpha}},\ LR\right\}.
\]
Consider any algorithm that makes at most $K+1$ adaptive, possibly randomized, queries to a stochastic gradient oracle and outputs a point $\hat x\in\R$; the algorithm accesses the unknown instance only via this oracle, without access to exact function values, exact gradients, or an analytical description of the instance. Then there exist two instances $v\in\{-1,+1\}$ of problem~\eqref{eq:problem} on $\R$, with $f_v$ convex and $L$-smooth on $\R$, minimizer $x^*_v=vR$, $f_v^*=0$, common starting point $x^0=0$ (i.e., $|x^0-x^*_v|=R$), and unbiased oracles $\nabla f_{v,\xi}$ satisfying Assumption~\ref{as:moment}, and, therefore, Assumption~\ref{as:weak}, on $\R$ with the parameters $(\sigma_0,0)$ such that, for at least one $v$,
\[
    \PP\left\{f_v(\hat x)-f_v^*\ge\frac{sR}{2}\right\}\ge\frac14,\qquad\text{where}\qquad \frac{sR}{2}=\frac14\min\left\{\frac{\sigma_0R}{(K+1)^{(\alpha-1)/\alpha}},\ 2LR^2\right\}.
\]
\end{proposition}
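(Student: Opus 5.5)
The plan is a two-point (Le Cam) argument in the style of \citet{bubeck2013bandits}, built so that each sample is informative only with a small probability. For $v\in\{-1,+1\}$, let $f_v$ be the Huber-type function with slope cap $s$ centred at $vR$:
\[
    f_v(x):=\begin{cases}\frac L2(x-vR)^2,&|x-vR|\le s/L,\\ s|x-vR|-\frac{s^2}{2L},&|x-vR|>s/L.\end{cases}
\]
This $f_v$ is convex and $L$-smooth with minimizer $vR$ and $f_v^*=0$. Since $s\le LR$, both functions are linear with slopes $\mp vs$ on $[-R,R]$, in particular near $x^0=0$.

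The oracle is $\nabla f_{v,\xi}(x):=f_v'(x)+\zeta_v(x)$, where the sample equals
\[
    f_v'(x)+vs\Big(\frac1q-1\Big)\ \text{ w.p. } q,\qquad f_v'(x)-vs\ \text{ w.p. } 1-q .
\]
This is unbiased. On the linear parts the sample value no longer depends on $v$ except on the rare branch. To make the two laws comparable at every query point, I would write the sample as $f_v'(x)+vs(B/q-1)$ with $B\sim\mathrm{Bernoulli}(q)$. The observation is then a deterministic function of $(x,B,v)$ that reveals $v$ only when $B=1$. Concretely, I would couple the two instances with common $B_k$'s: on the event $\{B_0=\dots=B_K=0\}$ the sample equals $f_v'(x)-vs$. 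This quantity is independent of $v$ on the region where $f_v'=-v\cdot(-s)$, but not globally, so I would check this carefully.

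To simplify, I would use a cleaner construction: choose $f_v$ so that $f_v'(x)=vs$ times a $v$-independent function. Alternatively, and this is my preferred fallback, use the observation model in which the oracle returns $0$ w.p. $1-q$ and $f'_v(x)/q$ w.p. $q$ (multiplicative sampling). This is unbiased, returns $0$ regardless of $v$ on the common branch, and satisfies
\[
    \E|\zeta|^\alpha\le q\,|f_v'(x)|^\alpha q^{-\alpha}+|f'_v(x)|^\alpha\le 2s^\alpha q^{1-\alpha}.
\]
Choosing $q:=1/(K+1)$ then gives $\E|\zeta|^\alpha\le 2s^\alpha (K+1)^{\alpha-1}\le\sigma_0^\alpha$, provided $s\le\sigma_0 2^{-1/\alpha}(K+1)^{-(\alpha-1)/\alpha}$; the stated factor $\frac12$ in $s$ covers this. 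Under this coupling, all $K+1$ answers coincide for $v=\pm1$ with probability
\[
    (1-q)^{K+1}\ge(1-1/(K+1))^{K+1}\ge\frac14\quad\text{for }K\ge1,
\]
and for $K=0$ I would take $q$ slightly less than $1$, or argue directly. The zero answer is indeed $v$-independent, since the algorithm sees only oracle outputs, and its internal randomness is shared across the two instances.

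On the coincidence event, $\hat x$ has the same value for both $v$. Since $f_{+1}(y)+f_{-1}(y)\ge 2f_{+1}(0)\ge sR$ by the symmetry and convexity of $f_v$ about $0$, and $f_v(0)\ge sR-s^2/(2L)\ge sR/2$ because $s\le LR$, at least one $v$ has $f_v(\hat x)\ge sR/2$. Averaging over $v$, which reduces the total mass $\ge\frac14\cdot 2$ split among the two instances, gives
\[
    \PP_{v}\{f_v(\hat x)\ge sR/2\}\ge\frac14
\]
for some $v$. Finally, $sR/2$ equals the stated $\frac14\min\{\cdot\}$ by direct substitution. The main obstacle is tuning the constants in the moment bound and the probability $(1-q)^{K+1}$ together so that the factor $\frac12$ in $s$ and the final $\frac14$ both close; the weak-tail claim follows from Proposition~\ref{prop:embeddings}.
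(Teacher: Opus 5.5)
Your route is the paper's: Huber instances centred at $\pm R$ with slope cap $s$, an oracle that reveals $v$ only on a rare branch, runs coupled through common Bernoulli variables and common internal randomness, and a split according to which $f_v$ is large at the common output. Your fallback oracle $f_v'(x)B/q$, which answers $0$ on the common branch, is a harmless simplification of the paper's $m(x)+v\delta(x)\ind_{\{U\le p\}}/p$, with the same bound $\E|\zeta|^\alpha\le2s^\alpha q^{1-\alpha}$.

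The gap is in the probability constants, and it is a real error. On $E:=\{B_0=\dots=B_K=0\}$ you only know that, for each outcome, at least one $v$ fails. Hence
\[
    \sum_{v=\pm1}\PP\bigl(E\cap\{f_v(\hat x)-f_v^*\ge sR/2\}\bigr)\ge\PP(E),
\]
which gives only $\max_v\PP\{f_v(\hat x)-f_v^*\ge sR/2\}\ge\PP(E)/2$. Your total mass ``$\ge\frac14\cdot2$'' is unjustified.

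With $q=\frac1{K+1}$ you have $\PP(E)=(1-\frac1{K+1})^{K+1}<e^{-1}$. So you get at most $\frac1{2e}<\frac14$, and only $\frac18$ at $K=1$. At $K=0$ you get nothing, since $\PP(E)=0$; moving $q$ slightly below $1$ makes $\PP(E)=1-q$ small and does not help.

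A sharper argument cannot rescue this choice of $q$. Consider the algorithm that queries $x=0$ at every step. There $f_v'(0)=-vs\neq0$, so any nonzero answer reveals $v$. It outputs $vR$ once $v$ is revealed, and otherwise outputs $\pm R$ by a fair coin. Under each $v$ it fails with probability exactly $\PP(E)/2<\frac14$, so your instances do not satisfy the statement.

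The fix is to spend the slack you noticed in the moment bound: take $q:=\frac1{2(K+1)}$, which is the paper's choice of $p$. Then $2s^\alpha q^{1-\alpha}=2s^\alpha(2(K+1))^{\alpha-1}\le\sigma_0^\alpha$ holds exactly because of the factor $\frac12$ in the definition of $s$; that is what the factor is for. Bernoulli's inequality gives $\PP(E)=(1-q)^{K+1}\ge1-(K+1)q=\frac12$ for every $K\ge0$, including $K=0$. Hence one of $\PP(E\cap\{\hat x\le0\})$ and $\PP(E\cap\{\hat x>0\})$ is at least $\frac14$. Since $f_+(x)\ge sR-\frac{s^2}{2L}\ge\frac{sR}{2}$ for $x\le0$ (using $s\le LR$) and symmetrically $f_-(x)\ge\frac{sR}{2}$ for $x>0$, this finishes the proof.
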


\begin{proof}
Let $p:=\frac{1}{2(K+1)}$ and let $f_s$ be the Huber function from the proof of Theorem~\ref{thm:lower}, with $f_s'(y)=\max\{-s,\min\{Ly,s\}\}$. For $v\in\{-1,+1\}$, let $f_v(x):=f_s(x-vR)$, which is convex and $L$-smooth on $\R$, has the unique minimizer $vR$ with $f_v^*=0$, and satisfies $|f_v'|\le s$. We write $f_v'=m+v\delta$ with $m:=\frac12(f_+'+f_-')$ and $\delta:=\frac12(f_+'-f_-')$, where $|\delta|\le s$, and, similarly, $f_v=M+vD$ with $M:=\frac12(f_++f_-)$ and $D:=\frac12(f_+-f_-)$, where $M'=m$ and $D'=\delta$. Let $\xi=U\sim\mathrm{Uniform}(0,1)$ and
\[
    f_{v,U}(x):=M(x)+vD(x)\,\frac{\ind_{\{U\le p\}}}{p},\qquad
    \nabla f_{v,U}(x)=m(x)+v\delta(x)\,\frac{\ind_{\{U\le p\}}}{p}.
\]
Then $\E_Uf_{v,U}=f_v$ and $\E_U\nabla f_{v,U}(x)=f_v'(x)$. The noise $\nabla f_{v,U}(x)-f_v'(x)=v\delta(x)\big(\frac{\ind_{\{U\le p\}}}{p}-1\big)$ equals $-v\delta(x)$ with probability $1-p$ and $v\delta(x)\frac{1-p}{p}$ with probability $p$; hence,
\begin{align*}
    \E_U\left|\nabla f_{v,U}(x)-f_v'(x)\right|^\alpha&=|\delta(x)|^\alpha\left[(1-p)+p\left(\frac{1-p}{p}\right)^\alpha\right]\le|\delta(x)|^\alpha\left(1+p^{1-\alpha}\right)\\
    &\le2s^\alpha p^{1-\alpha}=2s^\alpha\big(2(K+1)\big)^{\alpha-1}\le\sigma_0^\alpha,
\end{align*}
where $1+p^{1-\alpha}\le2p^{1-\alpha}$ since $p<1<\alpha$, and the last step follows from the choice of $s$. Therefore, both instances satisfy Assumption~\ref{as:moment} on $\R$ with $(\sigma_0,0)$, and Assumption~\ref{as:weak} by Markov's inequality.

Next, we run the algorithm on both instances with the same internal randomness and the same i.i.d.\ uniforms $U_0,\dots,U_K$ (one per query). On the event $E:=\{U_k>p\ \text{for all }k\le K\}$, every query at any point $x$ returns $m(x)$ under both instances, meaning that the two runs produce the same query points and the same output $\hat x$. By Bernoulli's inequality, $\PP(E)=(1-p)^{K+1}\ge1-(K+1)p=\frac12$. Since $\PP(E\cap\{\hat x\le0\})+\PP(E\cap\{\hat x>0\})=\PP(E)\ge\frac12$, one of the two probabilities is at least $\frac14$. If $\PP(E\cap\{\hat x\le0\})\ge\frac14$, we take $v=+1$: for $x\le0$, we have $x-R\le-R\le-s/L$, since $s\le LR$, and, therefore, $f_+(x)=s(R-x)-\frac{s^2}{2L}\ge sR-\frac{s^2}{2L}\ge\frac{sR}{2}$. Otherwise, $\PP(E\cap\{\hat x>0\})\ge\frac14$, and, for $v=-1$ and $x>0$, we have $f_-(x)=s(x+R)-\frac{s^2}{2L}\ge\frac{sR}{2}$. In both cases, the event in question is determined by the coupled run, which coincides with the actual run of instance $v$ on $E$, which concludes the proof.
\end{proof}

Proposition~\ref{prop:lower-rate} establishes the optimal polynomial dependence of the additive stochastic term $\Delta_0$ from Corollary~\ref{cor:rate} on $K$, $\sigma_0$, and $R$ in the regime $\sigma_0R(K+1)^{-(\alpha-1)/\alpha}\lesssim LR^2$, at a fixed confidence level with failure probability below $1/4$, within the class of algorithms using $K+1$ oracle calls; \clipsgd with $K+1$ iterations belongs to this class. For the moment class, the upper and lower bounds match up to constants and the factor $\ell^{(\alpha-1)/\alpha}$; at the weak-$L^2$ endpoint, the upper bound also contains the tail-dependent logarithmic factor $c_2(\Lambda_K)^{1/2}$. It is worth mentioning that the proposition itself does not establish the necessity of these logarithms, the optimal dependence on the failure probability, or the optimality of the multiplicative term; the dependence on the failure probability is studied in Proposition~\ref{prop:confidence-lower} below, the necessity of the logarithm at $\alpha=2$ at fixed confidence is shown in Proposition~\ref{prop:lorentz-opt-lower}, and nothing is claimed for the multiplicative term. The hard instances have bounded gradients, $|f_v'|\le s$, and state-dependent noise (its law depends on $x$ through $\delta(x)$, which is allowed by Assumption~\ref{as:moment}); therefore, the lower bound also holds within the subclass of problems with bounded gradients. That is, the proposition concerns a uniformly bounded noise scale ($\sigma_1=0$) rather than state-independent additive noise; in contrast, the Lorentz lower bound from Proposition~\ref{prop:lorentz-opt-lower} (see Appendix~\ref{app:lorentz}) is realized by state-independent noise. Related lower bounds are classical in expectation \citep{nemirovski1983problem,vural2022mirror}, and \citet{liu2026clipped} recently proves high-probability lower bounds for non-smooth convex problems under bounded $\alpha$-th moments, of the order $(\sigma_s^{2/\alpha-1}\sigma_l^{2-2/\alpha}+\sigma_s\ln^{1-1/\alpha}(1/\delta))D/T^{1-1/\alpha}$ for $T$ oracle calls and failure probability $\delta$, where $\sigma_s$ and $\sigma_l$ bound the directional and the full $\alpha$-th moment of the noise and $D$ is the initial distance; in dimension one, where $\sigma_s=\sigma_l$, this coincides with the dependence on $K$, $\sigma_0$, $R$, and $\beta$ in Propositions~\ref{prop:lower-rate} and~\ref{prop:confidence-lower}, which are derived for the smooth unconstrained problem and have explicit constants, while the dimension-dependent improvements via $\sigma_s$ have no counterpart in our setting.

The same construction with a confidence-dependent revelation probability gives the dependence on the confidence level.

\begin{proposition}[Dependence on the failure probability]\label{prop:confidence-lower}
{Let $L,R,\sigma_0>0$, $\alpha\in(1,2]$, $\beta\in(0,1/8]$, $\ell_\beta:=\ln\frac1{4\beta}$, and let $K\in\{0,1,\dots\}$ satisfy $K+1\ge\ell_\beta$. For every algorithm as in Proposition~\ref{prop:lower-rate}, there exist two instances $v\in\{-1,+1\}$ with the properties listed there (Assumption~\ref{as:moment} with $(\sigma_0,0)$ on $\R$, $|x^0-x_v^*|=R$) such that, for at least one $v$,}
\[
    \PP\left\{f_v(\hat x)-f_v^*\ge\frac14\min\left\{LR^2,\ \sigma_0R\left(\frac{\ell_\beta}{K+1}\right)^{\frac{\alpha-1}{\alpha}}\right\}\right\}>2\beta .
\]
That is, a guarantee holding with probability at least $1-\beta'$ for some $\beta'\le1/4$ and $K+1\ge\ln\frac{1}{2\beta'}$ must allow an error of at least $\frac14\min\{LR^2,\sigma_0R(\ln\frac1{2\beta'}/(K+1))^{(\alpha-1)/\alpha}\}$ on one of these instances; in other words, the factor $(\ln(1/\beta)/K)^{(\alpha-1)/\alpha}$ in the additive term in~\eqref{eq:improved-rates} cannot be improved in this range.
\end{proposition}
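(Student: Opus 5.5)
I reuse the two-point construction from the proof of Proposition~\ref{prop:lower-rate}, replacing the revelation probability $p=\frac{1}{2(K+1)}$ by a confidence-dependent value $p:=\frac{\ell_\beta}{K+1}$. The assumption $K+1\ge\ell_\beta$ gives $p\le1$; I may shrink $p$ by a constant factor if $p<1$ is needed strictly. The Huber slope becomes
\[
s:=\min\Bigl\{\tfrac{\sigma_0}{2}\,p^{(\alpha-1)/\alpha},\ LR\Bigr\}=\min\Bigl\{\tfrac{\sigma_0}{2}\Bigl(\tfrac{\ell_\beta}{K+1}\Bigr)^{(\alpha-1)/\alpha},\ LR\Bigr\}.
\]
The instances are $f_v(x)=f_s(x-vR)$, with the decomposition $f_v'=m+v\delta$ and the oracle $\nabla f_{v,U}=m+v\delta\,\ind_{\{U\le p\}}/p$.

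The moment check is verbatim: $\E|\text{noise}|^\alpha\le|\delta|^\alpha(1+p^{1-\alpha})\le2s^\alpha p^{1-\alpha}$. With $s^\alpha\le\frac{\sigma_0^\alpha}{2^\alpha}p^{\alpha-1}$, this is at most $2^{1-\alpha}\sigma_0^\alpha\le\sigma_0^\alpha$. So Assumption~\ref{as:moment}, and hence Assumption~\ref{as:weak}, holds with $(\sigma_0,0)$. The gap bound is also as before: on the side of $0$ opposite to $x_v^*$, $f_v(\hat x)\ge sR/2$. Since $sR/2=\frac14\min\{\sigma_0R(\ell_\beta/(K+1))^{(\alpha-1)/\alpha},2LR^2\}$, this is at least the stated threshold.

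The changed step is the probability of the non-revealing event $E=\{U_k>p\ \forall k\le K\}$. Bernoulli's inequality no longer suffices, so I need a lower bound on $(1-p)^{K+1}$ of order $e^{-\ell_\beta}=4\beta$. The bound $\ln(1-p)\ge-p-p^2$ for $p\le1/2$ gives $(1-p)^{K+1}\ge e^{-\ell_\beta(1+p)}$, which falls short of $4\beta$ by the factor $e^{-\ell_\beta p}$. \textbf{This constant bookkeeping is the main obstacle.} I would fix it by taking $p:=\frac{c\,\ell_\beta}{K+1}$ with $c<1$ slightly, so that $(1-p)^{K+1}\ge e^{-\ell_\beta}$; the resulting constant loss in $s$ is absorbed by the slack $2^{1-\alpha}$ and the factor $2$ in $2LR^2$. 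Alternatively, I would define $p$ directly by $(1-p)^{K+1}=4\beta$, i.e.\ $p=1-(4\beta)^{1/(K+1)}\le\frac{\ell_\beta}{K+1}$. This gives $\PP(E)=4\beta$ exactly, and the moment bound still holds since $p^{1-\alpha}$ is then controlled by $p\le\ell_\beta/(K+1)$ in the right direction for $s$, after comparing $s^\alpha p^{1-\alpha}$ with the smaller $p$. I expect this exact choice to work cleanly. In that case I would set $s$ in terms of this $p$ and use $p\ge\frac{\ell_\beta}{K+1}\cdot\frac{1}{1+\ell_\beta/(K+1)}\ge\frac{\ell_\beta}{2(K+1)}$. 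That inequality follows from $1-e^{-y}\ge y/(1+y)$ together with $\ell_\beta\le K+1$. It yields the threshold up to a constant factor, which I would track to reach $\frac14$.

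Once $\PP(E)\ge4\beta$ is in hand, the coupling argument from Proposition~\ref{prop:lower-rate} applies. On $E$ both runs coincide, so $\PP(E\cap\{\hat x\le0\})+\PP(E\cap\{\hat x>0\})\ge4\beta$, and one of the two terms is at least $2\beta$. A strict inequality $>2\beta$ follows by choosing $c$ so that $\PP(E)>4\beta$. The corresponding instance $v$ then has error at least the threshold with probability $>2\beta$. The reformulation in terms of $\beta'$ follows by setting $\beta=\beta'/2$, so that $\ell_\beta=\ln\frac1{2\beta'}$. A guarantee at confidence $1-\beta'$ would force the failure probability to be at most $\beta'=2\beta$, contradicting $>2\beta$. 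The range conditions $\beta\le1/8$ and $K+1\ge\ell_\beta$ translate to $\beta'\le1/4$ and $K+1\ge\ln\frac{1}{2\beta'}$.
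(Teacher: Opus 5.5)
This is the paper's argument: the same Huber pair $f_v$, the same oracle that reveals the sign $v$ only with a confidence-dependent probability $p$, and the same coupling on $E=\{U_k>p\ \forall k\le K\}$. The only delicate step is the one you flagged, and your first fix for it is stated incorrectly.

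Taking $c$ ``slightly below $1$'' in $p=c\,\ell_\beta/(K+1)$ fails. The range allows $\ell_\beta=K+1$ (e.g.\ $\beta=e^{-(K+1)}/4\le 1/8$). Then $p=c$, and $(1-c)^{K+1}\ge e^{-(K+1)}$ forces $c\le 1-1/e$. Keeping the constant $\tfrac14$ also forces $c\ge\tfrac12$: your moment bound leaves only a factor $2^{(\alpha-1)/\alpha}$ of room in $s$. To use that room, $s$ must be set at the full budget $s=\min\{LR,\sigma_0(p^{\alpha-1}/2)^{1/\alpha}\}$, not at $\tfrac{\sigma_0}{2}p^{(\alpha-1)/\alpha}$.

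The paper takes $c=\tfrac12$, i.e.\ $p=\ell_\beta/(2(K+1))\le\tfrac12$. It then uses that $\ln(1-p)+2p$ is positive on $(0,\tfrac12]$, which gives $\PP(E)>e^{-\ell_\beta}=4\beta$ strictly. With this $p$, $s\ge\tfrac12\min\{LR,\sigma_0(\ell_\beta/(K+1))^{(\alpha-1)/\alpha}\}$, which yields exactly the constant $\tfrac14$.

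Your exact choice $p=1-(4\beta)^{1/(K+1)}$ also reaches $\tfrac14$, via $p\ge\ell_\beta/(2(K+1))$ and the full-budget $s$. However, it gives $\PP(E)=4\beta$ exactly, hence only $\ge 2\beta$ rather than $>2\beta$. Since $1-e^{-y}>y/2$ on $(0,1]$, you can shrink $p$ to $\ell_\beta/(2(K+1))$ to recover the strict inequality, and that is precisely the paper's choice.
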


\begin{proof}
We repeat the proof of Proposition~\ref{prop:lower-rate} with the parameters $p:=\frac{\ell_\beta}{2(K+1)}\le\frac12$ and $s:=\min\{LR,\sigma_0(p^{\alpha-1}/2)^{1/\alpha}\}$; then $2s^\alpha p^{1-\alpha}\le\sigma_0^\alpha$ and both instances satisfy Assumption~\ref{as:moment} with $(\sigma_0,0)$. On the event $E=\{U_k>p\ \forall k\le K\}$, the two coupled runs coincide, and $\PP(E)=(1-p)^{K+1}>e^{-2p(K+1)}=e^{-\ell_\beta}=4\beta$, since $\ln(1-p)+2p$ vanishes at $0$ and is increasing on $(0,\frac12)$. Thus, $\PP(E\cap\{\hat x\le0\})>2\beta$ or $\PP(E\cap\{\hat x>0\})>2\beta$, and, on the respective instance, the error is at least $\frac{sR}2$, as before. Finally, $\frac{sR}2\ge\frac R2\cdot2^{-\frac1\alpha-\frac{\alpha-1}{\alpha}}\min\{LR,\sigma_0(\ell_\beta/(K+1))^{(\alpha-1)/\alpha}\}=\frac14\min\{LR^2,\sigma_0R(\ell_\beta/(K+1))^{(\alpha-1)/\alpha}\}$. The consequence follows with $\beta=\beta'/2$, which concludes the proof.
\end{proof}

\subsection{Strongly Convex Quadratics}\label{app:quadratics}

\begin{proposition}[Bias under strong convexity]\label{prop:quadratic}
Let $L,\sigma_0,\lambda>0$, $\alpha\in(1,2]$, and let $\zeta$, $\sigma_\lambda$, $p$, and $\nu$ be defined as in instance~A from the proof of Theorem~\ref{thm:lower} (these definitions make sense for every $\lambda>0$). Consider $f(x)=\frac L2x^2$ on $\R$ with the oracle $g(x,\xi)=f'(x)+\zeta$, which satisfies Assumptions~\ref{as:smooth}--\ref{as:weak} on $\R$ with $(\sigma_0,0)$. Let $x_\lambda:=-\frac{p\lambda}{(1-p)L}$. Then, for any $x^0\in\R$ and any deterministic stepsizes $\gamma_k>0$ with $\sum_k\gamma_k=\infty$ and $\sum_k\gamma_k^2<\infty$, \clipsgd with $\lambda_k\equiv\lambda$ satisfies $x^k\to x_\lambda$ almost surely, and
\[
    \lim_{k\to\infty}f(x^k)-f^*=\frac L2x_\lambda^2\ge\frac{\sigma_\lambda^{2\alpha}}{8\cdot16^\alpha L\lambda^{2(\alpha-1)}}=\frac{1}{8\cdot16^\alpha L}\min\left\{\frac{\sigma_0^{2\alpha}}{\lambda^{2(\alpha-1)}},\lambda^2\right\}\quad\text{almost surely}.
\]
\end{proposition}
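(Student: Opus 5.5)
The approach is the Robbins--Monro ODE/supermartingale argument for the one-dimensional mean clipped field. The first task is to compute the mean clipped field $h(x):=\E\clip(Lx+\zeta,\lambda)$ for the quadratic, reusing instance~A. There the noise $\zeta$ equals $\nu$ with probability $1-p$ and $-2\lambda$ with probability $p$, and $(1-p)\nu=2p\lambda$. Since $f'(x)=Lx$ is now unbounded, the clean formula \eqref{eq:mean-field} holds only on the window $W:=\{x:\ |Lx+\nu|<\lambda\}$, where $Lx-2\lambda<-\lambda$ automatically for $Lx<\lambda$. On $W$,
\[
  h(x)=(1-p)(Lx+\nu)-p\lambda=(1-p)Lx+p\lambda ,
\]
whose unique zero is $x_\lambda=-p\lambda/((1-p)L)$. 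Since $\nu\le\frac27\lambda$ and $p\le\frac18$, we have $|Lx_\lambda|\le\lambda/7$, so $x_\lambda$ lies well inside $W$. Outside $W$ the clipped value is a monotone function of $x$, namely a clip of an increasing affine map. Hence $h$ is non-decreasing on $\R$, with $h(x)>0$ for $x>x_\lambda$ and $h(x)<0$ for $x<x_\lambda$, and $|h|\le\lambda$. Strictness outside $W$ follows from a direct check of the cases $Lx+\nu\ge\lambda$ and $Lx+\nu\le-\lambda$; for instance, for large negative $x$ both atoms clip to $-\lambda$.

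The second step is almost sure convergence. Write $x^{k+1}=x^k-\gamma_k h(x^k)-\gamma_k e_k$, where $e_k$ is a martingale difference with $|e_k|\le2\lambda$. Set $V_k:=(x^k-x_\lambda)^2$. Then
\[
  \E[V_{k+1}\mid\G_k]\le V_k-2\gamma_k(x^k-x_\lambda)h(x^k)+4\gamma_k^2\lambda^2 ,
\]
and the middle term is nonnegative by the sign property. The Robbins--Siegmund lemma, which applies since $\sum\gamma_k^2<\infty$, gives that $V_k$ converges almost surely and that $\sum_k\gamma_k(x^k-x_\lambda)h(x^k)<\infty$. Because $x\mapsto(x-x_\lambda)h(x)$ is continuous and strictly positive away from $x_\lambda$, bounded below on $\{\epsilon\le|x-x_\lambda|\le M\}$, and $\sum\gamma_k=\infty$, the limit of $V_k$ must be $0$. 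The lower bound away from zero on this annulus needs a short argument via monotonicity and continuity of $h$ and the sign check. It follows that $x^k\to x_\lambda$ almost surely and $f(x^k)-f^*\to\frac L2x_\lambda^2$ by continuity.

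The last step is the arithmetic. Since $1-p\le1$,
\[
  \frac L2x_\lambda^2\ge\frac{p^2\lambda^2}{2L}=\frac{\lambda^2}{8L}\Bigl(\frac{\sigma_\lambda}{4\lambda}\Bigr)^{2\alpha}=\frac{\sigma_\lambda^{2\alpha}}{8\cdot16^\alpha L\lambda^{2\alpha-2}} .
\]
The two cases $\sigma_\lambda=\sigma_0$ and $\sigma_\lambda=\lambda$ give the stated minimum. The assumptions on the oracle, namely $\E\zeta=0$ and $\E|\zeta|^\alpha\le\sigma_0^\alpha$ (and hence the weak-tail condition by Proposition~\ref{prop:embeddings}), were already verified in instance~A without using $\lambda\ge\sigma_0/(2\cdot3^{1/\alpha})$. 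The main obstacle is the sign and monotonicity analysis of $h$ outside the window $W$, since the Huber bound $|f'|\le s$ is no longer available; everything else is standard.
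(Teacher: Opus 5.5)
Your proposal is correct and follows the paper's proof essentially step by step. Like you, the paper computes $h(x)=(1-p)Lx+p\lambda$ on the window $-\lambda-\nu\le Lx\le\lambda-\nu$ and uses the monotonicity of $h$ to get $h\ge h((\lambda-\nu)/L)>0$ to the right of the window and $h\le-\lambda$ to the left; it then applies the Robbins--Siegmund argument with $(x^k-x_\lambda)^2$, packaged as Lemma~\ref{lem:scalar-sa}, and finishes with the same bound $\frac L2x_\lambda^2\ge(p\lambda)^2/(2L)$. The one detail you leave implicit, the integrability of $(x^k-x_\lambda)^2$, follows from the deterministic bound $|x^k|\le|x^0|+\lambda\sum_{j<k}\gamma_j$, as the paper notes.
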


The proof relies on the following standard result from stochastic approximation.

\begin{lemma}[{Scalar mean field with a unique attracting root}]\label{lem:scalar-sa}
Let $V:\R\times\mathcal Z\to\R$ be measurable with $|V|\le B$, let $\zeta_0,\zeta_1,\dots$ be i.i.d., and let $h(x):=\E V(x,\zeta_0)$ be continuous with $(x-z)h(x)>0$ for all $x\ne z$. Let $\gamma_k>0$ be deterministic with $\sum_k\gamma_k=\infty$ and $\sum_k\gamma_k^2<\infty$. Then, for every $x^0\in\R$, the recursion $x^{k+1}=x^k-\gamma_kV(x^k,\zeta_k)$ converges to $z$ almost surely.
\end{lemma}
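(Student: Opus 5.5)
The plan is the classical Robbins--Siegmund supermartingale argument applied to $W_k:=(x^k-z)^2$. Write $x^{k+1}=x^k-\gamma_kh(x^k)-\gamma_k e_k$ with $e_k:=V(x^k,\zeta_k)-h(x^k)$. Since $x^k$ is $\G_k$-measurable and $\zeta_k$ is independent of $\G_k$, the freezing argument (as in Lemma~\ref{lem:adaptive}) gives $\E[V(x^k,\zeta_k)\mid\G_k]=h(x^k)$. Hence $e_k$ is a martingale difference with $|e_k|\le2B$, and $|h|\le B$. Expanding the square, we get
\[
    \E[W_{k+1}\mid\G_k]\le W_k-2\gamma_k(x^k-z)h(x^k)+\gamma_k^2B^2 .
\]
By assumption, the middle term is nonpositive.

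Since $\sum_k\gamma_k^2B^2<\infty$, the Robbins--Siegmund theorem yields two conclusions. First, $W_k$ converges almost surely to a finite limit $W_\infty$. Second, $\sum_k\gamma_k(x^k-z)h(x^k)<\infty$ almost surely.

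It remains to show $W_\infty=0$. Suppose instead that $W_\infty>0$ on an event of positive probability. On that event, for all large $k$, we have $\sqrt{W_\infty}/2\le|x^k-z|\le2\sqrt{W_\infty}$, so $x^k$ lies eventually in the compact set $\{x:\varepsilon\le|x-z|\le M\}$ for some random $0<\varepsilon<M$. On this set the continuous function $(x-z)h(x)$ is strictly positive, so it has a positive minimum $c$. Therefore $\sum_k\gamma_k(x^k-z)h(x^k)\ge c\sum_{k\ge k_0}\gamma_k=\infty$, which contradicts the summability above.

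One technical point arises here because $\varepsilon$ and $M$ are random. To avoid measurability issues, I would quantify over rational pairs $(\varepsilon,M)$ and use countable unions. For each fixed rational pair, the minimum $c(\varepsilon,M)$ is deterministic, so the contradiction holds on each event in the union. This gives $x^k\to z$ almost surely.

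I expect the only real obstacle to be the nonnegativity requirement of Robbins--Siegmund. It is immediate here, because $(x-z)h(x)\ge0$ everywhere, $h(z)=0$ being allowed. It is also worth confirming that $\E W_k<\infty$ for every $k$; this holds because $|x^k|\le|x^0|+B\sum_{j<k}\gamma_j$ is deterministically bounded.
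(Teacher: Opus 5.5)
Your proposal is correct and follows essentially the same route as the paper: Robbins--Siegmund applied to $W_k=(x^k-z)^2$ with the bound $\gamma_k^2B^2$, deterministic boundedness of $x^k$ for integrability, and a contradiction on $\{W_\infty>0\}$ via the positive minimum of $(x-z)h(x)$ on a compact annulus around $z$. The rational-pair device is harmless but unnecessary, since the final step is a deterministic argument applied pathwise on the full-measure event supplied by Robbins--Siegmund.
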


\begin{proof}
{Continuity and the sign condition imply $h(z)=0$.} Let $\G_k:=\sigma(\zeta_0,\dots,\zeta_{k-1})$ and $W_k:=(x^k-z)^2$. For each fixed $k$, the bound $|x^k|\le|x^0|+B\sum_{j<k}\gamma_j$ implies that $W_k$ is integrable. Since $x^k$ is $\G_k$-measurable and $\zeta_k$ is independent of $\G_k$, we have
\[
    \E[W_{k+1}\mid\G_k]\le W_k-2\gamma_k(x^k-z)h(x^k)+\gamma_k^2B^2 .
\]
The last term is summable, and $(x^k-z)h(x^k)\ge0$. By the Robbins--Siegmund theorem \citep{robbins1971convergence}, $W_k$ converges almost surely to a finite limit $W_\infty$, and $\sum_k\gamma_k(x^k-z)h(x^k)<\infty$ almost surely; in particular, the whole trajectory is bounded almost surely. On the event $\{W_\infty>0\}$, for all sufficiently large $k$, the iterates stay in the compact set $\{x:\sqrt{W_\infty}/2\le|x-z|\le2\sqrt{W_\infty}\}$. On this set, the continuous positive function $(x-z)h(x)$ is bounded from below by some $\delta>0$, and, therefore, $\sum_k\gamma_k(x^k-z)h(x^k)=\infty$ since $\sum_k\gamma_k=\infty$, which is a contradiction. Thus, $W_\infty=0$ almost surely, which concludes the proof.
\end{proof}

\begin{proof}[Proof of Proposition~\ref{prop:quadratic}]
The noise coincides with the one from instance~A; therefore, the assumptions hold as shown there. Let $V(x,\zeta):=\clip(Lx+\zeta,\lambda)$, which satisfies $|V|\le\lambda$, and let $h(x):=\E V(x,\zeta)=(1-p)\clip(Lx+\nu,\lambda)+p\,\clip(Lx-2\lambda,\lambda)$. The function $h$ is continuous and non-decreasing, since both clipped terms are. We recall that $p\le\frac18$, $\nu\le\frac27\lambda$, and $(1-p)\nu=2p\lambda$. If $-\lambda-\nu\le Lx\le\lambda-\nu$, then $Lx+\nu\in[-\lambda,\lambda]$ and $Lx-2\lambda<-\lambda$, and, therefore,
\[
    h(x)=(1-p)(Lx+\nu)-p\lambda=(1-p)Lx+p\lambda .
\]
On this interval, $h$ is strictly increasing and vanishes exactly at $x_\lambda$, which belongs to the interval since $\frac{p\lambda}{1-p}\le\frac\lambda7<\lambda-\nu$. By monotonicity, $h(x)\ge h\left(\frac{\lambda-\nu}{L}\right)>0$ for $Lx>\lambda-\nu$, and $h(x)\le h\left(\frac{-\lambda-\nu}{L}\right)=-\lambda<0$ for $Lx<-\lambda-\nu$. Thus, $(x-x_\lambda)h(x)>0$ for all $x\ne x_\lambda$, and Lemma~\ref{lem:scalar-sa} gives $x^k\to x_\lambda$ almost surely. By continuity, $f(x^k)-f^*\to\frac L2x_\lambda^2\ge\frac{(p\lambda)^2}{2L}$. It remains to notice that $p\lambda=\frac{\sigma_\lambda^\alpha}{2\cdot4^\alpha\lambda^{\alpha-1}}$ and $\sigma_\lambda^{2\alpha}/\lambda^{2(\alpha-1)}=\min\{\sigma_0^{2\alpha}/\lambda^{2(\alpha-1)},\lambda^2\}$, which concludes the proof.
\end{proof}

\subsection{Strong Multiplicative Noise}

The oracle below is the multiplicative oracle from \citet[Proposition~13]{faw2023beyond} with $\epsilon=2$ and $\sigma_1=9$ in their notation, for which their Lemma~35 shows that clipped and normalized SGD with any constant stepsize make no progress with constant probability on a quadratic; the following proposition extends the clipping-failure argument to bounded deterministic stepsize sequences, with an explicit probability and divergence on the event.

\begingroup\mdfsetup{nobreak=true}
\begin{proposition}[Strong multiplicative noise]\label{prop:diverge}
Let $L,\lambda>0$, $\alpha\in(1,2]$, $f(x)=\frac L2x^2$ on $\R$, and $g(x,\xi):=Lx(1+3Z)$, where $\PP\{Z=-1\}=0.9$ and $\PP\{Z=9\}=0.1$. Then $\E g(x,\xi)=f'(x)$ and $\E|g(x,\xi)-f'(x)|^\alpha=\sigma_1^\alpha|f'(x)|^\alpha$ with $\sigma_1:=(0.9\cdot3^\alpha+0.1\cdot27^\alpha)^{1/\alpha}\in(5.4,9]$, meaning that Assumption~\ref{as:moment} holds on $\R$ with $\sigma_0=0$. {Let $x^0\ge\lambda/L$ and let $(\gamma_k)_{k\ge0}$ be any deterministic stepsizes with $0<\gamma_k\le1/L$ for all $k$.} Then, with probability at least $2/3$, \clipsgd with $\lambda_k\equiv\lambda$ and stepsizes $\gamma_k$ satisfies $x^k\ge\frac{\lambda}{2L}$, and, therefore, $f(x^k)-f^*\ge\frac{\lambda^2}{8L}$, for all $k\ge0$. If, in addition, $\sum_k\gamma_k=\infty$, then $x^k\to+\infty$ almost surely on this event.
\end{proposition}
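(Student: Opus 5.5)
The plan is to check the moment identities directly and then to find a supermartingale that keeps the iterates away from the origin. For unbiasedness, $\E Z=-0.9+0.9=0$ gives $\E g=Lx=f'(x)$. The noise equals $3LxZ$, so $\E|g-f'|^\alpha=(3^\alpha\cdot0.9+27^\alpha\cdot0.1)|Lx|^\alpha=\sigma_1^\alpha|f'(x)|^\alpha$. The bounds on $\sigma_1$ come from evaluating the formula at $\alpha=1$ and $\alpha=2$: at $\alpha=2$ we get $\sqrt{81}=9$. Since $\alpha\mapsto\sigma_1$ is a power mean, it is non-decreasing, so $5.4\le\sigma_1\le9$ on $(1,2]$, with strict inequality for $\alpha>1$.

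Next comes the dynamics for $x^k>0$. With probability $0.9$ we have $g=-2Lx^k$, whose clipped value is $-\min\{2Lx^k,\lambda\}$, so the step is $x^{k+1}=x^k+\gamma_k\min\{2Lx^k,\lambda\}\ge x^k$. With probability $0.1$ we have $g=28Lx^k$, clipped to $\min\{28Lx^k,\lambda\}$, so $x^{k+1}=x^k-\gamma_k\min\{28Lx^k,\lambda\}$.

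On the region $x\ge\lambda/(2L)$ we have $2Lx\ge\lambda$, so both clippings are active and $x^{k+1}=x^k+\gamma_k\lambda\chi_k$ with $\PP\{\chi_k=1\}=0.9$, $\PP\{\chi_k=-1\}=0.1$. Each step is therefore at most $\gamma_k\lambda\le\lambda/L$ in size.

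I would now mimic instance~B of Theorem~\ref{thm:lower}. Let $\bar\gamma:=\sup_k\gamma_k\le1/L$ and choose $\kappa$ with $0.9e^{-\kappa}+0.1e^{\kappa}=1$, i.e.\ $e^{\kappa}=9$. Set $N_k:=\exp(-\kappa(x^{k\wedge\tau}-c)/(\bar\gamma\lambda))$, where $\tau$ is the first exit time from $\{x\ge\lambda/(2L)\}$. Convexity of $\varphi(v)=0.9e^{-v}+0.1e^v$ with $\varphi(0)=\varphi(\ln9)=1$ gives $\varphi(\kappa\gamma_k/\bar\gamma)\le1$, so $N$ is a nonnegative supermartingale. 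The remaining question is the overshoot: leaving the region requires $x^{k+1}<\lambda/(2L)$. On the region the downward step has size $\gamma_k\lambda\le\lambda/L$, so starting from $x^0\ge\lambda/L$ a single step can overshoot.

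This overshoot is the main obstacle, and I would handle it as follows. Exit can only happen from some $x^k<\lambda/L+\lambda/(2L)$, and the exit point satisfies $x\ge x^k-\gamma_k\lambda\ge x^k-\lambda/L$. So I would apply Ville's inequality to the level $\{x<\lambda/(2L)\}$. The problem is that the bound $e^{-\kappa(x^0-\lambda/(2L))/(\bar\gamma\lambda)}\le 9^{-1/2}=1/3$ requires the normalization $\bar\gamma\lambda$. With $\bar\gamma\le1/L$ and $x^0-\lambda/(2L)\ge\lambda/(2L)$, the exponent is at least $\kappa/2$, giving probability $\le9^{-1/2}=1/3$, i.e.\ success $\ge2/3$.

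For this to work, $N$ must stay a supermartingale up to and including the exit step, which is exactly what stopping at $\tau$ provides. On the exit event the terminal value satisfies $x^\tau<\lambda/(2L)$ only if the last step started inside the region, where the $\pm\gamma\lambda$ description is valid. Hence $N_\tau>e^{\kappa(x^0-\lambda/(2L))/(\bar\gamma\lambda)}\cdot N_0$, and Ville's inequality applies. The claim $f(x^k)\ge\lambda^2/(8L)$ then follows immediately.

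Finally, for divergence: on the good event all steps are $\gamma_k\lambda\chi_k$. The sum $\sum_k\gamma_k\lambda(\chi_k-0.8)$ is a martingale with increments bounded by $\gamma_k\lambda$. Write $x^k=x^0+0.8\lambda\sum_{j<k}\gamma_j+M_k$. Because $\sum_j\gamma_j=\infty$ while the increments are bounded, a strong law for martingales gives $M_k/\sum_{j<k}\gamma_j\to0$ a.s. (compare with $\sum\gamma_j^2\le\bar\gamma\sum\gamma_j$ via Kronecker-type arguments). It follows that $x^k\to+\infty$.
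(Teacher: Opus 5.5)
Your proposal is correct and follows essentially the same route as the paper: the same exponential supermartingale built from $\varphi(u)=0.9e^{-u}+0.1e^{u}$ with $\varphi(0)=\varphi(\ln 9)=1$, stopped at the exit time from $\{x\ge\lambda/(2L)\}$, with Ville's inequality giving failure probability at most $9^{-1/(2\bar\gamma L)}\le 1/3$, and then a martingale strong law via Kronecker's lemma (using $\gamma_j^2\le\bar\gamma\gamma_j$) for the divergence. The overshoot you flag as the main obstacle is not actually one: the argument only needs $N_\tau>1$ on $\{\tau<\infty\}$, and this holds no matter how far the last step jumps below the threshold.
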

\endgroup

{The stepsize caps in Theorems~\ref{thm:main}, \ref{thm:fixed}, and~\ref{thm:any-lambda} imply $\gamma\le1/L$. Their other assumptions exclude the above instance: the first two theorems require $\lambda\ge2\sqrt2LR$ with $R\ge|x^0|$, which is incompatible with $x^0\ge\lambda/L$, while Theorem~\ref{thm:any-lambda} requires $\sigma_1\le\bar\sigma_1$.}

\begin{proof}
Unbiasedness follows from $\E Z=0$, and $|g(x,\xi)-f'(x)|=3|Z||f'(x)|$ gives the moment identity; $\sigma_1=\|3Z\|_{L^\alpha}$ is non-decreasing in $\alpha$, equals $5.4$ at $\alpha=1$, and equals $9$ at $\alpha=2$. If $x\ge\frac{\lambda}{2L}$, then $g(x,\xi)\in\{-2Lx,28Lx\}$ with $2Lx\ge\lambda$, and, therefore, $\clip(g(x,\xi),\lambda)$ equals $-\lambda$ if $Z=-1$ and $\lambda$ if $Z=9$. Let $\chi_k:=1$ if $Z_k=-1$ and $\chi_k:=-1$ otherwise; these random variables are i.i.d.\ with $\PP\{\chi_k=1\}=0.9$. Let $\tau:=\inf\{k\ge0:x^k<\frac{\lambda}{2L}\}$. For $k<\tau$, we have $x^{k+1}=x^k+\gamma_k\lambda\chi_k$.

Next, let $\bar\gamma:=\sup_k\gamma_k\le1/L$ and $\kappa:=\ln9/(\bar\gamma\lambda)$. The function $\varphi(u):=0.9e^{-u}+0.1e^{u}$ is convex with $\varphi(0)=\varphi(\ln9)=1$; hence, $\varphi\le1$ on $[0,\ln9]$. Since $\kappa\gamma_k\lambda\in(0,\ln9]$, we have $\E e^{-\kappa\gamma_k\lambda\chi_k}=\varphi(\kappa\gamma_k\lambda)\le1$. Thus, $M_k:=\exp\left(-\kappa\left(x^{\min\{k,\tau\}}-\frac{\lambda}{2L}\right)\right)$ is a non-negative supermartingale, and $M_\tau>1$ on $\{\tau<\infty\}$. By the maximal inequality for non-negative supermartingales, we get
\[
    \PP\{\tau<\infty\}\le\PP\left\{\sup_kM_k\ge1\right\}\le M_0\le\exp\left(-\frac{\kappa\lambda}{2L}\right)=9^{-1/(2\bar\gamma L)}\le\frac13 .
\]
On $\{\tau=\infty\}$, we have $x^k=x^0+\lambda\sum_{j<k}\gamma_j\chi_j$ for all $k$. Let $\Gamma_k:=\sum_{j<k}\gamma_j\to\infty$. Since $\sum_{j\ge0}\gamma_j^2/\Gamma_{j+1}^2\le\bar\gamma\left(\frac1{\gamma_0}+\sum_{j\ge1}\left(\frac1{\Gamma_j}-\frac1{\Gamma_{j+1}}\right)\right)\le\frac{2\bar\gamma}{\gamma_0}$, the martingale $\sum_{j<k}\gamma_j(\chi_j-0.8)/\Gamma_{j+1}$ is bounded in $L^2$ and converges almost surely. By Kronecker's lemma, $\frac{1}{\Gamma_k}\sum_{j<k}\gamma_j(\chi_j-0.8)\to0$; hence, $\frac{1}{\Gamma_k}\sum_{j<k}\gamma_j\chi_j\to0.8$ and $x^k\to+\infty$ almost surely on $\{\tau=\infty\}$, which concludes the proof.
\end{proof}

\section{Proofs for Clip-SSTM}\label{app:sstm}

\subsection{Full Statements}\label{app:sstm-statements}

For a parameter $a\ge1$, the weights $\omega_{k+1}=\frac{k+2}{2aL}$ and $A_{k+1}=A_k+\omega_{k+1}$ from Algorithm~\ref{alg:clip-sstm} satisfy
\begin{equation}\label{eq:Ak}
    A_k=\frac{k(k+3)}{4aL},\qquad \frac{L\omega_{k+1}^2}{A_{k+1}}=\frac{(k+2)^2}{a(k+1)(k+4)}\le\frac1a ,
\end{equation}
since $(k+1)(k+4)-(k+2)^2=k\ge0$. We analyze \clipsstm under either noise model with $\alpha\in(1,2]$, with the clipping constants $(b,v)$ from Section~\ref{sec:results}: under Assumption~\ref{as:weak}, $v=v_\alpha(\Lambda_K^{\rm S})$, where
\begin{equation}\label{eq:Lambda-sstm}
    \Lambda_K^{\rm S}:=\max\left\{200\,b\,K,\ \Lambda_K^0\right\},\qquad
    \Lambda_K^0:=\left[\left(\frac{\sigma_0(K+1)}{17640\,LRH}\right)^{\!\alpha}+\left(\frac{\sigma_1}{3}\right)^{\!\alpha}\right]^{-1}
\end{equation}
($\Lambda_K^0:=0$ if $\sigma_0=\sigma_1=0$) plays the role of $\Lambda_K$ from Theorem~\ref{thm:main}, and $H:=\ln\frac{4K}{\beta}$ is defined as in Theorem~\ref{thm:sstm} below.

\begin{theorem}[\clipsstm: full statement of Theorem~\ref{thm:sstm-main}]\label{thm:sstm}
Let Assumptions~\ref{as:smooth}--\ref{as:convex} hold for $Q=B_{3R}(x^*)$, where $R\ge\|x^0-x^*\|>0$, and let either Assumption~\ref{as:moment} or Assumption~\ref{as:weak} hold with $\alpha\in(1,2]$. Let $K\ge1$ be an integer, $\beta\in(0,1]$, $H:=\ln\frac{4K}{\beta}$, $r:=\frac{\alpha-1}{\alpha}$, let $(b,v)$ be defined as above,
\begin{gather*}
    q_\alpha:=\min\left\{\frac{1}{1350\,v\,30^{\alpha-2}},\ \frac{1}{15\,b\,30^{\alpha-1}}\right\},\\
    c_{0,\alpha}:=\left(\frac{2}{q_\alpha}\right)^{1/\alpha},\qquad c_{1,\alpha}:=49\left(\frac{4}{q_\alpha}\right)^{2/\alpha},
\end{gather*}
and
\begin{align*}
    a_*(K,\beta):=\max\Big\{176400H^2,\ c_{0,\alpha}\frac{\sigma_0}{LR}(K+1)K^{\frac1\alpha}H^r,\ c_{1,\alpha}\sigma_1^2K^{\frac2\alpha}H^{2r}\Big\}.
\end{align*}
Then, for any $a\ge a_*(K,\beta)$ and $\lambda_k=\frac{R}{30\,\omega_{k+1}H}$, $0\le k<K$, there exists an event of probability at least $1-\beta$ on which all points $x^k,y^k,z^k$, $k=0,\dots,K$, produced by Algorithm~\ref{alg:clip-sstm} belong to $B_{2R}(x^*)$ and
\begin{equation}\label{eq:sstm-a}
    f(y^K)-f^*\le\frac{6aLR^2}{K(K+3)}.
\end{equation}
Moreover, for $a=a_*(K,\beta)$, on the same event, we have
\begin{align}
    f(y^K)-f^*\le\max\Big\{\frac{1058400\,LR^2H^2}{K^2},\ \frac{12c_{0,\alpha}\sigma_0RH^r}{K^r},\ \frac{6c_{1,\alpha}\sigma_1^2LR^2H^{2r}}{K^{2r}}\Big\}.\label{eq:sstm-rate}
\end{align}
The constants $c_{0,\alpha},c_{1,\alpha}$ depend only on $\alpha$ under Assumption~\ref{as:moment} (at $\alpha=2$, $q_2=\frac1{1350}$, $c_{0,2}=\sqrt{2700}$, $c_{1,2}=49\cdot5400$), or under Assumption~\ref{as:weak} with $\alpha<2$ if $v$ is replaced with $v_\alpha(\infty)$ (which is allowed), and are $O(\sqrt v)$ and $O(v)$ with $v=1+\ln(1+\Lambda_K^{\rm S})=O({\ln(eK)}+\ln(1+\Lambda_K^0))$ at $\alpha=2$ under Assumption~\ref{as:weak}.
\end{theorem}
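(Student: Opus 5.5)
The proof will mirror the induction in Proposition~\ref{prop:master}, now applied to the SSTM potential. The first step is the standard one-step estimate for similar triangles. Using Assumptions~\ref{as:smooth}--\ref{as:convex} on $Q=B_{3R}(x^*)$ and $L\omega_{k+1}^2\le A_{k+1}/a$ from~\eqref{eq:Ak}, we will derive, for $\theta_k:=\tg_k-\nabla f(x^{k+1})$,
\[
A_T(f(y^T)-f^*)+\tfrac12\|z^T-x^*\|^2\le\tfrac12\|z^0-x^*\|^2-\sum_{k<T}\omega_{k+1}\abr{\theta_k,z^k-x^*}+\sum_{k<T}\omega_{k+1}^2\|\theta_k\|^2 .
\]
Here the $\|\nabla f\|^2$ terms are absorbed because $a\ge1$, and Assumption~\ref{as:grad-bound} handles the cross term $\omega_{k+1}^2\abr{\nabla f(x^{k+1}),\theta_k}$. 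We then define events $E_T$ requiring that the noise part be at most $R^2$ and that $\|z^t-x^*\|\le\sqrt2R$ for all $t\le T$. Since $x^{t},y^{t}$ are convex combinations of the $z$'s, they lie in $B_{\sqrt2R}(x^*)$. The point $x^{T}$ is then also localized using $\omega\lambda=R/(30H)$, which keeps all queries in $Q$ and $B_{2R}$.

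The key new ingredient is the weighted gradient budget (Lemma~\ref{lem:budget}). On $E_{T-1}$ the potential gives $A_t(f(y^t)-f^*)\le\frac32R^2$. Combining this with smoothness ($\|\nabla f(x^{t+1})\|\le\|\nabla f(y^t)\|+L\|x^{t+1}-y^t\|$, where $\|x^{t+1}-y^t\|=\frac{\omega_{t+1}}{A_{t+1}}\|z^t-y^t\|$) and Assumption~\ref{as:grad-bound}, we obtain $\omega_{t+1}\|\nabla f(x^{t+1})\|\le7R/\sqrt a$. Since $\lambda_t=R/(30\omega_{t+1}H)$ and $a\ge176400H^2$, this gives $\|\nabla f(x^{t+1})\|\le\lambda_t/2$, so Lemma~\ref{lem:clip} (conditionally, via Corollary~\ref{cor:conditional} and Lemma~\ref{lem:adaptive}) applies at every step. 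It yields $\|\theta^b_t\|\le bA_t\lambda_t^{1-\alpha}$ and $\E\|\theta^u_t\|^2\le v\lambda_t^{2-\alpha}A_t$. For the weak case, Lemma~\ref{lem:log-sum} lets us replace the per-step $v$ by $v_\alpha(\Lambda_K^{\rm S})$.

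We split the noise terms into the five pieces \circledOne--\circledFive\ and bound each by $R^2/5$. Terms \circledTwo, \circledFour, \circledFive\ are deterministic on $E_{T-1}$. They reduce to controlling $\sum_t\omega_{t+1}^\alpha\lambda_t^{1-\alpha}\cdot\omega_{t+1}^{1-\alpha}A_t$-type sums. Inserting $\lambda_t\propto R/(\omega_{t+1}H)$ and $\omega_{t+1}\le(K+1)/(2aL)$ gives sums of the form $(H/R)^{\alpha-1}\sum_t\omega_{t+1}^\alpha(\sigma_0^\alpha+\sigma_1^\alpha\|\nabla f(x^{t+1})\|^\alpha)$. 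The $\sigma_0$-part is at most $\sigma_0^\alpha K(K+1)^\alpha(2aL)^{-\alpha}$. The $\sigma_1$-part uses the budget $7^\alpha R^\alpha K/a^{\alpha/2}$. Requiring both parts to be at most $q_\alpha R^2$ (respectively $q_\alpha R$ for the bias) yields exactly the second and third entries of $a_*$. Terms \circledOne\ and \circledThree\ are martingales with increments bounded by $\omega\lambda R\lesssim R^2/H$ and variance bounded via the same sums. Lemma~\ref{lem:freedman} then gives failure probability $\beta/(2K)$ per step, and the union bound over $K$ steps gives $1-\beta$.

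On $E_K$ we obtain $A_K(f(y^K)-f^*)\le\frac32R^2$, i.e.\ \eqref{eq:sstm-a}. Plugging in $a=a_*$ and taking the maximum of the three entries gives \eqref{eq:sstm-rate}. The constants at $\alpha=2$ are read off from $q_\alpha$. The main obstacle is the gradient budget step. It is circular: $\|\nabla f(x^{t+1})\|$ must be small to apply clipping, and that smallness follows only from the potential bound on the induction event. The induction therefore has to be set up so that $E_{T-1}$ alone controls step $T$. A second difficulty is the careful bookkeeping of $\Lambda_K^{\rm S}$ so that the logarithm in $v$ is horizon-uniform.
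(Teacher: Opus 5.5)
Your outline follows the paper's proof: the similar-triangles potential, induction on events controlling $B_t$ and $\|z^t-x^*\|$, the weighted gradient budget giving both $\|\nabla f(x^{t+1})\|\le\lambda_t/2$ and the affine budget $\sum_t\omega_{t+1}^\alpha D_t$, the five-term split, Freedman for \circledOne\ and \circledThree, and a union bound over $K$ steps. Dropping $\omega_{k+1}^2\abr{\theta_{k+1},\nabla f(x^{k+1})}$ from the potential is legitimate, and it slightly simplifies the paper's Step~4: you no longer need the clipping condition to bound the martingale vector. The reason is not Assumption~\ref{as:grad-bound}, though. After the mirror step, the leftover is $\frac{\omega_{k+1}^2}{2}(\|\theta_{k+1}\|^2-\|\nabla f(x^{k+1})\|^2)+\frac{\omega_{k+1}^2}{2a}\|\tg_k\|^2$. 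This is at most $\omega_{k+1}^2\|\theta_{k+1}\|^2$ once $a\ge2$, which holds since $a\ge176400H^2$. With $a=1$ the cross term survives. Two steps, however, do not go through as written.

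\textbf{Radius.} Because the potential carries $\frac12\|z^T-x^*\|^2$, a noise budget of $R^2$ only gives $\|z^T-x^*\|^2\le R^2+2R^2$. So with each of the five pieces bounded by $R^2/5$, the induction with radius $\sqrt2R$ does not close. The paper uses radius $2R$; its total $\frac{412}{675}R^2$ gives $\|z^T-x^*\|<\sqrt3R$. Also, it is $z^T$ and $y^T$, not $x^T$, that need $\omega_T\lambda_{T-1}=R/(30H)$ to stay in $Q$.

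\textbf{Weak-tail case with logarithmic $v$.} This is the more substantive gap: the case $v=v_\alpha(\Lambda_K^{\rm S})$, in particular $\alpha=2$, is left open, and it is not mere bookkeeping. The parameters are circular: $v$ fixes $q_\alpha$, $q_\alpha$ fixes $a_*$, and $a_*$ fixes the ratio that Lemma~\ref{lem:log-sum} actually produces. That ratio is $KC/\bar B=\Lambda(a_*)=K/(20^\alpha H\bar\Psi_K(a_*))$, with $C=(R/(20H))^\alpha$. Note that $C$ is independent of $a$ because $\omega_{t+1}\lambda_t=R/(30H)$.

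You must verify the self-consistency $\Lambda(a_*)\le\Lambda_K^{\rm S}$. The paper does this by cases on the active term of $a_*$. If $a_*=176400H^2$, then $\Lambda(a_*)=\Lambda_K^0$ exactly. Otherwise $\bar\Psi_K(a_*)\ge q_\alpha/2^{\alpha+1}$, so $\Lambda(a_*)\le2^{\alpha+1}K/(20^\alpha Hq_\alpha)$. Its $b$-branch is at most $9bK$. Its $v$-branch, after bounding $v$ by its logarithmic form and using $\Lambda_K^{\rm S}\ge200bK\ge400K$, reduces to $27(1+2\ln(1+400K))\le400H$. This fixed-point check is the reason $\Lambda_K^{\rm S}$ contains $200bK$.

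Moreover, for $a>a_*$ you cannot bound each per-step constant by $v_\alpha(\Lambda_K^{\rm S})$. Since $\lambda_t$ grows linearly in $a$, the per-step ratios grow with it. You must instead apply Lemma~\ref{lem:log-sum} with $\bar B=R^\alpha H^{1-\alpha}\bar\Psi_K(a_*)$. This is allowed because $\bar\Psi_K$ is non-increasing in $a$; equivalently, it is the monotonicity of $u\mapsto u\,v_\alpha(C/u)$.

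Under Assumption~\ref{as:moment}, or Assumption~\ref{as:weak} with $\alpha<2$ and $v=v_\alpha(\infty)$, none of this is needed, and your sketch is complete after the radius fix.
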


\begin{corollary}[Oracle complexity of \clipsstm]\label{cor:sstm-complexity}
Let the assumptions of Theorem~\ref{thm:sstm} hold with $\alpha<2$ under Assumption~\ref{as:weak}, in which case $q_\alpha$, $c_{0,\alpha}$, $c_{1,\alpha}$ and $a_*(K,\beta)$ are computed with $v=v_\alpha(\infty)$; let $C_\alpha:=\max\{1058400,12c_{0,\alpha},6c_{1,\alpha}\}$, $\varepsilon>0$, and
\begin{align*}
    M_\varepsilon:=\max\Bigg\{1,\sqrt{\frac{C_\alpha LR^2}{\varepsilon}},\left(\frac{C_\alpha\sigma_0R}{\varepsilon}\right)^{\frac1r},\left(\frac{C_\alpha\sigma_1^2LR^2}{\varepsilon}\right)^{\frac1{2r}}\Bigg\}.
\end{align*}
If $K\ge4M_\varepsilon\ln\frac{4M_\varepsilon}{\beta}$, then Algorithm~\ref{alg:clip-sstm} with $a=a_*(K,\beta)$ and $\lambda_k$ defined as in Theorem~\ref{thm:sstm} guarantees $f(y^K)-f^*\le\varepsilon$ with probability at least $1-\beta$. In particular,
\begin{align*}
    \widetilde O_\alpha\Bigg(\max\Bigg\{1,\sqrt{\frac{LR^2}{\varepsilon}},\left(\frac{\sigma_0R}{\varepsilon}\right)^{\frac{\alpha}{\alpha-1}},\left(\frac{\sigma_1^2LR^2}{\varepsilon}\right)^{\frac{\alpha}{2(\alpha-1)}}\Bigg\}\Bigg)
\end{align*}
oracle calls are sufficient, where $\widetilde O_\alpha$ hides constants depending only on $\alpha$ and logarithmic factors, including those in $1/\beta$. At $\alpha=2$ under Assumption~\ref{as:weak}, the same complexity holds with an additional polylogarithmic factor, as in Corollary~\ref{cor:rate}.
\end{corollary}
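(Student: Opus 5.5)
The plan is to apply the rate bound \eqref{eq:sstm-rate} of Theorem~\ref{thm:sstm} with $a=a_*(K,\beta)$, and to reduce the whole statement to the single implicit inequality $K/H\ge M_\varepsilon$, where $H=\ln\frac{4K}{\beta}$. On the event of probability at least $1-\beta$ from Theorem~\ref{thm:sstm}, $f(y^K)-f^*$ is at most the maximum of three terms. Since $C_\alpha\ge1058400$, $C_\alpha\ge12c_{0,\alpha}$ and $C_\alpha\ge6c_{1,\alpha}$, each term is dominated by the same expression with its constant replaced by $C_\alpha$. I then invert each of the three terms separately:
\[
    \frac{C_\alpha LR^2H^2}{K^2}\le\varepsilon\iff\frac KH\ge\sqrt{\frac{C_\alpha LR^2}{\varepsilon}},\qquad
    C_\alpha\sigma_0R\left(\frac HK\right)^r\le\varepsilon\iff\frac KH\ge\left(\frac{C_\alpha\sigma_0R}{\varepsilon}\right)^{1/r},
\]
and analogously $\frac KH\ge(C_\alpha\sigma_1^2LR^2/\varepsilon)^{1/(2r)}$ for the multiplicative term. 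Consequently, $K\ge M_\varepsilon H=M_\varepsilon\ln\frac{4K}{\beta}$ suffices for $f(y^K)-f^*\le\varepsilon$.

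The main step, and the only non-routine one, is to show that $K\ge4M_\varepsilon\ln\frac{4M_\varepsilon}{\beta}$ implies $K\ge M_\varepsilon\ln\frac{4K}{\beta}$. This is a Lambert-type estimate. Write $M:=M_\varepsilon\ge1$ and $L_0:=\ln\frac{4M}{\beta}$; because $\beta\le1$, we have $L_0\ge\ln4>1$.
\begin{itemize}
    \item The function $\psi(K):=K-M\ln\frac{4K}{\beta}$ satisfies $\psi'(K)=1-M/K\ge0$ for $K\ge M$. So it suffices to check $\psi(K_0)\ge0$ at $K_0:=4ML_0\ge M$.
    \item At this point, $\ln\frac{4K_0}{\beta}=L_0+\ln(4L_0)$, and $\psi(K_0)\ge0$ reduces to $3L_0\ge\ln4+\ln L_0$.
    \item Using $\ln L_0\le L_0-1$, it is enough to have $2L_0\ge\ln4-1$, which holds because $L_0>1$.
\end{itemize}
Monotonicity of $\psi$ then extends $\psi(K)\ge0$ to every $K\ge K_0$. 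The condition $K\ge1$ required by Theorem~\ref{thm:sstm} is automatic, since $K_0\ge4\ln4$.

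Finally, I translate this into oracle complexity. Algorithm~\ref{alg:clip-sstm} makes exactly $K$ oracle calls. Taking $K=\lceil4M_\varepsilon\ln\frac{4M_\varepsilon}{\beta}\rceil$ and using $1/r=\alpha/(\alpha-1)$ and $1/(2r)=\alpha/(2(\alpha-1))$, we get $K=\widetilde O_\alpha(M_\varepsilon)$. The constant $C_\alpha$ depends only on $\alpha$ here, because $v=v_\alpha(\infty)$ for $\alpha<2$, or $v=v_\alpha^{\rm mom}$ under Assumption~\ref{as:moment}. This yields the displayed complexity.

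At $\alpha=2$ under Assumption~\ref{as:weak}, $C_\alpha$ itself contains $v=1+\ln(1+\Lambda_K^{\rm S})$, which grows like $\ln(eK)$ plus problem-dependent logarithms. I would handle this as in the proof of Corollary~\ref{cor:rate}. The condition becomes $K\ge M_\varepsilon(K)\,H$, where $M_\varepsilon(K)$ grows only polylogarithmically in $K$. The same monotonicity argument, applied with the logarithm absorbed into $\widetilde O$, then gives the stated complexity up to an extra polylogarithmic factor.
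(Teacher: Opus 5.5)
Your proof is correct and follows essentially the same route as the paper. The paper also reduces the claim to $\ln\frac{4K}{\beta}\le K/M_\varepsilon$ (its Lemma~\ref{lem:loginv}). It proves this using the monotonicity of $K/M-\ln\frac{4K}{\beta}$ for $K\ge M$ and a check at $K_0=4M\ln\frac{4M}{\beta}$, which becomes $4\ln s\le s^3$ with $s=4M/\beta$; this is equivalent to your $3L_0\ge\ln4+\ln L_0$. It then substitutes the three inequalities from the definition of $M_\varepsilon$ into \eqref{eq:sstm-rate} and takes $K=\lceil4M_\varepsilon\ln\frac{4M_\varepsilon}{\beta}\rceil$.
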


Throughout Sections~\ref{app:sstm-potential}--\ref{app:sstm-uniform}, we assume that Assumptions~\ref{as:smooth}--\ref{as:convex} hold on $Q=B_{3R}(x^*)$, either Assumption~\ref{as:moment} or Assumption~\ref{as:weak} holds with $\alpha\in(1,2]$, and $(b,v)$, $H$, $r$, $q_\alpha$, $c_{0,\alpha}$, $c_{1,\alpha}$, $a_*(K,\beta)$, $\Lambda_K^{\rm S}$, $\Lambda_K^0$ are defined as in Theorem~\ref{thm:sstm} and~\eqref{eq:Lambda-sstm}; $\nabla f:=0$ outside $Q$, as in Appendix~\ref{app:aux}. As in Appendix~\ref{app:main}, let $\phi_\lambda(s):=v_\alpha^{\rm mom}s$ under Assumption~\ref{as:moment} and $\phi_\lambda(s):=s\,v_\alpha((3\lambda/2)^\alpha/s)$ under Assumption~\ref{as:weak}, with $\phi_\lambda(0):=0$. For $a\ge1$, we define
\begin{equation}\label{eq:Psibar}
    \bar\Psi_K(a):=\frac{\sigma_0^\alpha H^{\alpha-1}K(K+1)^\alpha}{2^\alpha a^\alpha L^\alpha R^\alpha}+\frac{7^\alpha\sigma_1^\alpha H^{\alpha-1}K}{a^{\alpha/2}},\quad
    \Lambda(a):=\frac{K}{20^\alpha H\,\bar\Psi_K(a)}\quad(\Lambda(a):=0\ \text{if}\ \bar\Psi_K(a)=0),
\end{equation}
which are the deterministic bound on the affine budget from Step~3 below and the corresponding logarithmic ratio; $\bar\Psi_K$ is non-increasing and $\Lambda$ is non-decreasing in $a$. For the iterates of Algorithm~\ref{alg:clip-sstm}, we define the errors
\begin{equation}\label{eq:theta-sstm}
    \theta_{k+1}:=\tg_k-\nabla f(x^{k+1})=\theta_{k+1}^u+\theta_{k+1}^b,\qquad
    \theta_{k+1}^u:=\tg_k-\E[\tg_k\mid\G_k],\qquad
    \theta_{k+1}^b:=\E[\tg_k\mid\G_k]-\nabla f(x^{k+1}).
\end{equation}
Since $\|\tg_k\|\le\lambda_k$, both conditional expectations are defined on the whole probability space, $\E[\theta_{k+1}^u\mid\G_k]=0$, and $\|\theta_{k+1}^u\|\le2\lambda_k$ almost surely. By induction over the recursions and the joint measurability of the oracle, $y^k$, $z^k$, and $x^{k+1}$ are $\G_k$-measurable. We never need the unconditional mean of the unclipped oracle at random points outside $Q$.

\subsection{Adaptive Query Points}\label{app:adaptive}

\begin{lemma}[Adaptive query points]\label{lem:adaptive}
Let Assumption~\ref{as:smooth} hold on $Q=B_{\rho R}(x^*)$ for some $\rho>0$, and let Assumption~\ref{as:moment} or Assumption~\ref{as:weak} hold on $Q$. Let $X$ be a $\G$-measurable random vector, let $\xi\sim\D$ be independent of $\G$, and let $A_X:=\{X\in Q\}$. Then $\ind_{A_X}g(X,\xi)\in L^1$ and
\begin{equation}\label{eq:adaptive-mean}
    \E\left[\ind_{A_X}g(X,\xi)\mid\G\right]=\ind_{A_X}\nabla f(X).
\end{equation}
Moreover, almost surely, under Assumption~\ref{as:weak}, for every $u>0$,
\begin{equation}\label{eq:adaptive-tail}
    \ind_{A_X}\PP\left\{\|g(X,\xi)-\nabla f(X)\|>u\mid\G\right\}\le\ind_{A_X}\frac{\sigma_0^\alpha+\sigma_1^\alpha\|\nabla f(X)\|^\alpha}{u^\alpha},
\end{equation}
and under Assumption~\ref{as:moment}, $\ind_{A_X}\E\left[\|g(X,\xi)-\nabla f(X)\|^\alpha\mid\G\right]\le\ind_{A_X}\left(\sigma_0^\alpha+\sigma_1^\alpha\|\nabla f(X)\|^\alpha\right)$.
\end{lemma}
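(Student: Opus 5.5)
The plan is to reduce everything to a statement about the law of $\xi$ with the query point frozen, using the independence of $\xi$ from $\G$. Set $\Psi(y,\xi):=\ind_{\{y\in Q\}}g(y,\xi)$. This is jointly measurable, since $Q$ is closed and hence Borel and $g$ is jointly measurable. For each fixed $y\in Q$, the map $\xi\mapsto g(y,\xi)$ is integrable: by Assumption~\ref{as:moment} and Jensen it has a finite $\alpha$-th moment, and under Assumption~\ref{as:weak} it lies in $L^q$ for $q<\alpha$ by Proposition~\ref{prop:embeddings}, in particular in $L^1$ because $\alpha>1$. The main tool is the freezing (substitution) lemma. If $X$ is $\G$-measurable, $\xi$ is independent of $\G$, and $h\ge0$ is jointly measurable, then $\E[h(X,\xi)\mid\G]=\bar h(X)$ a.s., where $\bar h(y):=\E_\xi h(y,\xi)$. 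I would first recall or prove this for indicators of product sets $B\times C$, where it follows directly from independence, and then extend it to all nonnegative measurable $h$ by a monotone class argument and monotone convergence.

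\emph{Integrability.} Apply the freezing lemma to $h(y,\xi)=\ind_{\{y\in Q\}}\|g(y,\xi)\|$. This gives $\E[\ind_{A_X}\|g(X,\xi)\|\mid\G]=\bar h(X)$. The key estimate is
\[
\bar h(y)\le\|\nabla f(y)\|+\E\|g(y,\xi)-\nabla f(y)\|.
\]
The right-hand side is bounded on $Q$: by Assumption~\ref{as:smooth}, $\|\nabla f(y)\|\le\|\nabla f(x^*)\|+L\rho R$, and $\nabla f(x^*)=0$ at the interior minimizer. The noise term is then bounded by the moment or weak-tail constant, using $(\sigma_0^\alpha+\sigma_1^\alpha\|\nabla f\|^\alpha)^{1/\alpha}$ together with \eqref{eq:weak-to-q} with $q=1$. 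Hence $\bar h$ is bounded on $Q$ and vanishes off $Q$. Taking expectations gives $\ind_{A_X}g(X,\xi)\in L^1$.

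\emph{Mean identity.} Now apply the freezing lemma to the positive and negative parts of each coordinate of $\Psi$. These are integrable by the previous step, so the conditional expectations are well defined. The lemma gives $\E[\Psi(X,\xi)\mid\G]=\bar\Psi(X)$ with $\bar\Psi(y)=\ind_{\{y\in Q\}}\E_\xi g(y,\xi)=\ind_{\{y\in Q\}}\nabla f(y)$, which is exactly \eqref{eq:adaptive-mean}.

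\emph{Tail bound.} For each fixed $u>0$, apply the freezing lemma to $h_u(y,\xi)=\ind_{\{y\in Q\}}\ind_{\{\|g(y,\xi)-\nabla f(y)\|>u\}}$. Here $\nabla f$ is Borel on $Q$ as a limit of difference quotients and is set to $0$ outside, so $h_u$ is measurable. The pointwise inequality \eqref{eq:weak-tail} then gives \eqref{eq:adaptive-tail} almost surely, but only for that fixed $u$. The main obstacle is making the bound hold for all $u>0$ simultaneously off a single null set. I would handle this as in Corollary~\ref{cor:conditional}. First, take the inequality for all rational $u$ on one full-measure event. Next, choose a regular version: the conditional law of $\xi$ given $\G$ is just $\D$, so the left-hand side can be taken to be $u\mapsto\ind_{A_X}\PP_\xi\{\|g(X,\xi)-\nabla f(X)\|>u\}$, which is right-continuous and nonincreasing in $u$. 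Finally, approach an arbitrary $u$ from above by rationals $u_n\downarrow u$. The left-hand side converges by right-continuity and the right-hand side by continuity of $u^{-\alpha}$, so the inequality passes to the limit.

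\emph{Moment bound.} The moment version is the freezing lemma applied to the nonnegative function $\ind_{\{y\in Q\}}\|g(y,\xi)-\nabla f(y)\|^\alpha$, followed by \eqref{eq:moment-affine}.
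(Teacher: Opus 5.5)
Your proof is correct and follows essentially the paper's route: you use the freezing formula for the $\G$-measurable query point and a bound on $\E_\xi\|g(y,\xi)\|$ that is uniform over $y\in Q$ (from $\nabla f(x^*)=0$, $L$-smoothness, and either Lyapunov's inequality or the embedding \eqref{eq:weak-to-q} with $q=1$); for the tail bound, you take the version of the conditional probability supplied by the freezing formula. The differences are cosmetic: you build $\ind_{\{y\in Q\}}$ into the integrand instead of substituting $x^*$ for $X$ off $Q$, and your rational-approximation step is redundant, because with the frozen version the pointwise assumption at $X(\omega)\in Q$ already gives the inequality for every $u>0$ and every $\omega$.
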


\begin{proof}
Since $x^*$ is an unconstrained minimizer of the differentiable function $f$, we have $\nabla f(x^*)=0$, and Assumption~\ref{as:smooth} gives
\begin{equation}\label{eq:grad-lin}
    \|\nabla f(x)\|\le L\|x-x^*\|\le\rho LR,\qquad x\in Q .
\end{equation}
Let $M_Q:=(\sigma_0^\alpha+\sigma_1^\alpha(\rho LR)^\alpha)^{1/\alpha}$. For every $x\in Q$, we have
\begin{equation}\label{eq:oracle-L1}
    \E_\xi\|g(x,\xi)\|\le\rho LR+\frac{\alpha}{\alpha-1}M_Q .
\end{equation}
Indeed, under Assumption~\ref{as:moment}, this follows from Lyapunov's inequality, $\E_\xi\|g(x,\xi)-\nabla f(x)\|\le M_Q$. Under Assumption~\ref{as:weak}, for $Y\ge0$ and $M>0$ with $\PP\{Y>u\}\le\min\{1,M^\alpha u^{-\alpha}\}$ for all $u>0$, we have $\E Y\le\int_0^M\dif u+M^\alpha\int_M^\infty u^{-\alpha}\dif u=\frac{\alpha}{\alpha-1}M$; for $M=0$, $Y=0$ almost surely.

Next, the vector $X_Q:=X\ind_{A_X}+x^*\ind_{A_X^c}$ is $\G$-measurable and belongs to $Q$. By independence and joint measurability, for every jointly measurable $\varphi$ with $\E|\varphi(X_Q,\xi)|<\infty$, we have
\[
    \E\left[\varphi(X_Q,\xi)\mid\G\right]=\E_{\xi\sim\D}\,\varphi(x,\xi)\big|_{x=X_Q}.
\]
This formula is verified first for indicators of measurable rectangles in $(x,\xi)$, then for simple functions, and, finally, for integrable functions by monotone convergence and splitting into positive and negative parts. By~\eqref{eq:oracle-L1}, $g(X_Q,\xi)$ is integrable, uniformly in the value of $X_Q$, and $\E[g(X_Q,\xi)\mid\G]=\nabla f(X_Q)$. Multiplying by $\ind_{A_X}$, on which $X_Q=X$, we get~\eqref{eq:adaptive-mean}. For the tail indicator $\varphi=\ind\{\|g(x,\xi)-\nabla f(x)\|>u\}$ and for the non-negative function $\varphi=\|g(x,\xi)-\nabla f(x)\|^\alpha$, the same formula does not require the integrability of $g$. {For the tail probabilities, we choose the version supplied by the freezing formula; the pointwise tail assumption then gives the inequality simultaneously for every $u>0$.} Together with Assumption~\ref{as:weak} (resp.\ Assumption~\ref{as:moment}), it gives the remaining claims, which concludes the proof.
\end{proof}

Without additional integrability of $g(X,\xi)$, the conditional expectation $\E[g(X,\xi)\mid\G]$ outside $Q$ is never used. For example, if $g(x,\xi)=x$ and $X$ has a Cauchy distribution, then the vector $g(X,\xi)$ is not integrable, while its restriction to $\{X\in Q\}$ is. In contrast, the clipped oracle is always bounded and has an ordinary conditional expectation on the whole probability space.

\subsection{Potential Inequality}\label{app:sstm-potential}

\begin{lemma}[Potential inequality]\label{lem:potential}
Let $a\ge1$ and let the points $x^{k+1},y^k,y^{k+1},z^k,z^{k+1}$, $0\le k<N$, produced by Algorithm~\ref{alg:clip-sstm} belong to $Q$. Then, for every $z\in Q$, we have
\begin{equation}\label{eq:potential}
\begin{aligned}
    A_N\left(f(y^N)-f(z)\right)\le{}&\frac12\|z^0-z\|^2-\frac12\|z^N-z\|^2\\
    &+\sum_{k=0}^{N-1}\omega_{k+1}\abr{\theta_{k+1},z-z^k+\omega_{k+1}\nabla f(x^{k+1})}+\sum_{k=0}^{N-1}\omega_{k+1}^2\|\theta_{k+1}\|^2 .
\end{aligned}
\end{equation}
\end{lemma}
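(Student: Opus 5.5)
The plan is to follow the standard one-step analysis of the Similar Triangles Method, carrying the error $\theta_{k+1}=\tg_k-\nabla f(x^{k+1})$ explicitly, and then to telescope over $k=0,\dots,N-1$. For each $k$, we write $\tg_k=\nabla f(x^{k+1})+\theta_{k+1}$. Since $z^{k+1}=z^k-\omega_{k+1}\tg_k$, expanding the square gives
\[
    \omega_{k+1}\abr{\tg_k,z^k-z}=\tfrac12\|z^k-z\|^2-\tfrac12\|z^{k+1}-z\|^2+\tfrac12\omega_{k+1}^2\|\tg_k\|^2 .
\]
This is the mirror-descent part. The coupling part comes from the identity $y^{k+1}-x^{k+1}=\frac{\omega_{k+1}}{A_{k+1}}(z^{k+1}-z^k)$, which follows directly from the definitions of $x^{k+1}$ and $y^{k+1}$.

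Next, we use smoothness on $Q$: all the points $x^{k+1},y^{k+1}$ lie in $Q$ and $Q$ is convex, so the segment between them lies in $Q$. This gives
\[
    A_{k+1}f(y^{k+1})\le A_{k+1}f(x^{k+1})+A_{k+1}\abr{\nabla f(x^{k+1}),y^{k+1}-x^{k+1}}+\tfrac{LA_{k+1}}2\|y^{k+1}-x^{k+1}\|^2 .
\]
By the coupling identity, the right-hand side becomes $A_{k+1}f(x^{k+1})+\omega_{k+1}\abr{\nabla f(x^{k+1}),z^{k+1}-z^k}+\frac{L\omega_{k+1}^2}{2A_{k+1}}\|z^{k+1}-z^k\|^2$.

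Then we split $A_{k+1}f(x^{k+1})=A_kf(x^{k+1})+\omega_{k+1}f(x^{k+1})$ and apply convexity on $Q$ twice. The first application is $f(x^{k+1})\le f(y^k)+\abr{\nabla f(x^{k+1}),x^{k+1}-y^k}$. The second is $f(x^{k+1})\le f(z)+\abr{\nabla f(x^{k+1}),x^{k+1}-z}$. The definition $A_{k+1}x^{k+1}=A_ky^k+\omega_{k+1}z^k$ makes the linear terms collapse: $A_k(x^{k+1}-y^k)+\omega_{k+1}(x^{k+1}-z)=\omega_{k+1}(z^k-z)$. Combining these steps yields
\[
    A_{k+1}f(y^{k+1})\le A_kf(y^k)+\omega_{k+1}f(z)+\omega_{k+1}\abr{\nabla f(x^{k+1}),z^{k+1}-z}+\tfrac{L\omega_{k+1}^2}{2A_{k+1}}\|z^{k+1}-z^k\|^2 .
\]

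We now substitute $\nabla f(x^{k+1})=\tg_k-\theta_{k+1}$ into the inner product. The $\tg_k$ part is handled by the three-point identity (in the form $\omega_{k+1}\abr{\tg_k,z^{k+1}-z}=\frac12\|z^k-z\|^2-\frac12\|z^{k+1}-z\|^2-\frac12\|z^{k+1}-z^k\|^2$). Since $L\omega_{k+1}^2/A_{k+1}\le1/a\le1$ by \eqref{eq:Ak}, the term $-\frac12\|z^{k+1}-z^k\|^2$ absorbs the smoothness remainder. The error part $-\omega_{k+1}\abr{\theta_{k+1},z^{k+1}-z}$ is rewritten using $z^{k+1}=z^k-\omega_{k+1}\nabla f(x^{k+1})-\omega_{k+1}\theta_{k+1}$. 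This gives $\omega_{k+1}\abr{\theta_{k+1},z-z^k+\omega_{k+1}\nabla f(x^{k+1})}+\omega_{k+1}^2\|\theta_{k+1}\|^2$, which is exactly the pair of terms in \eqref{eq:potential}.

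Finally, we sum over $k<N$. Since $A_0=0$, the term $A_0f(y^0)$ vanishes and $\sum_k\omega_{k+1}=A_N$, so the $f(z)$ terms combine into $A_Nf(z)$, and the inequality follows. The only delicate step is the absorption of the smoothness remainder. The key choice is to keep the $-\frac12\|z^{k+1}-z^k\|^2$ term from the three-point identity rather than bound $\|\tg_k\|^2$. This is what lets the error enter only through $\omega_{k+1}^2\|\theta_{k+1}\|^2$ with coefficient one, and not through a bound on $\|\tg_k\|^2$ involving the gradient.
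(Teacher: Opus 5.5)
Your proposal is correct and follows essentially the same one-step Similar Triangles argument as the paper. The shared steps are the descent lemma on $[x^{k+1},y^{k+1}]$, convexity applied twice via $A_{k+1}x^{k+1}=A_ky^k+\omega_{k+1}z^k$, the three-point identity for the $z$-update with the smoothness remainder absorbed using $L\omega_{k+1}^2/A_{k+1}\le 1/a$, and the substitution $z-z^{k+1}=z-z^k+\omega_{k+1}\nabla f(x^{k+1})+\omega_{k+1}\theta_{k+1}$ before telescoping. The only difference is bookkeeping: you express the remainder through $\|z^{k+1}-z^k\|^2$ rather than $\omega_{k+1}^2\|\tg_k\|^2$, and your first displayed identity is never actually used.
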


\begin{proof}
We fix one step and introduce the notation $v:=\nabla f(x^{k+1})$ and $g:=\tg_k=v+\theta_{k+1}$. The recursions give $y^{k+1}-x^{k+1}=-\frac{\omega_{k+1}^2}{A_{k+1}}g$. The descent lemma holds on the segment $[x^{k+1},y^{k+1}]\subset Q$, since it follows by integrating Assumption~\ref{as:smooth} along the segment. Thus, by~\eqref{eq:Ak} and $a\ge1$, we have
\[
    A_{k+1}f(y^{k+1})\le A_{k+1}f(x^{k+1})-\omega_{k+1}^2\abr{v,g}+\frac{L\omega_{k+1}^4}{2A_{k+1}}\|g\|^2
    \le A_{k+1}f(x^{k+1})-\frac{\omega_{k+1}^2}{2}\|g\|^2+\omega_{k+1}^2\abr{\theta_{k+1},g}.
\]
The identity $A_k(x^{k+1}-y^k)=\omega_{k+1}(z^k-x^{k+1})$ and convexity give
\[
    A_{k+1}f(x^{k+1})-A_kf(y^k)\le\omega_{k+1}\left(f(x^{k+1})+\abr{v,z^k-x^{k+1}}\right)\le\omega_{k+1}f(z)+\omega_{k+1}\abr{v,z^k-z}.
\]
Finally, the update rule for $z$ gives $\omega_{k+1}\abr{g,z^k-z}-\frac{\omega_{k+1}^2}{2}\|g\|^2=\frac12\|z^k-z\|^2-\frac12\|z^{k+1}-z\|^2$. Substituting $v=g-\theta_{k+1}$ and combining the three relations, we get
\[
    A_{k+1}f(y^{k+1})-A_kf(y^k)\le\omega_{k+1}f(z)+\frac12\|z^k-z\|^2-\frac12\|z^{k+1}-z\|^2+\omega_{k+1}\abr{\theta_{k+1},z-z^{k+1}} .
\]
It remains to use $z-z^{k+1}=z-z^k+\omega_{k+1}\nabla f(x^{k+1})+\omega_{k+1}\theta_{k+1}$ and to sum up the above inequality over $k=0,\dots,N-1$. The terms $A_kf(y^k)$ and the squared distances telescope, and $A_0=0$, which concludes the proof.
\end{proof}

\subsection{Weighted Bound on the True Gradients}\label{app:sstm-budget}

For $t\ge0$, we define
\begin{equation}\label{eq:Bt}
    B_t:=\sum_{l=0}^{t-1}\omega_{l+1}\abr{\theta_{l+1},x^*-z^l+\omega_{l+1}\nabla f(x^{l+1})}+\sum_{l=0}^{t-1}\omega_{l+1}^2\|\theta_{l+1}\|^2,\qquad B_0:=0 .
\end{equation}
These are finite measurable random variables; we do not need their unconditional integrability.

\begin{lemma}[Gradient budget]\label{lem:budget}
Let $a\ge1$ and $T\in\{1,\dots,K\}$, and assume that $B_t\le R^2$ and $\|z^t-x^*\|\le2R$ for $0\le t<T$. Then, for every $0\le t<T$, we have
\begin{equation}\label{eq:budget-pointwise}
    \omega_{t+1}\|\nabla f(x^{t+1})\|\le\frac{7R}{\sqrt a},
    \qquad\text{and hence}\qquad
    \sum_{t=0}^{T-1}\omega_{t+1}^\alpha\|\nabla f(x^{t+1})\|^\alpha\le\frac{7^\alpha R^\alpha T}{a^{\alpha/2}} .
\end{equation}
\end{lemma}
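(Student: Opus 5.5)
The plan is to bound each $\omega_{t+1}\|\nabla f(x^{t+1})\|$ separately, through the function gap at $y^t$ supplied by Lemma~\ref{lem:potential}. The summed bound in~\eqref{eq:budget-pointwise} then follows at once: raise the pointwise bound to the power $\alpha$ and sum over the $T$ indices $t<T$.

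\emph{Step 1: localization.} Fix $t<T$. By the recursions, $y^0=z^0=x^0$, and $y^{k+1}=(A_ky^k+\omega_{k+1}z^{k+1})/A_{k+1}$. Hence each $y^k$ is a convex combination of $z^0,\dots,z^k$, and $x^{k+1}$ is a convex combination of $y^k$ and $z^k$. Since $\|z^0-x^*\|\le R$ and $\|z^k-x^*\|\le 2R$ for $k<T$ by hypothesis, all the points $x^{k+1},y^k,z^k$ with $k\le t$ lie in $B_{2R}(x^*)\subset Q$. This also covers $y^{k+1}$ for $k+1\le t$, which is what Lemma~\ref{lem:potential} with $N=t$ requires.

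\emph{Step 2: function gap.} For $t\ge1$, apply Lemma~\ref{lem:potential} with $N=t$ and $z=x^*$, drop $-\frac12\|z^t-x^*\|^2$, and use $\|z^0-x^*\|\le R$ and $B_t\le R^2$. This gives $A_t(f(y^t)-f^*)\le\frac32R^2$. Assumption~\ref{as:grad-bound} at $y^t\in Q$ then yields $\|\nabla f(y^t)\|\le R\sqrt{3L/A_t}$. From~\eqref{eq:Ak} we have $L\omega_{t+1}^2/A_t=(t+2)^2/(a\,t(t+3))\le 9/(4a)$ for $t\ge1$. Therefore $\omega_{t+1}\|\nabla f(y^t)\|\le\sqrt{27/4}\,R/\sqrt a<2.6R/\sqrt a$.

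\emph{Step 3: from $y^t$ to $x^{t+1}$.} Smoothness on $Q$ gives $\|\nabla f(x^{t+1})\|\le\|\nabla f(y^t)\|+L\|x^{t+1}-y^t\|$. Moreover $x^{t+1}-y^t=\frac{\omega_{t+1}}{A_{t+1}}(z^t-y^t)$, and $\|z^t-y^t\|\le 4R$ by Step~1. By~\eqref{eq:Ak}, $L\omega_{t+1}^2/A_{t+1}\le 1/a$, so the second contribution is at most $4R/a\le4R/\sqrt a$ because $a\ge1$. Adding the two parts gives $\omega_{t+1}\|\nabla f(x^{t+1})\|\le 7R/\sqrt a$. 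The case $t=0$ is handled directly: $A_0=0$ forces $x^1=z^0=x^0$, and $\nabla f(x^*)=0$ gives $\omega_1\|\nabla f(x^0)\|\le\frac{1}{a L}\,LR\le R/\sqrt a$.

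The main obstacle is Step~1, namely checking that every point used in Lemma~\ref{lem:potential} and in the smoothness and gradient bounds really lies in $Q$ under only the stated hypotheses; the convex-combination structure of $y^k$ settles it. The remaining work is the constant bookkeeping in Step~2, where the worst case $t=1$ of the ratio $L\omega_{t+1}^2/A_t$ must still leave room within the constant $7$.
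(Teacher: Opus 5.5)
Your proof is correct and follows the paper's argument essentially step for step. The shared steps are: localization via the convex-combination structure of $y^t$; the potential inequality with $z=x^*$, giving $A_t(f(y^t)-f^*)\le\frac32R^2$; Assumption~\ref{as:grad-bound} at $y^t$; the smoothness transfer from $y^t$ to $x^{t+1}$ using $L\omega_{t+1}^2/A_{t+1}\le1/a$ and $L\omega_{t+1}^2/A_t\le 9/(4a)$; and the separate case $t=0$. Your constant $\sqrt{27/4}$ equals the paper's $3\sqrt3/2$. The only check you leave implicit is $(t+2)^2/(t(t+3))\le 9/4$ for $t\ge1$, which follows from $9t(t+3)-4(t+2)^2=(t-1)(5t+16)\ge0$.
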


\begin{proof}
By the recursions and induction, $y^t$ is a convex combination of $z^0,\dots,z^t$. Thus, $y^t,x^{t+1}\in B_{2R}(x^*)$ for $t<T$. For $1\le t<T$, all points from the first $t$ steps belong to $Q$, and Lemma~\ref{lem:potential} with $N=t$ and $z=x^*$ gives
\begin{equation}\label{eq:pot-bound}
    A_t\left(f(y^t)-f^*\right)\le\frac12R^2+B_t\le\frac32R^2 .
\end{equation}
Since $x^{t+1}-y^t=\omega_{t+1}(z^t-y^t)/A_{t+1}$ and $\|z^t-y^t\|\le4R$, Assumptions~\ref{as:smooth} and~\ref{as:grad-bound} give
\[
    \|\nabla f(x^{t+1})\|\le L\|x^{t+1}-y^t\|+\|\nabla f(y^t)\|\le\frac{4LR\,\omega_{t+1}}{A_{t+1}}+\sqrt{2L(f(y^t)-f^*)}\le\frac{4LR\,\omega_{t+1}}{A_{t+1}}+R\sqrt{\frac{3L}{A_t}} .
\]
Therefore, $\omega_{t+1}\|\nabla f(x^{t+1})\|\le4R\frac{L\omega_{t+1}^2}{A_{t+1}}+R\sqrt{\frac{3L\omega_{t+1}^2}{A_t}}$. The first ratio is at most $1/a$ by~\eqref{eq:Ak}. For the second one, we have $\frac{L\omega_{t+1}^2}{A_t}=\frac{(t+2)^2}{a\,t(t+3)}\le\frac{9}{4a}$, since $9t(t+3)-4(t+2)^2=(t-1)(5t+16)\ge0$. Consequently, $\omega_{t+1}\|\nabla f(x^{t+1})\|\le\frac{4R}{a}+\frac{3\sqrt3R}{2\sqrt a}<\frac{7R}{\sqrt a}$. For $t=0$, we have $x^1=x^0$ and $\omega_1=1/(aL)$, and \eqref{eq:grad-lin} gives $\omega_1\|\nabla f(x^1)\|\le R/a\le R/\sqrt a$. Raising to the power $\alpha$ and summing up, we get the second claim, which concludes the proof.
\end{proof}

\begin{remark}\label{rem:crude-budget}
The crude bound $\|\nabla f(x^{t+1})\|\le2LR$ would give only a budget of the order $R^\alpha K^{\alpha+1}/a^\alpha$, while Lemma~\ref{lem:budget} gives $R^\alpha K/a^{\alpha/2}$. Replacing the first budget with the second one is exactly what improves the dependence of Theorem~\ref{thm:sstm} on $\sigma_1$.
\end{remark}

\subsection{Proof of Theorem~\ref{thm:sstm}}\label{app:sstm-proof}

As in the proof of Proposition~\ref{prop:master}, our proof is induction-based: by induction, we show that the quantities $B_t$ and the distances $\|z^t-x^*\|$ remain bounded with high probability.

\paragraph{Step 1: induction events and applicability of the potential.} To formulate the statement rigorously, for $k=0,\dots,K$, we introduce probability event $E_k\in\G_k$ as follows: for all $0\le t\le k$,
\begin{equation}\label{eq:sstm-events}
    B_t\le R^2\qquad\text{and}\qquad\|z^t-x^*\|\le2R .
\end{equation}
We will prove by induction that $\PP(E_k)\ge1-k\beta/K$ for all $k=0,1,\dots,K$. The base of the induction ($k=0$) follows from $B_0=0$ and $\|z^0-x^*\|\le R$. Next, we assume that the statement holds for $k=T-1$, where $1\le T\le K$, and prove it for $k=T$. On $E_{T-1}$, by the convexity of the ball and the recursions, we have $y^t,z^t,x^{t+1}\in B_{2R}(x^*)$ for $t<T$. Moreover, since $H\ge\ln4>1$, we have
\[
    \|z^T-x^*\|\le\|z^{T-1}-x^*\|+\omega_T\lambda_{T-1}\le2R+\frac{R}{30H}<3R .
\]
The point $y^T$ is a convex combination of $y^{T-1}\in B_{2R}(x^*)$ and $z^T\in B_{3R}(x^*)$, implying $y^T\in Q$. Thus, on $E_{T-1}$, Lemma~\ref{lem:potential} can be applied to the first $T$ steps.

\paragraph{Step 2: the clipping condition.} Lemma~\ref{lem:budget} and $a\ge176400H^2$, i.e., $\sqrt a\ge420H$, give that $E_{T-1}$ implies $\omega_{t+1}\|\nabla f(x^{t+1})\|\le7R/\sqrt a\le R/(60H)$, i.e.,
\begin{equation}\label{eq:sstm-clip-cond}
    \|\nabla f(x^{t+1})\|\le\frac{R}{60\,\omega_{t+1}H}=\frac{\lambda_t}{2},\qquad t<T .
\end{equation}
Let $D_t:=\sigma_0^\alpha+\sigma_1^\alpha\|\nabla f(x^{t+1})\|^\alpha$. To apply the conditional lemmas correctly, we consider the predictable event $\mathcal C_t:=\{x^{t+1}\in Q,\ \|\nabla f(x^{t+1})\|\le\lambda_t/2\}\in\G_t$, the point $\check x^{t+1}:=x^{t+1}\ind_{\mathcal C_t}+x^*\ind_{\mathcal C_t^c}\in Q$, and $U_t:=g(\check x^{t+1},\xi^t)$. Lemma~\ref{lem:adaptive} gives $U_t\in L^1$ and $\E[U_t\mid\G_t]=\nabla f(\check x^{t+1})$, whose norm is at most $\lambda_t/2$ everywhere. Thus, Corollary~\ref{cor:conditional} can be applied to $U_t$. On $\mathcal C_t$, the vectors $\clip(U_t,\lambda_t)$ and $\tg_t$ coincide. Since both of them are bounded and $\mathcal C_t\in\G_t$, we have
\[
    \ind_{\mathcal C_t}\E\left[\clip(U_t,\lambda_t)-\tg_t\mid\G_t\right]=\E\left[\ind_{\mathcal C_t}\left(\clip(U_t,\lambda_t)-\tg_t\right)\mid\G_t\right]=0,
\]
and the same argument applies to the conditional second moments of the centered vectors. Therefore, on $\mathcal C_t$, and, as a consequence, on $E_{T-1}\subseteq\mathcal C_t$ for $t<T$, we have
\begin{equation}\label{eq:sstm-clip-bounds}
    \|\theta_{t+1}^b\|\le b\,D_t\lambda_t^{1-\alpha},\qquad \E\left[\|\theta_{t+1}^u\|^2\mid\G_t\right]\le\lambda_t^{2-\alpha}\phi_{\lambda_t}(D_t),
\end{equation}
where $D_t=A(\nabla f(x^{t+1}))$ is exactly the noise scale from the lemmas at $x^{t+1}$. Here we always condition on $\G_t$ and never on $E_{T-1}$, which may depend on future samples for $t<T-1$. The global bound $\|\theta_{t+1}^u\|\le2\lambda_t$ was mentioned above.

\paragraph{Step 3: a single affine budget.} Let
\begin{equation}\label{eq:PsiT}
    \Psi_T:=H^{\alpha-1}R^{-\alpha}\sum_{t=0}^{T-1}\omega_{t+1}^\alpha D_t .
\end{equation}
On $E_{T-1}$, we have $\sum_{t<T}\omega_{t+1}^\alpha D_t=\sigma_0^\alpha\sum_{t<T}\omega_{t+1}^\alpha+\sigma_1^\alpha\sum_{t<T}\omega_{t+1}^\alpha\|\nabla f(x^{t+1})\|^\alpha$. The first sum is at most $T(T+1)^\alpha/(2^\alpha a^\alpha L^\alpha)$, and the second one is bounded by Lemma~\ref{lem:budget}. Thus, since $T\le K$, we get
\begin{equation}\label{eq:Psi-bound}
    \Psi_T\le\frac{\sigma_0^\alpha H^{\alpha-1}K(K+1)^\alpha}{2^\alpha a^\alpha L^\alpha R^\alpha}+\frac{7^\alpha\sigma_1^\alpha H^{\alpha-1}K}{a^{\alpha/2}} .
\end{equation}
The right-hand side of~\eqref{eq:Psi-bound} is exactly $\bar\Psi_K(a)$ from~\eqref{eq:Psibar}. If $\sigma_0>0$, then the second term in the definition of $a_*(K,\beta)$ gives $a^\alpha\ge\frac{2}{q_\alpha}\frac{\sigma_0^\alpha}{L^\alpha R^\alpha}(K+1)^\alpha KH^{\alpha-1}$, meaning that the first term in~\eqref{eq:Psi-bound} is at most $q_\alpha/2^{\alpha+1}$; for $\sigma_0=0$, it vanishes. Similarly, if $\sigma_1>0$, then the third term gives $a^{\alpha/2}\ge\frac{4}{q_\alpha}7^\alpha\sigma_1^\alpha KH^{\alpha-1}$, meaning that the second term is at most $q_\alpha/4$; for $\sigma_1=0$, it vanishes. Thus,
\begin{equation}\label{eq:Psi-final}
    \Psi_T\le\bar\Psi_K(a)\le\frac{q_\alpha}{2^{\alpha+1}}+\frac{q_\alpha}{4}<\frac{q_\alpha}{2}\le q_\alpha .
\end{equation}

\paragraph{Step 3$'$: the logarithm at $a=a_*(K,\beta)$.} Under Assumption~\ref{as:weak}, we also need the variance constant $v=v_\alpha(\Lambda_K^{\rm S})$ to dominate the actual logarithmic ratio at $a_*:=a_*(K,\beta)$:
\begin{equation}\label{eq:sstm-fixed-point}
    \Lambda(a_*)\le\Lambda_K^{\rm S}.
\end{equation}
If $\bar\Psi_K(a_*)=0$, then $\Lambda(a_*)=0$. Otherwise, we consider which term attains the maximum in $a_*$. If $a_*=176400H^2$, then direct computations from~\eqref{eq:Psibar} give $\bar\Psi_K(a_*)=\frac KH\big[\big(\frac{\sigma_0(K+1)}{352800\,LRH}\big)^\alpha+\big(\frac{\sigma_1}{60}\big)^\alpha\big]$ (here we use $2\cdot176400=352800$ and $(176400H^2)^{\alpha/2}=420^\alpha H^\alpha$); hence, $\Lambda(a_*)=\big[20^\alpha\big(\frac{\sigma_0(K+1)}{352800\,LRH}\big)^\alpha+20^\alpha\big(\frac{\sigma_1}{60}\big)^\alpha\big]^{-1}=\Lambda_K^0\le\Lambda_K^{\rm S}$. If $a_*=c_{0,\alpha}\frac{\sigma_0}{LR}(K+1)K^{1/\alpha}H^r$, then the first term of $\bar\Psi_K(a_*)$ equals $\frac{1}{2^\alpha c_{0,\alpha}^\alpha}=\frac{q_\alpha}{2^{\alpha+1}}$, and if $a_*=c_{1,\alpha}\sigma_1^2K^{2/\alpha}H^{2r}$, then the second term equals $\frac{7^\alpha}{c_{1,\alpha}^{\alpha/2}}=\frac{q_\alpha}{4}$. In both cases, $\bar\Psi_K(a_*)\ge q_\alpha/2^{\alpha+1}$, which gives
\[
    \Lambda(a_*)\le\frac{2^{\alpha+1}K}{20^\alpha Hq_\alpha}=\frac{2^{\alpha+1}K}{20^\alpha H}\max\left\{1350\,v\,30^{\alpha-2},\ 15\,b\,30^{\alpha-1}\right\}.
\]
The second branch is $\frac{2^{\alpha+1}\cdot15\cdot30^{\alpha-1}}{20^\alpha}\frac{bK}{H}=3^\alpha\frac{bK}{H}\le9bK\le\Lambda_K^{\rm S}$, since $H>1$. For the first branch, $v\le(\frac32)^{2-\alpha}(1+\frac2\alpha\ln(1+\Lambda_K^{\rm S}))$ and $2^{\alpha+1}\cdot1350\cdot30^{\alpha-2}(\frac32)^{2-\alpha}/20^\alpha=2^{\alpha+1}\cdot1350\cdot20^{-2}\le27$ give the bound $27\frac KH\big(1+2\ln(1+\Lambda_K^{\rm S})\big)$. Since $b=b_\alpha\ge b_2=2$ under Assumption~\ref{as:weak}, we have $\Lambda_K^{\rm S}\ge400K$, and, since $\Lambda\mapsto\Lambda/(1+2\ln(1+\Lambda))$ is increasing, it is sufficient to check that $27K(1+2\ln(1+400K))\le400KH$, i.e., $27(1+2\ln(1+400K))\le400H$. Since $H\ge\ln(4K)$, this holds at $K=1$ ($351<554$), and the difference $400\ln(4K)-27(1+2\ln(1+400K))$ is increasing in $K\ge1$, since its derivative equals $400/K-21600/(1+400K)>0$. This proves~\eqref{eq:sstm-fixed-point}.

\paragraph{Step 4: bounded predictable vectors.} Before we proceed, we introduce a new notation: on the whole probability space, we define $w_t:=x^*-z^t+\omega_{t+1}\nabla f(x^{t+1})$ and $\eta_t:=w_t\ind_{\{\|w_t\|\le3R\}}$. The vector $\eta_t$ is $\G_t$-measurable and $\|\eta_t\|\le3R$ almost surely. On $E_{T-1}$, by~\eqref{eq:sstm-clip-cond}, we have $\|w_t\|\le2R+R/(60H)<3R$ for $t<T$, and, therefore, $\eta_t=w_t$ on this event.

\paragraph{Step 5: decomposition of $B_T$.} Let $Z_t:=\|\theta_{t+1}^u\|^2$. On $E_{T-1}$, using $\|u+v\|^2\le2\|u\|^2+2\|v\|^2$, we get $B_T\le\circledOne+\circledTwo+\circledThree+\circledFour+\circledFive$, where the following quantities are defined globally:
\begin{gather*}
    \circledOne:=\sum_{t<T}\omega_{t+1}\abr{\theta_{t+1}^u,\eta_t},\qquad \circledTwo:=\sum_{t<T}\omega_{t+1}\abr{\theta_{t+1}^b,\eta_t},\qquad \circledThree:=2\sum_{t<T}\omega_{t+1}^2\left(Z_t-\E[Z_t\mid\G_t]\right),\\
    \circledFour:=2\sum_{t<T}\omega_{t+1}^2\E[Z_t\mid\G_t],\qquad \circledFive:=2\sum_{t<T}\omega_{t+1}^2\|\theta_{t+1}^b\|^2 .
\end{gather*}
That is, it remains to derive the upper bounds for $\circledOne$--$\circledFive$.

\paragraph{Step 6: upper bound for $\circledOne$ (the linear martingale term).} To estimate this sum, we will use Bernstein's inequality in the martingale form (Lemma~\ref{lem:freedman}). The summands $X_t:=\omega_{t+1}\abr{\theta_{t+1}^u,\eta_t}$ have conditional expectations equal to zero given $\G_t$, and they are bounded globally:
\[
    |X_t|\le6R\,\omega_{t+1}\lambda_t=\frac{R^2}{5H}=:c_1,\qquad b_1:=\frac{R^2}{5},\qquad G:=\frac{R^4}{150H}.
\]
Finally, the conditional quadratic variation satisfies $V_1:=\sum_{t<T}\E[X_t^2\mid\G_t]\le9R^2S_T$, where $S_T:=\sum_{t<T}\omega_{t+1}^2\E[Z_t\mid\G_t]$. By the choice of $\lambda_t$, we have $\omega_{t+1}^2\lambda_t^{2-\alpha}=R^{2-\alpha}30^{\alpha-2}H^{\alpha-2}\omega_{t+1}^\alpha$, and, therefore, by~\eqref{eq:sstm-clip-bounds}, on $E_{T-1}$,
\[
    S_T\le R^{2-\alpha}30^{\alpha-2}H^{\alpha-2}\sum_{t<T}\omega_{t+1}^\alpha D_t\,\frac{\phi_{\lambda_t}(D_t)}{D_t}\qquad(\text{terms with }D_t=0\text{ are }0).
\]
{If $\bar\Psi_K(a)=0$, then $D_t=0$ for every $t<T$ on $E_{T-1}$, and~\eqref{eq:sstm-clip-bounds} gives $S_T=0$. The desired bound on $S_T$ is therefore immediate in this case. In the remaining estimates, assume $\bar\Psi_K(a)>0$, so that the budget used in Lemma~\ref{lem:log-sum} is positive.} Under Assumption~\ref{as:moment}, we have $\phi_{\lambda_t}(D_t)=vD_t$ with $v=v_\alpha^{\rm mom}$ (also when $D_t=0$), which gives $\sum_{t<T}\omega_{t+1}^\alpha\phi_{\lambda_t}(D_t)=v\sum_{t<T}\omega_{t+1}^\alpha D_t=vR^\alpha H^{1-\alpha}\Psi_T$. Under Assumption~\ref{as:weak}, let $q_t:=\omega_{t+1}^\alpha D_t$; since $\omega_{t+1}\lambda_t=R/(30H)$, {for $D_t>0$,} we have $(3\lambda_t/2)^\alpha/D_t=C_{\rm S}/q_t$ with $C_{\rm S}:=(R/(20H))^\alpha$, and, therefore, $\omega_{t+1}^\alpha\phi_{\lambda_t}(D_t)=q_t\,v_\alpha(C_{\rm S}/q_t)$. {When $q_t=0$, we define the latter product as $0$, consistently with $\phi_{\lambda_t}(0)=0$.} By Lemma~\ref{lem:log-sum} with $\bar B:=R^\alpha H^{1-\alpha}\bar\Psi_K(a)\ge\sum_{t<T}q_t=R^\alpha H^{1-\alpha}\Psi_T$ (by~\eqref{eq:Psi-final}), $T\le K$, and the monotonicity of $v_\alpha$, we get
\[
    \sum_{t<T}q_t\,v_\alpha\!\left(\frac{C_{\rm S}}{q_t}\right)\le\bar B\,v_\alpha\!\left(\frac{KC_{\rm S}}{\bar B}\right)=R^\alpha H^{1-\alpha}\,\bar\Psi_K(a)\,v_\alpha(\Lambda(a)),
\]
since $KC_{\rm S}/\bar B=K/(20^\alpha H\bar\Psi_K(a))=\Lambda(a)$. Moreover, $u\mapsto u\,v_\alpha(KC_{\rm S}/u)$ is non-decreasing (see Lemma~\ref{lem:log-sum}) and $\bar\Psi_K(a)\le\bar\Psi_K(a_*)$ for $a\ge a_*$, implying $\bar\Psi_K(a)v_\alpha(\Lambda(a))\le\bar\Psi_K(a_*)v_\alpha(\Lambda(a_*))\le\frac{q_\alpha}{2}\,v$ by~\eqref{eq:Psi-final} and~\eqref{eq:sstm-fixed-point}. In both cases, by the definition of $q_\alpha$, we have
\begin{equation}\label{eq:S-bound}
    S_T\le v\,30^{\alpha-2}\frac{R^2}{H}\cdot\frac{q_\alpha}{2}\le\frac{R^2}{2700H}\le\frac{R^2}{1350H},\qquad V_1\le\frac{R^4}{150H}=G .
\end{equation}
Since $\frac{b_1^2}{2G+2c_1b_1/3}=H$, Lemma~\ref{lem:freedman} gives
\begin{equation}\label{eq:I1-prob}
    \PP\left\{|\circledOne|>\frac{R^2}{5},\ V_1\le G\right\}\le2e^{-H}=\frac{\beta}{2K}.
\end{equation}

\paragraph{Step 7: upper bound for $\circledTwo$ (the systematic bias).} On $E_{T-1}$, by~\eqref{eq:sstm-clip-bounds}, the choice of $\lambda_t$, and~\eqref{eq:Psi-final}, we have
\begin{equation}\label{eq:I2}
    |\circledTwo|\le3R\sum_{t<T}\omega_{t+1}\|\theta_{t+1}^b\|\le3Rb\sum_{t<T}\omega_{t+1}D_t\lambda_t^{1-\alpha}=3b\,30^{\alpha-1}R^2\Psi_T\le\frac{R^2}{5},
\end{equation}
where the last step uses the second term in the definition of $q_\alpha$. In particular, $\sum_{t<T}\omega_{t+1}\|\theta_{t+1}^b\|\le R/15$.

\paragraph{Step 8: upper bound for $\circledThree$ (the quadratic martingale term).} Let $Y_t:=2\omega_{t+1}^2(Z_t-\E[Z_t\mid\G_t])$. Since $0\le Z_t\le4\lambda_t^2$ globally, the summands are bounded:
\[
    |Y_t|\le8\omega_{t+1}^2\lambda_t^2=\frac{8R^2}{900H^2}\le\frac{R^2}{5H}=c_1 .
\]
Moreover, $\E[Y_t^2\mid\G_t]=4\omega_{t+1}^4\,\mathrm{Var}(Z_t\mid\G_t)\le4\omega_{t+1}^4\E[Z_t^2\mid\G_t]\le16\omega_{t+1}^4\lambda_t^2\E[Z_t\mid\G_t]$. Since $\omega_{t+1}^2\lambda_t^2=R^2/(900H^2)$, on $E_{T-1}$, we have
\[
    V_3:=\sum_{t<T}\E[Y_t^2\mid\G_t]\le\frac{16R^2}{900H^2}S_T\le\frac{16R^4}{1215000H^3}\le\frac{R^4}{150H}=G,
\]
where the last inequality uses $H>1$. Lemma~\ref{lem:freedman} with the same $b_1,c_1,G$ gives
\begin{equation}\label{eq:I3-prob}
    \PP\left\{|\circledThree|>\frac{R^2}{5},\ V_3\le G\right\}\le\frac{\beta}{2K}.
\end{equation}

\paragraph{Step 9: upper bounds for $\circledFour$ and $\circledFive$.} On $E_{T-1}$, by~\eqref{eq:S-bound}, we have $\circledFour=2S_T\le\frac{R^2}{675H}$. By Step~7 and the non-negativity of the summands, we also have $\circledFive\le2\left(\sum_{t<T}\omega_{t+1}\|\theta_{t+1}^b\|\right)^2\le\frac{2R^2}{225}$.

\paragraph{Step 10: closing the induction and the final bound.} That is, we derived the upper bounds for $\circledOne$--$\circledFive$. Let $E_{\circledOne}:=\{V_1>G\}\cup\{|\circledOne|\le R^2/5\}$ and $E_{\circledThree}:=\{V_3>G\}\cup\{|\circledThree|\le R^2/5\}$. By~\eqref{eq:I1-prob} and~\eqref{eq:I3-prob}, each of these events has probability at least $1-\beta/(2K)$. On $E_{T-1}$, we have already shown that $V_1,V_3\le G$. Thus, $E_{T-1}\cap E_{\circledOne}\cap E_{\circledThree}$ implies, by Steps~5--9 and $H>1$,
\[
    B_T\le\left(\frac35+\frac1{675}+\frac2{225}\right)R^2=\frac{412}{675}R^2<R^2 .
\]
By Step~1, Lemma~\ref{lem:potential} can be applied up to step $T$. Since $f(y^T)\ge f^*$, it gives $\|z^T-x^*\|^2\le\|z^0-x^*\|^2+2B_T<3R^2<4R^2$. Therefore, $E_{T-1}\cap E_{\circledOne}\cap E_{\circledThree}\subseteq E_T$, and the union bound, which does not require the independence of the events, gives
\[
    \PP(E_T)\ge1-\frac{(T-1)\beta}{K}-\frac{\beta}{2K}-\frac{\beta}{2K}=1-\frac{T\beta}{K}.
\]
This finishes the induction. Finally, $E_K$ implies, via the recursions, that all points belong to $B_{2R}(x^*)$, and Lemma~\ref{lem:potential} with $z=x^*$ and $B_K\le R^2$ gives
\[
    A_K\left(f(y^K)-f^*\right)\le\frac12R^2+B_K\le\frac32R^2 .
\]
Substituting $A_K=K(K+3)/(4aL)$ from~\eqref{eq:Ak}, we get~\eqref{eq:sstm-a}. For $a=a_*(K,\beta)$, using $K(K+3)\ge K^2$, $K+1\le2K$, and $1-r=1/\alpha$, we get
\[
    \frac{6a_*LR^2}{K(K+3)}\le\max\left\{\frac{1058400\,LR^2H^2}{K^2},\ \frac{12c_{0,\alpha}\sigma_0RH^r}{K^r},\ \frac{6c_{1,\alpha}\sigma_1^2LR^2H^{2r}}{K^{2r}}\right\},
\]
which proves~\eqref{eq:sstm-rate} and concludes the proof.
\hfill$\square$

\subsection{Oracle Complexity}\label{app:sstm-complexity}

\begin{lemma}[Inverting the logarithm]\label{lem:loginv}
Let $M\ge1$ and $\beta\in(0,1]$. If $K\ge4M\ln\frac{4M}{\beta}$, then $\ln\frac{4K}{\beta}\le\frac KM$.
\end{lemma}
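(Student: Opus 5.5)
The plan is to use monotonicity of $K\mapsto K/\ln(4K/\beta)$ and reduce to the boundary case $K_0:=4M\ln\frac{4M}{\beta}$. Set $u:=\frac{4M}{\beta}\ge4$, since $M\ge1$ and $\beta\le1$. Then $K_0=4M\ln u\ge4\ln4>1$.

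The function $\psi(K):=K/M-\ln(4K/\beta)$ has derivative $1/M-1/K$. This is nonnegative for $K\ge M$, and $K_0\ge M\cdot4\ln4\ge M$. Hence $\psi$ is non-decreasing on $[K_0,\infty)$, and it suffices to show $\psi(K_0)\ge0$.

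At $K_0$ we have
\[
\ln\frac{4K_0}{\beta}=\ln\Big(\frac{4M}{\beta}\cdot4\ln u\Big)=\ln u+\ln(4\ln u).
\]
Since $K_0/M=4\ln u$, the inequality $\psi(K_0)\ge0$ reduces to
\[
\ln(4\ln u)\le3\ln u,
\qquad\text{i.e.}\qquad
4\ln u\le u^3 .
\]
This holds for all $u\ge1$, because $\ln u\le u-1<u\le u^3/4$ whenever $u\ge2$, and in our case $u\ge4$.

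The only step needing any care is this elementary scalar inequality. I do not expect any real obstacle, and the constant $4$ leaves ample slack.
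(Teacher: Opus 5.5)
Your proof is correct and matches the paper's argument: the substitution $u=4M/\beta\ge4$, monotonicity of $K/M-\ln(4K/\beta)$ for $K\ge M$, and the reduction at $K_0=4M\ln u$ to $4\ln u\le u^3$, which you verify via $\ln u\le u-1$ instead of the paper's derivative check. One small imprecision: your chain $\ln u\le u-1<u\le u^3/4$ only justifies the inequality for $u\ge2$, not the stated $u\ge1$, but that suffices here because $u\ge4$.
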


\begin{proof}
Let $s:=4M/\beta\ge4$ and $K_0:=4M\ln s\ge M$. The function $\varphi(K):=K/M-\ln(4K/\beta)$ is non-decreasing for $K\ge M$, since $\varphi'(K)=M^{-1}-K^{-1}\ge0$. Therefore, it is sufficient to check the case $K=K_0$: $\varphi(K_0)\ge0\iff\ln(4s\ln s)\le4\ln s\iff4\ln s\le s^3$. The last inequality holds for $s\ge4$, since the function $s^3-4\ln s$ is positive at $s=4$ and has positive derivative $3s^2-4/s$ on $[4,\infty)$, which concludes the proof.
\end{proof}

\paragraph{Proof of Corollary~\ref{cor:sstm-complexity}.} By Lemma~\ref{lem:loginv}, we have $H/K\le M_\varepsilon^{-1}$. By the definition of $M_\varepsilon$, $C_\alpha LR^2M_\varepsilon^{-2}\le\varepsilon$, $C_\alpha\sigma_0RM_\varepsilon^{-r}\le\varepsilon$, and $C_\alpha\sigma_1^2LR^2M_\varepsilon^{-2r}\le\varepsilon$. Since each of the three constants in~\eqref{eq:sstm-rate} is at most $C_\alpha$, substituting these inequalities into~\eqref{eq:sstm-rate}, we get $f(y^K)-f^*\le\varepsilon$. To count the oracle calls, we take $K=\lceil4M_\varepsilon\ln(4M_\varepsilon/\beta)\rceil$; the number of calls equals the number of steps $K$, which concludes the proof.
\hfill$\square$

\begin{remark}[Why the choice of $a$ matters]\label{rem:choice-a}
Corollary~\ref{cor:sstm-complexity} is not a statement about all $a\ge a_*(K,\beta)$. Consider $\beta=1/2$, $f(x)=Lx^2/2$, $x^*=0$, $x^0=R>0$, an exact oracle, and $\sigma_0=\sigma_1=0$. For $a\ge\max\{a_*(K,\beta),K(K+3)\}$, we have $LA_K\le1/4$. By induction, all points $x^{k+1},y^k,z^k$ up to step $K$ belong to the segment between $3R/4$ and $R$: while the points belong to $[0,R]$, the clipped gradient belongs to $[0,LR]$ and $z^{k+1}=R-\sum_{j=0}^k\omega_{j+1}\clip(Lx^{j+1},\lambda_j)\ge R-LRA_{k+1}\ge3R/4$. The upper bound for $z^{k+1}$ and both bounds for $x$ and $y$ follow from the recursions. Thus, $f(y^K)-f^*\ge9LR^2/32$, and the accuracy $\varepsilon=LR^2/8$ is not achieved, no matter how large $K$ is. Due to this reason, the corollary fixes $a=a_*(K,\beta)$.
\end{remark}

\subsection{Comparison with a Uniform Noise Scale}\label{app:sstm-uniform}

\begin{corollary}[Uniform noise scale]\label{cor:uniform}
Let the assumptions of Theorem~\ref{thm:sstm} hold and let $\bar\sigma:=(\sigma_0^\alpha+(3LR\sigma_1)^\alpha)^{1/\alpha}$. Then the noise assumption in force also holds on $Q$ with the parameters $(\bar\sigma,0)$. As a consequence, Algorithm~\ref{alg:clip-sstm} with $a=a_{\rm uni}:=\max\{176400H^2,\,c_{0,\alpha}\frac{\bar\sigma}{LR}(K+1)K^{1/\alpha}H^r\}$ (where, under Assumption~\ref{as:weak}, $v$ and $\Lambda_K^0$ in $c_{0,\alpha}$ are computed with the parameters $(\bar\sigma,0)$) and $\lambda_k$ defined as in Theorem~\ref{thm:sstm} satisfies, with probability at least $1-\beta$,
\begin{equation}\label{eq:uniform}
    f(y^K)-f^*\le C_\alpha\max\left\{LR^2\left(\frac HK\right)^2,\ \bar\sigma R\left(\frac HK\right)^r\right\},\qquad C_\alpha:=\max\{1058400,12c_{0,\alpha}\}.
\end{equation}
Moreover, with $a:=\min\{a_*(K,\beta),a_{\rm uni}\}$, with probability at least $1-\beta$, the localization from Theorem~\ref{thm:sstm} holds and $f(y^K)-f^*\le\frac{6LR^2}{K(K+3)}\min\{a_*(K,\beta),a_{\rm uni}\}$.
\end{corollary}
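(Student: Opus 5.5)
The plan has two parts. First, I show that the noise assumption in force holds on $Q=B_{3R}(x^*)$ with the constant-scale parameters $(\bar\sigma,0)$. Then I reuse Theorem~\ref{thm:sstm} with $\sigma_1=0$, both for $a_{\rm uni}$ and for the minimum of the two choices of $a$.

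For the reduction, take $x\in Q$. Since $\nabla f(x^*)=0$ and $f$ is $L$-smooth on $Q$, we get $\|\nabla f(x)\|\le L\|x-x^*\|\le3LR$; this is \eqref{eq:grad-lin} with $\rho=3$. Hence
\[
A(\nabla f(x))=\sigma_0^\alpha+\sigma_1^\alpha\|\nabla f(x)\|^\alpha\le\sigma_0^\alpha+(3LR\sigma_1)^\alpha=\bar\sigma^\alpha .
\]
Under either assumption, the right-hand side of \eqref{eq:moment-affine} or \eqref{eq:weak-tail} can be replaced by this larger constant. So Assumption~\ref{as:moment} (resp.\ Assumption~\ref{as:weak}) holds on $Q$ with $(\bar\sigma,0)$, and unbiasedness is unchanged.

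Next I apply Theorem~\ref{thm:sstm} to the parameters $(\bar\sigma,0)$. The third term of $a_*$ vanishes, so $a_*$ computed with $(\bar\sigma,0)$ is exactly $a_{\rm uni}$. Under Assumption~\ref{as:weak}, the constants $v$, $q_\alpha$, $c_{0,\alpha}$ and $\Lambda_K^{\rm S}$ are recomputed with $(\bar\sigma,0)$, as the statement prescribes. With probability at least $1-\beta$, the rate \eqref{eq:sstm-rate} then gives the maximum of $1058400LR^2H^2/K^2$ and $12c_{0,\alpha}\bar\sigma RH^r/K^r$. This is bounded by \eqref{eq:uniform} with $C_\alpha=\max\{1058400,12c_{0,\alpha}\}$.

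For the final claim, take $a=\min\{a_*(K,\beta),a_{\rm uni}\}$. This value equals one of the two, and each is admissible for its own parametrization. If the minimum is $a_*$, Theorem~\ref{thm:sstm} with the original $(\sigma_0,\sigma_1)$ gives localization and \eqref{eq:sstm-a}. If the minimum is $a_{\rm uni}$, the same theorem under $(\bar\sigma,0)$ gives the same conclusions. The clipping levels $\lambda_k=R/(30\omega_{k+1}H)$ depend on $a$ only through $\omega_{k+1}$, so in both cases they are the prescribed ones. Either way, \eqref{eq:sstm-a} yields $f(y^K)-f^*\le 6aLR^2/(K(K+3))$.

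The main obstacle is the weak-tail case, where $c_{0,\alpha}$ contains $v=v_\alpha(\Lambda_K^{\rm S})$. I must make sure that the quantity entering $a_{\rm uni}$ is the one from Theorem~\ref{thm:sstm} applied to $(\bar\sigma,0)$, with $\Lambda_K^0$ computed for $\bar\sigma$ and $\sigma_1=0$. Otherwise the fixed-point check \eqref{eq:sstm-fixed-point} would not be inherited. Since the statement defines $c_{0,\alpha}$ exactly this way, the theorem applies verbatim and no new estimate is needed.
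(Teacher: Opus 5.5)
Your proposal is correct and follows the paper's proof essentially step for step. You bound $\|\nabla f\|\le 3LR$ on $Q$ via \eqref{eq:grad-lin} to get the noise condition with $(\bar\sigma,0)$, and note that $a_*(K,\beta)$ for these parameters is exactly $a_{\rm uni}$, so Theorem~\ref{thm:sstm} yields \eqref{eq:uniform}; you then handle $a=\min\{a_*(K,\beta),a_{\rm uni}\}$ by splitting on which deterministic value is attained and applying the theorem with the matching parametrization (your care in recomputing $v$, $\Lambda_K^0$, and $c_{0,\alpha}$ for $(\bar\sigma,0)$ under Assumption~\ref{as:weak} is exactly what the statement's parenthetical prescribes).
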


\begin{proof}
By~\eqref{eq:grad-lin}, we have $\|\nabla f(x)\|\le3LR$ on $Q$, and, therefore, $\sigma_0^\alpha+\sigma_1^\alpha\|\nabla f(x)\|^\alpha\le\bar\sigma^\alpha$ for all $x\in Q$, and both~\eqref{eq:moment-affine} and~\eqref{eq:weak-tail} hold with $(\bar\sigma,0)$. Theorem~\ref{thm:sstm} with the parameters $(\bar\sigma,0)$ has the threshold $a_{\rm uni}$ and gives~\eqref{eq:uniform}. To prove the last claim, we notice that $a$ is deterministic. If it equals $a_*(K,\beta)$, we apply Theorem~\ref{thm:sstm} with $(\sigma_0,\sigma_1)$; otherwise, we apply it with $(\bar\sigma,0)$. In both cases, \eqref{eq:sstm-a} holds with probability at least $1-\beta$, and no additional union bound is needed, which concludes the proof.
\end{proof}

The factor $3$ in $\bar\sigma$ ensures that the tail condition holds on the whole set $Q$, on which the assumptions of Theorem~\ref{thm:sstm} are imposed. Since all queries are localized in $B_{2R}(x^*)$, repeating the probabilistic argument would allow a smaller factor, but this is not needed for the comparison of the orders. Since $\bar\sigma\le\sigma_0+3LR\sigma_1$, the multiplicative part contributes $O(\sigma_1LR^2(H/K)^r)$ to~\eqref{eq:uniform}. Theorem~\ref{thm:sstm} replaces this term with $\sigma_1^2LR^2(H/K)^{2r}$. The ratio of the two scales is of the order $\sigma_1(H/K)^r$ and tends to zero for fixed $\sigma_1$ and $\beta$ as $K\to\infty$; we do not claim that this is a uniform improvement for all $K$ and $\sigma_1$. For $\sigma_0>0$, the overall order may be determined by the additive term $\sigma_0R(H/K)^r$, meaning that the improvement of the multiplicative part does not necessarily change the leading power. The comparison concerns the polynomial powers of the horizon; at the weak-$L^2$ endpoint, both bounds also contain the tail-dependent logarithmic coefficients $c_{0,2},c_{1,2}$ from Theorem~\ref{thm:sstm}.

\paragraph{Comparison with prior analyses.} {We emphasize that the polynomial rates themselves are not new: for $\sigma_1=0$, the exponents of $K$ and $\varepsilon$ in~\eqref{eq:sstm-rate} coincide with the ones from \citet{gorbunov2020stochastic} at $\alpha=2$ and from \citet{sadiev2023high} for $\alpha\in(1,2]$, and, for \clipsgd, with the ones from \citet{sadiev2023high} (see Table~\ref{tab:complexity}). What changes is the setting in which they hold.}

{First, the noise class: the weak-$L^\alpha$ condition, and, at $\alpha=2$, the Lorentz scale, replace the bounded $\alpha$-th moment, and, therefore, the rates cover the noise whose $\alpha$-th moment is infinite.}

{Second, the affine structure: the gradient-dependent part of the noise enters only through the terms $\sigma_1^2LR^2(H/K)^{2r}$ (\clipsstm) and $\Delta_1$ (\clipsgd), which decay in $K$ faster than the additive term. Affine noise with finite variance ($\alpha=2$, $\sigma_1>0$) is the classical setting from \citet{bottou2018optimization}; related affine conditions appear in the analyses of adaptive stepsizes by \citet{faw2022power,wang2023convergence} (affine conditional variance, non-convex criteria) and \citet{attia2023sgd} (almost surely bounded affine noise in the main condition, with a sub-Gaussian extension). At $\alpha=2$, our multiplicative term has the order $\sigma_1^2LR^2/K$, up to logarithmic factors, which matches the variance term from the finite-variance analyses; the criteria and assumptions differ, meaning that this is a comparison of the orders only.}

{We do not claim that this term is optimal: we do not provide a lower bound for the multiplicative part, and, under purely multiplicative noise with additional structure, e.g., $\sigma_0=0$ and strong convexity, faster guarantees are known \citep{gower2019sgd}.}

{Faster polynomial decay is also possible without strong convexity under a global strong growth condition: under Assumption~\ref{as:moment} with $\alpha=2$ and $\sigma_0=0$, unbiasedness gives $\E\|g(x,\xi)\|^2=\|\nabla f(x)\|^2+\E\|g(x,\xi)-\nabla f(x)\|^2\le(1+\sigma_1^2)\|\nabla f(x)\|^2$, and \citet[Theorem~2]{vaswani2019fast} derive, for an accelerated method under global smoothness and strong growth with constant $\rho=1+\sigma_1^2$, the bound $\E f(x^K)-f^*\le2\rho^2LR^2/K^2$; hence, $2\rho^2LR^2/(\beta K^2)$ with probability at least $1-\beta$ by Markov's inequality, which is a faster polynomial order at fixed confidence with a much worse dependence on $\beta$ than our logarithmic one. \citet{gupta2024nesterov} also obtain acceleration in expectation for smooth convex and strongly convex problems through AGNES, under global multiplicative finite-variance noise and with arbitrary noise-to-gradient ratios. These results concern different algorithms and assumptions and show why our multiplicative term should not be regarded as optimal. An analysis bounding the noise scale uniformly over the ball does not retain the separation of the additive and gradient-dependent components (see Corollary~\ref{cor:uniform}).}

{Third, localization: the smoothness, convexity, and noise assumptions are imposed only on the ball $Q$, and all iterates remain in $B_{2R}(x^*)$ on the high-probability event (in Theorem~\ref{thm:any-lambda}, up to the first iterate in the neighborhood). Fourth, the guarantees are high-probability ones with logarithmic dependence on $1/\beta$ and with explicit constants and parameter choices; the logarithmic dependence on the horizon in $\ell$ and $H$ is an artifact of the union bound and can be removed outside the $\Lambda$-dependent weak-case tuning (see Appendix~\ref{app:maximal}).}

\subsection{The Lorentz Scale at the Endpoint}\label{app:lorentz}

In this section, we provide a direct finite-horizon analysis of \clipsstm under an affine Lorentz condition at the endpoint $\alpha=2$. Since $L^{2,2}=L^2$ and $L^{2,s}\subseteq L^{2,\infty}$, the result extends the finite-variance setting and refines the weak-$L^2$ guarantee; for every $s>2$, the inclusion $L^2\subsetneq L^{2,s}$ is strict, meaning that the assumption allows the noise with infinite second moment. For a real random variable $Y$ with non-increasing rearrangement $Y^*$ (see Appendix~\ref{app:spaces}) and $2\le s<\infty$, we define
\[
    \|Y\|_{2,s}:=\left(\int_0^1\left[t^{1/2}Y^*(t)\right]^s\frac{\dif t}{t}\right)^{1/s},\qquad \|Y\|_{2,\infty}:=\sup_{0<t\le1}t^{1/2}Y^*(t)=\wnorm{Y}\big|_{\alpha=2},
\]
and let $L^{2,s}$ be the class of $Y$ with $\|Y\|_{2,s}<\infty$; $\|Y\|_{2,2}^2=\E Y^2$. Let $\delta_s:=1-2/s$ for $s<\infty$ and $\delta_\infty:=1$.

\begin{lemma}[Lorentz embeddings]\label{lem:lorentz-embed}
For $2\le s<\infty$, $\|Y\|_{2,s}\le(\E Y^2)^{1/2}$ and $\|Y\|_{2,\infty}^2\le(s/2)^{2/s}\|Y\|_{2,s}^2\le2\|Y\|_{2,s}^2$. On an atomless probability space, $L^2\subsetneq L^{2,s}\subsetneq L^{2,\infty}$ for $2<s<\infty$.
\end{lemma}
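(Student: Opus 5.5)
The first inequality follows from a pointwise comparison of the integrands. We write $g(t):=t^{1/2}Y^*(t)$ and $M:=\|Y\|_{2,\infty}=\sup_{0<t\le1}g(t)$, and we may assume $\E Y^2<\infty$, since otherwise there is nothing to prove. Because $Y^*$ is non-increasing, $g(t)^2=tY^*(t)^2\le\int_0^tY^*(u)^2\dif u\le\E Y^2$, so $M^2\le\E Y^2$. Then for $s\ge2$,
\[
    \int_0^1g^s\frac{\dif t}{t}\le M^{s-2}\int_0^1g^2\frac{\dif t}{t}=M^{s-2}\int_0^1Y^*(t)^2\dif t=M^{s-2}\E Y^2\le(\E Y^2)^{s/2}.
\]
The identity $\int_0^1(Y^*)^2=\E Y^2$ holds because $Y^*$ and $|Y|$ are equidistributed. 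Taking the $s$-th root gives $\|Y\|_{2,s}\le(\E Y^2)^{1/2}$.

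For the second inequality, fix $t\in(0,1]$ and use monotonicity of $Y^*$ on $(0,t]$:
\[
    \|Y\|_{2,s}^s\ge\int_0^tu^{s/2-1}Y^*(u)^s\dif u\ge Y^*(t)^s\,\frac{2}{s}\,t^{s/2}.
\]
Hence $t^{1/2}Y^*(t)\le(s/2)^{1/s}\|Y\|_{2,s}$, and taking the supremum over $t$ and squaring yields $\|Y\|_{2,\infty}^2\le(s/2)^{2/s}\|Y\|_{2,s}^2$. The remaining bound $(s/2)^{2/s}\le2$ follows from $x^{1/x}\le e^{1/e}<2$ applied with $x=s/2\ge1$, since $(s/2)^{2/s}=(x^{1/x})$ with $x=s/2$.

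For strictness, on an atomless space we can realize any non-increasing profile as $Y^*$; in particular we may take $Y:=Y^*(U)$ for a uniform variable $U$, using the profile
\[
    Y^*(t)=t^{-1/2}\bigl(\ln(e/t)\bigr)^{-\gamma},\qquad\text{so that}\qquad g(t)=\bigl(\ln(e/t)\bigr)^{-\gamma}.
\]
After the substitution $u=\ln(e/t)$, we get $\E Y^2=\int_1^\infty u^{-2\gamma}\dif u$ and $\|Y\|_{2,s}^s=\int_1^\infty u^{-s\gamma}\dif u$. Choosing $\gamma\in(1/s,1/2]$ makes $\E Y^2=\infty$ while $\|Y\|_{2,s}<\infty$, which shows $L^2\ne L^{2,s}$.

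Choosing $\gamma=0$, i.e. $Y^*(t)=t^{-1/2}$, gives $M=1<\infty$ but $\int_0^1\frac{\dif t}{t}=\infty$, which shows $L^{2,s}\ne L^{2,\infty}$. The only care needed is checking that this profile is monotone and right-continuous, which both profiles are, so that it is indeed the rearrangement of $Y$.
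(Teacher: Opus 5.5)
Your proof is correct and follows the paper's argument essentially step by step. It uses the same pointwise bound $t\,Y^*(t)^2\le\int_0^tY^*(u)^2\dif u$ (routing it through $\|Y\|_{2,\infty}$ is only cosmetic), the same lower estimate of $\|Y\|_{2,s}^s$ on $(0,t]$ together with $\rho^{1/\rho}\le e^{1/e}$, and the same examples $t^{-1/2}[\ln(e/t)]^{-b}$ with $1/s<b\le1/2$ and $t^{-1/2}$. One small inaccuracy: under the paper's definition the rearrangement of $Y^*(U)$ agrees with your profile only on $(0,1)$ and vanishes at $t=1$, so it is not literally "indeed the rearrangement". This changes none of the integrals or suprema.
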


\begin{proof}
Let $M^2:=\E Y^2=\int_0^1Y^*(u)^2\dif u$ and $F(t):=t^{1/2}Y^*(t)$. Since $Y^*$ is non-increasing, we have $F(t)^2\le\int_0^tY^*(u)^2\dif u\le M^2$; hence, $\int_0^1F^s\frac{\dif t}{t}\le M^{s-2}\int_0^1F^2\frac{\dif t}{t}=M^s$. The second claim follows from $\|Y\|_{2,s}^s\ge Y^*(t)^s\int_0^tu^{s/2-1}\dif u=\frac2s\,t^{s/2}Y^*(t)^s$ for every $t$ and $(s/2)^{2/s}=\rho^{1/\rho}$ with $\rho=s/2\ge1$ is at most $e^{1/e}<2$. To prove the strict inclusions, we consider $Y_b(t):=t^{-1/2}[\ln(e/t)]^{-b}$ with $1/s<b\le1/2$ (a non-increasing function on $(0,1]$, which is its own rearrangement up to the single point $t=1$, which does not affect the integrals): the substitution $u=\ln(e/t)$ gives $\|Y_b\|_{2,s}^s=\int_1^\infty u^{-bs}\dif u=(bs-1)^{-1}<\infty$ and $\E Y_b^2=\int_1^\infty u^{-2b}\dif u=\infty$; moreover, $t^{-1/2}$ has $\|\cdot\|_{2,\infty}=1$ but $\int_0^1\frac{\dif t}{t}=\infty$, which concludes the proof.
\end{proof}

\paragraph{Monotonicity of the Lorentz scale.} For $2\le s_1<s_2\le\infty$ (with $1/\infty:=0$), we have
\begin{equation}\label{eq:lorentz-monotone}
    \|Y\|_{2,s_2}\le\left(\frac{s_1}{2}\right)^{\frac1{s_1}-\frac1{s_2}}\|Y\|_{2,s_1}.
\end{equation}
Indeed, let $\|Y\|_{2,s_1}<\infty$ and $h(t):=t^{1/2}Y^*(t)$. Since $Y^*$ is non-increasing, for every $t\in(0,1)$, we have $\|Y\|_{2,s_1}^{s_1}\ge\int_0^tu^{s_1/2-1}Y^*(u)^{s_1}\dif u\ge\frac{2}{s_1}h(t)^{s_1}$, implying $\|h\|_\infty\le(s_1/2)^{1/s_1}\|Y\|_{2,s_1}$, which is the case $s_2=\infty$. For finite $s_2$, $\int_0^1h^{s_2}\frac{\dif t}{t}\le\|h\|_\infty^{s_2-s_1}\int_0^1h^{s_1}\frac{\dif t}{t}$ gives the claim. Thus, the classes $L^{2,s}$ increase with $s$, and a noise bound at the index $s_1$ with parameters $(\sigma_0,\sigma_1)$ implies the bound at every index $s_2>s_1$ with the parameters $(s_1/2)^{1/s_1-1/s_2}(\sigma_0,\sigma_1)$; the embedding of spaces is not an embedding of balls with the same parameters.

\begin{assumption}[Affine $L^{2,s}$ noise]\label{as:lorentz}
There exist $s\in[2,\infty]$ and $\sigma_0,\sigma_1\ge0$ such that for all $x\in Q$, $\E_\xi[g(x,\xi)]=\nabla f(x)$ and $\big\|\,\|g(x,\xi)-\nabla f(x)\|\,\big\|_{2,s}^2\le\sigma_0^2+\sigma_1^2\|\nabla f(x)\|^2$.
\end{assumption}

For $s=2$, this is Assumption~\ref{as:moment} with $\alpha=2$, and, for $s=\infty$, it is Assumption~\ref{as:weak} with $\alpha=2$; the classes increase with $s$ by~\eqref{eq:lorentz-monotone}, with the noise parameters transformed by the embedding constant when guarantees at different indices are compared, and, for $s>2$, the noise may have infinite variance (it is sufficient to multiply $Y_b$ by an independent Rademacher sign to obtain unbiased examples). The embedding constants from Lemma~\ref{lem:lorentz-embed} are uniform in $s$, but, for a fixed distribution, the quasi-norm $\|Y\|_{2,s}$ is not necessarily uniform in $s$: for $Y_{1/2}$, it equals $(s/2-1)^{-1/s}$, which diverges as $s\downarrow2$. Therefore, when guarantees for one distribution are compared across $s$, the parameters $\sigma_0,\sigma_1$ must be taken relative to the chosen $s$, and their growth may offset a smaller power of the logarithm.

\begin{lemma}[Truncated second moment on the Lorentz scale]\label{lem:lorentz-trunc}
Let $Y\ge0$ with $\|Y\|_{2,s}^2\le D$ for some $s\in[2,\infty]$ and $D\ge0$. Then, for every $c>0$,
\begin{equation}\label{eq:lorentz-trunc}
    \E\min\{Y^2,c^2\}\le2D\left[1+\ln\left(1+\frac{c^2}{D}\right)\right]^{\delta_s},
\end{equation}
with right-hand side $0$ if $D=0$.
\end{lemma}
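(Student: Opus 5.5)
We work in the rearrangement picture. By the layer-cake formula, $\E\min\{Y^2,c^2\}=\int_0^1\min\{Y^*(t)^2,c^2\}\dif t$. Put $h(t):=t^{1/2}Y^*(t)$, so that $\min\{Y^*(t)^2,c^2\}=\min\{h(t)^2/t,c^2\}$. If $D=0$, then $Y^*=0$ a.e.\ and there is nothing to prove, so assume $D>0$. By the monotonicity estimate~\eqref{eq:lorentz-monotone} (its case $s_2=\infty$), $\|h\|_\infty^2\le(s/2)^{2/s}\|Y\|_{2,s}^2\le2D$. We split $(0,1]$ at $t_0:=\min\{1,D/c^2\}$.

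\emph{Small $t$.} On $(0,t_0]$ we bound the integrand by $c^2$, which contributes at most $c^2t_0\le D$.

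\emph{Large $t$.} On $(t_0,1]$ we bound the integrand by $h(t)^2/t$ and apply H\"older's inequality with respect to the measure $\dif t/t$, with exponents $s/2$ and $1/\delta_s=s/(s-2)$:
\[
    \int_{t_0}^1h^2\,\frac{\dif t}{t}\le\left(\int_0^1h^s\frac{\dif t}{t}\right)^{2/s}\left(\ln\frac1{t_0}\right)^{1-2/s}\le D\left(\ln\frac1{t_0}\right)^{\delta_s}.
\]
For $s=\infty$ we instead use $\|h\|_\infty^2\le D$ directly, which gives $D\ln(1/t_0)$; this matches $\delta_\infty=1$. Since $\ln(1/t_0)=\ln_+(c^2/D)\le\ln(1+c^2/D)$, both cases give a contribution of at most $D\ln(1+c^2/D)^{\delta_s}$.

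\emph{Combination.} Adding the two pieces gives $D+D\ell^{\delta_s}$ with $\ell:=\ln(1+c^2/D)$. Because $\delta_s\in[0,1]$, we have $1+\ell^{\delta_s}\le2(1+\ell)^{\delta_s}$, and~\eqref{eq:lorentz-trunc} follows.

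The main obstacle is the constant at $s=\infty$. There, $\|h\|_\infty^2$ equals the weak-$L^2$ quasi-norm $\|Y\|_{2,\infty}^2$, which is at most $D$ by definition, so the argument above goes through with constant $2$. For finite $s$, however, the H\"older step uses $\int h^s\,\dif t/t=\|Y\|_{2,s}^s\le D^{s/2}$ directly and never needs the $L^\infty$ bound, so it is also fine. One should double-check that the $s=\infty$ case is not routed through the embedding constant $2$: doing so would yield $2D+2D\ell$, and then the factor-$2$ absorption would fail. This is the step to verify carefully.
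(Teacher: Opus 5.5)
Your proof is correct and follows essentially the paper's argument: write $\E\min\{Y^2,c^2\}$ as a rearrangement integral, bound the part on $(0,t_0]$ by $c^2t_0\le D$, apply H\"older in $L^{s/2}(\dif t/t)$ on $(t_0,1]$, and absorb via $1+x^{\delta_s}\le2(1+x)^{\delta_s}$. The only difference is at $s=\infty$, where the paper uses the tail integral $2\int_0^cu\,\PP\{Y>u\}\dif u$ instead of $\|h\|_\infty$.

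The step you flag as delicate is harmless. The part on $(0,t_0]$ contributes only $D$, so even the bound $\|h\|_\infty^2\le2D$ would give $D+2D\ln(1+c^2/D)\le2D[1+\ln(1+c^2/D)]$, which is exactly the claim at $\delta_\infty=1$.
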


\begin{proof}
If $D=0$, then $Y=0$ almost surely by Lemma~\ref{lem:lorentz-embed}. If $0<c^2\le D$, then the left-hand side is at most $c^2\le D$. Next, let $c^2>D>0$ and $t_0:=D/c^2$. For $2<s<\infty$, by $\E\min\{Y^2,c^2\}=\int_0^1\min\{Y^*(t)^2,c^2\}\dif t$ and H\"older's inequality with exponents $s/2$ and $s/(s-2)$, we have
\begin{align*}
    \E\min\{Y^2,c^2\}&\le c^2t_0+\int_{t_0}^1\left[t^{1/2}Y^*(t)\right]^2\frac{\dif t}{t}
    \le D+\left(\int_{t_0}^1\left[t^{1/2}Y^*(t)\right]^s\frac{\dif t}{t}\right)^{2/s}\left[\ln\frac1{t_0}\right]^{1-2/s}\\
    &\le D\left\{1+\left[\ln\frac{c^2}{D}\right]^{\delta_s}\right\},
\end{align*}
which is at most the right-hand side of~\eqref{eq:lorentz-trunc}. For $s=2$, we have $\E\min\{Y^2,c^2\}\le\E Y^2\le D$. Finally, for $s=\infty$, the tail is at most $\min\{1,D/u^2\}$, and $2\int_0^cu\,\PP\{Y>u\}\dif u\le D[1+\ln(c^2/D)]$, as in the proof of Lemma~\ref{lem:weak-clip}. This concludes the proof.
\end{proof}

\begin{lemma}[Concavity with an exponent]\label{lem:log-sum-delta}
Let $C\ge0$ and $\delta\in[0,1]$, and let $\phi_\delta(q):=q[1+\ln(1+C/q)]^\delta$ for $q>0$, $\phi_\delta(0):=0$. Then $\phi_\delta$ is non-decreasing and concave on $[0,\infty)$. Consequently, $\sum_{t<T}\phi_\delta(q_t)\le\bar B[1+\ln(1+TC/\bar B)]^\delta$ whenever $q_t\ge0$ and $\sum_{t<T}q_t\le\bar B$, $\bar B>0$.
\end{lemma}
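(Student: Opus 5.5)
The plan follows Lemma~\ref{lem:log-sum}. I first establish concavity and monotonicity of $\phi_\delta$ on $(0,\infty)$, then continuity at $0$. The summation inequality then follows from Jensen's inequality and monotonicity.

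The cases $C=0$ and $\delta=0$ are trivial, since $\phi_\delta(q)=q$ there; so assume $C>0$ and $\delta\in(0,1]$. Write $\phi_\delta(q)=q\,u(q)^\delta$ with $u(q):=1+\ln(1+C/q)\ge1$. Since $u'(q)=-\frac{C}{q(q+C)}$, we get
\[
    \phi_\delta'(q)=u^{\delta-1}\Big(u-\frac{\delta C}{q+C}\Big).
\]
This is positive because $u\ge1>\frac{C}{q+C}\ge\frac{\delta C}{q+C}$, which gives monotonicity.

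For concavity, I would avoid a direct second-derivative computation if it turns messy. Instead, use $h(q):=q\ln(1+C/q)$ from the proof of Lemma~\ref{lem:log-sum}, which is concave and non-decreasing. Then
\[
    \psi(q):=q\,u(q)=q+h(q)
\]
is concave, positive and non-decreasing. Now write
\[
    \phi_\delta(q)=q^{1-\delta}\psi(q)^\delta.
\]
This is a weighted geometric mean of two non-negative concave functions, $q$ and $\psi$. The map $(a,b)\mapsto a^{1-\delta}b^\delta$ is concave and non-decreasing in each argument on $[0,\infty)^2$. Its composition with concave functions is therefore concave, and this gives concavity of $\phi_\delta$ on $(0,\infty)$ without heavy calculus.

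At $q=0$, continuity holds: $0\le\phi_\delta(q)\le q\,u(q)=\psi(q)\to0$, as noted for $h$ in Lemma~\ref{lem:log-sum}. A function that is concave on $(0,\infty)$, continuous at $0$ and equal to $0$ there is concave on $[0,\infty)$.

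For the consequence, set $B:=\sum_{t<T}q_t\le\bar B$. Jensen's inequality for the concave $\phi_\delta$ gives
\[
    \sum_{t<T}\phi_\delta(q_t)\le T\phi_\delta(B/T).
\]
Monotonicity then gives
\[
    T\phi_\delta(B/T)\le T\phi_\delta(\bar B/T)=\bar B\,[1+\ln(1+TC/\bar B)]^\delta.
\]

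The only delicate step is concavity. The geometric-mean composition argument should handle it cleanly. If that argument is doubted, the fallback is a direct check of the sign of $\phi_\delta''$. That check uses $\delta\le1$ and the inequality $u\ge1$.
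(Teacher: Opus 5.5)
Your proof is correct. It differs from the paper's only in how concavity is established.

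\emph{The paper's route.} It verifies concavity by computing the second derivative directly. With $w:=1+\ln(1+C/q)$ and $\rho:=C/(q+C)$, it obtains $\phi_\delta''(q)=-\frac{\delta\rho^2}{q}w^{\delta-2}(w+1-\delta)\le0$. Here $\delta\le1$ enters only through the sign of the factor $w+1-\delta$.

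\emph{Your route.} You factor $\phi_\delta(q)=q^{1-\delta}\psi(q)^\delta$ with $\psi=q+h$. The weighted geometric mean $(a,b)\mapsto a^{1-\delta}b^\delta$ is concave and coordinatewise non-decreasing on $[0,\infty)^2$. Hence its composition with the non-negative concave functions $q$ and $\psi$ is concave.

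\emph{Comparison.} Your argument reuses the concavity of $h$ already proved in Lemma~\ref{lem:log-sum} and avoids the second-derivative algebra. It also makes the role of $\delta\in[0,1]$ structural: $\phi_\delta$ is a geometric interpolation between the cases $\delta=0$ and $\delta=1$, both already known to be concave. The same reasoning applies to any such interpolation of non-negative concave, non-decreasing functions. The paper's computation is self-contained and gives an explicit formula for $\phi_\delta''$, which shows strict concavity for $C,\delta>0$, but that extra information is not needed.

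Your first-derivative monotonicity check, the continuity at $0$, and the Jensen-plus-monotonicity step coincide with the paper's.
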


\begin{proof}
If $C=0$, then $\phi_\delta(q)=q$. Next, let $C>0$, $w:=1+\ln(1+C/q)\ge1$, and $\rho:=C/(q+C)\in(0,1)$; then $\frac{\dif w}{\dif q}=-\frac\rho q$ and $\frac{\dif\rho}{\dif q}=-\frac{\rho^2}{C}$. Then $\phi_\delta'(q)=w^{\delta-1}(w-\delta\rho)>0$ and
\[
    \phi_\delta''(q)=-\frac{\delta\rho^2}{q}\,w^{\delta-2}\left(w+1-\delta\right)\le0 .
\]
Since $\phi_\delta(q)\le q(1+\ln(1+C/q))\to0$ as $q\downarrow0$, the function is continuous, non-decreasing, and concave on $[0,\infty)$, and the last claim follows from Jensen's inequality with equal weights and monotonicity, as in Lemma~\ref{lem:log-sum}, which concludes the proof.
\end{proof}

\begin{proposition}[\clipsstm on the Lorentz scale]\label{prop:lorentz-sstm}
Let Assumptions~\ref{as:smooth}--\ref{as:convex} hold for $Q=B_{3R}(x^*)$, where $R\ge\|x^0-x^*\|>0$, and let Assumption~\ref{as:lorentz} hold with some $s\in[2,\infty]$. Then Theorem~\ref{thm:sstm} holds with $\alpha=2$ and the clipping constants
\[
    b:=4,\qquad v:=v_s(\Lambda_K^{\rm S}),\qquad v_s(\Lambda):=2\left[1+\ln(1+\Lambda)\right]^{\delta_s},\qquad \Lambda_K^{\rm S}=\max\{800K,\Lambda_K^0\}.
\]
In particular, with $a=a_*(K,\beta)$, $\mathcal L_K:=1+\ln(1+\Lambda_K^{\rm S})$, and $v=2\mathcal L_K^{\delta_s}$, on an event of probability at least $1-\beta$, we have
\[
    f(y^K)-f^*\le\max\left\{\frac{1058400\,LR^2H^2}{K^2},\ 12\sqrt{2700\,v}\,\sigma_0R\sqrt{\frac HK}\,,\ 6\cdot49\cdot5400\,v\,\sigma_1^2LR^2\frac HK\right\};
\]
in other words, the additive term contains $\mathcal L_K^{\delta_s/2}$ and the multiplicative term contains $\mathcal L_K^{\delta_s}$.
\end{proposition}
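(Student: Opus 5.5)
The plan is to rerun the proof of Theorem~\ref{thm:sstm} verbatim at $\alpha=2$. The only change is the source of the clipping constants: the clipping lemma is replaced by a Lorentz-scale version. The probabilistic skeleton uses only a bias bound $\|\theta_{t+1}^b\|\le bD_t\lambda_t^{-1}$ and a second-moment bound $\E[\|\theta_{t+1}^u\|^2\mid\G_t]\le\phi_{\lambda_t}(D_t)$, with $\phi$ concave and non-decreasing so that Lemma~\ref{lem:log-sum} applies. These ingredients are the induction (Steps~1--10), the gradient budget (Lemma~\ref{lem:budget}), the potential inequality (Lemma~\ref{lem:potential}), and the Freedman bounds. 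So it suffices to establish these two bounds under Assumption~\ref{as:lorentz} with $b=4$ and $\phi_\lambda(q)=q\,v_s((3\lambda/2)^2/q)$, and then recheck Step~3$'$ with the new $\Lambda_K^{\rm S}$.

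\emph{Clipping lemma on $L^{2,s}$.} Let $Y=\|X-x\|$ with $\|Y\|_{2,s}^2\le D:=A(x)$ and $\|x\|\le\lambda/2$. For the bias, reuse Step~2 of Lemma~\ref{lem:weak-clip}: $\|\E\tX-x\|\le\E(Y-\lambda/2)_+$. By Lemma~\ref{lem:lorentz-embed}, $\PP\{Y>u\}\le 2D/u^2$ (since $\|Y\|_{2,\infty}^2\le2\|Y\|_{2,s}^2$). Then $\int_{\lambda/2}^\infty 2Du^{-2}\,\dif u=4D/\lambda$, which gives $b=4$ uniformly in $s$. For the second moment, reuse the contraction~\eqref{eq:clip-contract}: $\E\|\tX-x\|^2\le\E\min\{Y^2,(3\lambda/2)^2\}$. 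Lemma~\ref{lem:lorentz-trunc} with $c=3\lambda/2$ bounds this by $2D[1+\ln(1+(3\lambda/2)^2/D)]^{\delta_s}=D\,v_s((3\lambda/2)^2/D)$. The conditional and adaptive versions follow exactly as in Corollary~\ref{cor:conditional} and Lemma~\ref{lem:adaptive}; only the distribution function enters, via the freezing formula.

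\emph{Summation and the fixed-point check.} In Step~6, replace Lemma~\ref{lem:log-sum} by Lemma~\ref{lem:log-sum-delta} with $\delta=\delta_s$ and $C=C_{\rm S}=(R/(20H))^2$. This gives $S_T\le 30^0\frac{R^2}{H}\cdot\frac{q_2}{2}\,v_s(\Lambda(a))$. It then remains to verify~\eqref{eq:sstm-fixed-point}, i.e.\ $\Lambda(a_*)\le\Lambda_K^{\rm S}=\max\{800K,\Lambda_K^0\}$. The case $a_*=176400H^2$ gives $\Lambda(a_*)=\Lambda_K^0$ exactly as before. In the other two cases, $\Lambda(a_*)\le\frac{8K}{400Hq_2}$ with $q_2=\min\{1/(1350v),1/(15\cdot30\cdot b)\}$ and $b=4$. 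The $b$-branch gives $\frac{8\cdot1800}{400}\frac{K}{H}=36K/H\le800K$. The $v$-branch gives $27\frac KH\,v_s(\Lambda_K^{\rm S})\le54\frac KH(1+\ln(1+\Lambda_K^{\rm S}))$. This must be bounded by $\Lambda_K^{\rm S}$; using $\Lambda_K^{\rm S}\ge800K$, monotonicity of $\Lambda/(1+\ln(1+\Lambda))$, and $H\ge\ln(4K)>1$, it reduces to the elementary inequality $54(1+\ln(1+800K))\le800H$, checked at $K=1$ and by monotonicity of the difference. I expect this bookkeeping to be the main obstacle. The constant $800=200b$ must be large enough, and the factor $2$ in $v_s$ (versus $(3/2)^0$ before) must not break the chain. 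If the constants are tight, I would enlarge $800$ or use $H\ge\ln 4$ more carefully.

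\emph{Final bound.} Substituting into~\eqref{eq:sstm-rate} with $\alpha=2$, $r=1/2$ gives the stated rate. Here $q_2$ is determined by its $v$-branch, since $1350v\ge2700>1800=450b$; hence $c_{0,2}=\sqrt{2700v}$ and $c_{1,2}=49\cdot5400v$. Since $v=2\mathcal L_K^{\delta_s}$, the additive term carries $\mathcal L_K^{\delta_s/2}$ and the multiplicative term carries $\mathcal L_K^{\delta_s}$.
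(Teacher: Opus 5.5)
Your proposal is correct and follows the paper's proof essentially line by line. The steps match: $b=4$ from the tail bound $\PP\{Y>u\}\le 2D/u^2$ and the weak-$L^2$ bias integral; the truncated second moment via the contraction and Lemma~\ref{lem:lorentz-trunc}; summation via Lemma~\ref{lem:log-sum-delta}; the re-check of Step~3$'$ reducing to $54(1+\ln(1+800K))\le 800H$; and $q_2$ being fixed by its $v$-branch because $v\ge2$.

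One small point needs tightening. Your intermediate bound $S_T\le\frac{R^2}{H}\cdot\frac{q_2}{2}\,v_s(\Lambda(a))$ only closes at $a=a_*$, because $\Lambda(a)$ is non-decreasing in $a$. For $a>a_*$ you need, as in Step~6, the monotonicity of $u\mapsto u\,v_s(KC_{\rm S}/u)$ (also given by Lemma~\ref{lem:log-sum-delta}). That monotonicity bounds $\bar\Psi_K(a)\,v_s(\Lambda(a))$ by $\bar\Psi_K(a_*)\,v_s(\Lambda(a_*))$ before you invoke $\Lambda(a_*)\le\Lambda_K^{\rm S}$.
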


\begin{proof}
The proof of Theorem~\ref{thm:sstm} uses the noise assumption only through the conditional clipping bounds~\eqref{eq:sstm-clip-bounds}, i.e., through the following three properties of $(b,v)$: (i)~for $\|x\|\le\lambda/2$ and $X$ with $\E X=x$, $\|\E\clip(X,\lambda)-x\|\le bD/\lambda$; (ii)~$\E\|\clip(X,\lambda)-x\|^2\le\phi_\lambda(D)$ with $\phi_\lambda(D)=D\,v_s((3\lambda/2)^2/D)$, where $v_s$ is non-decreasing and $q\mapsto q\,v_s(C/q)$ is concave and non-decreasing for every $C\ge0$ (this is exactly what Lemma~\ref{lem:log-sum} provides in Step~6, both for the summation and for the monotonicity in $a$, and what makes the right-hand side of~(ii) non-decreasing in $D$); (iii)~the fixed point~\eqref{eq:sstm-fixed-point}. Here $D=\sigma_0^2+\sigma_1^2\|x\|^2$ bounds $\|\,\|X-x\|\,\|_{2,s}^2$, and the conditional versions follow exactly as in Corollary~\ref{cor:conditional} and Lemma~\ref{lem:adaptive}, since the quasi-norm bound is applied pointwise in $x$ via the freezing formula. To prove~(i), we use Lemma~\ref{lem:lorentz-embed}, which gives $\PP\{\|X-x\|>u\}\le2D/u^2$, i.e., the tail condition from Lemma~\ref{lem:weak-clip} with $\alpha=2$ and $2D$ in place of $A(x)$, and, therefore, \eqref{eq:weak-clip-bias} gives the bias bound $b_2\cdot2D/\lambda=4D/\lambda$. To prove~(ii), we use the contraction~\eqref{eq:clip-contract} and Lemma~\ref{lem:lorentz-trunc} with $c=3\lambda/2$, which give $\E\|\clip(X,\lambda)-x\|^2\le\E\min\{\|X-x\|^2,(3\lambda/2)^2\}\le D\,v_s((3\lambda/2)^2/D)$, and the required monotonicity and concavity follow from Lemma~\ref{lem:log-sum-delta} with $\delta=\delta_s$ (the factor $2$ is harmless). To prove~(iii), we repeat Step~3$'$ with $b=4$ and $v\le2(1+\ln(1+\Lambda_K^{\rm S}))$: the second branch is $9bK=36K\le\Lambda_K^{\rm S}$, and the first branch is at most $\frac{8\cdot1350\cdot2}{400}\frac KH\big(1+\ln(1+\Lambda_K^{\rm S})\big)=54\frac KH\big(1+\ln(1+\Lambda_K^{\rm S})\big)$; since $\Lambda_K^{\rm S}\ge800K$ and $\Lambda\mapsto\Lambda/(1+\ln(1+\Lambda))$ is increasing, it is sufficient to show that $54(1+\ln(1+800K))\le800H$, which holds for $K=1$ ($416<1109$, using $H\ge\ln4$) and whose two sides have increasing difference in $K$ (the derivative is $800/K-43200/(1+800K)>0$). The displayed bound is~\eqref{eq:sstm-rate} at $\alpha=2$: since $v\ge2>4/3$, we have $q_2=\min\{\frac1{1350v},\frac1{1800}\}=\frac1{1350v}$; hence, $c_{0,2}=\sqrt{2/q_2}=\sqrt{2700v}$ and $c_{1,2}=49\cdot4/q_2=49\cdot5400v$, which concludes the proof.
\end{proof}

\begin{remark}[Sharpness of the exponent]\label{rem:lorentz-sharp}
For $2<s<\infty$, the exponent $\delta_s$ in~\eqref{eq:lorentz-trunc} cannot be decreased uniformly over the unit ball of $L^{2,s}$ and over $c$: for $T\ge1$, the non-increasing function $Y_T(t):=c_T\min\{t^{-1/2},e^{T/2}\}$ on $(0,1]$ (its own rearrangement up to the point $t=1$) with $c_T:=(T+2/s)^{-1/s}$ satisfies $\|Y_T\|_{2,s}=1$ and $\E Y_T^2=c_T^2(1+T)$, which is the truncated second moment at level $c=c_Te^{T/2}\ge Y_T$; it grows as $T^{1-2/s}$, while $\ln(1+c^2)\sim T$. For $s=\infty$, the full logarithm is necessary, see Remark~\ref{rem:alpha-two}. This sharpness statement concerns the truncated second-moment bound over the Lorentz unit ball. Proposition~\ref{prop:lorentz-opt-lower} below separately establishes an optimization lower bound and proves the necessity of the exponent $\frac12-\frac1s$ in the additive noise term at fixed confidence; it does not establish the optimality of the full logarithmic dependence in the \clipsstm guarantee or of its multiplicative term.
\end{remark}

\begin{remark}[Comparison with the moment guarantee]\label{rem:lorentz-moment}
The orders in $K$ from Proposition~\ref{prop:lorentz-sstm} can also be recovered from Theorem~\ref{thm:sstm} under Assumption~\ref{as:moment}. For $1<p<2\le s<\infty$, writing $\E|Y|^p=\int_0^1[t^{1/2}Y^*(t)]^p\,t^{1-p/2}\frac{\dif t}{t}$ and applying H\"older's inequality with the exponents $s/p$ and $s/(s-p)$, we get
\begin{equation}\label{eq:lorentz-moment-embed}
    \|Y\|_p\le C_{p,s}\|Y\|_{2,s},\qquad C_{p,s}:=\left(\frac{2(s-p)}{s(2-p)}\right)^{\frac1p-\frac1s},
\end{equation}
and, for $s=\infty$, $C_{p,\infty}=(2/(2-p))^{1/p}$ (see Proposition~\ref{prop:embeddings}). As a consequence, Assumption~\ref{as:lorentz} implies Assumption~\ref{as:moment} with the exponent $p$ and the parameters $(C_{p,s}\sigma_0,C_{p,s}\sigma_1)$. {Choose $p_K:=2-\min\{\frac12,\frac{1}{\ln(eK)}\}\in[\frac32,2)$. The constants $c_{0,p},c_{1,p}$ from Theorem~\ref{thm:sstm} under Assumption~\ref{as:moment} are bounded uniformly in $p\in[\frac32,2)$. We next verify the uniform embedding bound $C_{p_K,s}\le4[\ln(eK)]^{1/2-1/s}$.}

{Set $T:=\ln(eK)$ and $p:=p_K$. If $1\le T\le2$, then $p=3/2$, the base in~\eqref{eq:lorentz-moment-embed} is at most $4$, and its exponent is at most $2/3$. Hence $C_{p,s}\le4^{2/3}\le4T^{1/2-1/s}$. If $T\ge2$, the base is at most $2T$ and $1/p-1/s=1/2-1/s+1/(2pT)$. Since $p\ge3/2$ and $\ln(2T)/T$ decreases for $T\ge2$,}
\[
    {C_{p,s}\le\sqrt2\,T^{1/2-1/s}\exp\!\left(\frac{\ln(2T)}{2pT}\right)
    \le\sqrt2\,e^{\ln4/6}\,T^{1/2-1/s}<4T^{1/2-1/s}.}
\]
{The case $s=\infty$ follows by taking the limit. Also, $(2-p_K)\ln K\le1$ and $H>1$ give $K^{-(p_K-1)/p_K}\le e^{1/(2p_K)}K^{-1/2}$ and $H^{(p_K-1)/p_K}\le H^{1/2}$. Applying Theorem~\ref{thm:sstm} with its prescribed parameters therefore gives, with probability at least $1-\beta$,}
\[
    f(y^K)-f^*\lesssim\frac{LR^2H^2}{K^2}+\sigma_0R\sqrt{\frac HK}\,[\ln(eK)]^{\frac12-\frac1s}+\sigma_1^2LR^2\frac HK\,[\ln(eK)]^{1-\frac2s},\qquad H=\ln\frac{4K}{\beta},
\]
{The constant in this bound is absolute. For fixed problem parameters and confidence, this reduction gives the same powers of $K$ and $\ln K$ as Proposition~\ref{prop:lorentz-sstm}. The direct proof provides explicit finite-horizon constants and parameter choices under the original Lorentz condition, with $1+\ln(1+\Lambda_K^{\rm S})$ in place of $\ln(eK)$.}

{Theorem~\ref{thm:maximal} improves the reduction by replacing $H=\ln(4K/\beta)$ with $h=\ln(4/\beta)$, yielding Corollary~\ref{cor:lorentz-upper}. For fixed problem parameters, fixed confidence, and $\sigma_0>0$, the additive terms in the direct and improved bounds have orders $\sigma_0RK^{-1/2}(\ln K)^{1-1/s}$ and $\sigma_0RK^{-1/2}(\ln K)^{1/2-1/s}$, respectively. The improved bound therefore removes a factor $\sqrt{\ln K}$ and matches the additive lower bound asymptotically (see Remark~\ref{rem:lorentz-opt-lower-scope}).}

{This improvement does not follow by substituting $h$ directly into the parameters of Proposition~\ref{prop:lorentz-sstm}. It uses the moment theorem with exponent $p_K$ and noise parameters $(C_{p_K,s}\sigma_0,C_{p_K,s}\sigma_1)$. For $s=2$, the finite-variance theorem with exponent $2$ is also available. The same moment-embedding argument applies to \clipsgd.}
\end{remark}

\paragraph{An optimization lower bound on the Lorentz scale.} Next, we show that the Lorentz logarithm is necessary for optimization itself, in the additive noise class. The oracle model is the same as in Proposition~\ref{prop:lower-rate}: the algorithm makes at most $K$ adaptive, possibly randomized, calls to a stochastic gradient oracle, which returns fresh independent samples at successive calls; query points, the decision to stop, and the output are measurable functions of the known class parameters, the previous responses, and an independent random seed; no exact function values are supplied. For a real random variable $Y$ with law $P$, we write $\|Y\|_{2,s;P}$ for the quasi-norm defined above (the rearrangement is taken under $P$) and omit $P$ when it is clear from the context.

\begin{proposition}[Optimization lower bound under additive $L^{2,s}$ noise]\label{prop:lorentz-opt-lower}
Let $K\ge1$ be an integer, $L,R,\sigma_0>0$, and $s\in[2,\infty]$. For every algorithm in the above oracle model that makes at most $K$ stochastic gradient calls and returns a point $\hat x\in\R$, there exist a convex $L$-smooth quadratic $f$ on $\R$ and an unbiased stochastic gradient oracle $g(x,\xi)$ with $|x^0-x^*|=R$ and $\|g(x,\xi)-\nabla f(x)\|_{2,s}\le\sigma_0$ for every $x\in\R$ (the law of the noise does not depend on $x$, and Assumption~\ref{as:lorentz} holds on $\R$ with $(\sigma_0,0)$) such that
\begin{equation}\label{eq:lorentz-opt-lower}
    \PP\left\{f(\hat x)-f^*\ge\frac1{64}\min\left\{LR^2,\ \frac{\sigma_0R}{\sqrt K}\,[\ln(eK)]^{\frac12-\frac1s}\right\}\right\}\ge\frac13 ,
\end{equation}
where the probability is taken w.r.t.\ the oracle samples and the internal randomness of the algorithm, which may know $L,R,\sigma_0,s$, and $K$. The constants $1/64$ and $1/3$ do not depend on $s$.
\end{proposition}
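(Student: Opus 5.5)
Two-point argument as in Proposition~\ref{prop:lower-rate}, but with quadratics $f_v(x)=\frac L2(x-v\Delta)^2$, $v\in\{-1,+1\}$, and state-independent noise. We set $x^0$ so that $|x^0-x^*_v|=R$ for both $v$; for instance $x^*_v=v\Delta$ with $\Delta\le R$ and $x^0$ placed at distance $R$ from both minimizers. The simplest choice is $\Delta=R$ and $x^0=0$.

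The oracle is $g_v(x,\xi)=L(x-v\Delta)+\zeta_v$. Since the gradients differ only by the constant shift $2L\Delta$, we can write $g_v(x,\xi)=Lx+(\zeta_v-vL\Delta)$. The task is to choose two laws $P_\pm$ of $W_v:=\zeta_v-vL\Delta$ such that:
\begin{itemize}
\item the means of $W_\pm$ are $\mp L\Delta$;
\item the centered variables satisfy $\|W_v-\E W_v\|_{2,s}\le\sigma_0$;
\item the product laws $P_+^{\otimes K}$ and $P_-^{\otimes K}$ have total variation at most $1/3$.
\end{itemize}
Then any algorithm's outputs on the two instances are coupled with probability $\ge2/3$. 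As in the proof of Proposition~\ref{prop:lower-rate}, on one instance $\hat x$ lies on the wrong side of $0$ with probability $\ge1/3$. There $f_v(\hat x)\ge \frac L2\Delta^2$, which gives the bound $\frac{L\Delta^2}{2}$. Choosing $L\Delta\asymp\frac{\sigma_0}{\sqrt K}[\ln(eK)]^{1/2-1/s}$, capped at $LR$, yields $\frac{L\Delta^2}2\gtrsim\min\{LR^2,\ \Delta\cdot\sigma_0 K^{-1/2}[\ln(eK)]^{1/2-1/s}\}$. This needs care: $L\Delta^2$ versus $R\cdot(\text{gap})$. To get the product $R\cdot\sigma_0K^{-1/2}\ell^{\cdot}$ rather than its square over $L$, I would instead use Huber-type $f_v$ with slope $\epsilon:=L\Delta$ capped, exactly as in Proposition~\ref{prop:lower-rate}. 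But the statement asks for a quadratic, so I would take $\Delta=R$ in the regime where the gap is $\ge LR$, and otherwise a quadratic with small curvature. Since $L$-smooth only requires curvature $\le L$, use $f_v(x)=\frac{\mu}{2}(x-vR)^2$ with $\mu:=\epsilon/R\le L$. Then the gradient shift is $\epsilon$ and the wrong-side loss is $\mu R^2/2=\epsilon R/2$, giving the displayed form.

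The core construction is a noise pair with a small mean shift, Lorentz norm $\sigma_0$, and small Hellinger distance. I would take a symmetric base law $Q$ with the critical tail, the centered Pareto-type variable with $Y^*(t)\approx\sigma_0 c\,t^{-1/2}[\ln(e/t)]^{-1/s}$ truncated at $t\ge t_K:=1/K$. Then $P_\pm$ are tilts $dP_\pm=(1\pm h)\,dQ$ with $h(y)=\theta\,\mathrm{sign}(y)\ind_{\{|y|\ge\tau\}}$, restricted to the large atoms. Equivalently, with probability $\asymp1/K$ per sample, there is a large value of size $\asymp\sigma_0\sqrt K[\ln(eK)]^{-1/s}$ whose sign is biased.

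The mean shift is then $\epsilon\asymp\theta\,\E|Y|\ind_{|Y|\ge\tau}$. Summing the tail over dyadic levels from $t=1/K$ to $1$ gives $\E[|Y|\ind]\cdot$ contributions of order $\sigma_0\sqrt{t}[\ln]^{-1/s}$. Cleaner is to spread the bias over all levels $t\in[1/K,1]$ with weight $h$ chosen by Cauchy--Schwarz optimality, $h(y)\propto |y|$ on $|y|\le\sigma_0\sqrt K$. Then $\epsilon=\theta\,\E Y^2\ind_{|Y|\le\sigma_0\sqrt K}/\|\cdot\|$ and $\chi^2(P_+\|P_-)\asymp\theta^2\E Y^2\ind$. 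Requiring $K\chi^2\lesssim1$ gives $\epsilon\lesssim\sqrt{\E Y^2\ind_{|Y|\le\sigma_0\sqrt K}/K}$. By Remark~\ref{rem:lorentz-sharp}, the truncated second moment of the extremal $L^{2,s}$ function is $\asymp\sigma_0^2[\ln(eK)]^{1-2/s}$, which gives exactly $\epsilon\asymp\sigma_0K^{-1/2}[\ln(eK)]^{1/2-1/s}$.

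The main obstacle is checking that the tilted laws remain in the $\sigma_0$-ball of $L^{2,s}$ after recentering. The factor $1\pm h$ with $|h|\le1/2$ changes the distribution function by a factor in $[1/2,3/2]$, and the rearrangement then scales at most by $\sqrt{3/2}$ in $t^{1/2}Y^*$. The mean shift $\epsilon\le\sigma_0$ perturbs the norm by a bounded factor via Lemma~\ref{lem:lorentz-embed}, so it suffices to rescale the base law by an absolute constant, uniformly in $s$. The remaining ingredients are routine: the tensorization $\TV\le\sqrt{K\chi^2}$, and handling adaptivity, where the noise is state-independent so adaptivity is irrelevant to the coupling.
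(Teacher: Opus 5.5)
Your proposal is essentially the paper's proof. It uses the same instances $f_v(x)=\frac\mu2(x-vR)^2$ with $\mu=\Delta/R\le L$ and $x^0=0$, state-independent noise given by odd linear tilts $1-v\vartheta z$ of a symmetric truncated extremal law, $\chi^2$ tensorization forcing $\Delta\asymp\sqrt{\E Z_0^2/K}$, and the same two-point error argument. Fix the details as the paper does: the tilt must be $h(y)=\vartheta y$, since with $|y|$ it neither normalizes nor shifts the mean; the base must be exactly $ac_T\min\{t^{-1/2},e^{T/2}\}$ with $T=\ln(eK)$ from Remark~\ref{rem:lorentz-sharp}, not the profile $t^{-1/2}[\ln(e/t)]^{-1/s}$, whose truncated $L^{2,s}$ quasi-norm grows like $(\ln\ln K)^{1/s}$ and would cost that factor in the rate; and the noise-class check is cleanest via the exact invariance of the law of $|Z|$ under an odd tilt of a symmetric law, followed by $(Z+c)^*\le Z^*+|c|$ and Minkowski in $L^s(\dif t/t)$, because $\|\cdot\|_{2,s}$ is only a quasi-norm for $s>2$ and Lemma~\ref{lem:lorentz-embed} alone gives no triangle inequality.
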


\begin{proof}
Without loss of generality, we assume that $x^0=0$ (this can be achieved by translation). Let $T:=\ln(eK)\ge1$, $a:=\sigma_0/4$, $c_T:=(T+2/s)^{-1/s}$ for $s<\infty$, and $c_T:=1$ for $s=\infty$. Let $U$ be uniformly distributed on $(0,1)$ and $\epsilon$ be an independent Rademacher sign, and define the symmetric reference law $P_0$ via $Z_0:=a\epsilon c_T\min\{U^{-1/2},e^{T/2}\}$. The rearrangement of $|Z_0|$ is $ac_T\min\{t^{-1/2},e^{T/2}\}$ for $0<t<1$ (its value at the single point $t=1$ is immaterial), and, therefore, for $s<\infty$,
\[
    \|Z_0\|_{2,s}^s=a^sc_T^s\left(e^{sT/2}\int_0^{e^{-T}}t^{\frac s2-1}\dif t+\int_{e^{-T}}^1\frac{\dif t}{t}\right)=a^sc_T^s\left(\tfrac2s+T\right)=a^s,
\]
and, for $s=\infty$, the supremum equals $a$; similarly,
\[
    V:=\E_0Z_0^2=a^2c_T^2(1+T),\qquad M:=\sup|Z_0|=ac_Te^{T/2}.
\]
Let $\Delta:=\min\{LR,\frac18\sqrt{V/K}\}$, $\vartheta:=\Delta/V$, and $\mu:=\Delta/R\le L$. For $v\in\{-1,+1\}$, we define the tilted laws $P_v$ via $\frac{\dif P_v}{\dif P_0}(z):=1-v\vartheta z$. They integrate to one since $\E_0Z_0=0$, and they are positive since $\vartheta M\le\frac{M}{8\sqrt{KV}}=\frac{e^{T/2}}{8\sqrt{K(T+1)}}=\frac{\sqrt e}{8\sqrt{T+1}}<\frac14$. For $Z\sim P_v$, we have $\E_vZ=-v\vartheta V=-v\Delta$.

\emph{Noise class.} For $u\ge0$, we have $P_v\{|Z|>u\}=\E_0[(1-v\vartheta Z_0)\ind_{\{|Z_0|>u\}}]=P_0\{|Z_0|>u\}$, since $z\ind_{\{|z|>u\}}$ is odd and $P_0$ is symmetric; hence, the rearrangement of $|Z|$ under $P_v$ coincides with the one of $|Z_0|$ under $P_0$, and $\|Z\|_{2,s;P_v}=a$. Next, $c_T\le1$ and $(T+1)/K=(\ln K+2)/K\le2$ for $K\ge1$ give $\Delta\le\frac{ac_T}8\sqrt{(T+1)/K}\le\frac{\sqrt2}{8}a<\frac a4$. Since $|Z+v\Delta|\le|Z|+\Delta$, we have $P_v\{|Z+v\Delta|>u+\Delta\}\le P_v\{|Z|>u\}$ for all $u\ge0$, i.e., $(Z+v\Delta)^*(t)\le Z^*(t)+\Delta$ for $0<t<1$. For $s<\infty$, Minkowski's inequality in $L^s((0,1),\dif t/t)$ gives
\[
    \|Z+v\Delta\|_{2,s;P_v}\le\|Z\|_{2,s;P_v}+\Delta\left(\int_0^1t^{\frac s2}\frac{\dif t}{t}\right)^{1/s}=a+\left(\tfrac2s\right)^{1/s}\Delta\le a+\Delta ,
\]
and, for $s=\infty$, the same bound follows by taking suprema. Thus, $\|Z+v\Delta\|_{2,s;P_v}\le a+\Delta<\frac54a=\frac5{16}\sigma_0<\sigma_0$.

\emph{Instances.} Let $f_v(x):=\frac\mu2(x-vR)^2$ and $\nabla f_{v,Z}(x):=\mu x+Z$ with $Z\sim P_v$ (as a stochastic function, $f_{v,Z}(x)=\frac\mu2x^2+Zx+\frac\mu2R^2$, and $\E_vf_{v,Z}=f_v$). Both objectives are convex and $L$-smooth on $\R$, with minimizers $vR$ and $f_v^*=0$; $\E_v[\mu x+Z]=\mu(x-vR)=\nabla f_v(x)$, and the noise is $\nabla f_{v,Z}(x)-\nabla f_v(x)=Z+v\Delta$, whose law does not depend on $x$ and satisfies the required bound by the previous step.

\emph{Indistinguishability.} With $P_\pm:=P_{\pm1}$ and $\chi^2(P\|Q):=\int(\dif P/\dif Q-1)^2\dif Q$, we have
\[
    \chi^2(P_+\|P_-)=\E_0\frac{4\vartheta^2Z_0^2}{1+\vartheta Z_0}\le\frac{16}{3}\,\frac{\Delta^2}{V}\le\frac{1}{12K},
\]
by $1+\vartheta Z_0\ge3/4$ and $\Delta^2\le V/(64K)$. By independence, $1+\chi^2(P_+^{\otimes K}\|P_-^{\otimes K})=(1+\chi^2(P_+\|P_-))^K\le e^{1/12}$, and the Cauchy--Schwarz inequality gives $\TV(P_+^{\otimes K},P_-^{\otimes K})\le\frac12\sqrt{e^{1/12}-1}<\frac14$. The response to a query $x$ is $\mu x+Z$ with $\mu$ common for both hypotheses, meaning that the whole transcript and the output are the same measurable function of $K$ independent observations $Z_1,\dots,Z_K$ (unused ones are supplied to the algorithms with fewer calls) and of the seed under both hypotheses; hence, the output laws also have total variation smaller than $1/4$. Let $A:=\{\hat x>0\}$. Then $\PP_+(A^c)+\PP_-(A)=1-(\PP_+(A)-\PP_-(A))>\frac34$, meaning that one of the two error probabilities exceeds $\frac38>\frac13$. On an error under hypothesis $v$, we have $|\hat x-vR|\ge R$ and $f_v(\hat x)-f_v^*\ge\frac{\mu R^2}{2}=\frac{\Delta R}{2}$. Finally, $c_T\ge(T+1)^{-1/s}$ gives $\sqrt V=ac_T\sqrt{T+1}\ge a(T+1)^{\frac12-\frac1s}\ge aT^{\frac12-\frac1s}$ (also for $s=\infty$), and, therefore,
\[
    \frac{\Delta R}{2}=\min\left\{\frac{LR^2}2,\frac{R\sqrt V}{16\sqrt K}\right\}\ge\frac1{64}\min\left\{LR^2,\frac{\sigma_0R}{\sqrt K}T^{\frac12-\frac1s}\right\}.\qedhere
\]
\end{proof}

\begin{remark}[Scope of the optimization lower bound]\label{rem:lorentz-opt-lower-scope}
For fixed $L,R,\sigma_0>0$ and large $K$, the stochastic term in~\eqref{eq:lorentz-opt-lower} is the smaller one, and, therefore, at any fixed failure probability $\beta<1/3$, no uniform guarantee over the additive $L^{2,s}$ class can improve the order $\sigma_0R[\ln(eK)]^{1/2-1/s}/\sqrt K$: this recovers the finite-variance order at $s=2$, proves the necessity of the Lorentz logarithm for every $s>2$, and gives the factor $\sqrt{\ln(eK)}$ at $s=\infty$. Unlike Remark~\ref{rem:lorentz-sharp}, it is a statement about the optimization error of every algorithm in the stated oracle model. Nevertheless, it does not establish the optimality of the whole \clipsstm guarantee: the additive term from Proposition~\ref{prop:lorentz-sstm} contains $\sqrt H\,\mathcal L_K^{1/2-1/s}$, and, for fixed $L,R,\sigma_0>0$, $\sigma_1=0$, and fixed $\beta$, $\Lambda_K^0=(17640LRH/(\sigma_0(K+1)))^2=O([\ln(eK)]^2/K^2)$, meaning that $\Lambda_K^{\rm S}=800K$ for large $K$ and both $H$ and $\mathcal L_K$ are of the order $\ln(eK)$; therefore, a factor of the order $\sqrt{\ln(eK)}$ remains between the upper and the lower bound, and nothing is claimed for the multiplicative term or for the joint dependence on $K$ and a vanishing $\beta$. The construction is a minimax one: the hard instance, its curvature $\mu$, and its noise law depend on $K$ and $s$; each noise law is bounded with variance $\Var_{P_v}(Z+v\Delta)=V-\Delta^2\in[\frac{63}{1024},\frac18]\cdot\sigma_0^2[\ln(eK)]^{1-2/s}$ (this follows from $a^2T^{1-2/s}\le V\le2a^2T^{1-2/s}$ and $\Delta^2\le V/(64K)$), which is not bounded uniformly in $K$ for $s>2$. That is, no hardness is claimed for one fixed infinite-variance distribution, for a class with a prescribed strong convexity constant, or for centrally symmetric noise (the centered tilted laws are asymmetric).
\end{remark}

\subsection{Removing the Horizon Logarithm}\label{app:maximal}

The factors $\ell=\ln\frac{4(K+1)}{\beta}$ and $H=\ln\frac{4K}{\beta}$ appear since the proofs of Proposition~\ref{prop:master} and Theorem~\ref{thm:sstm} distribute the failure probability over the $K+1$ (resp.\ $K$) steps of the induction. This is an artifact of the union bound, not of the heavy tails; under the moment assumption, its removal is studied by \citet{nguyen2023improved}. Let
\begin{equation}\label{eq:h}
    h:=\ln\frac4\beta>1 .
\end{equation}

\begin{theorem}[Horizon-free confidence]\label{thm:maximal}
\begin{enumerate}
    \item Proposition~\ref{prop:master} remains valid when $\ell$ is replaced by $h$ in~\eqref{eq:master-conds}, under either noise assumption and for all $\alpha\in(1,2]$. Consequently, Theorem~\ref{thm:fixed} and Corollary~\ref{cor:fixed} hold with $\ell$ replaced by $h$ throughout.
    \item Theorem~\ref{thm:main} and Corollary~\ref{cor:rate} hold with $\ell$ replaced by $h$ throughout (in $\gamma_L,\gamma_0,\gamma_1,\gamma_2$, $\lambda$, $\Delta_L,\Delta_0,\Delta_1$), under Assumption~\ref{as:moment} with $\alpha\in(1,2]$, and under Assumption~\ref{as:weak} with $\alpha<2$ if $c$ is computed with $v=v_\alpha(\infty)$.
    \item Theorem~\ref{thm:sstm} and Corollary~\ref{cor:sstm-complexity} hold with $H$ replaced by $h$ throughout (in $\lambda_k$, $a_*(K,\beta)$ and~\eqref{eq:sstm-rate}), under Assumption~\ref{as:moment} with $\alpha\in(1,2]$, and under Assumption~\ref{as:weak} with $\alpha<2$ if $q_\alpha$ is computed with $v=v_\alpha(\infty)$.
\end{enumerate}
{In the cases of items~2 and~3, the corresponding bound below holds for each method separately with probability at least $1-\beta$:}
\begin{equation}\label{eq:improved-rates}
\begin{aligned}
    f(\bar x^K)-f^*&\le C_\alpha\left[\frac{LR^2h}{K+1}+\sigma_0R\Big(\frac h{K+1}\Big)^r+\sigma_1^2LR^2\Big(\frac h{K+1}\Big)^{2r}\right],\\
    f(y^K)-f^*&\le C_\alpha\left[\frac{LR^2h^2}{K^2}+\sigma_0R\Big(\frac hK\Big)^r+\sigma_1^2LR^2\Big(\frac hK\Big)^{2r}\right],
\end{aligned}
\end{equation}
with $r=\frac{\alpha-1}{\alpha}$ and constants $C_\alpha$ depending only on $\alpha$; under Assumption~\ref{as:moment}, $C_\alpha$ is bounded uniformly in $\alpha\in[3/2,2]$.
\end{theorem}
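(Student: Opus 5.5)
The plan is to replace the step-by-step union bound in the induction proofs by a single application of the maximal form of Freedman's inequality (Lemma~\ref{lem:freedman}) to \emph{stopped} martingales. For item~1, I would fix the horizon and define the stopping time $\tau$ as the first $T\le K+1$ at which the condition in~\eqref{eq:events} fails. I would then work with the truncated summands $X_t\ind_{\{t<\tau\}}$ and $Y_t\ind_{\{t<\tau\}}$ in place of $X_t$ and $Y_t$. The indicator is $\G_t$-measurable, so these remain martingale differences with the same bounds $c_1$. Moreover, on $\{t<\tau\}$ the localization and the clipping bounds~\eqref{eq:clip-bounds}--\eqref{eq:var-sum} hold deterministically. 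Consequently, the cumulative conditional variances up to \emph{any} $T\le K+1$ are bounded by $G$ almost surely, not only on $E_{T-1}$.

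Next, I would apply the maximal Freedman bound once to each of the two stopped martingales, with $b_1=R^2/5$ and $c_1,G$ defined as before but with $\ell$ replaced by $h$. This gives a failure probability of $2e^{-h}=\beta/2$ for each, hence $\beta$ in total, with no factor $K+1$. On the complementary event, every partial sum up to $\tau\wedge(K+1)$ is at most $R^2/5$. The deterministic bounds on \circledTwo, \circledFour, \circledFive{} carry over verbatim, since they only use~\eqref{eq:master} and~\eqref{eq:master2}, whose $\ell$-dependence becomes $h$-dependence. A contradiction argument then shows $\tau>K+1$: at the step $T=\tau$, the five-term decomposition would give the condition of~\eqref{eq:events} and $R_T\le\sqrt2R$, contradicting the definition of $\tau$. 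Theorem~\ref{thm:fixed} and Corollary~\ref{cor:fixed} follow exactly as before, because the prescribed $\lambda$ and $\Lambda_\lambda$ never involve $\ell$.

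For items~2 and~3, I would reuse the proofs of Theorems~\ref{thm:main} and~\ref{thm:sstm} with $\ell$ or $H$ replaced by $h$. For \clipsstm, this means stopping at the first violation of~\eqref{eq:sstm-events} and applying Lemma~\ref{lem:budget} and Steps~2--9 on $\{t<\tau\}$. The main obstacle is the logarithmic tuning under the weak condition, namely Step~2 of the proof of Theorem~\ref{thm:main} and Step~$3'$ of the proof of Theorem~\ref{thm:sstm}. These steps used $\ell\ge\ln(4(K+1))$ (resp.\ $H\ge\ln(4K)$) to absorb $\ln\Lambda_K$ and show $\Lambda_*\le\Lambda_K$. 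With $h$, which does not depend on $K$, that absorption fails. I would therefore use the constant $v$ that is independent of $\Lambda$: $v_\alpha^{\rm mom}$ under the moment condition, and $v_\alpha(\infty)$ for the weak condition with $\alpha<2$. With this choice the fixed-point check becomes vacuous, and Step~3 of the proof of Theorem~\ref{thm:main} reduces to monotonicity of a linear function of $u$. This is exactly why $\alpha=2$ under Assumption~\ref{as:weak} is excluded.

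Finally, the rates~\eqref{eq:improved-rates} follow from Corollary~\ref{cor:rate} and from~\eqref{eq:sstm-rate} with $h$ substituted. This substitution collects $c^{1/\alpha}40^{(\alpha-1)/\alpha}$ and similar factors, as well as $c_{0,\alpha}$ and $c_{1,\alpha}$, into $C_\alpha$. For the moment case, $v_\alpha^{\rm mom}\le 3/2$ and $b^{\rm mom}_\alpha\le1$, so $q_\alpha$ and $c$ are bounded uniformly in $\alpha\in[3/2,2]$. The bounds $40^{\alpha-1}\le40$ and $2^{\alpha}\le4$ then give the stated uniformity of $C_\alpha$.
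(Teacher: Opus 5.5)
Your proposal is correct and follows the paper's proof. The paper also stops at the first failure of the induction event, applies the maximal Freedman inequality once to each of the two stopped martingale channels (each failing with probability $2e^{-h}=\beta/2$), and closes by contradiction at $T=\tau$. It likewise restricts items~2 and~3 to a $\Lambda$-independent $v$ ($v_\alpha^{\rm mom}$, or $v_\alpha(\infty)$ for $\alpha<2$), since the fixed-point checks in Step~2 and Step~$3'$ are the only places where $\ell\ge\ln(4(K+1))$ or $H\ge\ln(4K)$ is used.
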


\begin{proof}
\emph{Item 1.} We modify the proof of Proposition~\ref{prop:master}, with $h$ in place of $\ell$ in~\eqref{eq:master-conds} and in all quantities derived from it ($c_1=R^2/(5h)$, $G=R^4/(150h)$). The consequences~\eqref{eq:gamma-L} use only $h>1$. The events $E_k$ from~\eqref{eq:events} are non-increasing in $k$, $E_k\in\G_k$, and all deterministic bounds from the proof (localization, \eqref{eq:grad-small}, \eqref{eq:clip-bounds}, \eqref{eq:Sigma}, \eqref{eq:var-sum}, \eqref{eq:master}, \eqref{eq:master2}, the bounds on $\circledTwo,\circledFour,\circledFive$, and the variance bounds~\eqref{eq:var-T1} and its analogue for $\circledThree$) hold on $E_{T-1}$ for every $T\le K+1$ exactly as before, since they use $\ell$ only through~\eqref{eq:master-conds}. Instead of the induction, we define
\[
    \tau:=\min\{T\in\{1,\dots,K+1\}:\ E_T\text{ fails}\}\qquad(\tau:=K+2\text{ if }E_{K+1}\text{ holds}),
\]
meaning that $\{t<\tau\}=E_t\in\G_t$ for $t\le K$, and consider the stopped increments $X_t\ind_{E_t}$ and $Y_t\ind_{E_t}$, $t=0,\dots,K$, where $X_t,Y_t$ are the summands of $\circledOne,\circledThree$. Since $\ind_{E_t}$ is $\G_t$-measurable, these are martingale difference sequences w.r.t.\ $(\G_{t+1})$, bounded by $c_1$, with conditional variances $\sigma_t^2\ind_{E_t}$ and $\widetilde\sigma_t^2\ind_{E_t}$. Their total conditional variances are at most $G$ almost surely: on $\{\tau=T\}$ with $T\le K+1$, the event $E_{T-1}$ holds and the sums reduce to $\sum_{t<T}\sigma_t^2$ and $\sum_{t<T}\widetilde\sigma_t^2$, which are at most $G$ by~\eqref{eq:var-T1} and its analogue; on $\{\tau=K+2\}$, the same holds with $T=K+1$. Therefore, the maximal form of Lemma~\ref{lem:freedman} with $b_1=R^2/5$, $c_1$, and $G$ gives, for the two channels,
\[
    \PP\Big\{\exists T\le K+1:\ \Big|\sum_{t<T}X_t\ind_{E_t}\Big|>\frac{R^2}5\Big\}\le2e^{-h}=\frac\beta2,\qquad
    \PP\Big\{\exists T\le K+1:\ \Big|\sum_{t<T}Y_t\ind_{E_t}\Big|>\frac{R^2}5\Big\}\le\frac\beta2 .
\]
On the complement $E^{\max}$ of the union of these two events, $\PP(E^{\max})\ge1-\beta$, we assume that $\tau=T\le K+1$. Then $E_{T-1}$ holds, implying $\ind_{E_t}=1$ for $t\le T-1$ and the stopped sums up to $T-1$ coincide with $\circledOne$ and $\circledThree$ at time $T$; hence, $|\circledOne|,|\circledThree|\le R^2/5$, and $|\circledTwo|,\circledFour,\circledFive\le R^2/5$ by the deterministic bounds on $E_{T-1}$. As in the paragraph ``Completing the induction'', this gives the condition from~\eqref{eq:events} at $t=T$ and $R_T\le\sqrt2R$, meaning that $E_T$ holds, which contradicts $\tau=T$. Thus, $E_{K+1}\supseteq E^{\max}$, and the conclusion of Proposition~\ref{prop:master} follows as before. The proof of Theorem~\ref{thm:fixed} applies Proposition~\ref{prop:master} with $v=v_\alpha(\Lambda_\lambda)$, where $\Lambda_\lambda$ from~\eqref{eq:Lambda} does not depend on $\ell$, and the proof of Corollary~\ref{cor:fixed} is algebraic in $\ell$; both of them remain valid with $h$.

\emph{Item 2.} The proof of Theorem~\ref{thm:main} verifies~\eqref{eq:master-conds} from $\gamma\le\min\{\gamma_L,\gamma_0,\gamma_1,\gamma_2\}$ via algebraic manipulations in which $\ell$ is a parameter (Step~1 in Appendix~\ref{app:main}); only the fixed-point check from Step~2, which bounds $\Lambda_{\gamma_*,\lambda_*}$ by $\Lambda_K$ under Assumption~\ref{as:weak}, uses $\ell\ge\ln(4(K+1))$. Under Assumption~\ref{as:moment}, the constant is $c=c_\alpha^{\rm mom}$ and no fixed point is needed; under Assumption~\ref{as:weak} with $\alpha<2$ and $v=v_\alpha(\infty)$, \eqref{eq:var-sum} holds with this $v$ directly, since $v_\alpha(\cdot)\le v_\alpha(\infty)$, and, again, no fixed point is needed. Then Item~1 gives the claim, and Corollary~\ref{cor:rate} follows from algebraic manipulations.

\emph{Item 3.} In the proof of Theorem~\ref{thm:sstm} (see Appendix~\ref{app:sstm-proof}), $H$ enters through the parametrization $\lambda_t=R/(30\omega_{t+1}H)$, $a\ge176400H^2$, the Freedman parameters $c_1=R^2/(5H)$, $G=R^4/(150H)$, the inequalities $H>1$ in Steps~1, 8, and~10, and the fixed-point check from Step~3$'$, which is needed only under Assumption~\ref{as:weak} with $v=v_\alpha(\Lambda_K^{\rm S})$; in the two cases from the statement, $\phi_{\lambda_t}(D_t)\le vD_t$ holds directly with the horizon-free $v$ (Lemma~\ref{lem:moment-clip}, resp.\ $v_\alpha(\cdot)\le v_\alpha(\infty)$), and Step~6 reads $S_T\le v30^{\alpha-2}\frac{R^2}H\Psi_T\le\frac{R^2}{2700H}$ by~\eqref{eq:Psi-final} and the definition of $q_\alpha$. With $h$ in place of $H$, all deterministic steps hold on $E_{T-1}$ for every $T\le K$ as before. Let $\tau$ be the first $T\le K$ at which $E_T$ from~\eqref{eq:sstm-events} fails ($\tau:=K+1$ if $E_K$ holds); $\{t<\tau\}=E_t\in\G_t$, and the stopped increments $X_t\ind_{E_t}$, $Y_t\ind_{E_t}$ of $\circledOne,\circledThree$ are martingale differences bounded by $c_1=R^2/(5h)$ with total conditional variances at most $G=R^4/(150h)$ almost surely (Steps~6 and~8 on $E_{\tau-1}$). The maximal form of Lemma~\ref{lem:freedman} bounds the probability that some partial sum of either channel exceeds $R^2/5$ by $\beta/2$ for each of them. On the complement, if $\tau=T\le K$, then $E_{T-1}$ holds, the stopped sums coincide with $\circledOne,\circledThree$ at time $T$, Steps~5--9 give $B_T<R^2$, and Step~1 and Lemma~\ref{lem:potential} give $\|z^T-x^*\|<2R$, meaning that $E_T$ holds, which is a contradiction. Thus, $E_K$ holds with probability at least $1-\beta$, and the final bound follows as in Step~10; Corollary~\ref{cor:sstm-complexity} follows from algebraic manipulations. The rates~\eqref{eq:improved-rates} are Corollary~\ref{cor:rate} and~\eqref{eq:sstm-rate} with $h$; under Assumption~\ref{as:moment}, $b_\alpha^{\rm mom}\in[1/2,1]$ and $v_\alpha^{\rm mom}\in[1,3/2]$, meaning that $c_\alpha^{\rm mom}$, $q_\alpha$, $c_{0,\alpha}$, $c_{1,\alpha}$, and, as a consequence, $C_\alpha$ are bounded uniformly on $[3/2,2]$, which concludes the proof.
\end{proof}

Note that Theorem~\ref{thm:maximal} controls the prefixes of \emph{one} run tuned for the horizon $K$; it is not an anytime statement for all horizons of one infinite trajectory. {For the original tuning with $v_\alpha(\Lambda_K)$ and $v_\alpha(\Lambda_K^{\rm S})$, the fixed-point estimates in Steps~2 and~3$'$ must be checked again after replacing $\ell$ or $H$ by $h$. At $\alpha=2$ and fixed confidence, the logarithmic terms in these checks cannot in general be absorbed into $h$ for large $K$. This is not a general obstruction for $\alpha<2$, where $v_\alpha$ is bounded. Theorem~\ref{thm:maximal} uses $v_\alpha(\infty)$ in that case, and the following corollary treats the endpoint through the moment embedding.}

\begin{corollary}[The Lorentz endpoint without the union-bound loss]\label{cor:lorentz-upper}
Let Assumptions~\ref{as:smooth}--\ref{as:convex} hold for $Q=B_{3R}(x^*)$, where $R\ge\|x^0-x^*\|>0$, let Assumption~\ref{as:lorentz} hold with $s\in[2,\infty]$, let $K\ge1$, $\beta\in(0,1)$, and $h=\ln(4/\beta)$. Let $p_K:=2-\min\{\frac12,\frac1{\ln(eK)}\}$ and $C_{p,s}$ be defined as in Remark~\ref{rem:lorentz-moment}. Then Algorithm~\ref{alg:clip-sstm} with the parameters from Theorem~\ref{thm:sstm} under Assumption~\ref{as:moment}, computed with the exponent $p_K$, the parameters $(C_{p_K,s}\sigma_0,C_{p_K,s}\sigma_1)$, and $h$ in place of $H$, satisfies, with probability at least $1-\beta$,
\begin{equation}\label{eq:lorentz-upper}
    f(y^K)-f^*\le C\left[\frac{LR^2h^2}{K^2}+\sigma_0R\sqrt{\frac hK}\,[\ln(eK)]^{\frac12-\frac1s}+\sigma_1^2LR^2\,\frac hK\,[\ln(eK)]^{1-\frac2s}\right]
\end{equation}
with an absolute constant $C$. Under the same assumptions for $Q=B_{2R}(x^*)$, Algorithm~\ref{alg:clip-sgd} with the parameters from Theorem~\ref{thm:main} (exponent $p_K$, the same noise parameters, $h$ in place of $\ell$, $K+1$ steps) satisfies the same bound for $f(\bar x^K)-f^*$ with the deterministic term $LR^2h/(K+1)$.
\end{corollary}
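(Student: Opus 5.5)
The plan is the moment-embedding route sketched in Remark~\ref{rem:lorentz-moment}, now combined with the horizon-free confidence of Theorem~\ref{thm:maximal}. First, I reduce the Lorentz condition to a moment condition. By~\eqref{eq:lorentz-moment-embed}, Assumption~\ref{as:lorentz} on $Q$ implies Assumption~\ref{as:moment} with exponent $p=p_K\in[\frac32,2)$ and parameters $(C_{p_K,s}\sigma_0,C_{p_K,s}\sigma_1)$. For $\sigma_1$ I would use $(s+t)^{p/2}\le s^{p/2}+t^{p/2}$, as in Remark~\ref{rem:direction}. Remark~\ref{rem:lorentz-moment} already verifies the uniform embedding bound $C_{p_K,s}\le4[\ln(eK)]^{1/2-1/s}$ for all $s\in[2,\infty]$, with $s=\infty$ obtained as a limit. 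At $s=2$ one may also take $p=2$ directly.

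Second, I apply item~3 of Theorem~\ref{thm:maximal}, the moment case, to \clipsstm with exponent $p_K$ and these inflated parameters. This yields the second line of~\eqref{eq:improved-rates} with $r=r_K=(p_K-1)/p_K$. Its constant $C_{p}$ is bounded uniformly over $p\in[\frac32,2]$, since the moment constants $b^{\rm mom}_p$ and $v^{\rm mom}_p$ lie in fixed intervals. The resulting bound is
\[
C\Big[\frac{LR^2h^2}{K^2}+C_{p_K,s}\sigma_0R\Big(\frac hK\Big)^{r_K}+C_{p_K,s}^2\sigma_1^2LR^2\Big(\frac hK\Big)^{2r_K}\Big].
\]

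Third, I convert the exponent $r_K$ to $\frac12$. Since $r_K\le\frac12$ and $1-2r_K=(2-p_K)/p_K$, we have $(h/K)^{r_K}=(h/K)^{1/2}(K/h)^{(2-p_K)/(2p_K)}$. The factor $K^{(2-p_K)/(2p_K)}\le e^{1/(2p_K)}\le e^{1/3}$ because $(2-p_K)\ln K\le1$. The factor $h^{-(2-p_K)/(2p_K)}\le1$ because $h>1$. The same argument, squared, handles $(h/K)^{2r_K}$. Inserting $C_{p_K,s}\le4[\ln(eK)]^{1/2-1/s}$ then gives~\eqref{eq:lorentz-upper} with an absolute constant.

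For \clipsgd I repeat these steps using item~2 of Theorem~\ref{thm:maximal} and the first line of~\eqref{eq:improved-rates}, with $K+1$ in place of $K$. The deterministic term there is $LR^2h/(K+1)$.

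The main obstacle is the uniformity of constants as $p_K\uparrow2$ and over $s$. I must check that $C_\alpha$ from Theorem~\ref{thm:maximal} stays bounded on $[\frac32,2]$ under Assumption~\ref{as:moment}, which the theorem asserts. I must also check the embedding bound for all $s$, including $s=2$, where the exponent of $\ln(eK)$ vanishes. A further point is that items~2--3 require no fixed-point $\Lambda$ check in the moment case, so replacing $H$ or $\ell$ by $h$ is legitimate.
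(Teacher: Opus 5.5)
Your proposal is correct and follows the paper's proof essentially step by step. The steps are: embedding into Assumption~\ref{as:moment} with exponent $p_K$ and parameters $C_{p_K,s}(\sigma_0,\sigma_1)$ via subadditivity of $t\mapsto t^{p/2}$; items~2--3 of Theorem~\ref{thm:maximal} with their constant bounded uniformly on $[\frac32,2]$; the conversion $K^{(2-p_K)/(2p_K)}\le e^{1/(2p_K)}$ and $h^{-(2-p_K)/(2p_K)}\le 1$; and finally $C_{p_K,s}\le4[\ln(eK)]^{1/2-1/s}$. The only nit concerns your aside about taking $p=2$ directly at $s=2$: it is unnecessary and would change the prescribed parameters. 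Since $C_{p,2}=1$, the $p_K$ route already covers $s=2$, which is exactly how the paper handles it.
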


\begin{proof}
{We use the prescribed exponent $p_K$ for every $s\in[2,\infty]$. In particular, $C_{p,2}=1$, so the moment embedding also covers $s=2$.} By~\eqref{eq:lorentz-moment-embed} and the subadditivity of $t\mapsto t^{p/2}$, Assumption~\ref{as:lorentz} implies, for $1<p<2$ and $x\in Q$, $\E\|g(x,\xi)-\nabla f(x)\|^p\le C_{p,s}^p(\sigma_0^2+\sigma_1^2\|\nabla f(x)\|^2)^{p/2}\le C_{p,s}^p(\sigma_0^p+\sigma_1^p\|\nabla f(x)\|^p)$, i.e., Assumption~\ref{as:moment} with the exponent $p$ and the parameters $C_{p,s}(\sigma_0,\sigma_1)$. With $p=p_K\in[\frac32,2)$, we have $C_{p_K,s}\le4[\ln(eK)]^{\frac12-\frac1s}$ (see Remark~\ref{rem:lorentz-moment}), $K^{-(p_K-1)/p_K}\le e^{1/(2p_K)}K^{-1/2}$ (since $\frac12-\frac{p-1}{p}=\frac{2-p}{2p}$ and $(2-p_K)\ln K\le1$), and $h^{(p_K-1)/p_K}\le h^{1/2}$ (since $h>1$). Substituting these inequalities into~\eqref{eq:improved-rates} with the exponent $p_K$ and using the uniform boundedness of $C_{p}$ on $[\frac32,2]$, we get~\eqref{eq:lorentz-upper}; for \clipsgd, the same substitution in the first bound of~\eqref{eq:improved-rates} applies, which concludes the proof.
\end{proof}

{For fixed $L,R,\sigma_0>0$, $\sigma_1=0$, $s\in[2,\infty]$, and $\beta<1/3$, the stochastic term in~\eqref{eq:lorentz-upper} matches the lower bound~\eqref{eq:lorentz-opt-lower} as $K\to\infty$. Thus, $\sigma_0RK^{-1/2}[\ln(eK)]^{1/2-1/s}$ is the asymptotic minimax order over the additive $L^{2,s}$ class at fixed confidence, and both \clipsgd and \clipsstm attain this order. The asymptotic qualification accounts for the cap $LR^2$ in the lower bound and the deterministic terms in the upper bounds.} That is, not all logarithmic factors from the earlier bounds are statistically necessary: the logarithmic dependence on the horizon in $\ell$ and $H$ comes from the union bound, while the Lorentz factor has a lower bound of its own. For $1<\alpha<2$, the dependence on $\beta$ in~\eqref{eq:improved-rates} is optimal as well (see Proposition~\ref{prop:confidence-lower}).

\section{Additional Experiments}\label{app:experiments}

In this section, we present additional experiments complementing Section~\ref{sec:experiments}: an experiment with symmetric noise on a quadratic problem, a ridge regression problem on real data with the asymmetric noise from Section~\ref{sec:experiments}, and the experiments with the full theoretical parameter prescriptions from Theorems~\ref{thm:main} and~\ref{thm:sstm} on a smooth convex benchmark with vanishing curvature. We start with the description of the full experimental protocol.

\paragraph{Protocol.} We use $\alpha=3/2$ and $1024$ independent runs per configuration.
The reported horizons reach $10^6$ oracle calls.
For \clipsgd, we report the mean of $f(\bar x^{n-1})-f^*$, where
$\bar x^{n-1}=n^{-1}\sum_{k=0}^{n-1}x^k$ is the averaged iterate after $n$ oracle calls (one call per iteration); for \clipsstm, we report the mean of $f(y^n)-f^*$, using the output prescribed by Algorithm~\ref{alg:clip-sstm}, without additional averaging.
The bands are pointwise approximate $95\%$ confidence intervals for the mean.
Horizon-dependent configurations restart from the same initial point
for each terminal horizon; the theoretical failure parameter is $\beta=0.05$.

Let $P$ satisfy $\PP\{P>t\}=t^{-\alpha}$ for $t\ge1$; then
$\E P=3$ and $\E P^\alpha=\infty$.
We use either $Z=\epsilon P$, with an independent Rademacher sign $\epsilon$,
or the centered asymmetric $Z=(P-3)/2$.
Both of them satisfy $\E Z=0$ and
$\sup_{u>0}u\,\PP\{|Z|>u\}^{1/\alpha}\le1$.
In the scalar experiments,
$
 g(x)=x+S(x)Z,\;
 S(x)=\bigl(\sigma_0^\alpha+\sigma_1^\alpha|x|^\alpha\bigr)^{1/\alpha}.
$
The vector oracles are specified in each experiment. All of them satisfy the affine weak-tail
inequality with the stated $(\sigma_0,\sigma_1)$; whenever the noise scale
is non-zero, its $\alpha$-th moment is infinite.

\paragraph{Small additive noise under symmetric noise.} We minimize $f(x)=x^2/2$ on $\R$ ($L=1$, $x^*=0$) starting from $x^0=1$ with the symmetric noise, $\sigma_1=0.03$, the constant stepsize $\gamma=0.05$, and the constant clipping level $\lambda=1$, and vary $\sigma_0\in\{0,0.01,0.1,0.5,1\}$ (see Figure~\ref{fig:exp-small-noise}).

{For $\sigma_0=0$, the noise scale vanishes at $x^*$, and the observed error closely tracks the noiseless reference $(\gamma n)^{-2}/2$ over the plotted range. For $\sigma_0>0$, the curves leave this reference earlier as $\sigma_0$ increases and then decay approximately as $1/n$, at levels increasing with $\sigma_0$. The exact noiseless averaged error, when clipping is inactive, is $(1-(1-\gamma)^n)^2/(2\gamma^2n^2)$; the plotted reference is its large-$n$ limit.}

{Two features of this example go beyond the tail assumption. For symmetric $Z$ and even $S$, the mean clipped field $m_\lambda(x):=\E\clip(x+S(x)Z,\lambda)$ is odd, and, therefore, $m_\lambda(0)=0$; its zero is unique here, although, in general, $m_\lambda(x)\ne\nabla f(x)$ for $x\ne0$: since $\sigma_0\le1$ and $\sigma_1=0.03$, we have $S(x)\le\sigma_0+\sigma_1|x|<1+|x|$ for $x\ne0$; for $x>0$ and $a\ge0$, the sum $\clip(x+a,1)+\clip(x-a,1)$ is non-negative and positive when $a<x+1$, which is provided by the event $\{S(x)P<x+1\}$ of positive probability (the case $S(x)=0$ is immediate); hence, $m_1(x)>0$ for $x>0$ and, by oddness, $m_1(x)<0$ for $x<0$. Thus, clipping does not displace the stationary point in this experiment, and the stepsize is constant rather than tied to the horizon as in Corollary~\ref{cor:fixed}. That is, the experiment illustrates the dependence on $\sigma_0$, through which the additive noise enters both the neighborhood $\mathcal N_\lambda$ and the vanishing term $\mathcal V_{\lambda,K}$ from Corollary~\ref{cor:fixed}, and not the convergence to $x^*$ with a fixed stepsize for arbitrary weak-$L^\alpha$ noise, which fails in general (see Theorem~\ref{thm:lower}).}

\begin{figure}[htbp]
    \centering
    \includegraphics[width=0.6\linewidth]{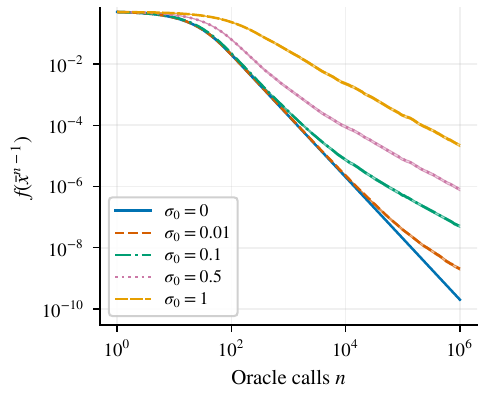}
    \caption{Mean error of the averaged iterate on $f(x)=x^2/2$ under symmetric Pareto noise}
    \label{fig:exp-small-noise}
\end{figure}

\paragraph{Details of the clipping-level experiment.} In the experiment from Figure~\ref{fig:exp-clipping-bias} (see Section~\ref{sec:experiments}), $\gamma_*(n)=\min\{\gamma_L,\gamma_0,\gamma_1,\gamma_2\}$ and $\ell_n=\ln(4n/\beta)$ are computed for the horizon $n$ with $\beta=0.05$, which gives $\lambda(10^2)=2.8$, $\lambda(10^4)=32$, and $\lambda(10^6)\approx570$; each horizon $n$ is a separate run with $\lambda(n)$ being constant within the run. The dotted lines are the values $f(x_\lambda)$ at the zero $x_\lambda$ of the mean clipped field $x\mapsto\E\clip(g(x),\lambda)$, computed numerically; $x_\lambda>0$, since clipping cuts the heavy right tail of $Z$. They decrease with $\lambda$ roughly as $\lambda^{-2(\alpha-1)}$, since, on a quadratic, the error at the biased fixed point is the squared bias divided by $2L$, and the bias scales as $\sigma_0^\alpha/\lambda^{\alpha-1}$ (see Lemma~\ref{lem:clip}). The zero $x_\lambda$ of the mean clipped field is a reference for the observed plateau: with a non-zero constant stepsize, the stochastic iterates may fluctuate around this point, and the limiting error of their average is not necessarily equal to $f(x_\lambda)-f^*$ exactly.

\paragraph{Regression on real data.} {We minimize $f(w)=\frac1{2m}\|Aw-y\|^2+\frac{0.01}{2}\|w\|^2$ on the Diabetes dataset \citep{efron2004least}, with standardized features and targets ($m=442$, $d=10$; loaded with scikit-learn, \citealp{pedregosa2011scikit}), starting from $w^0=0$. Here $L=\lambda_{\max}(\nabla^2f)\approx4.0$ and $\lambda_{\min}(\nabla^2f)\approx0.019$. We use the same clipping levels as above, with $R=\|w^0-w^*\|$ and this $L$ in $\lambda(n)$, and the common stepsize $\gamma=0.05/L$ (Figure~\ref{fig:exp-regression}).}

{The oracle is $g(w,Z)=\nabla f(w)+S(w)Ze$, where $Z$ is the centered asymmetric Pareto variable, $(\sigma_0,\sigma_1)=(0.05,0.001)$, and $e$ is a fixed unit eigenvector of the smallest Hessian eigenvalue. The projected scale $S(w):=(\sigma_0^\alpha+\sigma_1^\alpha|\abr{\nabla f(w),e}|^\alpha)^{1/\alpha}$ satisfies Assumption~\ref{as:weak}, since $|\abr{\nabla f(w),e}|\le\|\nabla f(w)\|$. This oracle is also the gradient of $f(w)+Z\Psi(\abr{w-w^*,e})$, with $\Psi'(u)=(\sigma_0^\alpha+\sigma_1^\alpha\lambda_{\min}^\alpha|u|^\alpha)^{1/\alpha}$, although our model does not require a sample-objective representation.}

{For small clipping levels, the error first decreases and then rises toward a plateau. This behavior is consistent with a displacement of the equilibrium of the mean clipped field. If $\nabla f(w_\lambda)=be$, the quadratic structure gives $w_\lambda-w^*=(b/\lambda_{\min})e$ and $f(w_\lambda)-f^*=b^2/(2\lambda_{\min})$, illustrating how low curvature can amplify clipping bias. Since the experiment uses a positive constant stepsize, this equilibrium value is a reference for the observed plateau, not an exact formula for the limiting error of the averaged iterates. Larger levels lower the observed plateau, and $\lambda(n)$ yields decreasing error over the tested horizons.}

\begin{figure}[htbp]
    \centering
    \includegraphics[width=0.6\linewidth]{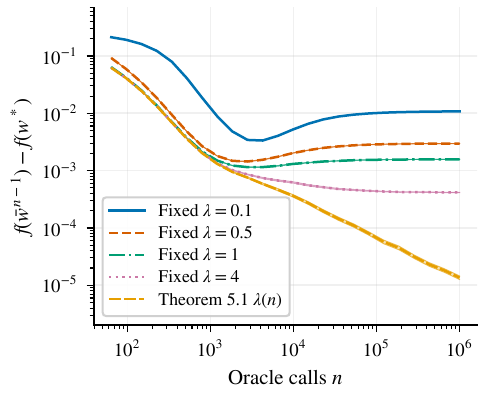}
    \caption{Mean objective gap of the averaged iterate for ridge regression on the Diabetes dataset with asymmetric Pareto noise along the eigenvector of the smallest Hessian eigenvalue}
    \label{fig:exp-regression}
\end{figure}

\paragraph{Relation to the theory.} The clipping level $\lambda(n)$ is the one prescribed by Theorem~\ref{thm:sgd} for the horizon $n$, but the stepsize used is the common empirical value rather than the much smaller $\gamma_*(n)$, and the constant-level runs use a fixed stepsize instead of the horizon-dependent one from Theorem~\ref{thm:fixed-main}; therefore, the comparisons isolate the effect of the clipping level and do not test the high-probability bounds themselves.

{For the asymmetric-noise examples, larger clipping levels lower the observed plateaus, while the horizon-dependent level yields decreasing error over the tested horizons. The symmetric-noise example illustrates a different case: clipping preserves the zero of the mean field, and the averaged error continues to decrease with a constant level. These observations are consistent with the mechanisms discussed in Sections~\ref{sec:results} and~\ref{sec:planned}.}

{The pointwise $95\%$ intervals quantify uncertainty in the estimated mean across runs; they are not $95\%$ guarantees for the error of an individual run. All experiments use $\alpha=3/2$, so they do not directly test the Lorentz endpoint or compare its two theoretical tunings.}

\paragraph{A convex benchmark with vanishing curvature.}
In the remaining experiments, we use a chain-inspired objective in $d=16$ dimensions.
Let $Dw=(w_1-w_2,\ldots,w_{d-1}-w_d)$ and
$M=\lambda_{\max}(D^\top D)$. We choose a zero-mean linear profile
$w_i^*=c((d+1)/2-i)$, $1\le i\le d$, with $\|w^*\|=R$, and let
$\tau=\|Dw^*\|/\sqrt{d-1}$. We set $L=R=1$, $w^0=0$, and
\begin{align*}
 f(w)&=\frac{8L\tau^2}{9M}\sum_{i=1}^{d-1}
       \psi\!\left(\frac{[D(w-w^*)]_i}{\tau}\right),\\
 \psi(t)&=\frac{t^2-\ln(1+t^2)}2.
\end{align*}
Since $0\le\psi''(t)\le9/8$, this objective is globally convex and
$L$-smooth. Moreover, $\nabla^2f(w^*)=0$ and
$\psi(t)=t^4/4+O(t^6)$ near zero; therefore, it is not strongly convex even on the
active zero-mean subspace.
We use either additive noise
$g(w)=\nabla f(w)+\sigma_0Zv$, where
$v=\nabla f(w^0)/\|\nabla f(w^0)\|$ is fixed, or multiplicative noise
$g(w)=(1+\sigma_1Z)\nabla f(w)$, always with $Z=(P-3)/2$.
These are the gradients of $f_Z=f+\sigma_0Zv^\top w$ and
$f_Z=(1+\sigma_1Z)f$, respectively, and they satisfy
Assumption~\ref{as:weak} with the stated coefficients.

\paragraph{Three noise regimes for \clipsgd.}
We compare $(\sigma_0,\sigma_1)=(0,0),(0,0.004),(0.002,0)$ on this objective,
using the full pair $\gamma_*(n)$ and
$\lambda(n)=R/(40\gamma_*(n)\ln(4n/\beta))$ from
Theorem~\ref{thm:main}, recomputed for each configuration and horizon
(see Figure~\ref{fig:exp-sgd-regimes}).
The mean averaged-iterate gaps are close at short horizons and separate
as the horizon increases: the noiseless case decreases most rapidly,
followed by the multiplicative and then the additive noise.
Since the prescribed stepsizes depend on the noise coefficients,
this separation reflects both the stochastic perturbations and the
corresponding conservative tuning.
The comparison illustrates the three noise regimes from
Corollary~\ref{cor:rate}, whose upper bounds have the orders
$n^{-1}$, $n^{-2/3}$, and $n^{-1/3}$ up to logarithmic factors.
These orders describe theoretical upper bounds and are not claimed
to be fitted empirical slopes for this instance.

\begin{figure}[htbp]
    \centering
    \includegraphics[width=0.6\linewidth]{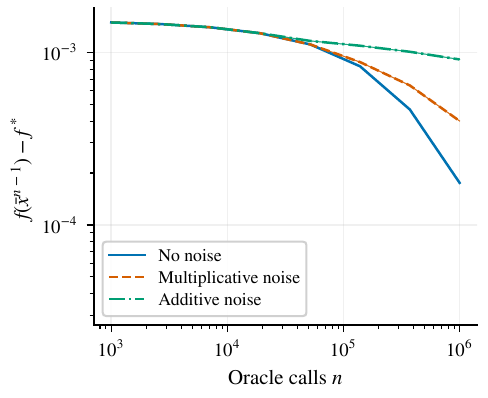}
    \caption{Mean averaged-iterate gap for \clipsgd on the smooth convex chain
    with vanishing curvature, under the full horizon-dependent prescription
    from Theorem~\ref{thm:main}}
    \label{fig:exp-sgd-regimes}
\end{figure}

\paragraph{Sensitivity of \clipsstm to the noise.}
On the same objective, we use the full prescription from
Theorem~\ref{thm:sstm}: $a=a_*(n,\beta)$ and
$\lambda_k=R/(30\omega_{k+1}H_n)$, where
$\omega_{k+1}=(k+2)/(2aL)$ and $H_n=\ln(4n/\beta)$.
We vary $\sigma_0\in\{0,10^{-5},10^{-4},10^{-3}\}$ with $\sigma_1=0$
(see Figure~\ref{fig:exp-sstm-additive}), and, separately,
$\sigma_1\in\{0,10^{-4},3\cdot10^{-4},10^{-3}\}$ with $\sigma_0=0$
(see Figure~\ref{fig:exp-sstm-multiplicative}).
The two plots share the same noiseless control and cover
$10^3\le n\le10^6$ on logarithmic axes.
The parameter $a_*(n,\beta)$ and the associated clipping schedule are
recomputed for each noise level and terminal horizon.
At the smallest noise levels, the curves closely follow the noiseless
control, while larger coefficients lead to markedly slower observed progress.
This separation reflects both the stochastic perturbations and the more
conservative theoretical tuning required by larger noise coefficients.

\begin{figure}[htbp]
    \centering
    \begin{minipage}[t]{0.48\linewidth}
        \centering
        \includegraphics[width=\linewidth]{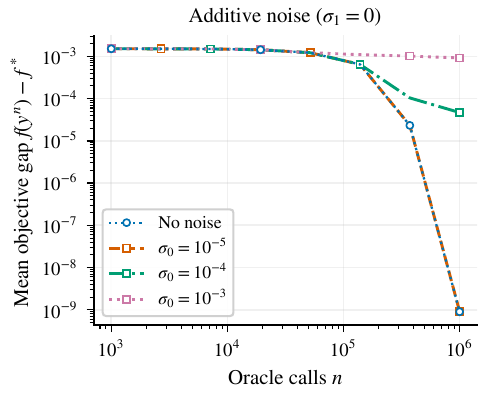}
        \caption{Mean output gap $f(y^n)-f^*$ of \clipsstm on the smooth convex chain
        under the full prescription from Theorem~\ref{thm:sstm}: additive noise ($\sigma_1=0$).}
        \label{fig:exp-sstm-additive}
    \end{minipage}\hfill
    \begin{minipage}[t]{0.48\linewidth}
        \centering
        \includegraphics[width=\linewidth]{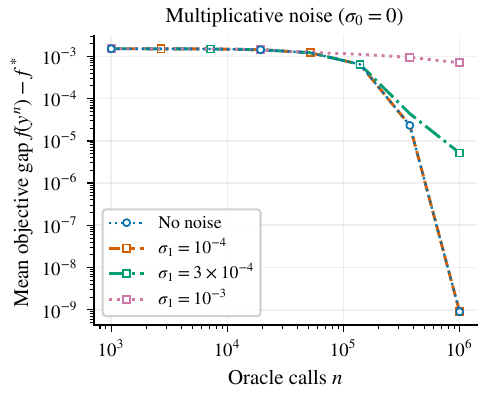}
        \caption{Mean output gap $f(y^n)-f^*$ of \clipsstm on the smooth convex chain
        under the full prescription from Theorem~\ref{thm:sstm}: multiplicative noise ($\sigma_0=0$).}
        \label{fig:exp-sstm-multiplicative}
    \end{minipage}
\end{figure}

\paragraph{Software and data.} {We use the diabetes dataset from \citet{efron2004least}, loaded through scikit-learn \citep{pedregosa2011scikit}. The scikit-learn software is distributed under the BSD-3-Clause license. The synthetic problems are fully specified by the formulas above.}

\end{document}